\documentclass[10pt]{amsart}
\usepackage{graphicx} 

\setlength{\textwidth}{\paperwidth}
\addtolength{\textwidth}{-2.6in}
\calclayout


\title{witt affine springer theory}
\author{Noam Nissan and Yakov Varshavsky}
\date{\today}

\usepackage{quiver}
\usepackage{hyperref}
\usepackage[shortlabels]{enumitem}
\usepackage{biblatex}

\usepackage{amsthm}
\usepackage{amsmath}
\usepackage{twemojis}
\usepackage[T1]{fontenc}

\usepackage[nameinlink, capitalise,noabbrev]{cleveref}

\addbibresource{bibliography.bib}

\newtheorem*{lemma*}{Lemma}
\newtheorem*{definition*}{Definition}
\newtheorem*{problem*}{Problem}
\newtheorem*{corollary*}{Corollary}

\newtheorem*{proposition*}{Proposition}

\newtheorem*{theorem*}{Theorem}
\newtheorem{remark*}{Remark}

\newtheorem{theorem}{Theorem}[section]
\newtheorem{lemma}[theorem]{Lemma}

\newtheorem{corollary}[theorem]{Corollary}
\newtheorem{proposition}[theorem]{Proposition}

\theoremstyle{definition}
\newtheorem{remark}[theorem]{Remark}

\newtheorem{example}[theorem]{Example}
\newtheorem{para}[theorem]{}
\newtheorem{definition}[theorem]{Definition}

\newlist{mylist}{enumerate}{2}
\setlist[mylist, 1]{label=(\alph*),noitemsep,topsep=0pt, ,ref={\thetheorem}(\alph*)}
\setlist[mylist, 2]{label=\roman*.}

\crefname{Appendix}{Appendix}{Appendixes}
\crefname{mylisti}{Paragraph}{Paragraphs}
\crefname{paragraph}{Paragraph}{Paragraphs}
\AddToHook{env/lemma/begin}{\crefalias{theorem}{lemma}}
\AddToHook{env/proposition/begin}{\crefalias{theorem}{proposition}}
\AddToHook{env/remark/begin}{\crefalias{theorem}{remark}}
\AddToHook{env/para/begin}{\crefalias{theorem}{paragraph}}
\AddToHook{env/example/begin}{\crefalias{theorem}{example}}
\AddToHook{env/definition/begin}{\crefalias{theorem}{definition}}
\AddToHook{env/corollary/begin}{\crefalias{theorem}{corollary}}

\newcommand{\rats}{\mathbb{Q}}
\newcommand{\ints}{\mathbb{Z}}
\newcommand{\nats}{\mathbb{N}}
\newcommand{\basicextension}{K}
\newcommand{\resfield}{\kappa}
\newcommand{\witt}{{W_{\basicints}}}
\newcommand{\wittzp}{{W_{\ints_p}}}
\newcommand{\wittn}{W_{\basicints,n}}
\DeclareMathOperator{\weilres}{Res}
\newcommand{\basicints}{\mathcal{O}}
\newcommand{\valuation}{v_{\uniformizer}}
\newcommand{\qlbar}{\overline{\rats_l}}

\DeclareMathOperator{\colim}{colim}
\DeclareMathOperator{\op}{op}
\DeclareMathOperator{\Fun}{Fun}

\DeclareMathOperator{\End}{End}
\DeclareMathOperator{\Id}{Id}

\DeclareMathOperator{\mor}{Mor}
\DeclareMathOperator{\ob}{Ob}
\newcommand{\varcat}[1]{\mathcal{#1}}
\DeclareMathOperator{\pro}{Pro}

\newcommand{\dimfunc}{\underline{\dim}}

\DeclareMathOperator{\Alg}{Alg}

\DeclareMathOperator{\perf}{perf}
\DeclareMathOperator{\pfp}{pfp}

\DeclareMathOperator{\fp}{fp}

\DeclareMathOperator{\qcqs}{qcqs}
\DeclareMathOperator{\sch}{Sch}
\newcommand{\persch}{\sch_{\perf}}
\DeclareMathOperator{\aff}{Aff}
\newcommand{\peraff}{\aff_{\perf}}
\DeclareMathOperator{\stacks}{St}
\newcommand{\perstacks}{\stacks_{\perf}}
\DeclareMathOperator{\algsp}{AlgSp}
\newcommand{\peralgsp}{\algsp_{\perf}}
\DeclareMathOperator{\prestacks}{preSt}

\DeclareMathOperator{\Shv}{Shv}

\newcommand{\varstack}[1]{\mathcal{#1}}

\DeclareMathOperator{\catofsets}{Set}
\DeclareMathOperator{\catofanima}{Anima}

\newcommand\liealg[1][g]{\mathfrak{#1}}

\DeclareMathOperator{\Ind}{Ind}
\newcommand{\thegroup}{G}
\newcommand{\theborel}{B}
\newcommand{\thetorus}{T}
\newcommand{\theweylgroup}{W}
\newcommand{\theextendedweylgroup}{\widetilde{\theweylgroup}}
\newcommand{\theroots}{R}
\newcommand{\thetoruslie}{\liealg[t]}
\newcommand{\theborellie}{\liealg[b]}
\newcommand{\thelie}{\liealg[g]}

\newcommand{\chevalley}{\liealg[c]}
\newcommand{\thechevalleymap}{\chi}
\newcommand{\theiwahorychevalleymap}{\nu}
\DeclareMathOperator{\theadjoint}{Ad}

\newcommand{\thediscriminant}{\mathfrak{D}}
\DeclareMathOperator{\regsemisimple}{rs}

\newcommand{\rootfunc}{\textbf{r}}

\newcommand{\theliecenter}{\liealg[z]}

\newcommand{\theiwahori}{I}

\newcommand{\flag}{Fl}

\newcommand{\arcspace}{L^{+}}

\newcommand{\loopspace}{L}
\DeclareMathOperator{\topnilp}{tn}


\DeclareMathOperator{\differential}{D}
\DeclareMathOperator{\codim}{codim}

\DeclareMathOperator{\spec}{Spec}
\newcommand{\uniformizer}{\varpi}
\newcommand{\affinespace}{\mathbb{A}}
\newcommand{\projectivespace}{\mathbb{P}}
\newcommand{\multiplicativegroup}{\mathbb{G}_m}

\newcommand{\explicitset}[1]{\left\{#1\right\}}
\newcommand{\parenth}[1]{\left(#1\right)}


\newcommand{\iwahory}{I}
\newcommand{\liewahory}{\operatorname{Lie}(\theiwahori)}
\newcommand{\paraliewahory}{\mathbb{L}\mathrm{ie}(\theiwahori)}
\newcommand{\compactelements}{\mathfrak{C}}
\newcommand{\springertotalspace}{\widetilde{\compactelements}}
\newcommand{\thespringermap}{\mathfrak{p}}

\newcommand{\targetlattice}{\Xi}
\newcommand{\sourcelattice}{\Omega}

\DeclareMathOperator{\speciallineargroup}{SL}
\newcommand{\sltwo}{\speciallineargroup_2}
\newcommand{\bplustilde}{\widetilde{B^+}}
\newcommand{\bnplustilde}{\widetilde{B^+_n}}
\newcommand{\bplus}{{B^+}}
\newcommand{\bnplus}{{B^+_n}}
\newcommand{\canlift}[1]{\left[#1\right]}
\newcommand{\pararcspace}{\mathbb{L}^+}

\newcommand{\xto}[1]{\xrightarrow{#1}}

\newcommand{\into}{\hookrightarrow}
\newcommand{\onto}{\twoheadrightarrow}

\newcommand{\indconstructible}{\mathcal{D}}
\newcommand{\constructible}{\mathcal{D}_{\operatorname{c}}}
\newcommand{\prl}{\operatorname{Pr}^L_{\operatorname{st},\ell}}

\begin{document}
\begin{abstract}
This paper extends the affine Springer theory developed by Bouthier, Kazhdan, and the second author (see \texorpdfstring{\cite{bouthier2022perverse}}{[BKV]}) to the mixed characteristic case. In particular, we introduce a theory of perfectly placid perfect \texorpdfstring{$\infty$}{infinity}-stacks and establish their dimension theory. Furthermore, we prove that, in the Witt vector setting, the Chevalley morphism between arc spaces is flat.
\end{abstract}
\maketitle


\section{Introduction}

\begin{para}{\bf The affine Springer correspondence.}
In \cite{bouthier2022perverse}, the authors presented an affine version of the Grothendieck–Springer resolution, whose fibres are the affine Springer fibres.

Let $\thegroup$ be a connected reductive group over $\resfield[[t]]$, where $\resfield$ is an algebraically closed field. Then $\arcspace(\thegroup)$ (resp. $\loopspace(\thegroup)$) is a group scheme (resp. group ind-scheme) defined over $\resfield$, representing the functor of points $R \mapsto \thegroup(R[[t]])$ (resp. $R \mapsto \thegroup(R((t)))$). We have corresponding definitions for the Lie algebra $\thelie$ of $\thegroup$: $\arcspace(\thelie)$ and $\loopspace(\thelie)$. The Iwahori subgroup (resp. subalgebra) is a certain closed subgroup scheme $\iwahory \subset \arcspace(\thegroup)$ (resp. $\liewahory$), with rational points corresponding to the classical Iwahori subgroup (see \cref{wittloopandarc:def:iwahori}).

We consider the affine flag variety $\flag := \loopspace(\thegroup) / \iwahory$ 
and the Chevalley morphism $\thechevalleymap : \thelie \to \thelie // \thegroup =: \chevalley$.
Denote the preimage $\compactelements := \loopspace(\thechevalleymap)^{-1}(\arcspace(\chevalley))\subset\loopspace(\thelie)$.
Let
\[
\springertotalspace := \explicitset{(g\iwahory, \gamma) \in \flag \times \compactelements \mid \theadjoint_{g^{-1}}(\gamma) \in \iwahory},
\]
then the affine Grothendieck–Springer resolution is the projection $\thespringermap:\springertotalspace \to \compactelements$. It is $\loopspace (G)$-equivariant, and we denote by $\overline{\thespringermap} : [\springertotalspace/\loopspace(\thegroup)] \to [\compactelements/\loopspace(\thegroup)]$ the induced map between quotient stacks. 

Assume that $\resfield$ is of characteristic $p> 2h$, where $h$ is the Coxeter number of $\thegroup$. Under this assumption,\footnote{The assumption on the characteristics is weaker in the original paper, but it was already corrected in \cite{bouthier2025perversity}.} it is shown in \cite{bouthier2022perverse}  that this Grothendieck–Springer resolution is semi-small, and it is deduced that the Grothendieck–Springer sheaf $\mathcal{S} := \overline{\thespringermap}_!(\omega_{[\springertotalspace/\loopspace(\thegroup)]})$ is perverse for some $t$-structure.
\end{para}

\begin{para}{\bf Goal.}
    The goal of this work to expand the results of \cite{bouthier2022perverse} to the mixed characteristic setting, as envisioned in \cite[\S 0.5.3]{bouthier2022perverse}.
    
    We use the Witt vector arc space (loop space) construction defined in \cite{zhu2017affine} and further developed in \cite{bhatt2017projectivity}. This construction is taking a scheme $X$ defined over $\wittzp(\resfield)$ (the Witt vectors of $\resfield$) to the perfect scheme (perfect ind scheme) over $\resfield$ with functor of points $R\mapsto X(\wittzp(R))$ ($R\mapsto X(\wittzp(R)[1/p])$).

    Since the current paper follows the structure of \cite{bouthier2022perverse} very closely, we try  to emphasis the differences between our work and \cite{bouthier2022perverse} and to not repeat the arguments that are the same.
\end{para}

The main major difference between this work and \cite{bouthier2022perverse} is the following theorem (analogous to \cite[Theorem 3.4.7 and Corollary 3.4.8]{bouthier2022perverse}), proved in \cref{flatnessofchevalley} using a construction from \cite{bando2023derived}.

\begin{theorem}\label{thmintro:chevalleyflat}[\cref{codimofstrata:prop:finitechevalleyflat} and \cref{codimofstrata:prop:chevalleyflat}] Suppose $\operatorname{char}\resfield>2h$ where $h$ is the Coxeter number of $\thelie$.
Then the Chevalley morphisms between the arc spaces (see \cref{wittloopandarc:def:arcofnu} for definitions)
\[
        \thechevalleymap \colon \arcspace(\thelie) \to \arcspace(\chevalley)
        \quad \text{and} \quad
        \theiwahorychevalleymap \colon \liewahory \to \arcspace(\chevalley)
\]
    and their truncated versions:
\[
        \thechevalleymap_n \colon \arcspace_n(\thelie) \to \arcspace_n(\chevalley)
        \quad \text{and} \quad
        \theiwahorychevalleymap_n \colon \liewahory_n \to \arcspace_n(\chevalley)
\]
are flat.
\end{theorem}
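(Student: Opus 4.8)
The plan is to prove flatness of the truncated morphisms $\thechevalleymap_n$ and $\theiwahorychevalleymap_n$ for every $n$, and then to deduce the untruncated statements by passing to the limit. Indeed $\arcspace(\thelie)=\lim_n\arcspace_n(\thelie)$ and $\liewahory=\lim_n\liewahory_n$ along affine transition maps that are smooth (torsors under perfect vector groups, since $\thelie$ and $\liewahory$ are $\basicints$-smooth), and a filtered limit of flat morphisms along flat affine transition maps is again flat; so flatness at all finite levels gives flatness of $\thechevalleymap$ and $\theiwahorychevalleymap$. Now fix $n$. Because $\operatorname{char}\resfield>2h$, Chevalley's restriction theorem gives $\chevalley=\thelie//\thegroup\cong\affinespace^{\therank}_{\basicints}$, so $\arcspace_n(\chevalley)$ is a perfect affine space over $\resfield$, in particular regular; likewise $\arcspace_n(\thelie)$ is a perfect affine space (as $\thelie$ is $\basicints$-free), hence regular and irreducible, and $\liewahory_n$, being cut out of it by the linear congruence defining the Iwahori, is again a perfect affine space. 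By the flatness criterion for a morphism from a Cohen–Macaulay scheme to a regular scheme, flatness of $\thechevalleymap_n$ (resp. $\theiwahorychevalleymap_n$) is equivalent to all of its fibres having the expected dimension $(\dim\thelie-\therank)(n+1)$ (resp. $(\dim\thelie-\therank)(n+1)-\#\theroots^{+}$). The existence of a section (the Kostant section for $\thechevalleymap$, an Iwahori-adapted variant for $\theiwahorychevalleymap$) makes these morphisms surjective, and combined with irreducibility of the source this yields the lower bound on every fibre; so the entire content is the matching upper bound.

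For the upper bound I would use the contracting $\multiplicativegroup$-action. The scaling action of $\multiplicativegroup$ on $\thelie$ induces actions on $\arcspace_n(\thelie)$ and on $\arcspace_n(\chevalley)$, the latter with strictly positive weights $d_1,\dots,d_{\therank}$ (the degrees of the fundamental invariants), and $\thechevalleymap_n$ is equivariant; the same holds for $\theiwahorychevalleymap_n$ since $\multiplicativegroup$ preserves $\theborellie$. For $\lambda\in\multiplicativegroup$ the automorphism $\lambda$ identifies $\thechevalleymap_n^{-1}(c)$ with $\thechevalleymap_n^{-1}(\lambda\cdot c)$, while $\lambda\cdot c\to0$ as $\lambda\to0$; upper semicontinuity of fibre dimension in the family over $\affinespace^1$ given by $\lambda\mapsto\lambda\cdot c$ then yields $\dim\thechevalleymap_n^{-1}(c)\le\dim\thechevalleymap_n^{-1}(0)$ for every geometric point $c$. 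Since $\thechevalleymap_n^{-1}(0)=\arcspace_n(\mathcal{N})$ with $\mathcal{N}=\thechevalleymap^{-1}(0)\subset\thelie$ the ($\basicints$-flat) nilpotent cone, and since the $\theiwahorychevalleymap_n$-case reduces to the same estimate together with its relative version for $\arcspace_n(\mathcal{N})\to\mathcal{N}_{\resfield}$ (using $\theborellie\cap\mathcal{N}=\liealg[n]$, or alternatively by deducing flatness of $\theiwahorychevalleymap$ from that of $\thechevalleymap$ via flatness of the reductive-level morphism $\theborellie\to\chevalley$), everything comes down to the single inequality
\[
\dim\arcspace_n(\mathcal{N})\ \le\ (\dim\thelie-\therank)(n+1).
\]

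This last inequality is where I expect the main difficulty, and it is the point at which the derived-geometric construction of \cite{bando2023derived} enters. In equal characteristic it is exactly the fact (a theorem of Musta\c{t}\u{a}) that the jet schemes of a local complete intersection with canonical singularities have the expected dimension; it applies to $\mathcal{N}$ because, for $\operatorname{char}\resfield>2h$, $\mathcal{N}$ is a normal complete intersection in $\thelie$ whose Springer resolution $T^{*}(\thegroup/\theborel)\to\mathcal{N}$ is crepant, so $\mathcal{N}$ has canonical (indeed Gorenstein) singularities. In the Witt-vector setting one works instead with the derived Witt jet space: the closed immersion $\mathcal{N}\hookrightarrow\thelie$ is quasi-smooth, being cut out by the regular sequence $f_1,\dots,f_{\therank}$, so its derived Witt jet space is a quasi-smooth closed subscheme of $\arcspace_n(\thelie)$ of virtual codimension $\therank(n+1)$, and flatness of $\thechevalleymap_n$ becomes the statement that this derived Witt jet space is classical — equivalently, that the classical truncation $\arcspace_n(\mathcal{N})$ attains the virtual dimension rather than exceeding it. I expect \cref{flatnessofchevalley} to prove this, following \cite{bouthier2022perverse}, by stratifying $\arcspace_n(\thelie)$ according to the $\thegroup$-conjugacy type of the reduction modulo $\uniformizer$: on each stratum one bounds the contribution of the higher jet directions, and the codimension of the stratum of non-regular elements — equal to $3$ on the subregular locus and growing on the deeper strata — exactly absorbs the extra freedom, the construction of \cite{bando2023derived} supplying the comparison between Witt-vector and ordinary jet spaces that legitimises this bookkeeping in mixed characteristic. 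The hypothesis $\operatorname{char}\resfield>2h$ enters both through $\chevalley\cong\affinespace^{\therank}$ and through the normality and canonicity of $\mathcal{N}$ (equivalently, the good behaviour of the Springer resolution) that underlie this dimension count.
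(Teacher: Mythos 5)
Your outer skeleton — deduce the untruncated case by a filtered limit, reduce each truncated case to fibre dimension bounds by miracle flatness (Cohen--Macaulay over regular), and use the contracting $\multiplicativegroup$-action to reduce everything to bounding $\dim\arcspace_n(\mathcal{N})$ — is exactly the structure of the proof of the equal-characteristic analogue in \cite{bouthier2022perverse}, and is reasonable in outline. But you then leave the one genuinely hard step open. You write that you ``expect'' the paper to establish the inequality $\dim\arcspace_n(\mathcal{N})\le(\dim\thelie-\therank)(n+1)$ by a stratification of $\arcspace_n(\thelie)$ into conjugacy types plus a Musta\c{t}\u{a}-style singularities argument. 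That is precisely the step the paper's introduction singles out as \emph{not} available in mixed characteristic: the global/motivic-integration methods behind Musta\c{t}\u{a}'s theorem and the argument of \cite[Appendix~B.4]{bouthier2022perverse} do not carry over to Witt arc spaces, and the paper never attempts them. Your brief appeal to derived Witt jet spaces and quasi-smoothness of $\mathcal{N}\hookrightarrow\thelie$ is also not what the paper does and, as stated, does not by itself give the required upper bound.

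The actual route in \cref{flatnessofchevalley} is quite different and worth contrasting. Instead of proving the dimension inequality directly for $\arcspace_n$, the paper builds (following Bando) a ring scheme $\bnplus$ over $\projectivespace^1_{\perf}$ whose fibre at $[1{:}0]$ is $R\mapsto R[[t]]/t^n$ (the equal characteristic jet ring) and whose fibre at every other point is $R\mapsto\witt(R)/\varpi^n$. Taking points of $\thelie$, $\chevalley$, $\liewahory$ in this ring yields \emph{parametrized} jet spaces $\pararcspace_n(\thelie)$, $\pararcspace_n(\chevalley)$, $\paraliewahory_n$, which are perfections of vector bundles over $\projectivespace^1$ and in particular perfectly finitely presented. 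Then flatness is imported from equal characteristic by a generic-flatness argument on this family: (i) the flat locus of $\pararcspace_n(\chi)$ is open (\cref{prop:flatlocusopen}), and by cr\`ete de platitude par fibres (\cref{thm:critflatfibre}) together with the known equal-characteristic flatness (\cref{thm:chevflatcharp}) it contains the entire fibre over $[1{:}0]$; (ii) the fibrewise contracting $\multiplicativegroup$-action, compatible with the linear structure of the vector bundles, forces the non-flat locus to be $\multiplicativegroup$-stable, so its image in $\projectivespace^1_{\perf}$ under the proper map from the projectivization of $\pararcspace_n(\thelie)$ is closed and misses $[1{:}0]$; (iii) hence there is an open $U\ni[1{:}0]$ over which the map is flat, and any $\resfield$-point of $U$ other than $[1{:}0]$ produces the flatness of $\arcspace_n(\chi)$ as a base change. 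The same scheme with $\paraliewahory_n$ gives $\arcspace_n(\nu)$. So in the paper the $\multiplicativegroup$-action is used to spread an open flat locus, not to compare fibre dimensions, and the central input is the interpolation over $\projectivespace^1$ rather than a Witt-vector dimension count. This is the key idea your proposal does not supply, and without it the argument does not close.
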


The proof in \cite[Appendix B.4]{bouthier2022perverse} uses global methods that are not available in the mixed characteristic setting. Following \cite{bando2023derived}, we use a parametrized version  of the arc spaces over $\projectivespace^1$ (see \cref{def:jetandarc}) that interpolates between the mixed characteristics arc spaces and the equal characteristics one in the sense that it has fibers isomorphic to the mixed characteristics arc space at every point $x\in \projectivespace^1$ except for $x=\infty$, where the fibers are the equal characteristics arc space.

We deduce \cref{thmintro:chevalleyflat} by using a generic flatness argument.

\begin{para}{\bf Working over perfect schemes.}
In this work, we define perfect $\infty$-stacks as sheaves on the \'etale site of perfect affine schemes over $\overline{\mathbb{F}}_p$ with values in the infinity category of spaces, which we (following recent work of Dustin Clausen and Peter Scholze) call anima to avoid confusion with the notion of algebraic spaces. This setting forces us to change some of arguments of \cite{bouthier2022perverse}: 
\begin{mylist}
    \item \label{circomvanting_tangent:codimensions} The tangent space of a perfect scheme is always zero (see \cref{perfschemes:prop:notangent}), so the proof of \cite{goresky2006codimensions} for the codimensions of the strata in the GKM stratification fails in our setting. We circumvent this in \cref{appendixcodim}, presenting a different proof that uses an arc space version of the inverse function theorem.

    \item In \cite{bouthier2022perverse}, the construction of the stratification of $\arcspace(\chevalley)$ 
    is based on \cite[Proposition~3.11]{bouthier2022perverse}, which in its turn is based on the work \cite{bouthier2022torsors}, which only applies to the equal characteristic case. Though in principle it should be possible to carry out the whole argument to the mixed characteristic case, we propose a different proof of the corresponding result \cref{stratoftorus:chevalleyisetale}, repeating the argument of \cite[Theorem 7.2.5]{bouthier2022perversesheavesinfinitedimensionalstacksv4}. 

    \item In \cite{bouthier2022perverse}, a notion of a strongly pro-smooth morphism is introduced. A morphism of schemes $f : X \to Y$ is strongly pro smooth if one can exhibit $X$ as a filtered limit of smooth schemes finitely presented over $Y$ with smooth affine transition maps. An important ingredient (see \cite[\S 1.1.6]{bouthier2022perverse}) is that there exists a canonical presentation of strongly pro smooth morphisms in this form. However, this argument does not generalize to strongly pro perfectly smooth morphisms. As a result, certain arguments in the dimension theory yield weaker results (see \cref{dimthry:prop:pfpfbetweenplacidpresentedcorollaries}). Moreover, we provide a different proof (see \cref{perversetstructures:prop:welldefined}) for the uniqueness of the $t$-structure on perfectly placid stacks, since the proof of \cite[Proposition 6.3.1]{bouthier2022perverse} relies on the canonical smooth presentation.
\end{mylist}
\end{para}

\begin{para}{\bf Some other differences.}
There are other, smaller differences between this work and \cite{bouthier2022perverse} that we wish to highlight: 
\begin{mylist}
 \item Perfect schemes are always reduced. Consequently, there is no notion of a reduced perfect $\infty$-stack. This makes some of the definitions and results cleaner in this setting. For example:
    \begin{mylist}
        \item The notion of the complement of an open $\infty$-substack $\varstack{X} \smallsetminus \varstack{U}$ (see \cref{stratification:stratification}) is cleaner compared to \cite[\S2.4.1]{bouthier2022perverse}.
        \item In \cref{wittloopandarc:example:loopofGm}, we show that there is a natural isomorphism of group schemes
        \[
        \loopspace (\multiplicativegroup) \simeq \ints \times \arcspace( \multiplicativegroup).
        \]
        This isomorphism holds only after reduction in the non-perfect setting.
    \end{mylist}

    \item In perfect schemes, the notion of universal homeomorphism coincides with that of isomorphism (see \cite[Lemma 3.8]{bhatt2017projectivity}). As a consequence, if a morphism $f : X \to Y$ between perfect schemes is universally open or universally closed, then it is an isomorphism if and only if it induces a bijection on $K$-points, $f : X(K) \to Y(K)$, for every algebraically closed field $K$ (see \cref{perfschemes:prop:graddream}). This simplifies certain arguments, such as our proof of \cref{wittaffinrspringer:gneral:looppreservesquot} (compare \cite[\S B.2]{bouthier2022perverse}) and the proof of \cref{wittloopandarc:example:loopofGm}.
    
    \item Working with Witt vectors rather than power series necessitates re-verifying some arguments using Teichm\"uller representatives of Witt vectors. All of these proofs carry over (see, for example, \cref{stratoftorus:twistedstrataresmooth}).
\end{mylist}
\end{para}

\begin{para}{\bf Organization of the paper.}
\begin{mylist}
    \item We begin by setting up some general perfect algebraic geometry of \emph{perfect $\infty$-stacks} in \cref{perfschemes} and \cref{perfstacks}. We develop a theory of perfectly placid perfect $\infty$-stacks and perfectly smooth morphisms between them. As in the corresponding definitions in the usual setting (see \cite[\S1]{bouthier2022perverse}), these can be viewed as perfect $\infty$-stacks and morphisms that are only singular in finite codimension.

    \item We then expand the dimension theory developed in \cite[\S2]{bouthier2022perverse} to the setting of perfect $\infty$-stacks. In \cref{dimthry}, we introduce notions such as uo-equidimensional and weakly equidimensional morphisms between perfectly placid perfect $\infty$-stacks. This allows us to define placid stratifications and (semi-)small morphisms in \cref{stratification}.

    \item In \cref{GKMstrat}, \cref{Proofofchevalleyisetale}, and \cref{wittaffinrspringer}, we study the geometry of the morphism $\overline\thespringermap : [\springertotalspace/\loopspace(\thegroup)] \to [\compactelements/\loopspace(\thegroup)]$ and show that it is semi-small over the regular semisimple locus.

    \item In \cref{perversetstructures}, we develop the analogue of the sheaf theory and perverse $t$-structures constructed in \cite[Part~3]{bouthier2022perverse} for perfect $\infty$-stacks.

    \item In \cref{mainthm}, we formulate and prove the main theorem of this work (\cref{mainthm:mainthm}).

    \item In \cref{flatnessofchevalley}, we prove \cref{thmintro:chevalleyflat}.

    \item In \cref{appendixcodim}, we calculate the codimensions of the strata of the GKM stratification as described in \cref{circomvanting_tangent:codimensions}.
\end{mylist}
\end{para}

\begin{para}{\bf Acknowledgements.}
We thank Alexis Bouthier and David Kazhdan for their 
collaboration on the paper \cite{bouthier2022perverse} on which this work is based. We also thank Peter Scholze for his suggestion to extend the results of \cite{bouthier2022perverse} to the setting of perfect $\infty$-stacks. This research was partially supported by the ISF grant 2091/21 of the second author. Most of the work 
has been carried out while the first author was a graduate student at the Hebrew University of Jerusalem. 
\end{para}

\section{Perfect schemes.}\label{perfschemes}

\begin{para}{\bf Notations.}
    \begin{mylist}
        \item Let $\resfield$ be an algebraically closed field of characteristic~$p$.
        \item Let $\aff$, $\sch$, and $\algsp$ denote the ($1$-)categories of affine schemes, schemes, and algebraic spaces over~$\resfield$, respectively, and let $\sch^{\operatorname{qcqs}}\subset\sch$ be the subcategory of quasi-compact and quasi-separated schemes. 
    \end{mylist}
\end{para}

\begin{para}{\bf Perfect schemes.} \label{perfschemes:perfschemes}
    \begin{mylist}
        \item A scheme $U$ is said to be \emph{perfect} if its absolute Frobenius morphism $\Phi=\Phi_p \colon U \to U$ is an isomorphism.
        \item Let $\persch$ denote the full subcategory of $\sch$ consisting of perfect schemes. The inclusion functor $\persch \into \sch$ admits a right adjoint, sending a scheme $X$ to its perfection
        \[
        X_{\perf} := \lim \left( \cdots \overset{\Phi}{\longrightarrow} X \overset{\Phi}{\longrightarrow} X \right),
        \]
        where the limit is taken in the category of schemes (rather than in $\sch$ - the categories of schemes over $\resfield$, where the diagram is not defined).
        For an overview of perfect schemes, see \cite[\S 3]{bhatt2017projectivity}.
    \end{mylist}
\end{para}

\begin{para}{\bf Universal homeomorphisms.}
    \begin{mylist}
        \item The canonical morphism $X_{\perf} \to X$ is a universal homeomorphism.
        \item A scheme $X$ is perfect if and only if every universal homeomorphism $X' \to X$ with $X'$ reduced is an isomorphism (see \cite[Lemma 14.2.1]{barwick2020exodromy}).
    \end{mylist}
\end{para}

\begin{lemma}\label{perfschemes:prop:graddream}
    Let $f \colon X \to Y$ be a morphism of perfect schemes. Then the following conditions are equivalent:
    \begin{mylist}
        \item The morphism $f$ is an isomorphism.
        \item The morphism $f$ is a universal homeomorphism.
    \end{mylist}
    Moreover, if $f$ is either universally closed or universally open, then these conditions are also equivalent to:
    \begin{mylist}[(c)]
        \item For any algebraically closed field $K$, the induced map on $K$-points $f \colon X(K) \to Y(K)$ is a bijection.
    \end{mylist}
\end{lemma}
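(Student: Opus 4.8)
The plan is to prove the equivalence of (a) and (b) first, with no extra hypotheses, and then to treat (c) by showing that (b) $\Rightarrow$ (c) holds in general while (c) $\Rightarrow$ (b) holds once $f$ is universally open or closed. The implication (a) $\Rightarrow$ (b) is trivial. For (b) $\Rightarrow$ (a) — the only step where perfectness genuinely enters — I would use that a perfect scheme is reduced (if a local section satisfies $x^{p^{k}} = 0$ then $\Phi^{k}(x) = 0$, hence $x = 0$, since $\Phi$ is an isomorphism); thus $f \colon X \to Y$ is a universal homeomorphism from a reduced scheme onto the perfect scheme $Y$, and is therefore an isomorphism by the characterization of perfect schemes recalled above (\cite[Lemma 14.2.1]{barwick2020exodromy}).

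Next I would check (b) $\Rightarrow$ (c), not using that $f$ is universally open or closed. A universal homeomorphism is surjective and universally injective: surjectivity is clear, and a residue field extension $\kappa(f(x)) \hookrightarrow \kappa(x)$ that were not purely inseparable would make the base change of $f$ along $\spec \overline{\kappa(f(x))} \to Y$ non-injective. Universal injectivity already gives injectivity of $f \colon X(K) \to Y(K)$ for every field $K$. For surjectivity on $K$-points with $K$ algebraically closed, I would fix $y \in Y(K)$ and observe that $X \times_{Y} \spec K \to \spec K$ is again a universal homeomorphism; hence its source is a nonempty scheme with a single point whose residue field is purely inseparable over $K$, thus equal to $K$ as $K$ is perfect. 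That point is a $K$-point of $X$ lying over $y$, so $X(K) \to Y(K)$ is surjective.

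Finally, assuming $f$ is universally open (resp.\ universally closed) and that (c) holds, I would deduce (b), which in turn gives (a) by the above. From (c) one extracts that $f$ is surjective on points (take $K = \overline{\kappa(y)}$ for $y \in Y$) and universally injective: two distinct scheme points over a common image, or a residue field extension with a nontrivial separable part, would each produce two distinct $K$-points of $X$ with the same image once $K$ is taken large enough and the residue-field embeddings are chosen to agree over the base. Since both surjectivity and universal injectivity are stable under base change, every base change $f'$ of $f$ is a bijection on underlying topological spaces; being moreover open (resp.\ closed), such an $f'$ is a homeomorphism, so $f$ is a universal homeomorphism. I expect the only mildly delicate point to be precisely this extraction of "surjective and universally injective" from bijectivity on $K$-points — one must choose the algebraically closed fields large enough and the residue-field embeddings compatibly — together with the elementary observation that a universally open or universally closed morphism which is universally bijective on points is automatically a universal homeomorphism; everything else is formal, the real content being the rigidity statement (b) $\Rightarrow$ (a) for perfect schemes.
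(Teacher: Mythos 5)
Your proof is correct, and it follows essentially the paper's strategy, but with two small variations worth noting. For (a) $\Leftrightarrow$ (b) the paper simply cites \cite[Lemma 3.8]{bhatt2017projectivity}, whereas you derive the implication (b) $\Rightarrow$ (a) yourself by combining the elementary observation that perfect schemes are reduced with the characterization of \cite[Lemma 14.2.1]{barwick2020exodromy} (``$Y$ is perfect iff every universal homeomorphism onto $Y$ from a reduced source is an isomorphism''), which is a nice self-contained route given that both ingredients are already recalled in \cref{perfschemes:prop:perfarered} and the surrounding paragraph. For (c), the paper takes the trivial implication (a) $\Rightarrow$ (c); you instead prove (b) $\Rightarrow$ (c) directly, which costs you a short argument (universal injectivity gives injectivity on $K$-points, and base-changing the universal homeomorphism to $\operatorname{Spec} K$ gives surjectivity), but of course yields the same result since (a) $\Leftrightarrow$ (b). Your (c) $\Rightarrow$ (b) argument is the same as the paper's — bijectivity on $K$-points for all algebraically closed $K$ gives universal bijectivity on topological spaces (cf.\ \cite[\href{https://stacks.math.columbia.edu/tag/01J9}{Tag 01J9}]{stacks-project}), and a universally open or universally closed universally bijective morphism is a universal homeomorphism — only spelled out in somewhat greater detail regarding the choice of field and compatibility of residue-field embeddings, which is a legitimate point of care that the paper leaves implicit.
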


\begin{proof}
    The equivalence (a) $\iff$ (b) is \cite[Lemma 3.8]{bhatt2017projectivity}.  
    The implication (a) $\Rightarrow$ (c) is immediate.

    We prove (c) $\Rightarrow$ (b). Suppose $f$ is universally open or universally closed. To show that $f$ is a universal homeomorphism, it suffices to prove that $f$ is universally bijective on the underlying topological space.

    Since $f$ induces a bijection on $K$-points for all algebraically closed fields $K$, it is universally bijective: points of a scheme correspond to equivalence classes of maps from algebraically closed fields (see \cite[\href{https://stacks.math.columbia.edu/tag/01J9}{Tag 01J9}]{stacks-project}).
\end{proof}

\begin{proposition}\label{perfschemes:prop:topologicalinvarianceofetale}
    {\bf Étale morphisms and universal homeomorphisms} (\cite[Prop.~14.1.7]{barwick2020exodromy}).  
    Let $f \colon X \to Y$ be a universal homeomorphism of schemes, and let $Y' \to Y$ be an étale morphism. Then the base change $X \times_Y Y' \to X$ is étale. Moreover, the induced morphism of étale sites
    \[
    f^* \colon Y_{\acute{e}t} \to X_{\acute{e}t}
    \]
    is an equivalence of sites. In particular, the canonical morphism $X_{\perf} \to X$ induces an equivalence of étale topoi:
    \[
    \Shv(X_{\acute{e}t}) \simeq \Shv((X_{\perf})_{\acute{e}t}).
    \]
\end{proposition}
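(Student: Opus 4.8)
The plan is to treat the two assertions separately. The first --- that $X\times_Y Y'\to X$ is étale --- is immediate, since étale morphisms are stable under base change; this also shows $Y'\mapsto X\times_Y Y'$ is a well-defined functor $f^{*}\colon Y_{\acute{e}t}\to X_{\acute{e}t}$. Because $f$ is a universal homeomorphism, for every $Y'$ in $Y_{\acute{e}t}$ the projection $X\times_Y Y'\to Y'$ is a homeomorphism, so $f^{*}$ preserves and reflects covering families (the jointly surjective ones); hence to get the claimed equivalence of sites --- and thereby of topoi, the final ``in particular'' being the instance $f=(X_{\perf}\to X)$ of \cref{perfschemes:perfschemes} --- it suffices to prove $f^{*}$ is an equivalence of categories. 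I would prove full faithfulness and essential surjectivity in turn.

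For \emph{full faithfulness} I would argue topologically. A $Y$-morphism $W_1\to W_2$ is the same as a section of the étale morphism $W_1\times_Y W_2\to W_1$, and a section of an étale morphism $E\to W$ is the same as a clopen subscheme $Z\into E$ with $Z\to W$ an isomorphism. Clopen subschemes of $E$ correspond to clopen subsets of $|E|$, hence are matched bijectively with clopen subschemes of $E\times_Y X$ under base change along $f$; and for such a $Z$, the morphism $Z\to W$ is étale, hence an isomorphism precisely when it is surjective and radicial --- two conditions that, for a morphism and its base change along a universal homeomorphism, each hold iff they hold after base change (clear for surjectivity, a short point-chase for radicialness, using surjectivity/injectivity of universal homeomorphisms and purely inseparable residue extensions). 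So sections of $W_1\times_Y W_2\to W_1$ correspond to sections after base change to $X$, giving full faithfulness.

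For \emph{essential surjectivity} --- which is the real content --- I would proceed by dévissage. It is local on $Y$, so take $Y=\spec A$; then $X=\spec B$ is affine ($f$ is integral), and replacing $A$ by its image and $A,B$ by their reductions --- harmless by the invariance of the small étale site under nilpotent thickenings --- reduces to $A\into B$ with $A,B$ reduced. Writing $B$ as the filtered union of its module-finite $A$-subalgebras $B_\lambda$ (each again integral, radicial, surjective over $A$) and using that, as $\spec B=\lim_\lambda\spec B_\lambda$, every quasi-compact object of $X_{\acute{e}t}$ descends to some $\spec B_\lambda$, reduces to $B$ module-finite over $A$. In that case I would show a high power of the absolute Frobenius of $X$ factors through $f$: using radicialness in characteristic $p$ (so the residue extensions of $f$ are purely inseparable, with exponents bounded since $B$ is module-finite over $A$) and reducedness of $B$ --- one input being that radicialness forces $\ker(B\otimes_A B\onto B)$ to be a nilideal --- one finds $n$ with $b^{p^{n}}\in A$ for all $b\in B$, equivalently $\Phi_B^{n}=(B\xto{\phi}A\into B)$, i.e.\ $\Phi_X^{n}=g\circ f$ on spectra for some $g\colon Y\to X$. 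Since the relative Frobenius of an étale morphism is an isomorphism, $(\Phi_X^{n})^{*}V\cong V$ for a given étale $V\to X$; as $(\Phi_X^{n})^{*}=f^{*}\circ g^{*}$, this presents $V$ as $f^{*}(g^{*}V)$ with $g^{*}V\to Y$ étale, giving essential surjectivity.

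The hard part will be that last step --- producing a uniform exponent $n$ with $b^{p^{n}}\in A$ --- where reducedness, module-finiteness, and the purely inseparable residue extensions of $f$ are all genuinely needed; everything else is formal dévissage, modulo the nilpotent-thickening case, which is itself the archetypal instance of the statement and is standard. I would also note that for the uses in this paper the general statement is overkill: $f$ is either an absolute Frobenius, for which $f^{*}\simeq\Id$ directly from the relative-Frobenius remark, or the perfection morphism, and then --- working locally on $X$, say $X=\spec R$ --- one has $X_{\perf}=\lim(\cdots\xto{\Phi}X\xto{\Phi}X)$, a cofiltered limit along the affine maps $\Phi$, so that $(X_{\perf})_{\acute{e}t}$ is the filtered colimit of the $(X)_{\acute{e}t}$ along the equivalences $\Phi^{*}$, hence equivalent to $X_{\acute{e}t}$.
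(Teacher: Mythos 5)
The paper gives no proof of this proposition; it simply cites \cite[Prop.~14.1.7]{barwick2020exodromy} (the ``topological invariance of the small \'etale site'', which is also EGA~IV~18.1.2 and Stacks Tag 04DZ). So you are proving from scratch a result that the paper treats as a black box, and I will evaluate your argument on its own terms.

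Your overall skeleton is the standard one --- full faithfulness via sections as clopen subschemes, essential surjectivity via d\'evissage to the affine, reduced, module-finite case --- and the full-faithfulness half is fine. Two remarks on the essential-surjectivity half. First, a scope issue: the proposition as stated is characteristic-free, while your Frobenius argument only makes sense in characteristic $p$. In the paper everything lives over $\kappa$ of characteristic $p$, so this is harmless in practice, but it means your argument does not actually establish the proposition as written; the EGA/Stacks proof avoids Frobenius entirely for this reason.

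Second, and more substantively, the ``hard step'' you isolate --- extracting $n$ with $b^{p^n}\in A$ for all $b\in B$ --- is exactly where your sketch has a gap. From nilpotence of $\ker(B\otimes_A B\to B)$ you correctly get $b^{p^n}\otimes 1=1\otimes b^{p^n}$ in $B\otimes_A B$ for some $n$, but the inference from ``$c\otimes 1=1\otimes c$ in $B\otimes_A B$'' to ``$c\in A$'' is faithfully flat descent, and universal homeomorphisms are not flat. The ``bounded exponent on residue extensions'' input also does not close the gap on its own: for $A=\mathbb F_p[t^2,t^3]\subset B=\mathbb F_p[t]$ the generic extension is trivial (exponent $0$) yet $t\notin A$, so the needed exponent is strictly larger than anything read off from the residue fields. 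The domination-by-Frobenius statement you invoke is a genuine theorem, but proving it requires a different mechanism (e.g. comparing $B$ with the image of $A$ under successive Frobenii inside the total ring of fractions, plus a conductor/finiteness argument), not the equalizer-of-coproducts observation. As written, the proof does not go through at this point.

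That said, the closing observation in your proposal is correct, self-contained, and entirely sufficient for everything the paper does with this proposition: for the perfection morphism one has $X_{\perf}=\lim(\cdots\xto{\Phi}X\xto{\Phi}X)$ as a cofiltered limit along affine transition maps, so $(X_{\perf})_{\acute{e}t}$ is the filtered colimit of copies of $X_{\acute{e}t}$ along $\Phi^{*}$, and each $\Phi^{*}$ is an equivalence because the relative Frobenius of an \'etale morphism is an isomorphism (no appeal to the general theorem needed). I would lead with that argument and cite the reference for the general statement, rather than rely on the Frobenius-domination sketch.
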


\begin{proposition}{\bf Properties preserved by the perfection functor.} \label{perfschemes:prop:propertiesofperfschemespreserved}
    Let $f \colon X \to Y$ be a morphism of algebraic spaces. If $f$ has one of the following properties, then its perfection $f_{\perf} \colon X_{\perf} \to Y_{\perf}$ has the same property:
    \begin{mylist}
        \item Quasi-compact or quasi-separated;
        \item Affine;
        \item Étale;
        \item Flat;
        \item (Universally) open or closed;
        \item A closed, open, or locally closed embedding;
        \item Surjective (in the étale topology).
    \end{mylist}
\end{proposition}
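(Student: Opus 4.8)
The plan is to deduce each property from two facts: the canonical maps $g\colon X_{\perf}\to X$ and $h\colon Y_{\perf}\to Y$ are integral universal homeomorphisms fitting into the commutative square $f\circ g=h\circ f_{\perf}$; and the perfection functor commutes with fibre products (it is a right adjoint, and the scheme-theoretic fibre product of perfect schemes is again perfect, because Frobenius, being additive in characteristic~$p$, acts on $A\otimes_{C}B$ as $\Phi_{A}\otimes\Phi_{B}$, which is bijective when $\Phi_{A}$ and $\Phi_{B}$ are). Integrality of $g$ and $h$ comes from integrality of the absolute Frobenius of any $\resfield$-scheme (a local section $a$ is a root of $T^{p}-\Phi(a)$, with $\Phi(a)$ in the image of $\Phi$), applied to the tower $X_{\perf}=\lim(\cdots\xrightarrow{\Phi}X\xrightarrow{\Phi}X)$; in particular $g$ and $h$ are affine, surjective and universally closed. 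Since every property in the list is \'etale-local on source and target, I would first reduce the case of algebraic spaces to that of schemes and work with schemes from now on.

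For the routine parts the arguments are as follows. Parts (a) and (b): $g$ affine implies that the composite $f\circ g=h\circ f_{\perf}\colon X_{\perf}\to Y$ is affine (resp.\ quasi-compact, quasi-separated) whenever $f$ is, and a cancellation argument using that $h$ is affine transfers this to $f_{\perf}$. Part (e): the commutative square exhibits $f_{\perf}$ as $f$ conjugated by the homeomorphisms $g,h$, so $f_{\perf}$ is open (resp.\ closed) whenever $f$ is; for the universal versions one base-changes the square along an arbitrary $T\to Y_{\perf}$, using that a base change of a universal homeomorphism is a homeomorphism and that perfection commutes with the fibre products involved. Part (g): $f_{\perf}$ is surjective since $f$ and $g$ are and a composite of surjections is surjective. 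Part (f): a surjection $A\onto B$ of rings induces a surjection $A_{\perf}\onto B_{\perf}$ (a filtered colimit of surjections), which handles closed immersions; an open subscheme of a perfect scheme is again perfect, which gives $U_{\perf}=U\times_{Y}Y_{\perf}$ for open immersions $U\into Y$; and locally closed immersions follow by composing. Part (d): locally over $\spec S\subseteq Y$ and $\spec R\subseteq X$ with $S\to R$ flat, one has $R_{\perf}=\varinjlim_{n}R$ over $S_{\perf}=\varinjlim_{n}S$ with each term $R$ flat over the corresponding $S$, and a filtered colimit of flat modules over a filtered colimit of rings is flat.

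The step I expect to be the main obstacle is part (c), \'etaleness, because neither of the above techniques applies: $g$ is not \'etale, and the square $f\circ g=h\circ f_{\perf}$ is not Cartesian in general (for instance for $X=\affinespace^{1}\to Y=\spec\resfield$). The plan is first to reduce to the case $Y$ perfect: \'etaleness is stable under base change, so $f'\colon X\times_{Y}Y_{\perf}\to Y_{\perf}$ is \'etale, and $(f')_{\perf}=f_{\perf}$ because perfection commutes with fibre products and $(Y_{\perf})_{\perf}=Y_{\perf}$. It then suffices to prove that an \'etale morphism $f\colon X\to Y$ with $Y$ perfect has perfect source, for then $X_{\perf}=X$ and $f_{\perf}$ is identified with $f$. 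To see this, factor the absolute Frobenius as $\Phi_{X}\colon X\xrightarrow{F_{X/Y}}X\times_{Y,\Phi_{Y}}Y\xrightarrow{\ \mathrm{pr}_{1}\ }X$; the map $\mathrm{pr}_{1}$ is the base change of the isomorphism $\Phi_{Y}$ along $f$, hence an isomorphism, while the relative Frobenius $F_{X/Y}$ is simultaneously a universal homeomorphism (always, in characteristic~$p$) and \'etale (a morphism between two schemes \'etale over $Y$), hence an isomorphism; so $\Phi_{X}$ is an isomorphism. (Alternatively, this fact is among the generalities on perfect schemes recalled in \cite[\S3]{bhatt2017projectivity}.) What then remains is the routine bookkeeping indicated above.
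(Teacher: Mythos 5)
Your proof is correct, but it takes a more self-contained route than the paper's. The paper dispatches almost every item by citing \cite[Lemma~3.4]{bhatt2017projectivity} (which already covers quasi-compact/quasi-separated, affine, étale, flat, universally closed, and the embeddings), gives the one-line universal-homeomorphism-plus-pullbacks argument for universally open, and cites a Stacks Project tag for surjectivity. You instead re-derive everything: the integrality of the perfection map $g\colon X_{\perf}\to X$, the cancellation trick for (a)–(b), the conjugation-by-homeomorphisms argument for (e), the filtered-colimit-of-flat-maps argument for (d), and — the one genuinely non-routine step — the relative Frobenius factorization $\Phi_X = \mathrm{pr}_1\circ F_{X/Y}$ showing that an étale morphism over a perfect base has perfect source, which is exactly the engine behind the cited Bhatt–Scholze lemma. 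Your arguments for universally open and surjectivity essentially coincide with the paper's. What your version buys is transparency (nothing is outsourced, and the reader sees why the Frobenius formalism forces the result); what it costs is length, and two small points of imprecision worth fixing if you keep it: the claim that a scheme-theoretic fibre product of perfect schemes is perfect requires the base to be perfect (over $\resfield$ or over $Y_{\perf}$ this is fine, but it fails for a non-perfect base, e.g.\ $\mathbb{F}_p[x^{1/p^\infty}]\otimes_{\mathbb{F}_p[x]}\mathbb{F}_p[x^{1/p^\infty}]$ is not perfect), and the assertion that every property in the list is étale-local on source and target is not literally true for ``affine,'' so the reduction from algebraic spaces to schemes should be handled property by property.
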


\begin{proof}
    The cases of quasi-compact, quasi-separated, affine, étale, flat, universally closed, and being a closed, open, or locally closed embedding are treated in \cite[Lemma~3.4]{bhatt2017projectivity}.

    The statement for universally open follows from the fact that the perfection morphism is a universal homeomorphism and that the perfection functor preserves pullbacks.

    The case of surjectivity in the étale topology follows from \cite[\href{https://stacks.math.columbia.edu/tag/05VM}{Tag 05VM}]{stacks-project}.
\end{proof}

\begin{definition}
    Let $f \colon X \to Y$ be a morphism of perfect schemes. We say that $f$ has one of the properties listed in \cref{perfschemes:prop:propertiesofperfschemespreserved} if it has that property as a morphism of schemes.

    In other words, we do not prefix such adjectives with “perfectly”; for example, we say that $f$ is flat, not “perfectly flat”, whenever this is true as a morphism of schemes.
\end{definition}

\begin{para}\label{perfschemes:prop:notangent} {\bf Tangent spaces of perfect schemes.}
    Let $U = \spec R$ be a perfect affine scheme. Since $R$ is perfect, for any maximal ideal $\mathfrak{m}\subset R$ we have that $\mathfrak{m} = \mathfrak{m}^p$ and so $\mathfrak{m}= \mathfrak{m}^2$. Thus, for $X\in \persch$, the tangent space $T_x X = 0$ for any $x \in X$. 
\end{para}

\begin{para}\label{perfschemes:prop:perfarered} {\bf Perfect schemes are reduced.}
    Let $U = \spec R$ be a perfect affine scheme, then $R$ is reduced as the $p^n$ power map is an isomorphism for any $n$. Thus, any perfect scheme is reduced.
\end{para}

\begin{para}\label{perfschemes:def:perfoffinitetype}{\bf Perfectly finitely presented.}
    \begin{mylist}
        \item Note that
        \[
        \aff \simeq \pro(\aff^{\fp}), \quad \sch^{\operatorname{qcqs}} \simeq \pro(\sch^{\fp}).
        \]
        Were $\aff^{\fp}, \sch^{\fp}$ are the categories of finitely presernted affine schemes  and of finitely presented schemes over $\spec \resfield$ (equivalently, schemes of finite type).
        Equivalently, the co-compact objects are the full subcategories $\aff^{\fp} \subset \aff$ and $\sch^{\fp} \subset \sch$ of affine schemes and schemes finitely presented over $\spec \resfield$, and they are co-generating. 

        \item  A morphism of perfect schemes $f \colon X \to Y$ is said to be (locally) \emph{perfectly finitely presented} if (locally) it is isomorphic to the perfection of a finitely presented morphism $f_0 \colon X_0 \to Y$, i.e., if $f \simeq (f_0)_{\perf}$. We denote by $\peraff^{\pfp}$ and $\persch^{\pfp}$ the full subcategories of $\peraff$ and $\persch$ consisting of objects perfectly finitely presented over $\spec \resfield$.
        
        \item Perfectly finitely presented perfect schemes are co-compact objects in $\peraff$ and $\persch$. In fact, every co-compact object in these categories is perfectly finitely presented and these categories are cogenerated by cocompact objects. Hence, we obtain equivalences of categories:
        \[
        \peraff \simeq \pro(\peraff^{\pfp}), \quad \persch^{\operatorname{qcqs}} \simeq \pro(\persch^{\pfp}).
        \]
    \end{mylist}
\end{para}

\begin{proposition}\label{perfschemes:prop:existsfinitetypemodel}%
Let $f \colon X \to Y$ be a morphism in $\persch^{\pfp}$. Then there exists a morphism $f_0 \colon X_0 \to Y_0$ of finitely presented schemes over $\resfield$ such that $f \simeq (f_0)_{\perf}$.
\end{proposition}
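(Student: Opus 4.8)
The plan is to choose finitely presented models for $X$ and $Y$, reduce to descending a single morphism between their perfections, carry out the descent along the Frobenius tower by the standard limit formalism, and absorb the resulting twist by a power of Frobenius into the choice of model.

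First I fix, using the definition of $\persch^{\pfp}$, finitely presented $\resfield$-schemes $X_0'$ and $Y_0$ together with isomorphisms $u\colon (X_0')_{\perf}\isoto X$ and $v\colon (Y_0)_{\perf}\isoto Y$. It then suffices to treat the morphism $h := v^{-1}\circ f\circ u\colon (X_0')_{\perf}\to (Y_0)_{\perf}$. Since perfection is right adjoint to the inclusion $\persch\into\sch$ (see \cref{perfschemes:perfschemes}), the datum of $h$ is equivalent to that of the morphism of schemes $g := \epsilon_{Y_0}\circ h\colon (X_0')_{\perf}\to Y_0$, where $\epsilon_{Y_0}\colon (Y_0)_{\perf}\to Y_0$ is the canonical projection. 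Writing $(X_0')_{\perf} = \lim(\cdots\xrightarrow{\Phi}X_0'\xrightarrow{\Phi}X_0')$ as a cofiltered limit of finitely presented schemes with affine transition maps, I invoke the standard fact that a morphism out of such a limit into the finitely presented scheme $Y_0$ factors through a finite stage of the tower (e.g.\ \cite[\href{https://stacks.math.columbia.edu/tag/01ZC}{Tag 01ZC}]{stacks-project}); this produces an integer $N\ge 0$ and a morphism $g_N\colon X_0'\to Y_0$ with $g = g_N\circ\pi_N$, where $\pi_N\colon (X_0')_{\perf}\to X_0'$ is the projection onto the $N$-th term.

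To conclude, I identify $h$ in terms of $g_N$. The projections satisfy $\pi_N = \epsilon_{X_0'}\circ\Phi_{(X_0')_{\perf}}^{-N}$, since $\epsilon_{X_0'} = \pi_0$ and the (invertible) Frobenius $\Phi_{(X_0')_{\perf}}$ shifts the tower; combining this with the naturality $\epsilon_{Y_0}\circ (g_N)_{\perf} = g_N\circ\epsilon_{X_0'}$ of the counit gives $\epsilon_{Y_0}\circ h = g = \epsilon_{Y_0}\circ\bigl((g_N)_{\perf}\circ\Phi_{(X_0')_{\perf}}^{-N}\bigr)$, hence $h = (g_N)_{\perf}\circ\Phi_{(X_0')_{\perf}}^{-N}$ by uniqueness in the adjunction. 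As $\Phi_{(X_0')_{\perf}}^{-N}$ is an automorphism of $(X_0')_{\perf}$, taking $X_0 := X_0'$, $f_0 := g_N$, and replacing $u$ by $u\circ\Phi_{(X_0')_{\perf}}^{N}$ exhibits $f$ as isomorphic to $(f_0)_{\perf}$.

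I expect the only delicate point to be this last twist. For the chosen models, $f$ is in general not literally the perfection of a morphism of finitely presented schemes, only so after precomposing with a power of the Frobenius of $(X_0')_{\perf}$; but that Frobenius is the perfection of the Frobenius of $X_0'$, so the twist is harmless and is absorbed into the model isomorphism (alternatively, one avoids it altogether by running the limit argument along the tower of \emph{relative} Frobenii over $\resfield$, whose $N$-th term is again a finitely presented $\resfield$-scheme with the same perfection, and then $h$ equals $(f_0)_{\perf}$ on the nose). The limit/finite-presentation step yielding $g_N$, and the index bookkeeping relating $\pi_N$ to $\pi_0$, are otherwise entirely routine — one only keeps in mind the caveat from \cref{perfschemes:perfschemes} that the Frobenius tower is a diagram of schemes rather than of $\resfield$-schemes, which is harmless because $\resfield$ is perfect.
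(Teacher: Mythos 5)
Your approach is genuinely different from the paper's, which simply quotes two lemmas from \cite{dahlhausen2025motivic} to produce a finitely presented model; your direct limit-of-Frobenius-towers argument is the more self-contained route. The overall strategy is sound, but as written the primary path has a real problem that your parenthetical alternative quietly repairs, and that alternative should really be the main argument.

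The problem is in the step ``a morphism out of such a limit into the finitely presented scheme $Y_0$ factors through a finite stage.'' (As a side note, the relevant fact is the one about factoring morphisms through a finite stage, not Tag 01ZC, which only asserts that the limit exists.) That limit formalism requires $Y_0$ to be locally of finite presentation over the base $S$ over which the tower is a diagram of $S$-schemes. But the absolute Frobenius tower $\cdots\xrightarrow{\Phi}X_0'\xrightarrow{\Phi}X_0'$ is a diagram of $\mathbb{F}_p$-schemes, not of $\resfield$-schemes, since $\Phi_{X_0'}$ does not commute with the structure map to $\spec\resfield$. And $Y_0$ is of finite presentation over $\resfield$ but typically not over $\mathbb{F}_p$ (when $\resfield$ is not finite over $\mathbb{F}_p$, which is the generic case here since $\resfield$ is algebraically closed). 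So the limit fact does not apply directly to your tower; labelling the caveat ``harmless because $\resfield$ is perfect'' is not in itself a justification, because the failure is about where $Y_0$ is finitely presented, not about $\resfield$.

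Your parenthetical fix is exactly right and should be promoted to the main line: since $\resfield$ is perfect, twist the structure maps (or equivalently use the relative Frobenius and $(1/p^n)$-twists $X_0'^{(1/p^n)}:=X_0'\times_{\resfield,\Phi_\resfield^{-n}}\resfield$) to turn the tower into a pro-system of finitely presented $\resfield$-schemes with affine, finite transition maps and the same limit $(X_0')_{\perf}$. Then the morphism-factorization result over $\resfield$ applies directly, $g$ factors as an $\resfield$-morphism $g_N\colon X_0'^{(1/p^N)}\to Y_0$, and since $X_0'^{(1/p^N)}$ is still finitely presented over $\resfield$, one takes $X_0:=X_0'^{(1/p^N)}$, $f_0:=g_N$, and verifies $(f_0)_{\perf}\simeq h$ from the adjunction. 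Running it this way also spares you the post-hoc $\Phi^{-N}$-twist, which as written would additionally require you to check that $g_N$ is $\resfield$-linear for the correct twisted $\resfield$-structure on the source. With that reorganization the proof is correct.
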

\begin{proof}
    By using \cite[Lemma 1.17]{dahlhausen2025motivic} we get a reducibly finitely presented model $f_1:X_1\to Y_1$. We take $f_0:=(f_1)_{\perf},X_0 = (X_1)_{\perf}, Y_0 = (Y_1)_{\perf}$ and apply \cite[Corollary 1.10] {dahlhausen2025motivic} to show $f_0$ is a finitely presented morphism between finitely presented schemes over $\resfield$.
\end{proof}

\begin{proposition}\label{prop:flatlocusopen}
    A morphism $f:X\to Y$ between quasi-compact perfectly finitely presented schemes has an open flat locus.
\end{proposition}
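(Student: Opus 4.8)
The plan is to deduce the statement from the classical openness of the flat locus for finitely presented morphisms, combined with the fact that the perfection functor preserves flatness. Using \cref{perfschemes:prop:existsfinitetypemodel}, choose a finitely presented model $f_0 \colon X_0 \to Y_0$ over $\resfield$ with $f \simeq (f_0)_{\perf}$, and let $\pi \colon X \to X_0$ denote the (universally homeomorphic) perfection morphism. By Grothendieck's theorem on the openness of the flat locus of a finitely presented morphism (see \cite[Tag~0399]{stacks-project}), the flat locus $U_0 \subseteq X_0$ of $f_0$ is open. Since perfection preserves flatness (\cref{perfschemes:prop:propertiesofperfschemespreserved}) and carries the open subscheme $U_0 \subseteq X_0$ to the open subscheme $\pi^{-1}(U_0) \subseteq X$, the restriction $f|_{\pi^{-1}(U_0)}$ is the perfection of the flat morphism $f_0|_{U_0}$, hence flat; so $\pi^{-1}(U_0)$ is an open subset of $X$ contained in the flat locus of $f$.

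It remains to show that every point $x \in X$ at which $f$ is flat has an open neighbourhood contained in the flat locus. This does \emph{not} follow from the previous paragraph, because the flat locus of $f$ can be strictly larger than $\pi^{-1}(U_0)$: perfection may improve flatness — for instance the perfection of the normalisation of a cuspidal cubic is an isomorphism of perfect schemes, even though the model is not flat at the cusp. The point is that near $x$ one may \emph{refine the model}. Working locally, write $Y = \spec B$ with $B$ perfect and $X = \spec A$ with $A = (A_1)_{\perf}$ for a finitely presented $B$-algebra $A_1$. Since $\Phi_B$ is an automorphism, the Frobenius tower computing $A$ re-indexes into a filtered colimit $A = \colim_k M_k$ of finitely presented $B$-algebras $M_k$ (each abstractly equal to $A_1$, with its $B$-algebra structure twisted by $\Phi_B^{k}$) whose transition maps are $B$-\emph{linear}. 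Using that $A$ is flat over $B$ near the prime corresponding to $x$, one then produces a finitely presented $B$-algebra $A'$ which is flat over $B$ near that prime and satisfies $(A')_{\perf} \simeq A$ over $B$ near $x$; a natural candidate for $A'$ is built from the image of $A_1 \to A$, whose formation does not change the perfection (the kernel being nilpotent, as $A$ is reduced). Applying the classical openness to $\spec A' \to \spec B$ yields an open neighbourhood of that prime over which $\spec A' \to \spec B$ is flat, and its preimage under the perfection morphism is an open neighbourhood of $x$ in $X$ on which $f$ is flat. Hence the flat locus of $f$, being a union of such open sets, is open.

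The main obstacle is exactly the construction of the flat finitely presented model $A'$ in the second step — equivalently, the statement that a flat perfectly finitely presented algebra over a perfect ring is, locally, the perfection of a flat finitely presented algebra. The re-indexing of the Frobenius tower is not enough on its own: a filtered colimit of finitely presented $B$-algebras can be flat over $B$ without any of the terms being flat, so one must exploit the specific shape of the Frobenius transition maps together with the reducedness of perfect schemes to extract a flat model. Once this non-formal input (which may be available from the model-theoretic lemmas of \cite{dahlhausen2025motivic}, or provable directly by a limit computation) is in hand, the remainder of the argument is the formal dévissage above.
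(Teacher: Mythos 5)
Your first paragraph is exactly the paper's proof: take a finitely presented model $f_0 \colon X_0 \to Y_0$, whose flat locus $U_0 \subseteq X_0$ is open, and observe that $(U_0)_{\perf}$ is an open subscheme of $X$ on which $f$ is flat. The paper stops there, and so --- as your second paragraph correctly points out --- it only establishes that the flat locus of $f$ \emph{contains} an open, not that the flat locus \emph{is} open; the containment can be strict, since (as in your cuspidal-cubic example) the perfection of a non-flat universal homeomorphism is an isomorphism. Your proposed repair --- producing, near a point where $f$ is flat, a \emph{flat} finitely presented model over the perfect base --- is indeed what would be needed, but the sketch you give does not prove it: passing to the image of $A_1 \to A$, i.e.\ to $A_1/\sqrt{0}$, need not yield a finitely presented $B$-algebra (perfect rings are not Noetherian, so the radical of a finitely generated ideal need not be finitely generated), and even when it does there is no evident reason for it to be flat where $A$ is. As it stands the gap remains open both in your proposal and in the paper's one-line proof.

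Two observations that may help. First, the flat locus of $f$ is closed under generization in the Noetherian space $\lvert X\rvert = \lvert X_0\rvert$, so it would suffice to show it is constructible; this suggests arguing by generic flatness along the irreducible components of the closed non-flat locus of a model. Second, the paper's only use of the proposition, in the proof of \cref{thm:mainact}, does not actually need the full statement: one may take $V$ there to be the $(\multiplicativegroup)_{\projectivespace^1_{\perf}}$-saturation of $(U_0)_{\perf}$, which is open, $\multiplicativegroup$-stable, contained in the flat locus, and still contains the preimage of $[1\!:\!0]$, because the proof of \cref{thm:critflatfibre} in fact establishes flatness of the chosen model $f_0$ along that fibre.
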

\begin{proof}
    Apply \cref{perfschemes:prop:existsfinitetypemodel} to get a model $f_0 :X_0 \to Y_0$ which is finitely presented over $\resfield$. There is an open subset $U_0 \subset X$ such that $f_0|_{U_0}$ is flat. By \cref{perfschemes:prop:propertiesofperfschemespreserved} for open immersions and flat morphisms, $U:= (U_0)_{\perf}$ is an open subscheme of $X$ where $f$ is flat.


\end{proof}

\begin{theorem}[Critère de platitude par fibres]\label{thm:critflatfibre}
    Let $f \colon X \to Y$ be a morphism of perfect schemes over a perfect base scheme $S$. Let $x \in X$ and let $s \in S$, $y \in Y$ be its images. Suppose $f:X\to Y$ over $S$, is the perfection of an arrow  of schemes $f_0:X_0\to Y_0$, over $S_0$ which are Noetherian over $\resfield$. Assume that $X_0$ is flat over $S_0$ at $x$, and that the base change
    \[
    f_0 \times_{S_0} \spec k(s) \colon X_0 \times_{S_0} \spec k(s) \to Y_0 \times_{S_0} \spec k(s)
    \]
    is flat at $x$.

    Then $Y$ is flat over $S$ at $y$, and $f \colon X \to Y$ is flat at $x$.
\end{theorem}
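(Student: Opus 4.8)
The plan is to apply the classical fibrewise criterion for flatness to the given model $f_0\colon X_0\to Y_0$ over $S_0$, and then transport its conclusion up along the perfection functor. First I would set up the local translation. The morphisms $X\to X_0$, $Y\to Y_0$, $S\to S_0$ are universal homeomorphisms, so they identify underlying topological spaces; hence $x$, $y$, $s$ also name points of $X_0$, $Y_0$, $S_0$ with $f_0(x)=y$ and $x,y$ lying over $s$. More importantly, localization commutes with perfection: for a $\resfield$-scheme $Z_0$ and $z\in Z_0$ there is a canonical isomorphism $\mathcal{O}_{(Z_0)_{\perf},z}\cong(\mathcal{O}_{Z_0,z})_{\perf}$. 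Indeed, working affine-locally with $Z_0=\spec A$ (and using that perfection preserves open immersions, \cref{perfschemes:prop:propertiesofperfschemespreserved}), $\spec A_{\perf}\to\spec A$ is a universal homeomorphism, so the prime $\mathfrak q$ of $z$ has a unique preimage $\mathfrak q'$ in $A_{\perf}$, and a direct filtered-colimit computation identifies $(A_{\perf})_{\mathfrak q'}$ with $(A_{\mathfrak q})_{\perf}$. In particular $\mathcal{O}_{X,x}=(\mathcal{O}_{X_0,x})_{\perf}$, and likewise for $Y$ and $S$.

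Next I would invoke the fibrewise criterion for flatness for $f_0$ over $S_0$ at $x$ (see \cite[\href{https://stacks.math.columbia.edu/tag/039A}{Tag 039A}]{stacks-project}): for the local homomorphisms $\mathcal{O}_{S_0,s}\to\mathcal{O}_{Y_0,y}\to\mathcal{O}_{X_0,x}$ of Noetherian local rings (Noetherian since $X_0,Y_0,S_0$ are), the hypotheses hold --- $\mathcal{O}_{X_0,x}$ is flat over $\mathcal{O}_{S_0,s}$ by assumption; $x$ lies in the fibre $X_0\times_{S_0}\spec k(s)$ since it maps to $s$, and this fibre maps flatly at $x$ to $Y_0\times_{S_0}\spec k(s)$ by assumption, the two local rings in question being $\mathcal{O}_{X_0,x}\otimes_{\mathcal{O}_{S_0,s}}k(s)$ and $\mathcal{O}_{Y_0,y}\otimes_{\mathcal{O}_{S_0,s}}k(s)$ --- so the criterion yields that $\mathcal{O}_{Y_0,y}$ is flat over $\mathcal{O}_{S_0,s}$ and $\mathcal{O}_{X_0,x}$ is flat over $\mathcal{O}_{Y_0,y}$; that is, $Y_0\to S_0$ is flat at $y$ and $f_0$ is flat at $x$. (No finiteness of $f_0$ is needed: the ideal-separatedness demanded by the underlying local criterion of flatness is automatic here, since a finitely generated ideal of $\mathcal{O}_{Y_0,y}$ base-changes to a finitely generated --- hence, by Krull's intersection theorem, separated --- $\mathcal{O}_{X_0,x}$-module. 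Alternatively, flatness of $\mathcal{O}_{Y_0,y}$ over $\mathcal{O}_{S_0,s}$ follows from flatness of $f_0$ at $x$ by faithfully flat descent along the faithfully flat local homomorphism $\mathcal{O}_{Y_0,y}\to\mathcal{O}_{X_0,x}$.)

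Finally I would transport the two conclusions. Applying \cref{perfschemes:prop:propertiesofperfschemespreserved} (preservation of flatness under perfection) to the flat morphism of $\resfield$-schemes $\spec\mathcal{O}_{X_0,x}\to\spec\mathcal{O}_{Y_0,y}$ shows $\spec(\mathcal{O}_{X_0,x})_{\perf}\to\spec(\mathcal{O}_{Y_0,y})_{\perf}$ is flat; by the first step this is the morphism $\spec\mathcal{O}_{X,x}\to\spec\mathcal{O}_{Y,y}$, so $f$ is flat at $x$. The same argument applied to $\spec\mathcal{O}_{Y_0,y}\to\spec\mathcal{O}_{S_0,s}$ shows $Y$ is flat over $S$ at $y$. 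The content is entirely classical, so the only points needing care are the identification $\mathcal{O}_{(Z_0)_{\perf},z}\cong(\mathcal{O}_{Z_0,z})_{\perf}$ --- which is exactly what makes ``perfection preserves flatness'' applicable at a single point, rather than only over an open locus whose openness would have to be established separately (and which would require a finiteness hypothesis on $f_0$) --- and the routine check that the local form of the fibrewise criterion genuinely applies without any finiteness assumption; I expect the former to be the only mildly delicate step.
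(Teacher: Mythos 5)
Your proof is correct and takes essentially the same route as the paper's: invoke the classical fibrewise flatness criterion on the Noetherian model $f_0$ (the paper cites the scheme-level Stacks Project Tag 039C, you the local-ring Tag 039A after passing to stalks), then transport the conclusion through the perfection functor via \cref{perfschemes:prop:propertiesofperfschemespreserved}. Your explicit identification $\mathcal{O}_{(Z_0)_{\perf},z}\cong(\mathcal{O}_{Z_0,z})_{\perf}$ is a welcome precision the paper glosses over: \cref{perfschemes:prop:propertiesofperfschemespreserved} as stated concerns a globally flat morphism of algebraic spaces, while the conclusion here is pointwise, and working with stalks is exactly what makes the transport valid without invoking an open flat locus.
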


\begin{proof}
    By \cite[\href{https://stacks.math.columbia.edu/tag/039C}{Tag 039C}]{stacks-project}, $Y_0$ is flat over $S_0$ at $y$, and $f_0 \colon X_0 \to Y_0$ is flat at $x$. The claim then follows from \cref{perfschemes:prop:propertiesofperfschemespreserved}.
\end{proof}

\begin{definition}\label{perfschemes:def:perfectlysmooth}
    (\cite[Def.~A.25]{zhu2017affine})  
    A morphism $f \colon X \to Y$ between perfect schemes perfectly finitely presented over $\resfield$ is called \emph{perfectly smooth} if for every $x \in X$ there exists an étale neighborhood $U \to X$ of $x$ fitting into a commutative diagram
    \[
    \begin{tikzcd}
        U \arrow[r] \arrow[d] & Y \times \affinespace^n_{\perf} \arrow[d] \\
        X \arrow[r, "f"] & Y
    \end{tikzcd}
    \]
    where the upper horizontal arrow is étale and the right vertical arrow is the projection.
\end{definition}

The following was mentioned in \cite[\S 2.1.2]{van2024mod} without proof; these properties are straightforward to verify.

\begin{proposition}\label{perfschemes:prop:propertiesperfectlysmooth}%
    {\bf Properties of perfectly smooth morphisms.}
    The class of perfectly smooth morphisms satisfies:
    \begin{mylist}
        \item Étale morphisms and isomorphisms are perfectly smooth.
        \item The composition of perfectly smooth morphisms is perfectly smooth.
        \item The class of perfectly smooth morphisms is stable under pullbacks.
    \end{mylist}
\end{proposition}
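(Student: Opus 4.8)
The plan is to verify the three assertions directly from \cref{perfschemes:def:perfectlysmooth}. I will use only the following facts, all standard or recorded above: étale morphisms and perfectly finitely presented morphisms of perfect schemes are stable under composition and base change; the fibre product of perfect schemes formed in $\sch$ is again perfect, hence computes the fibre product in $\persch$ and preserves perfect finite presentation over $\resfield$; and $\affinespace^m_{\perf}\times\affinespace^n_{\perf}\simeq\affinespace^{m+n}_{\perf}$. For (a), if $f$ is étale one takes $U=X$ and $n=0$ in the defining diagram, so that the upper horizontal arrow is $f$ itself; isomorphisms are a fortiori étale.

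For (b), let $f\colon X\to Y$ and $g\colon Y\to Z$ be perfectly smooth and fix $x\in X$ with $y:=f(x)$. Pick an étale neighbourhood $U\to X$ of $x$ with an étale $Y$-map $a\colon U\to Y\times\affinespace^n_{\perf}$, and an étale neighbourhood $V\to Y$ of $y$ with an étale $Z$-map $b\colon V\to Z\times\affinespace^m_{\perf}$. Set
\[
U':=U\times_{Y\times\affinespace^n_{\perf}}\bigl(V\times\affinespace^n_{\perf}\bigr),
\]
the fibre product of $a$ with the étale map $(V\to Y)\times\Id_{\affinespace^n_{\perf}}\colon V\times\affinespace^n_{\perf}\to Y\times\affinespace^n_{\perf}$. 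Then $U'\to U$ is étale, hence $U'\to X$ is étale; the projection $U'\to V\times\affinespace^n_{\perf}$ is étale; and composing it with $b\times\Id\colon V\times\affinespace^n_{\perf}\to Z\times\affinespace^m_{\perf}\times\affinespace^n_{\perf}\simeq Z\times\affinespace^{m+n}_{\perf}$ produces an étale $Z$-map $U'\to Z\times\affinespace^{m+n}_{\perf}$. A diagram chase shows this data realizes \cref{perfschemes:def:perfectlysmooth} for $g\circ f$, provided $U'$ is a neighbourhood of $x$: choosing lifts $u\in U$ of $x$ and $v\in V$ of $y$ with trivial residue extensions, the fibre of $U'\to U$ over $u$ is a base change of the fibre of $V\to Y$ over $y$, so it contains a point over $u$ with trivial residue extension, giving the required lift of $x$.

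For (c), let $f\colon X\to Y$ be perfectly smooth, let $Y'\to Y$ be a morphism of perfect schemes perfectly finitely presented over $\resfield$, and let $f'\colon X':=X\times_Y Y'\to Y'$ be the base change; fix $x'\in X'$ with image $x\in X$. Choosing an étale neighbourhood $U\to X$ of $x$ and an étale $Y$-map $a\colon U\to Y\times\affinespace^n_{\perf}$, the scheme $U':=U\times_X X'\simeq U\times_Y Y'$ is étale over $X'$, and the base change of $a$ along $Y'\times\affinespace^n_{\perf}\to Y\times\affinespace^n_{\perf}$ is an étale $Y'$-map $U'\to Y'\times\affinespace^n_{\perf}$; this realizes \cref{perfschemes:def:perfectlysmooth} for $f'$, and as in (b) a lift of $x$ in $U$ with trivial residue extension base changes to a lift of $x'$ in $U'$ with trivial residue extension. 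The only points needing (minor) care are precisely these last ones — keeping the charts \emph{neighbourhoods} of the chosen points rather than mere étale covers hitting them — together with the bookkeeping that all auxiliary fibre products stay in $\persch^{\pfp}$; both are immediate, so I do not anticipate any real obstacle.
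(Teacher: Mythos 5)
Your proof is correct and is exactly the direct verification the paper declines to write out (the paper cites \cite[\S 2.1.2]{van2024mod} and calls these properties ``straightforward to verify''). Your argument --- (a) take $U=X$, $n=0$; (b) splice the two étale charts by pulling one back over the other and using $\affinespace^m_{\perf}\times\affinespace^n_{\perf}\simeq\affinespace^{m+n}_{\perf}$; (c) base change the chart --- is the standard one and matches what the authors intend.
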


\begin{remark}
    The notion of perfectly smooth morphisms differs from smoothness of $f$ viewed as a morphism of schemes, unless the diagram in \cref{perfschemes:def:perfectlysmooth} can be chosen with $n=0$, in which case the morphism is étale.
\end{remark}

\begin{definition}\label{perfschemes:def:perfectlyproper}
    A morphism $f \colon X \to Y$ is called \emph{perfectly finitely presented proper} (or \emph{pfp-proper}) if there exists a finitely presented proper morphism $f_0 \colon X_0 \to Y_0$ such that $f \simeq (f_0)_{\perf}$.
\end{definition}

\section{Perfect \texorpdfstring{\( \infty \)}{infinity}-stacks}\label{perfstacks}

\begin{para}{\bf Preliminaries on $\infty$-stacks.}\label{notationsandconventions:algebraicgeometry}
    Let $\catofanima$ denote the $\infty$-category of Anima commonly refffered to as $\infty$-groupoids or spaces.
    Let $\prestacks := \Fun(\aff^{\op}, \catofanima)$ denote the category of $\infty$-prestacks.
    \begin{mylist}
        \item An object $\varstack{X} \in \prestacks$ is called an \emph{$\infty$-stack} if it is a sheaf for the étale topology (see \cite[Def.~6.2.2.7]{lurie2009higher}). We denote the full subcategory of $\infty$-stacks by $\stacks$.
        \item\label{notationsandconventions:def:space} An $\infty$-stack is called a \emph{space} if it takes values in $\catofsets \subset \catofanima$.
        \item There is a fully faithful embedding $\sch \into \stacks$ given by the restricted Yoneda functor, sending a scheme to its functor of points. We identify the category of schemes with its essential image under this embedding.
    \end{mylist}
\end{para}

\begin{definition}\label{perfstacks:def:coverings}{\bf Coverings.}  
    A morphism $\varstack{X} \to \varstack{Y}$ of $\infty$-stacks is called a \emph{covering} if it is an effective epimorphism in the sense of \cite{lurie2009higher}. More precisely, by \cite[\S 7.2.1.14]{lurie2009higher}, effective epimorphisms in a category of sheaves are characterized as follows: for any morphism $U \to \varstack{Y}$ with $U$ affine, there exists an étale cover (that is, an étale surjective map) $U'\to U$ such that  the composition $U'\to U \to \varstack{Y}$ factors through $\varstack{X}$.
\end{definition}

\begin{definition}{\bf Perfect $\infty$-stacks.}\label{perfstacks:def:perfstacks}
    We equip $\peraff$ with the étale topology.  
    A \emph{perfect $\infty$-stack} is a sheaf on the étale site of perfect affine schemes. We denote by $\perstacks := \Shv(\peraff)$ the category of perfect $\infty$-stacks.
\end{definition}

\begin{para}\label{perfstacks:def:perfstacksembedding}{\bf Perfect $\infty$-stacks as $\infty$-stacks.}  
The inclusion $i \colon \peraff \into \aff$ is a morphism of sites.
The induced restriction functor 
    \[
    (-)_{\perf} \colon \stacks \to \Shv(\peraff)
    \]
    (which we call the \emph{perfectization}) admits a fully faithful left adjoint $\iota$.
    
    

\end{para}

\begin{remark}{\bf Limits of perfect $\infty$-stacks.}
    \begin{mylist}
        \item The fully faithful embedding $\perstacks \into \stacks$ preserves all colimits but does not preserve limits. We always work in the category $\perstacks$; thus, limits are taken in this category.
        
        \item However, the difference is exactly the application of the perfection functor. Since perfection is a right adjoint, it preserves all limits. Thus, for a diagram $F \colon I \to \perstacks$, we have
        \[
        \lim F \simeq (\lim (\iota\circ F))_{\perf}.
        \]
    \end{mylist}
\end{remark}

\begin{remark}\label{perfstacks:remark:noneedtowritered}{\bf Reduction issues do not arise for perfect $\infty$-stacks.}
\begin{mylist}
    \item Since there is no notion of a non-reduced perfect scheme, there is correspondingly no notion of a non-reduced perfect $\infty$-stack.
    \item Moreover, in \cite{bouthier2022perverse}, a class of \emph{reduced} $\infty$-stacks is considered. We observe that, by \cref{perfschemes:prop:perfarered}, the functor $\iota$ takes values in the subcategory of reduced $\infty$-stacks.
\end{mylist}
\end{remark}

\begin{para}{\bf Perfect algebraic spaces.} 
    Note that if an $\infty$-stack $X$ is representable by an algebraic space, then $X_{\perf}$ is also representable by an algebraic space (by \cref{perfschemes:prop:topologicalinvarianceofetale}). Thus, the definition of algebraic spaces applies consistently, whether considered as sheaves on affine schemes or on perfect affine schemes.
\end{para}

\begin{para}{\bf Properties of perfect $\infty$-stacks.}
    \begin{mylist}
        \item\label{perfstacks:def:pfprepresentable}  A morphism $f:\varstack{X}\to \varstack{Y}$ of perfect $\infty$-stacks is (locally) pfp-representable if for any morphism $U\to \varstack{Y}$ with $U$ a perfect affine scheme, $f\times_{\varstack{Y}}U$ is a (locally) perfectly finitely presented morphism of perfect algebraic spaces.
        \item\label{perfstacks:def:uorepresentable}  A morphism $f:\varstack{X}\to \varstack{Y}$ of perfect $\infty$-stacks is universally open representable if for any morphism $U\to \varstack{Y}$ with $U$ a perfect affine scheme, $f\times_{\varstack{Y}}U$ is a universally open morphism of perfect algebraic spaces.
        \item\label{perfstacks:def:pfpembeddings} A morphism $f:\varstack{X}\to \varstack{Y}$ of perfect $\infty$-stacks is a pfp-open/closed/locally closed embedding if if for any morphism $U\to \varstack{Y}$ with $U$ a perfect affine scheme, $f\times_{\varstack{Y}}U$ is a perfectly finitely presented open/closed/locally closed embedding of perfect algebraic spaces.
    \end{mylist}
\end{para}

\begin{para}{\bf Properties that are étale-local on the base.} 
    We say a class (P) of morphisms from perfect $\infty$-stacks to perfect affine schemes is \emph{étale-local on the base} if, for every morphism $f \colon \varstack{X} \to Y$, where $\varstack{X}$ is a perfect $\infty$-stack and $Y$ is a perfect affine scheme, and every étale covering $Y' \to Y$, the morphism $f$ is in (P) if and only if the base change $f\times_Y Y'$ is in (P).
\end{para}

\begin{example}\label{perfstacks:prop:examplesofpropertiesetalelocal}
    The class of isomorphisms is étale-local on the base. Similarly, the class of universally open morphisms is étale-local on the base (indeed, this follows from the non-perfect case).
\end{example}

\begin{para}\label{perfstacks:def:locallyschematic}{\bf Locally schematic morphisms.}
    \begin{mylist}
        \item The class of schematic morphisms is not étale-local on the base.
        \item We say that a morphism $f \colon \varstack{X} \to \varstack{Y}$ is \emph{locally schematic} if, for any morphism $Y \to \varstack{Y}$ with $Y$ an affine scheme, there exists an étale cover $Y'\to Y$ such that the  fiber product $\varstack{X} \times_{\varstack{Y}} Y'$ is a perfect scheme.
        \item The class of locally schematic morphisms is étale-local on the base.
    \end{mylist}
\end{para}

\begin{para}\label{perfstacks:def:indpfpproper}{\bf Ind pfp-proper algebraic spaces.}
    \begin{mylist}
        \item A morphism of $\infty$-stacks $\varstack{X} \to \varstack{Y}$ is called \emph{ind pfp-proper} if, for every morphism $Y \to \varstack{Y}$ with $Y$ an affine scheme, the base change $\varstack{X} \times_{\varstack{Y}} Y \to Y$ can be written as a filtered colimit $\colim X_i$ over $Y$ of pfp-proper algebraic spaces, with transition maps given by closed embeddings.
        
        \item\label{perfstacks:def:indpfpproperlocschematic} A morphism $\varstack{X} \to \varstack{Y}$ is called \emph{ind pfp-proper-locally schematic} if, for every morphism $Y \to \varstack{Y}$ with $Y$ affine, the base change $\varstack{X} \times_{\varstack{Y}} Y \to Y$ can be written as a filtered colimit $\colim X_i$ over $Y$, where each $X_i \to Y$ is a pfp-proper and locally schematic morphism of algebraic spaces, and the transition maps are closed embeddings.

        \item The class of ind pfp-proper morphisms is étale-local on the base, by the same argument as in \cite[Remark 1.2.9(a)]{bouthier2022perverse}, where the analogous statement is proved for ind fp-proper morphisms.
    \end{mylist}
\end{para}

\begin{corollary}\label{perfstacks:prop:graddream}
    Let $f:\varstack{X} \to \varstack{Y}$ be a morphism of perfect $\infty$-stacks satisfying one of the following:
    \begin{mylist}
        \item $f$ is locally schematic and universally open representable (see \cref{perfstacks:def:uorepresentable}).
        \item $f$ is ind pfp-proper-locally schematic (see \cref{perfstacks:def:indpfpproperlocschematic}).
    \end{mylist}
    Then $f$ is an equivalence if and only if for every algebraically closed field $K$, the induced map of anima
    \[
    f:\varstack{X}(K) \to \varstack{Y}(K)
    \]
    is an equivalence.
\end{corollary}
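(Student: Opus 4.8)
The plan is to reduce to a statement about morphisms of perfect schemes and invoke \cref{perfschemes:prop:graddream}. Only the backward implication needs proof, so assume $f$ induces an equivalence $\varstack{X}(K)\isoto\varstack{Y}(K)$ for every algebraically closed $K$. I would first reduce to the case $\varstack{Y}=Y$ a perfect affine scheme: $\varstack{Y}$ is a colimit of perfect affine schemes, colimits in $\perstacks$ are universal, so $f$ is the colimit of its base changes $f\times_{\varstack{Y}}Y$ along maps from perfect affine schemes $Y\to\varstack{Y}$; each of these again satisfies (a) or (b), and since $(\varstack{X}\times_{\varstack{Y}}Y)(K)=\varstack{X}(K)\times_{\varstack{Y}(K)}Y(K)$ each again induces an equivalence on $K$-points. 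Once $\varstack{Y}=Y$ is a scheme the anima $\varstack{X}(K)\isoto Y(K)$ is a set, so the hypothesis is that $f\colon \varstack{X}(K)\to Y(K)$ is a bijection for all algebraically closed $K$.

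For case (a): the locally schematic hypothesis provides an étale cover $Y'\to Y$ (which we may take to be a perfect scheme) such that $X':=\varstack{X}\times_Y Y'$ is a perfect scheme. As being an equivalence is étale-local on the base (\cref{perfstacks:prop:examplesofpropertiesetalelocal}), it suffices to show that $f'\colon X'\to Y'$ is an isomorphism of perfect schemes. But $f'$ is universally open (by \cref{perfstacks:def:uorepresentable}, checked on an affine cover of $Y'$) and $X'(K)=\varstack{X}(K)\times_{Y(K)}Y'(K)\to Y'(K)$ is a bijection, so $f'$ is an isomorphism by \cref{perfschemes:prop:graddream}, implication (c)$\Rightarrow$(a).

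For case (b): write $\varstack{X}=\colim_{i\in I}X_i$ with $I$ filtered, each $X_i\to Y$ pfp-proper and locally schematic, and the transition maps closed embeddings. Since $X_i\to Y$ is universally closed its image is closed; let $Z_i\hookrightarrow Y$ be this image with its reduced structure. As $Y$ is perfect, hence reduced, $Z_i$ is again a perfect scheme (a quotient of a perfect ring by a radical ideal is perfect), and $X_i\to Z_i$ is pfp-proper, locally schematic, and bijective on $K$-points — surjective because $Z_i$ is the image (a nonempty perfectly finitely presented perfect scheme over an algebraically closed field has a rational point) and injective because $X_i(K)\hookrightarrow\varstack{X}(K)\hookrightarrow Y(K)$. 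Arguing as in case (a), but with the universally closed clause of \cref{perfschemes:prop:graddream}, $X_i\to Z_i$ is an equivalence. Hence I may replace $X_i$ by $Z_i$ and assume $\varstack{X}=\colim_i Z_i$ with $Z_i\hookrightarrow Y$ perfect closed subschemes and $\bigcup_i|Z_i|=|Y|$ — the latter because every point of $Y$ is the image of a $K$-point, which lifts to $\varstack{X}(K)=\colim_i Z_i(K)=\bigcup_i Z_i(K)$.

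It then remains to see $\colim_i Z_i\to Y$ is an equivalence, and for this I would find an index $i_0$ with $Z_{i_0}=Y$. Each $Y\setminus Z_i$ is a quasi-compact open subset of the spectral space $|Y|$: choosing a finitely presented proper model $X_{i,0}\to Y_{i,0}$ with $(Y_{i,0})_{\perf}=Y$ — so $|Y_{i,0}|=|Y|$ is quasi-compact — the image of $|X_{i,0}|$ is constructible by Chevalley's theorem and closed by properness, hence its complement is a quasi-compact open. These opens form a codirected family with empty intersection $Y\setminus\bigcup_i|Z_i|$, so passing to the constructible topology, in which each of them is clopen and $|Y|$ is compact, one of them is empty; that is, $Z_{i_0}=Y$ since $Y$ is reduced. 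Then for each $j\ge i_0$ the closed embedding $Z_j\hookrightarrow Y$ has a section $Y=Z_{i_0}\to Z_j$, hence is an isomorphism, so the cofinal subsystem $\{Z_j\}_{j\ge i_0}$ is the constant diagram at $Y$ and $\varstack{X}=\colim_j Z_j=Y$. The crux is this case (b) — the reduction of each pfp-proper term $X_i$ to its reduced image $Z_i$ (requiring that $Z_i$ be a perfect scheme with $X_i(K)\to Z_i(K)$ bijective) and the constructible-topology compactness argument extracting $Z_{i_0}=Y$; case (a) is routine given \cref{perfschemes:prop:graddream}.
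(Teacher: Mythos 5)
Your proof is correct and follows the same overall strategy as the paper: reduce to the case where $\varstack{Y}=Y$ is a perfect affine scheme, handle case (a) by passing to an \'etale cover to make $\varstack{X}$ a scheme and invoking \cref{perfschemes:prop:graddream}, and handle case (b) by a constructible-topology compactness argument on $|Y|$ that collapses the filtered colimit to a single term. The only substantive difference is the order of operations in case (b). The paper uses compactness of $|Y|$ in the constructible topology directly: the closed constructible images of the $X_i$ cover $|Y|$ (this is where injectivity of $\varstack{X}(K)\to Y(K)$ together with surjectivity of $X_i(K)\to\varstack{X}(K)$ for $i\gg0$ is used), so some $X_i\to Y$ is surjective, hence $X_i\to\varstack{X}$ is a surjective closed embedding and thus an isomorphism; then $\varstack{X}\to Y$ is pfp-proper and \cref{perfschemes:prop:graddream} is applied once. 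You instead pre-process each term $X_i$ by replacing it with its reduced image $Z_i\hookrightarrow Y$, showing $X_i\simeq Z_i$ via the lemma, and only afterwards use compactness to find $Z_{i_0}=Y$ and conclude the system is cofinally constant. Your $X_i\simeq Z_i$ step is dispensable — the paper's direct route skips it — but it is a valid and perhaps more transparent decomposition, at the cost of applying \cref{perfschemes:prop:graddream} term-by-term rather than once.
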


\begin{proof}
    Since isomorphisms are étale-local on the base (see \cref{perfstacks:prop:examplesofpropertiesetalelocal}), and every perfect $\infty$-stack admits a cover by perfect affine schemes, it suffices to assume that $\varstack{Y} = Y$ is a perfect affine scheme.

    In the first case, if $f$ is locally schematic and universally open, then (after an étale cover of $Y$) the source $\varstack{X}$ is a scheme. The claim then follows from \cref{perfschemes:prop:graddream} and étale descent for isomorphisms.

    In the second case, suppose $f$ is ind pfp-proper-locally schematic. Then $\varstack{X} \simeq \colim X_i$ is a filtered colimit over $Y$ of pfp-proper and locally schematic algebraic spaces with closed embeddings for transition maps. By quasi-compactness of the constructible topology on $Y$, there exists $i \gg 0$ such that $X_i \to Y$ is surjective, and hence $X_i \to \varstack{X}$ is a surjective closed embedding. It follows that $X_i \simeq \varstack{X}$, and so $f$ is a pfp-proper and locally schematic morphism. In particular, it is universally closed. The result then follows from \cref{perfschemes:prop:graddream}.
\end{proof}

\begin{para}{\bf Torsors.}
    \begin{mylist}
        \item Let $\varstack{X}$ be a perfect $\infty$-stack, and let $H$ be a perfect group space acting on $\varstack{X}$. We may form the quotient perfect $\infty$-stack $[\varstack{X}/H]$ via the bar construction.

        \item \label{perfstacks:prop:torsorequivalentcond} Let $f \colon \varstack{X} \to \varstack{Y}$ be a morphism of perfect $\infty$-stacks, and let $H$ be a perfect group space acting on $\varstack{X}$ over $\varstack{Y}$. Then the following conditions are equivalent (see \cite[Paragraph 1.2.6 (c)-(d)]{bouthier2022perverse}, the argument there works in a general $\infty$-topos):
        \begin{mylist}
            \item The natural map $[\varstack{X}/H] \to \varstack{Y}$ is an equivalence.
            \item The morphism $f$ is an $H$-torsor: that is, $f$ is a covering (see \cref{perfstacks:def:coverings}) and the action morphism
            \[
            H \times \varstack{X} \to \varstack{X} \times_{\varstack{Y}} \varstack{X}, \quad (h, x) \mapsto (hx, x)
            \]
            is an equivalence.
        \end{mylist}

        \item For $U \in \peraff$, the groupoid $[\varstack{X}/H](U)$ classifies pairs $(\widetilde{U}, \phi)$ where $\widetilde{U} \to U$ is an $H$-torsor and $\phi \colon \widetilde{U} \to \varstack{X}$ is an $H$-equivariant morphism.\footnote{We caution the reader unfamiliar with higher category theory that the anima of equivariant morphisms between spaces is generally not a disjoint union of contractible anima. Consequently, the quotient of a space by a group space will typically not be a space, but rather a $1$-stack (i.e., a stack in groupoids).}
    \end{mylist}
\end{para}

\begin{para}\label{perfstacks:prop:descentandtorsors}{\bf Descent for classes of morphisms which are \'etale local on the base.}
    Let (P) be a class of morphisms of perfect $\infty$-stacks, and suppose it is \'etale local on the base.

    Let $\varstack{X}, \varstack{Y}$ be perfect $\infty$-stacks equipped with an action of a perfect group space $H$. Suppose $f \colon \varstack{X} \to \varstack{Y}$ is an $H$-equivariant morphism in the class (P). Then the induced morphism
    \[
    [f/H] \colon [\varstack{X}/H] \to [\varstack{Y}/H]
    \]
    also lies in the class (P).
\end{para}

\begin{proof}
    See \cite[Lemma 1.2.7]{bouthier2022perverse}.
\end{proof}

\begin{proposition}\label{perfstacks:prop:torsoronpoints}{\bf Torsors detected on geometric points.}
    Let $f \colon \varstack{X} \to \varstack{Y}$ be a morphism of perfect $\infty$-stacks which is either:
    \begin{enumerate}
        \item locally schematic and universally open representable (see \cref{perfstacks:def:uorepresentable}), or
        \item Ind pfp-proper-locally schematic (see \cref{perfstacks:def:indpfpproperlocschematic}).
    \end{enumerate}
    Let $\Lambda$ be a discrete group acting on $\varstack{X}$ over $\varstack{Y}$. Suppose that for every algebraically closed field $K$, the induced morphism $\varstack{X}(K) \to \varstack{Y}(K)$ is a $\Lambda$-torsor in $\catofanima$. Then $f$ is a $\Lambda$-torsor.
\end{proposition}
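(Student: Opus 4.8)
The plan is to apply the criterion \cref{perfstacks:prop:torsorequivalentcond}, which identifies "$f$ is a $\Lambda$-torsor" with "the projection $\pi\colon[\varstack X/\Lambda]\to\varstack Y$ is an equivalence", and then to deduce that $\pi$ is an equivalence from the geometric-point criteria \cref{perfschemes:prop:graddream} and \cref{perfstacks:prop:graddream}. Thus I would establish two things: (i) $\pi$ again lies in class (1), resp. (2), so that those criteria apply to it; and (ii) $\pi$ induces an equivalence on $K$-points for every algebraically closed $K$.

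For the reductions, note that the formation of $[\varstack X/\Lambda]$ commutes with arbitrary base change (colimits are universal in the $\infty$-topos $\perstacks$), that the classes (1) and (2) are étale-local on the base (for (1) combine \cref{perfstacks:prop:examplesofpropertiesetalelocal} with \cref{perfstacks:def:locallyschematic}; for (2) use \cref{perfstacks:def:indpfpproper}), and that both the torsor hypothesis on $K$-points and the statement "$\pi$ is an equivalence" are insensitive to étale base change (equivalences being étale-local by \cref{perfstacks:prop:examplesofpropertiesetalelocal}). Hence I may assume $\varstack Y=Y$ is a perfect affine scheme; using that $f$ is locally schematic, after refining by an étale cover I may further assume that $\varstack X=X$ is a perfect scheme with $X\to Y$ universally open in case (1), resp. that $X=\colim_i X_i$ is a filtered colimit along closed embeddings with each $X_i\to Y$ pfp-proper and locally schematic in case (2). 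The torsor hypothesis then forces the $\Lambda$-action on $X(K)$ to be free for every algebraically closed $K$, i.e. the action on $X$ is set-theoretically free on geometric points.

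For (i), the technical heart is to show that, étale-locally on $Y$, the quotient $[X/\Lambda]$ is representable by a perfect scheme $Z$, with $q\colon X\to Z$ the quotient map. Here the rigidity of the perfect world is essential. First, reducedness of perfect schemes (\cref{perfschemes:prop:perfarered}) upgrades set-theoretic freeness to the statement that the action map $\Lambda\times X\to X\times_Y X$ is a monomorphism: two morphisms $T\to X$ differing by a non-identity element of $\Lambda$ disagree at every geometric point of $T$, hence—$T$ being reduced and the relative diagonal of $X/Y$ an immersion—disagree as morphisms. Second, universal openness of $X\to Y$, resp. properness in case (2) which makes every orbit finite and the action admissible, furnishes the étale-local presentation of the orbit space as a perfect scheme. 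Granting this, $q$ is a covering; since $X\to Y$ is universally open, resp. since $Z=\colim_i Z_i$ with $Z_i$ the image of $X_i$ (each pfp-proper over $Y$ with closed-embedding transitions), the morphism $Z\to Y$ is universally open, resp. ind pfp-proper; in either case $\pi$ lies in class (1), resp. (2).

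For (ii), if $K$ is algebraically closed then $\spec K$ has no nontrivial étale covers, so every $\Lambda$-torsor over $\spec K$ is trivial and evaluation at $\spec K$ preserves the colimit defining $[X/\Lambda]$; hence $Z(K)=[X/\Lambda](K)$ is the quotient of $X(K)$ by $\Lambda$ in $\catofanima$, which by the torsor hypothesis is $Y(K)$. Thus $\pi\colon Z\to Y$ is a bijection on $K$-points for all $K$, and by \cref{perfschemes:prop:graddream} (applicable since $Z\to Y$ is universally open or universally closed), resp. \cref{perfstacks:prop:graddream}, it is an isomorphism. Since equivalences are étale-local on the base, $[\varstack X/\Lambda]\to\varstack Y$ is an equivalence and $f$ is a $\Lambda$-torsor. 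The main obstacle is the representability claim in (i): verifying that a set-theoretically free discrete-group action on a perfect scheme—over an affine base, with the openness or properness coming from (1), resp. (2)—really admits a scheme quotient étale-locally. This is precisely where one leans on the absence of non-reduced perfect schemes and of infinitesimals (\cref{perfschemes:prop:perfarered}, \cref{perfschemes:prop:notangent}, cf. \cref{perfstacks:remark:noneedtowritered}); everything else is a formal consequence of \cref{perfstacks:prop:torsorequivalentcond}, the geometric-point criteria, and the base-change behaviour of quotients.
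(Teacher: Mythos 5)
Your overall strategy—reducing, via \cref{perfstacks:prop:torsorequivalentcond}, to showing that $\pi\colon[\varstack X/\Lambda]\to\varstack Y$ is an equivalence, checking it on geometric points, and then invoking \cref{perfstacks:prop:graddream}—matches the paper. But the way you try to establish the crucial input, namely that $\pi$ itself belongs to class (1), resp.\ (2), diverges from the paper, and your version has a real gap.

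You insist on producing an étale-local \emph{scheme} model $Z$ for $[\varstack X/\Lambda]$ and you frankly flag that you have not proved this: ``The main obstacle is the representability claim in (i).'' That obstacle is genuine. Set-theoretic freeness of a discrete (possibly infinite) group action, even on a perfect scheme over an affine base, does not by itself yield an étale-local scheme quotient, and neither universal openness of $X\to Y$ in case (1) nor the ind-proper structure in case (2) is an off-the-shelf hypothesis for a quotient-existence theorem. Even if a quotient scheme did exist, you would still owe an argument that it inherits universal openness, resp.\ the ind pfp-proper-locally schematic structure. So the step that carries the weight of your proof is the one that is not proved.

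The paper sidesteps representability entirely. Let $(P)$ be the relevant class. It is stable under base change and composition and is étale-local on the base. Since $\Lambda$ acts trivially on $\varstack Y$, one has $[\varstack Y/\Lambda]\simeq\varstack Y\times[*/\Lambda]$, and the morphism $[f/\Lambda]\colon[\varstack X/\Lambda]\to\varstack Y\times[*/\Lambda]$ pulls back along the étale cover $\varstack Y\to\varstack Y\times[*/\Lambda]$ to $f$ itself, which is in $(P)$; by étale-locality on the base $[f/\Lambda]\in(P)$. This morphism factors as $\pi$ followed by the unit section $\varstack Y\to\varstack Y\times[*/\Lambda]$, whose diagonal (namely $\varstack Y\to\varstack Y\times\Lambda$) is a clopen embedding and hence in $(P)$. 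The cancellation theorem then gives $\pi\in(P)$, with no need to know that $[\varstack X/\Lambda]$ is a scheme. After that, the geometric-point argument you give is correct and finishes the proof. I would encourage you to replace your representability step with this cancellation argument: it is shorter, it is complete, and it is exactly what the paper does.
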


\begin{proof}
    Let (P) be either the class of locally schematic and universally open morphisms or the class of ind pfp-proper-locally schematic morphisms. The class (P) is stable under base change and composition.

    Moreover, (P) is \'etale local on the target: for the universally open case, this follows from \cref{perfstacks:prop:examplesofpropertiesetalelocal}; for the ind pfp-proper case, see \cref{perfstacks:def:indpfpproper}(c).

    Consider the morphism $[f/\Lambda] \colon [\varstack{X}/\Lambda] \to [\varstack{Y}/\Lambda] \simeq \varstack{Y} \times [*/\Lambda]$. Since (P) is \'etale local on the target, this morphism lies in (P). The diagonal morphism $\varstack{Y} \to \varstack{Y} \times [*/\Lambda]$, given by $y \mapsto (1, y)$ and identifying $\varstack{Y}$ with the unit section in $\Lambda \times \varstack{Y}$, is also in (P). Because $[f/\Lambda]$ factors through this diagonal, it follows by the cancellation theorem \cite[11.1.19]{ravi2022rising}\footnote{The theorem is stated there for schemes, but its proof applies in any category with pullbacks.} that $[f/\Lambda]$ lies in (P).

    Since $[\varstack{X}/\Lambda] \to \varstack{Y}$ is in (P) and is an equivalence on geometric points, we conclude by \cref{perfstacks:prop:graddream} that $f$ is a $\Lambda$-torsor.
\end{proof}

\section{Perfectly placid \texorpdfstring{\( \infty \)}{infinity}-stacks and perfectly smooth morphisms between them.}

In this subsection, we use \cite[Appendix A.1-A.3]{bouthier2022perverse} to define the notions of perfectly placid $\infty$-stacks and perfectly smooth morphisms. We include self contained versions of all definitions for convenience of the reader.

\begin{definition}\label{perfplacid:def:perfplacidpresandstronglyprosmooth}{\bf Strongly pro perfectly smooth morphisms and perfectly placid presentations.}
    \begin{mylist}
        \item A morphism \( f \colon X \to Y \) of perfect algebraic spaces is called \emph{strongly pro perfectly smooth} if \( X \) can be written as a filtered limit \( X \simeq \lim X_\alpha \), where each \( X_\alpha \to Y \) is perfectly finitely presented and perfectly smooth, and the transition maps \( X_\alpha \to X_\beta \) are perfectly finitely presented, perfectly smooth, and affine.

        \item Let $X$ be a perfect algebraic space. A \emph{perfectly placid presentation} of $X$ is an identification $X \simeq \lim X_\alpha$, where each $X_\alpha$ is a perfect algebraic space perfectly finitely presented over $X$, and the transition maps $X_\alpha \to X_\beta$ are perfectly smooth and affine.

        \item We say that a perfect algebraic space admits a perfectly placid presentation if such a presentation exists.

        \item We say that a morphism of schemes is strongly pro perfectly smooth, or that a scheme admits a perfectly placid presentation, if the corresponding properties hold when viewed as algebraic spaces.
    \end{mylist}
\end{definition}

\begin{remark}\label{perfplacid:prop:perfplacidpresandstronglyprosmoothinpro}
    Recall from \cref{perfschemes:def:perfoffinitetype} that
    \[
    \persch^{\operatorname{qcqs}} \simeq \pro( \persch^{\pfp}), \quad \text{and} \quad \peraff \simeq \pro (\peraff^{\pfp}).
    \]
    Let $\varcat{B}$ denote either $\persch^{\pfp}$ or $\peraff^{\pfp}$, and let $\mor_0(\varcat{B})$ be the class of perfectly smooth morphisms in $\varcat{B}$. Then the construction of classes of morphisms and objects in a pro-category, as formulated in \cite[A.3.1]{bouthier2022perverse}, applies to the pair $(\varcat{B}, \mor_0(\varcat{B}))$.

    The resulting class $\mor_0(\pro\varcat{B})$ is the class of strongly pro perfectly smooth morphisms (see \cref{perfplacid:def:perfplacidpresandstronglyprosmooth}), and $\ob_0(\pro\varcat{B})$ is the class of schemes or affine schemes admitting a perfectly placid presentation.
\end{remark}

\begin{para}\label{perfplacid:def:perfplacidandperfsmooth}{\bf Perfectly smooth morphisms and perfectly placid $\infty$-stacks.}
    We apply the construction of \cite[\S A.2.4]{bouthier2022perverse} to the category $\perstacks$, with the classes of objects and morphisms described in \cref{perfplacid:def:perfplacidpresandstronglyprosmooth}. The conditions for the construction are satisfied by \cref{perfplacid:prop:perfplacidpresandstronglyprosmoothinpro} and \cite[A.3.2]{bouthier2022perverse}. We obtain classes of morphisms \( \mor_\infty \) and objects \( \ob_\infty \), defined as the smallest classes satisfying the following properties:
    \begin{mylist}
        \item Let \( \{X_\alpha\} \) be a collection of schemes admitting perfectly placid presentations. Then \( \coprod X_\alpha \in \ob_\infty \).
        \item Let \( \{X_\alpha \to Y\} \subset \mor_0 \) be a collection of morphisms in \( \persch \). Then \( \coprod X_\alpha \to Y \in \mor_\infty \).
        \item The class \( \mor_\infty \) is stable under base change, composition, and contains all isomorphisms.
        \item Let \( \varstack{X} \in \ob_\infty \), and let \( \varstack{X} \to \varstack{Y} \in \mor_\infty \) be a covering (see \cref{perfstacks:def:coverings}). Then \( \varstack{Y} \in \ob_\infty \).
        \item Let \( \varstack{X} \to \varstack{Y} \) be a covering in \( \mor_\infty \), and suppose the composition \( \varstack{X} \to \varstack{Y} \to \varstack{Z} \) lies in \( \mor_\infty \). Then \( \varstack{Y} \to \varstack{Z} \in \mor_\infty \).
    \end{mylist}
    A morphism \( f: \varstack{X} \to \varstack{Y} \) in \( \mor_\infty \) is called a \emph{perfectly smooth morphism}, and a perfect \( \infty \)-stack \( \varstack{X} \in \ob_\infty \) is called \emph{perfectly placid}.
\end{para}

\begin{para}\label{perfplacid:prop:canonicalsmoothcover}{\bf The canonical perfectly smooth cover.} (see \cite[Theorem A.2.8.]{bouthier2022perverse})
    Let $\varstack{X}$ be a perfectly placid $\infty$-stack. We define a subcategory $\mathcal{J}_{\varstack{X}} \subset (\persch)_{/\varstack{X}}$ of the slice category. The objects of $\mathcal{J}_{\varstack{X}} $ are perfect schemes $Y = \coprod Y_i$  such that each $Y_i$ is a perfect affine scheme which admits a perfectly placid presentation, equipped with a perfectly smooth morphism $Y \to \varstack{X}$. The (connected components of) morphisms of $\mathcal{J}_{\varstack{X}} $ are morphisms over $\varstack{X}$ which are  strongly pro perfectly smooth  on connected components. Then $\mathcal{J}_{\varstack{X}}$ is a filtered category, and there is a canonical equivalence
    \[
    \varstack{X} \simeq \colim_{Y \in \mathcal{J}_{\varstack{X}}} Y.
    \]
\end{para}

\begin{remark}
    In \cite{bouthier2022perverse} (see \cite[\S 1.3]{bouthier2022perverse} for the specific case and \cite[\S A.1]{bouthier2022perverse} for the general categorical framework), the authors construct the classes $\ob_\infty$ and $\mor_\infty$ via an explicit inductive procedure. We omit this construction here, but remark it works exactly the same way here.
\end{remark}

\section{Dimension theory.}\label{dimthry}

In this section we will define notions of weakly equidimensional, equidimensional and uo-equidimensional between perfectly placid $\infty$-stacks. We apply the categorical framework developed in \cite[Appendix A.4]{bouthier2022perverse}. The reader that wishes to understand the proofs of this section is advised to read the self contained account in \cite[Appendix A]{bouthier2022perverse}. We include self contained versions of all definitions for convenience of the reader who wishes not to.

\begin{para}{\bf Dimension function.} Let $f:X\to Y$ be a morphism of perfect algebraic spaces, perfectly finitely presented over \( \spec\resfield \). 
        We associate to $f$ a function $\dimfunc_f:|X|\to \ints$, given by 
        \[
        \dimfunc_f(x):= \dim_x(X)-\dim_{f(x)}(Y).
        \]
        The dimensions are finite since dimensions are a feature of the underlying space, and the map on topological spaces $f:|X|\to |Y|$ is isomorphic to a morphism of finitely presented schemes over $\spec\resfield$ by \cref{perfschemes:prop:existsfinitetypemodel}.
        
        

\end{para}

\begin{definition}\label{dimthry:def:equidimensionaloffinitetype}
    Let $f\colon X \to Y$ be a morphism of perfect algebraic spaces that are perfectly finitely presented over \( \spec\resfield \).
    \begin{mylist}
        \item We say that $f$ is \emph{weakly equidimensional} if the dimension function $\dimfunc_f$ is locally constant on $|X|$.
        \item We say that $f$ is \emph{equidimensional} if it is weakly equidimensional and for every $x \in X$ we have
        \[
        \dim_x f^{-1}(f(x)) = \dimfunc_f(x).
        \]
        \item We say that $f$ is \emph{uo-equidimensional} if it is universally open and equidimensional.
    \end{mylist}
\end{definition}

\begin{example}
    A locally closed embedding $X \hookrightarrow Y$ is said to be \emph{of pure codimension $d$} if it is weakly equidimensional of relative dimension $-d$. Since a locally closed embedding is injective on underlying topological spaces, it cannot be equidimensional unless $d = 0$.
\end{example}

        
        
        

\begin{example}\label{dimthry:prop:flatisuoequidim}
    Let $f: X \to Y$ be a flat morphism between irreducible schemes perfectly finitely presented over $\spec \resfield$. Then $f$ is uo-equidimensional.
\end{example}
\begin{proof}
    First, we show that $f$ is open. By taking a model finitely presented over $\spec \resfield$ (see \cref{perfschemes:prop:existsfinitetypemodel}), the image under $f$ of any open subset is constructible. Hence, to show $f$ is open, it suffices to use the going down theorem, which holds for general flat morphisms. Thus, $f$ is also universally open, since flatness is preserved under base change. Next, $f$ is weakly equidimensional by irreducibility. Hence, $f$ is uo-equidimensional since universally open and weakly equidimensional imply uo-equidimensional by the same argument as in \cite[Corollary 2.1.5.]{bouthier2022perverse}.
\end{proof}

\begin{definition}\label{dimthry:def:IPP}
    Let $\varcat{E} \subset \mor(\peraff^{\pfp})$ be a class of morphisms between perfect affine schemes perfectly finitely presented over $\spec \resfield$, such that:
    \begin{mylist}
        \item $\varcat{E}$ contains all perfectly smooth morphisms;
        \item $\varcat{E}$ is closed under composition;
        \item $\varcat{E}$ is closed under pullbacks along perfectly smooth morphisms.
    \end{mylist}
    We say that $\varcat{E}$ satisfies the \emph{IPP property} if for any two placid presentations $X \simeq \lim X_\alpha$ and $X \simeq \lim X'_\beta$, and for any index $\alpha$, there exists an index $\beta$ and a morphism $f_{\beta,\alpha} \colon X'_\beta \to X_\alpha$ under $X$ that belongs to $\varcat{E}$.
\end{definition}

\begin{proposition}\label{dimthry:prop:uoequidimsatisfyIPP}
    The classes introduced in \cref{dimthry:def:equidimensionaloffinitetype}—namely, weakly equidimensional, equidimensional, and uo-equidimensional morphisms—as well as the class of universally open morphisms, satisfy the IPP property.
\end{proposition}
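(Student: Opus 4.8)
The plan is to treat the four classes uniformly, since all of them are defined purely in terms of the underlying topological spaces, dimension functions, and fibre dimensions, and these are insensitive to perfection; the proof will ultimately reduce to the classical statement of \cite[Appendix A.4]{bouthier2022perverse}. Two things must be checked: that each class satisfies conditions (1)--(3) of \cref{dimthry:def:IPP}, and that the comparison condition in \cref{dimthry:def:IPP} holds.

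For conditions (1)--(3): a perfectly smooth morphism is flat and universally open (\'etale-locally on the source it is a projection $Y \times \affinespace^n_{\perf} \to Y$, and universal openness and flatness are \'etale-local on the source), and its dimension function is its relative dimension $n$ --- locally constant, with fibres of pure dimension $n$ --- so it is uo-equidimensional; thus (1) holds for all four classes. Universal openness is classically stable under composition and base change. For the three dimension-theoretic classes one uses additivity of the dimension function along composites, $\dimfunc_{g\circ f}(x)=\dimfunc_f(x)+\dimfunc_g(f(x))$, together with the fact that along a perfectly smooth $Y'\to Y$ of relative dimension $n$ the base change $X\times_Y Y'\to Y'$ has dimension function the pullback of $\dimfunc_{X/Y}$ (because $X\times_Y Y'\to X$ is again perfectly smooth of relative dimension $n$); combined with the corresponding statements for fibre dimensions, this gives (2) and (3). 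All of these are verified on finitely presented models via \cref{perfschemes:prop:existsfinitetypemodel}, reducing them to the classical dimension formulas.

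For the comparison condition, fix perfectly placid presentations $X\simeq\lim_\alpha X_\alpha$ and $X\simeq\lim_\beta X'_\beta$ and an index $\alpha$. Since $X_\alpha$ is perfectly finitely presented it is cocompact in $\peraff\simeq\pro(\peraff^{\pfp})$ (\cref{perfschemes:def:perfoffinitetype}), so $\mor(X,X_\alpha)=\colim_\beta\mor(X'_\beta,X_\alpha)$ and the structure map $X\to X_\alpha$ factors as $X\to X'_{\beta}\xrightarrow{f_{\beta,\alpha}}X_\alpha$ over $X$ for some $\beta$ (unique up to enlarging $\beta$). The maps $X\to X_\alpha$ and $X\to X'_\beta$ are strongly pro perfectly smooth, hence cofiltered limits of flat, universally open, uo-equidimensional affine morphisms; so one concludes $f_{\beta,\alpha}\in\varcat{E}$ either by a cancellation argument --- $f_{\beta,\alpha}$ composed with the strongly pro perfectly smooth $X\to X'_\beta$ equals the strongly pro perfectly smooth $X\to X_\alpha$, and strongly pro perfectly smooth morphisms play the role of coverings for the four classes --- or, more safely, by spreading out to finitely presented models of $f_{\beta,\alpha}$ and of enough of the two presentations to realise a classical comparison of placid presentations, applying the classical version of this proposition from \cite[Appendix A.4]{bouthier2022perverse}, and transporting the conclusion back along perfection using \cref{perfschemes:prop:propertiesofperfschemespreserved} for universal openness and $|Z_{\perf}|\cong|Z|$ for the dimension functions and fibre dimensions.

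I expect the main obstacle to be the dimension bookkeeping for the equidimensional and uo-equidimensional classes --- the additivity of $\dimfunc$ and its compatibility with perfectly smooth pullback rest on genuine fibre-dimension theorems rather than on formal nonsense --- together with making the reduction of the two infinite placid presentations to compatible finitely presented models precise enough to quote the classical result of \cite[Appendix A.4]{bouthier2022perverse} cleanly; the purely formal part (existence of the comparison map and perfection-invariance of the relevant properties) is routine.
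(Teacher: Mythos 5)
Your verification of conditions (a)--(c) of the IPP definition matches the paper's, which simply cites the corresponding argument from the non-perfect case. For the IPP comparison condition itself, the cancellation route you sketch is essentially the paper's approach (the paper cites \cite[A.5.4]{bouthier2022perverse}), but as written it has a formal defect: you compose $f_{\beta,\alpha}$ with $X \to X'_\beta$, yet $X$ is not perfectly finitely presented, so this composite is not in $\mor(\peraff^{\pfp})$, the ambient category in which $\varcat{E}$ lives. To make the argument precise one must pass further along the presentations to produce a pfp scheme $X_\gamma$ and a morphism $g\colon X_\gamma\to X'_\beta$ under $X$ with $f_{\beta,\alpha}\circ g$ equal to a transition map, and then invoke a genuine cancellation lemma for the class $\varcat{E}$ (surjective morphisms in $\varcat{E}$ act as coverings for $\varcat{E}$), which is the real content of \cite[A.5.4]{bouthier2022perverse} and is not one line.

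Your spreading-out alternative, which you flag as ``more safe,'' is not the paper's route and is in fact the riskier one. Individual objects and morphisms do spread out by \cref{perfschemes:prop:existsfinitetypemodel}, but to apply the classical IPP lemma you would have to realise a sufficiently large, mutually compatible sub-diagram of \emph{both} infinite placid presentations over a single finitely presented model, and it is not routine to do so. Your proposal also misses the subtlety that the paper's footnote to this proposition flags: in the classical case (\cite[A.5.2(b)]{bouthier2022perverse}), IPP for the dimension-theoretic classes is deduced from IPP for smooth morphisms, but that reduction is unavailable here because perfectly smooth morphisms are not a priori known to satisfy IPP. This is exactly why the cancellation argument must be carried out directly on the dimension-theoretic classes rather than routed through (perfectly) smooth morphisms.
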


\begin{proof}
    These classes satisfy conditions (a)–(c) of \cref{dimthry:def:IPP} by similar arguments to their analogs for non-perfect schemes (see \cite[A.5.2(a)]{bouthier2022perverse}). 
    
    The verification of the IPP property then follows as in the proof of \cite[A.5.4]{bouthier2022perverse}.\footnote{Note that the argument for the IPP property in \cite[A.5.2(b)] {bouthier2022perverse} fails in this case because (at least a priori) perfectly smooth morphisms do not satisfy the IPP property.}
\end{proof}

\begin{definition}\label{dimthry:def:equidimforplacidpresented} (see \cite[Construction A.3.1.]{bouthier2022perverse})  Let $f \colon X \to Y$ be a morphism between perfect schemes admitting perfectly placid presentations.  
We say that $f$ is \emph{universally open}, \emph{weakly equidimensional}, \emph{equidimensional}, or \emph{uo-equidimensional} if for every pair of placid presentations $X \simeq \lim X_\alpha$ and $Y \simeq \lim Y_\beta$, and for every $\beta$, there exists $\alpha$ and a commutative diagram
\[
\begin{tikzcd}
	X \arrow[r,"f"] \arrow[d] & Y \arrow[d] \\
	X_\alpha \arrow[r,"f_{\alpha\beta}"] & Y_\beta
\end{tikzcd}
\]
where the vertical arrows are the canonical projections, and $f_{\alpha\beta}$ has the corresponding property (universally open, weakly equidimensional, etc.) in the sense of \cref{dimthry:def:equidimensionaloffinitetype}.
\end{definition}



\begin{definition}\label{dimthry:def:defofequidimforplacidstacks}
(see \cite[Construction A.2.4.]{bouthier2022perverse})
Let \( f \colon \varstack{X} \to \varstack{Y} \) be a morphism between perfectly placid \( \infty \)-stacks.  
We say that \( f \) is \emph{universally open}, \emph{weakly equidimensional}, \emph{equidimensional}, or \emph{uo-equidimensional} if for every perfectly smooth morphism \( Y \to \varstack{Y} \) with \( Y \) admitting a perfectly placid presentation, there exists a perfectly smooth cover \( \{X_\alpha \to \varstack{X} \times_{\varstack{Y}} Y\}_\alpha \) such that:
\begin{mylist}
    \item each \( X_\alpha \) admits a perfectly placid presentation, and
    \item the composition \( X_\alpha \to \varstack{X} \times_{\varstack{Y}} Y \to Y \) is universally open, weakly equidimensional, equidimensional, or uo-equidimensional, respectively, in the sense of \cref{dimthry:def:equidimforplacidpresented}.
\end{mylist}
\end{definition}

\begin{lemma}\label{dimthry:prop:pfpfbetweenplacidpresentedisequidimiffbasechangeofpfp}
Let \( f \colon X \to Y \) be a perfectly finitely presented morphism between perfect affine schemes, both admitting perfectly placid presentations. Then \( f \) is universally open / weakly equidimensional / equidimensional / uo-equidimensional if and only if the following holds:

There exists a perfectly pro-smooth morphism \( Y \to Y_0 \), and a morphism \( f_0 \colon X_0 \to Y_0 \) which is universally open / weakly equidimensional / equidimensional / uo-equidimensional, such that:
\begin{mylist}
    \item \( X_0 \) and \( Y_0 \) are perfect schemes, perfectly finitely presented over \( \spec\resfield \), and
    \item there is an isomorphism \( f \simeq f_0 \times_{Y_0} Y \).
\end{mylist}
\end{lemma}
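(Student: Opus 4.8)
The plan is to prove the two implications separately, using the IPP property (\cref{dimthry:prop:uoequidimsatisfyIPP}) together with \cref{perfschemes:prop:existsfinitetypemodel} to pass between placid presentations and finite-type models.

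First I would prove the ``if'' direction, which is essentially formal. Suppose we are given a perfectly pro-smooth morphism $Y \to Y_0$ and $f_0 \colon X_0 \to Y_0$ with the relevant property (in the sense of \cref{dimthry:def:equidimensionaloffinitetype}), together with $f \simeq f_0 \times_{Y_0} Y$. Writing $Y \simeq \lim Y_\gamma$ as a perfectly placid presentation obtained from the pro-smooth presentation of $Y \to Y_0$ (so each $Y_\gamma \to Y_0$ is perfectly finitely presented and perfectly smooth, with affine perfectly smooth transition maps), we get a compatible perfectly placid presentation $X \simeq \lim X_\gamma$ with $X_\gamma := X_0 \times_{Y_0} Y_\gamma$. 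The maps $f_\gamma \colon X_\gamma \to Y_\gamma$ are then pullbacks of $f_0$ along the perfectly smooth morphisms $Y_\gamma \to Y_0$, so they still have the relevant property by the closure under perfectly smooth pullback (condition (c) of \cref{dimthry:def:IPP}, verified in the proof of \cref{dimthry:prop:uoequidimsatisfyIPP}). Now given \emph{arbitrary} placid presentations $X \simeq \lim X_\alpha$, $Y \simeq \lim Y_\beta$ and an index $\beta$, I would use the IPP property to compare $\{Y_\beta\}$ with $\{Y_\gamma\}$: there is $\gamma$ and a morphism $Y_\gamma \to Y_\beta$ under $Y$ in the relevant class; pulling back $f$ and using that $f$ is perfectly finitely presented, one finds $X_\gamma \to X_\alpha$ for suitable $\alpha$ (again by IPP applied to the $X$-side presentations), and the factorization $X_\gamma \to X_\alpha \to Y_\beta$ through $X_\gamma \to Y_\gamma \to Y_\beta$, combined with closure under composition and the cancellation-type statements for these classes, forces $f_{\alpha\beta}$ to have the required property. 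This is exactly \cref{dimthry:def:equidimforplacidpresented}.

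For the ``only if'' direction, assume $f$ has the property in the sense of \cref{dimthry:def:equidimforplacidpresented}. Fix a perfectly placid presentation $Y \simeq \lim Y_\beta$. By hypothesis, for some $\beta$ there is $\alpha$ and a commutative square with vertical projections and $f_{\alpha\beta} \colon X_\alpha \to Y_\beta$ having the property in the sense of \cref{dimthry:def:equidimensionaloffinitetype}. Here $X_\alpha$ and $Y_\beta$ are perfectly finitely presented over $X$ and over $Y$ respectively, not over $\spec\resfield$, so $f_{\alpha\beta}$ is not yet a morphism of perfectly finitely presented $\resfield$-schemes. To fix this I would further resolve: $Y_\beta$ admits a perfectly placid presentation $Y_\beta \simeq \lim Y_{\beta,\delta}$ with each $Y_{\beta,\delta}$ perfectly finitely presented over $\spec\resfield$; since $f$ (hence the relevant part of the square) is perfectly finitely presented, $f_{\alpha\beta}$ descends to a perfectly finitely presented morphism $f_0 \colon X_0 \to Y_0 := Y_{\beta,\delta}$ of perfectly finitely presented $\resfield$-schemes for $\delta \gg 0$, via \cref{perfschemes:prop:existsfinitetypemodel} and a standard spreading-out argument; moreover the property (universally open / weakly equidimensional / equidimensional / uo-equidimensional) descends to $f_0$ for $\delta$ large, because each such property is detected on a finite-type model and is open/closed in the base (using \cref{prop:flatlocusopen} and \cite[\href{https://stacks.math.columbia.edu/tag/039C}{Tag 039C}]{stacks-project}-type arguments for the equidimensionality conditions). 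Then $Y \to Y_\beta \to Y_0$ is perfectly pro-smooth (the first map by construction of the placid presentation, the second since $Y_\beta \to Y_{\beta,\delta}$ is part of a perfectly placid presentation with perfectly smooth affine transition maps), and $f \simeq f_0 \times_{Y_0} Y$ by construction. That is the desired data.

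The main obstacle I expect is the bookkeeping in the ``only if'' direction: one must simultaneously (i) replace the presentation-relative finitely presented schemes $X_\alpha, Y_\beta$ by genuinely finite-type $\resfield$-schemes without losing the property, and (ii) check that ``$Y \to Y_0$ perfectly pro-smooth'' really holds for the composite, which requires knowing that the transition maps in the canonical perfectly placid presentation of $Y_\beta$ over $\spec\resfield$ are perfectly smooth and affine — this is exactly where the absence of a \emph{canonical} presentation of strongly pro perfectly smooth morphisms (flagged in the introduction as item giving weaker results, \cref{dimthry:prop:pfpfbetweenplacidpresentedcorollaries}) costs us, and one has to argue more carefully than in \cite{bouthier2022perverse}. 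The ``if'' direction's only subtlety is invoking IPP correctly to reduce from arbitrary placid presentations to the one adapted to $Y_0$, which is routine once \cref{dimthry:prop:uoequidimsatisfyIPP} is in hand.
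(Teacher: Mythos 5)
Your plan diverges substantially from the paper's argument: the paper's proof is essentially a reduction to two citations of \cite[A.4.2(b) and A.4.11(c)]{bouthier2022perverse}, with the new content being a spreading-out argument (using \cref{perfschemes:prop:existsfinitetypemodel} and \cite[\href{https://stacks.math.columbia.edu/tag/01ZM}{Tag 01ZM}]{stacks-project}) to show that $f$ is $\peraff^{\pfp}$-representable, after which \cite[A.4.11(c)]{bouthier2022perverse} supplies the iff. You instead attempt a direct argument via the IPP property. Comparing the two: the paper's route is shorter and transfers the hard categorical bookkeeping to the BKV appendix, while yours makes the geometry more explicit but runs into the exact difficulties that the abstract machinery is designed to handle.

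Two concrete gaps. First, in the ``only if'' direction, \cref{dimthry:def:equidimforplacidpresented} only provides a \emph{commutative} square relating $f$ to $f_{\alpha\beta}\colon X_\alpha\to Y_\beta$, not a \emph{Cartesian} one. Your concluding sentence ``$f\simeq f_0\times_{Y_0}Y$ by construction'' therefore is not justified: taking $Y_0=Y_\beta$ and $f_0=f_{\alpha\beta}$ gives $f_0\times_{Y_0}Y = X_\alpha\times_{Y_\beta}Y\to Y$, and there is no reason for $X_\alpha\times_{Y_\beta}Y$ to agree with $X$. You would instead need to spread out $f$ itself (not $f_{\alpha\beta}$) to obtain a Cartesian square, and then separately argue that the resulting morphism inherits the property---this is precisely what \cite[A.4.2(b)]{bouthier2022perverse} packages. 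Second, in the ``if'' direction, you obtain from IPP a morphism $X_\gamma\to X_\alpha$ (that is, with $\alpha$ given and $\gamma$ found) and then appeal to ``cancellation-type statements for these classes'' to transfer the property to $f_{\alpha\beta}$. Cancellation (if $g\circ f$ and $f$ are in $\varcat{E}$ then $g$ is) is not established in the paper for weakly equidimensional, equidimensional, or uo-equidimensional morphisms, and it is not obvious. The correct move is to apply IPP with the roles of the two $X$-presentations swapped, obtaining $X_\alpha\to X_\gamma$ for a \emph{found} $\alpha$, and then simply \emph{define} $f_{\alpha\beta}$ as the composite $X_\alpha\to X_\gamma\to Y_\gamma\to Y_\beta$, using only closure of $\varcat{E}$ under composition and compatibility with the projections---no cancellation required.

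A smaller remark: you read \cref{perfplacid:def:perfplacidpresandstronglyprosmooth} as saying $X_\alpha$ is perfectly finitely presented over $X$ rather than over $\spec\resfield$. The paper's wording is indeed confusing, but the intended and consistently used meaning (cf.\ \cref{perfplacid:prop:perfplacidpresandstronglyprosmoothinpro}) is pfp over $\spec\resfield$; with that reading your ``further resolve $Y_\beta$'' step is unnecessary, and the genuine difficulty in the ``only if'' direction is the Cartesianness issue above, not the finite-type bookkeeping you flagged.
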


\begin{proof}
    'Only if' is by \cite[A.4.2(b)]{bouthier2022perverse}.

    Let us prove 'if'. Let \( f\colon X \to Y \) be a perfectly finitely presented morphism between perfect affine schemes admitting perfectly placid presentations. Let \( f'\colon X' \to Y \) be a finitely presented morphism such that \( f \simeq (f')_{\perf} \) (see \cref{perfschemes:prop:existsfinitetypemodel}).
    
    Let \( Y \simeq \lim Y_\alpha \) be some perfectly placid presentation. By \cite[\href{https://stacks.math.columbia.edu/tag/01ZM}{Tag 01ZM}]{stacks-project}, there exists some \( \alpha \) and some \( f'_0\colon X'_0 \to Y_\alpha \) such that \( f' \simeq f'_0 \times_{Y_\alpha} Y \). Applying \( (-)_{\perf} \), which preserves all limits, we get that \( f \simeq (f'_0)_{\perf} \times_{Y_\alpha} Y \). We get that \( f \) is \( \peraff^{\pfp} \)-representable in the sense of \cite[A.4.9]{bouthier2022perverse} (it is a base change of morphisms between perfect affine schemes perfectly finitely presented over \( \spec\resfield \)). We conclude by (a corrected version of) \cite[A.4.11(c)]{bouthier2022perverse}\footnote{Note that in \cite[A.4.11(c)]{bouthier2022perverse} the statement is mistakenly tautological. The statement that is actually proven in \cite[A.4.11(c)]{bouthier2022perverse} is: For a \( \varcat{B} \)-representable morphism \( f\colon x \to y \) such that \( x,y \in \ob_0(\varstack{A}) \), we have that \( f \in \mor_0(\varstack{A}) \) if and only if there exists a \( \mor_0(\varcat{B}) \)-presentation \( y \simeq \lim_\alpha y_\alpha \), an index \( \alpha \), and a morphism \( f_\alpha\colon x_\alpha \to y_\alpha \) in \( \mor_0^+(\varcat{B}) \) such that \( f \simeq f_\alpha \times_{y_\alpha} y \).}.
\end{proof}

\begin{para}{\bf Equidimensional morphisms of relative dimension $d$ between perfectly finitely presented perfect affine schemes.}
    \begin{mylist}
        \item Let \( f \colon X \to Y \) be a universally open / weakly equidimensional / equidimensional / uo-equidimensional morphism between perfect affine schemes, perfectly finitely presented over \( \spec\resfield \). We say that \( f \) is \emph{of relative dimension \( d \)} if the dimension function \( \dimfunc_f \) is constantly equal to \( d \).
        
        \item \label{dimthry:prop:equidimofrelativedimensionpreservedbypullback} 
        Universally open / weakly equidimensional / equidimensional / uo-equidimensional morphisms of relative dimension \( d \) are preserved under base change along open morphisms, and the property is reflected under base change along surjective open morphisms.
    \end{mylist}
\end{para}

\begin{proof}
    Part (b) follows immediately from the analogue in the perfect setting of \cite[Lemma 2.2.4. (a)]{bouthier2022perverse}.
\end{proof}

\begin{para}\label{dimthry:prop:constantrelativedimension}{\bf Pfp-representable equidimensional morphisms of relative dimension $d$ between perfect affine schemes.}
    \begin{mylist}
        \item Let $f \colon X \to Y$ be a perfectly finitely presented morphism between perfect affine schemes admitting perfectly placid presentations. We say that $f$ is universally open / weakly equidimensional / equidimensional / uo-equidimensional \emph{of relative dimension $d$} if there exists a morphism $f_0$ as in \cref{dimthry:prop:pfpfbetweenplacidpresentedisequidimiffbasechangeofpfp} such that $\dimfunc_{f_0}$ is constant equal to $d$.
        
        \item\label{dimthry:prop:purecodimschemes} A weakly equidimensional, perfectly finitely presented, locally closed embedding of relative dimension $-d$ will be called \emph{of pure codimension $d$}.
    \end{mylist}
\end{para}

\begin{proposition}\label{dimthry:prop:compositionofequidim} 
    The composition of a weakly equidimensional morphism of relative dimension $d$ with a weakly equidimensional morphism of relative dimension $d'$ is weakly equidimensional morphism of relative dimension $d + d'$.
\end{proposition}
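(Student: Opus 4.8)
The idea is to reduce the statement to morphisms of perfect schemes perfectly finitely presented over $\spec\resfield$, where it becomes the elementary observation that the dimension function is additive under composition, and then to transport the conclusion back along perfectly placid presentations. Throughout, let $f\colon X\to Y$ be weakly equidimensional of relative dimension $d$ and $g\colon Y\to Z$ weakly equidimensional of relative dimension $d'$.

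\textbf{Reduction to finite-type models.} Using the characterisation of such morphisms in \cref{dimthry:prop:pfpfbetweenplacidpresentedisequidimiffbasechangeofpfp} together with \cref{dimthry:prop:constantrelativedimension}, and the fact that the classes in question satisfy the IPP property (\cref{dimthry:prop:uoequidimsatisfyIPP}) — which is what allows the finite-type models below to be chosen over a common, sufficiently fine term of a perfectly placid presentation of $Y$ — I would produce perfect schemes $X_0,Y_0,Z_0$ perfectly finitely presented over $\spec\resfield$, a perfectly pro-smooth morphism $Z\to Z_0$, and morphisms $f_0\colon X_0\to Y_0$, $g_0\colon Y_0\to Z_0$ with $\dimfunc_{f_0}\equiv d$ and $\dimfunc_{g_0}\equiv d'$, such that $g\simeq g_0\times_{Z_0}Z$, $Y\simeq Y_0\times_{Z_0}Z$, and $f\simeq f_0\times_{Y_0}Y$; in particular $g\circ f\simeq(g_0\circ f_0)\times_{Z_0}Z$. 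Assembling these models \emph{compatibly} — arranging that the composite of the two models models the composite $g\circ f$, as opposed to the dimension arithmetic that follows — is the one step that genuinely requires care, and it is exactly the bookkeeping that the pro-category framework of \cite[Appendix A.4--A.5]{bouthier2022perverse} is designed to handle; our proof follows that of the analogous statement there \emph{mutatis mutandis}.

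\textbf{The finite-type computation and conclusion.} All the schemes above are perfectly finitely presented over $\spec\resfield$, so by \cref{perfschemes:prop:existsfinitetypemodel} their local dimensions are finite and may be computed on a finitely presented model over $\resfield$. For $x\in X_0$ with images $y=f_0(x)$ and $z=g_0(y)$, the defining formula telescopes:
\[
\dimfunc_{g_0\circ f_0}(x)=\dim_x X_0-\dim_z Z_0=\bigl(\dim_x X_0-\dim_y Y_0\bigr)+\bigl(\dim_y Y_0-\dim_z Z_0\bigr)=\dimfunc_{f_0}(x)+\dimfunc_{g_0}(y)=d+d'.
\]
Thus $\dimfunc_{g_0\circ f_0}$ is constantly $d+d'$; in particular it is locally constant, so $g_0\circ f_0$ is weakly equidimensional of relative dimension $d+d'$. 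Since $g\circ f\simeq(g_0\circ f_0)\times_{Z_0}Z$, the "if" direction of \cref{dimthry:prop:pfpfbetweenplacidpresentedisequidimiffbasechangeofpfp}, in the relative-dimension form of \cref{dimthry:prop:constantrelativedimension}, gives that $g\circ f$ is weakly equidimensional of relative dimension $d+d'$. If the statement is to be read for morphisms of perfectly placid $\infty$-stacks, one first reduces to the affine case above by choosing perfectly smooth covers as in \cref{dimthry:def:defofequidimforplacidstacks}, exactly as in the corresponding argument of \cite{bouthier2022perverse}. The main obstacle, as noted, is not any of these steps individually but the compatible choice of finite-type models in the reduction.
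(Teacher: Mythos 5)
Your proof is correct and follows essentially the same strategy as the paper: reduce via \cref{dimthry:prop:pfpfbetweenplacidpresentedisequidimiffbasechangeofpfp} to models perfectly finitely presented over $\spec\resfield$, use the IPP property (\cref{dimthry:prop:uoequidimsatisfyIPP}) to make the two models compatible over a sufficiently fine term of a placid presentation of the middle object, observe that the dimension function telescopes in the finite-type case, and descend along the strongly pro perfectly smooth pullback. The only difference is presentational: the paper spells out the compatibility step explicitly (introducing $Y'_\alpha := Y'_0\times_{Z_0}Z_\alpha$ and passing to a uo-equidimensional morphism $Y'_\alpha\to Y_0$), where you defer to the pro-category bookkeeping of \cite[Appendix~A]{bouthier2022perverse} — which is a reasonable elision but exactly where the non-trivial content sits, so in a final write-up you should make that step concrete as the paper does.
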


\begin{proof}
    Let \( f \colon X \to Y \) and \( g \colon Y \to Z \) be morphisms such that
    \[
    f \simeq f_0 \times_{Y_0} Y \quad \text{and} \quad g \simeq g_0 \times_{Z_0} Z,
    \]
    where \( f_0 \colon X_0 \to Y_0 \) and \( g_0 \colon Y'_0 \to Z_0 \) are locally closed embeddings between perfect schemes, perfectly finitely presented over \( \spec\resfield \) over \( \spec \resfield \), and of pure codimensions \( d \) and \( d' \), respectively, and \( Z \to Z_0 \) and \( Y \to Y_0 \) are strongly pro perfectly smooth. 

    Note that a priori \( Y_0 \) and \( Y'_0 \) need not coincide.

    Choose a presentation \( Z \simeq \lim Z_\alpha \) of \( Z\to Z_0 \) as a strongly pro perfectly smooth morphism,  and define
    \(
    Y'_\alpha := Y'_0 \times_{Z_0} Z_\alpha.
    \)
    After possibly enlarging \( \alpha \), we may assume there exists a uo-equidimensional morphism \( Y'_\alpha \to Y_0 \). Define
    \[
    g_\alpha := g_0 \times_{Y'_0} Z_\alpha \quad \text{and} \quad f_\alpha := f_0 \times_{Y_0} Y'_\alpha.
    \]
    Then \( f \simeq f_\alpha \times_{Y'_\alpha} Y \) and \( g \simeq g_\alpha \times_{Z_\alpha} Z \), so the composition satisfies
    \[
    g \circ f \simeq (g_\alpha \circ f_\alpha) \times_{Z_\alpha} Z.
    \]
    
    The statement for perfectly finitely presented perfect schemes  is analogous to the case of finitely presented schemes, see \cite[Lemma 2.1.6(b)]{bouthier2022perverse}. we find that \( g_\alpha \circ f_\alpha \) is of pure codimension \( d + d' \)  and the result follows.
\end{proof}

\begin{para}\label{dimthry:prop:constantrelativedimensionbetweenplacid} {\bf Pfp-representable equidimensional morphisms of relative dimension $d$ between perfectly placid perfect $\infty$-stacks.}
    Let \( f\colon \varstack{X} \to \varstack{Y} \) be a pfp-representable morphism (see \cref{perfstacks:def:pfprepresentable}) between perfectly placid $\infty$-stacks. We say that \( f \) is weakly equidimensional / equidimensional / uo-equidimensional \emph{of relative dimension \( d \)} if there exists a perfectly smooth covering \( \coprod Y_\alpha \to \varstack{Y} \), where each \( Y_\alpha \) is an affine scheme admitting a perfectly placid presentation, and affine \'etale covers \( \coprod Z_{\alpha,\beta} \to \coprod \varstack{X} \times_{\varstack{Y}} Y_\alpha \), such that the composition
    \[
    \coprod Z_{\alpha,\beta} \to \coprod \varstack{X} \times_{\varstack{Y}} Y_\alpha \to \coprod Y_\alpha
    \]
    is weakly equidimensional / equidimensional / uo-equidimensional of relative dimension \( d \).
\end{para}

\begin{definition}
    \label{dimthry:prop:purecodimstacks} A weakly equidimensional, perfectly finitely presented, locally closed embedding of relative dimension $-d$ will be called \emph{of pure codimension $d$}.
\end{definition}

\begin{remark}
    The notions of pfp-representable morphisms with constant relative dimension given in \cref{dimthry:prop:constantrelativedimension} and \cref{dimthry:prop:constantrelativedimensionbetweenplacid} are well-defined in the sense that the relative dimension \( d \) is uniquely determined.
    This can be shown using the methods of \cite[\S 2.2]{bouthier2022perverse}, but we omit the proof for brevity.
\end{remark}

\begin{remark}
    \cref{dimtry:prop:uo_equidim_cover_by_smooth_imply_uo_equidim} and  \cref{dimthry:prop:pfpfbetweenplacidpresentedcorollaries} correspond to \cite[Corollary 1.1.7]{bouthier2022perverse} and \cite[Lemma 2.2.10]{bouthier2022perverse}, respectively. The statements here are slightly more involved, as perfectly smooth morphisms do not satisfy the IPP property, unlike in the non-perfect setting.
\end{remark}

\begin{proposition}\label{dimtry:prop:uo_equidim_cover_by_smooth_imply_uo_equidim}
    Let \( f \colon X \onto Y \) be a perfectly finitely presented, perfectly smooth, surjective morphism of affine schemes admitting perfectly placid presentations. If \( X \) is strongly pro-perfectly smooth, then \( Y \to * \) is uo-equidimensional.
\end{proposition}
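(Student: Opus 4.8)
\emph{Strategy.} The plan is to descend $f$ to a finite-level model $W_\gamma \onto Y_\gamma$, transport the uo-equidimensionality of $X$ over $\resfield$ down to $W_\gamma$, and then run the classical ``cancellation'' argument for the perfectly smooth surjection $W_\gamma \onto Y_\gamma$. The transport is the delicate step: as flagged in the remark preceding the statement, perfectly smooth morphisms do \emph{not} satisfy the IPP property, so -- unlike in \cite[Corollary 1.1.7]{bouthier2022perverse} -- one cannot compare the two relevant perfectly placid presentations of $X$ through perfectly smooth maps directly, and must instead route through the class of uo-equidimensional morphisms, which \emph{does} satisfy IPP (\cref{dimthry:prop:uoequidimsatisfyIPP}). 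Throughout, since $X$ and $Y$ are affine, all perfectly placid presentations in play may be taken to have affine terms.

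\emph{Step 1: reduction to a finite level.} Using the IPP property of uo-equidimensional morphisms together with their stability under composition (cf.\ \cref{dimthry:prop:compositionofequidim}), to prove that $Y \to \spec\resfield$ is uo-equidimensional in the sense of \cref{dimthry:def:equidimforplacidpresented} it suffices to exhibit \emph{one} perfectly placid presentation $Y \simeq \lim_\gamma Y_\gamma$ and one index $\gamma_0$ with $Y_{\gamma_0} \to \spec\resfield$ uo-equidimensional in the finite-type sense of \cref{dimthry:def:equidimensionaloffinitetype}: given any other placid presentation, IPP applied to the pair produces a uo-equidimensional morphism from one of its terms down to $Y_{\gamma_0}$, and composition with $Y_{\gamma_0} \to \spec\resfield$ finishes.

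\emph{Step 2: descent of $f$.} Fix a placid presentation $Y \simeq \lim_\beta Y_\beta$. Since $f$ is perfectly finitely presented, it descends -- as in the proof of \cref{dimthry:prop:pfpfbetweenplacidpresentedisequidimiffbasechangeofpfp}, via \cite[\href{https://stacks.math.columbia.edu/tag/01ZM}{Tag 01ZM}]{stacks-project} -- to a morphism $f_{\beta_0}\colon X_{\beta_0}\to Y_{\beta_0}$ with $X_{\beta_0}$ perfectly finitely presented over $\resfield$ and $X \simeq X_{\beta_0}\times_{Y_{\beta_0}} Y$; enlarging $\beta_0$ we may assume $f_{\beta_0}$ is perfectly smooth and surjective, both being detected at a finite level (for surjectivity by quasi-compactness of the constructible topology, as in the proof of \cref{perfstacks:prop:graddream}). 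For $\gamma \ge \beta_0$ put $W_\gamma := X_{\beta_0}\times_{Y_{\beta_0}} Y_\gamma$. Then $X \simeq \lim_{\gamma\ge\beta_0} W_\gamma$ is a perfectly placid presentation of $X$ -- the transition maps are base changes of the perfectly smooth affine maps $Y_\gamma \to Y_{\gamma'}$ -- and each $W_\gamma \to Y_\gamma$, being a base change of $f_{\beta_0}$, is perfectly smooth and surjective.

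\emph{Step 3: transport via IPP, and cancellation.} Since $X$ is strongly pro perfectly smooth over $\resfield$, write $X \simeq \lim_\alpha X_\alpha$ with each $X_\alpha \to \spec\resfield$ perfectly finitely presented and perfectly smooth; this is a perfectly placid presentation, and each $X_\alpha \to \spec\resfield$ is uo-equidimensional (a perfectly smooth morphism is \'etale-locally a projection from $\affinespace^n_{\perf}$, hence flat, universally open and equidimensional). Applying the IPP property of uo-equidimensional morphisms to the presentations $X \simeq \lim_\alpha X_\alpha$ and $X \simeq \lim_\gamma W_\gamma$, for a suitable $\alpha$ we obtain an index $\gamma$ and a uo-equidimensional morphism $W_\gamma \to X_\alpha$ over $X$; composing with $X_\alpha \to \spec\resfield$ shows that $q\colon W_\gamma \to \spec\resfield$ is uo-equidimensional in the finite-type sense. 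Now run the finite-type cancellation (the analogue of \cite[Corollary 1.1.7]{bouthier2022perverse}) for the perfectly smooth surjection $p\colon W_\gamma \onto Y_\gamma$ of perfect schemes perfectly finitely presented over $\resfield$ with $q = r\circ p$ uo-equidimensional, $r\colon Y_\gamma \to \spec\resfield$: universal openness of $r$ descends along the surjection $p$ (after any base change $p$ stays surjective, so the image of an open set is the image of its open preimage under the universally open $q$); and since the target is a point, weak equidimensionality of $r$ equals equidimensionality, while $\dimfunc_r$ is locally constant because $\dimfunc_r\circ p = \dimfunc_{q} - \dimfunc_{p}$ is a difference of locally constant functions ($\dimfunc_q$ as $q$ is weakly equidimensional, $\dimfunc_p$ as $p$ is perfectly smooth) and a function whose pullback along the open surjection $p$ is locally constant is itself locally constant. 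Thus $Y_\gamma \to \spec\resfield$ is uo-equidimensional; as $\{Y_{\gamma'}\}_{\gamma'\ge\gamma}$ is a perfectly placid presentation of $Y$ with a good index, Step 1 concludes the proof.

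\emph{Main obstacle.} The crux is Step 3, and what makes it heavier than its equal-characteristic counterpart is precisely the failure of IPP for perfectly smooth morphisms: the ``smooth over $\resfield$'' presentation of $X$ and the ``pulled back from $Y$'' presentation cannot be linked through perfectly smooth maps, forcing the detour through uo-equidimensional morphisms. A secondary point requiring care is that surjectivity survives all the base changes in Steps~2--3, which is exactly what powers the cancellation argument.
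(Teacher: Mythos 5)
Your proof is correct and follows essentially the same route as the paper's: descend $f$ to a perfectly finitely presented model over a finite stage of a placid presentation of $Y$, compare the pulled-back presentation of $X$ with its strongly pro perfectly smooth presentation via the IPP property of uo-equidimensional morphisms (since perfectly smooth morphisms themselves fail IPP), and transfer equidimensionality from the finite-level model of $X$ to that of $Y$ by the additivity of dimension functions together with surjectivity. Two minor cosmetic differences from the paper: you establish universal openness of $Y_\gamma \to \ast$ by descending it along the surjection $p$, whereas the paper simply cites that a morphism from a finitely presented scheme to a point is always universally open (\cite[\href{https://stacks.math.columbia.edu/tag/0383}{Lemma 0383}]{stacks-project}); and you spell out explicitly that surjectivity can be arranged at a finite level (via quasi-compactness of the constructible topology), a point the paper uses implicitly when deducing equidimensionality of $Y_0$ from that of $X_0$.
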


\begin{proof}
    Take a perfectly finitely presented, uo-equidimensional model \( f_0 \colon X_0 \to Y_0 \) such that $Y\to Y_0$ is strongly pro perfectly smooth and \( f \simeq f_0 \times_{Y_0} Y \) (see \cref{dimthry:prop:pfpfbetweenplacidpresentedisequidimiffbasechangeofpfp}). 

    Since $Y\to Y_0$ is strongly pro perfectly smooth, it is uo-equidimensional. Thus, it is enough to show $Y_0\to *$ is uo-equidimensional. The morphism $Y_0\to *$ is always universally open (see \cite[\href{https://stacks.math.columbia.edu/tag/0383}{Lemma 0383}]{stacks-project}) so it is enough to show it is equidimensional.
    
    We claim that it is enough to show \( X_0 \) is equidimensional. Note that \(X_0 \to Y_0\) is equidimensional and dimension function compose additively (this follows from the finitely presented case \cite[Lemma 2.1.6(b)]{bouthier2022perverse} by taking a finitely presented model). Thus, if $X_0$ is equidimensional, then $Y_0$ is equidimensional as well.

    Let \( Y \simeq\lim_{\alpha} Y_\alpha \) be a presentation of \( Y \to Y_0 \) as a strongly pro-perfectly smooth morphism. Then  $X$ has a perfectly placid presentation 
    \( X \simeq\lim_{\alpha} X_\alpha \) with \( X_\alpha:= X_0 \times_{Y_0} Y_\alpha \) and another one \( X \simeq \lim_{\beta} X_\beta' \), in which each \( X_\beta' \) is perfectly smooth. By the IPP property (see \cref{dimthry:prop:uoequidimsatisfyIPP}), there exists a uo-equidimensional morphism \( X_\alpha \to X_\beta' \). Since \( X_\beta' \) is equidimensional (see \cref{dimthry:prop:flatisuoequidim}), so is \( X_\alpha \). Finally, replacing \( f_0 \) with \( f_\alpha \simeq f_0 \times_{Y_0} Y_\alpha \), we obtain that $X_0$ is equidimensional, and we are done.   
\end{proof}

\begin{proposition}\label{dimthry:prop:pfpfbetweenplacidpresentedcorollaries}
    Let \( f:Z\to Y \) be a perfectly finitely presented morphism of connected perfect affine schemes admitting placid presentations. If \( Y \to * \) and \( Z \to * \) are equidimensional, then \( f \) is weakly equidimensional of relative dimension $d$ for some $d\in \ints$.
\end{proposition}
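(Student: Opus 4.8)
The plan is to descend $f$ to a morphism between perfect schemes perfectly finitely presented over $\resfield$ which is visibly weakly equidimensional, and then to conclude by \cref{dimthry:prop:pfpfbetweenplacidpresentedisequidimiffbasechangeofpfp} and \cref{dimthry:prop:constantrelativedimension}. First I would fix a perfectly placid presentation $Y \simeq \lim_\alpha Y_\alpha$. Since $Y \to *$ is equidimensional, \cref{dimthry:def:equidimforplacidpresented} applied to this presentation (and to the trivial presentation of $*$) yields an index $\alpha_1$ with $Y_{\alpha_1}\to *$ equidimensional in the sense of \cref{dimthry:def:equidimensionaloffinitetype}, that is, $y \mapsto \dim_y Y_{\alpha_1}$ is locally constant on $|Y_{\alpha_1}|$. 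For every $\alpha \geq \alpha_1$ the transition map $Y_\alpha \to Y_{\alpha_1}$ is perfectly smooth, hence weakly equidimensional (this is part of what \cref{dimthry:prop:uoequidimsatisfyIPP} asserts; concretely, its dimension function is locally constant because étale-locally the map is a projection $Y_{\alpha_1} \times \affinespace^n_{\perf} \to Y_{\alpha_1}$). Since dimension functions add along compositions (reduce to the finitely presented case via \cref{perfschemes:prop:existsfinitetypemodel}, cf.\ \cite[Lemma 2.1.6(b)]{bouthier2022perverse}), it follows that $Y_\alpha \to *$ is equidimensional for every $\alpha \geq \alpha_1$.

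Next, since $f$ is perfectly finitely presented, I would argue exactly as in the proof of \cref{dimthry:prop:pfpfbetweenplacidpresentedisequidimiffbasechangeofpfp} — using \cite[\href{https://stacks.math.columbia.edu/tag/01ZM}{Tag 01ZM}]{stacks-project} and the fact that $(-)_{\perf}$ preserves limits — to produce $\alpha_2 \geq \alpha_1$ and a perfectly finitely presented morphism $f_{\alpha_2}\colon W_{\alpha_2} \to Y_{\alpha_2}$ between perfect schemes perfectly finitely presented over $\resfield$ with $f \simeq f_{\alpha_2}\times_{Y_{\alpha_2}} Y$. Putting $W_\alpha := W_{\alpha_2}\times_{Y_{\alpha_2}} Y_\alpha$ for $\alpha \geq \alpha_2$ gives a perfectly placid presentation $Z \simeq \lim_\alpha W_\alpha$: the transition maps $W_\alpha \to W_\beta$ are base changes of the perfectly smooth affine maps $Y_\alpha \to Y_\beta$, hence perfectly smooth and affine, and each $W_\alpha$ is perfectly finitely presented over $\resfield$. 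Because $Z \to *$ is equidimensional, \cref{dimthry:def:equidimforplacidpresented} applied to this presentation gives $\alpha_3 \geq \alpha_2$ with $W_{\alpha_3}\to *$ equidimensional, i.e.\ $w \mapsto \dim_w W_{\alpha_3}$ locally constant on $|W_{\alpha_3}|$.

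Then $f_{\alpha_3}\colon W_{\alpha_3} \to Y_{\alpha_3}$ will be weakly equidimensional: additivity of dimension functions for $W_{\alpha_3}\xrightarrow{f_{\alpha_3}} Y_{\alpha_3}\to *$ gives $\dimfunc_{f_{\alpha_3}} = \dimfunc_{W_{\alpha_3}\to *} - \dimfunc_{Y_{\alpha_3}\to *}\circ f_{\alpha_3}$, whose first summand is locally constant (as $W_{\alpha_3}\to *$ is equidimensional) and whose second summand is locally constant, being the pullback along the continuous map $f_{\alpha_3}$ of the locally constant function $\dimfunc_{Y_{\alpha_3}\to *}$ (here $\alpha_3 \geq \alpha_1$). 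To extract a single relative dimension I would use connectedness of $Z$: the image of $Z$ under the projection $Z = W_{\alpha_3}\times_{Y_{\alpha_3}} Y \to W_{\alpha_3}$ lies in one connected component $W' \subseteq W_{\alpha_3}$, which is a perfectly finitely presented clopen subscheme since $|W_{\alpha_3}|$ is Noetherian, and $Z \simeq W'\times_{Y_{\alpha_3}} Y$ compatibly with the maps to $Y$. The restriction $f'\colon W' \to Y_{\alpha_3}$ of $f_{\alpha_3}$ is perfectly finitely presented, weakly equidimensional, and has constant dimension function $\dimfunc_{f'} \equiv d$ for some $d \in \ints$ since $W'$ is connected. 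As $Y \to Y_{\alpha_3}$ is strongly pro perfectly smooth (being $\lim_{\alpha \geq \alpha_3} Y_\alpha$ with the requisite transition maps) and $f \simeq f'\times_{Y_{\alpha_3}} Y$, \cref{dimthry:prop:pfpfbetweenplacidpresentedisequidimiffbasechangeofpfp} shows $f$ is weakly equidimensional and \cref{dimthry:prop:constantrelativedimension} shows it is of relative dimension $d$.

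The step I expect to require the most care is the interaction between the presentation $\{W_\alpha\}$ of $Z$ and the presentation $\{Y_\alpha\}$ of $Y$: since perfectly smooth morphisms do not satisfy the IPP property, one cannot compare presentations abstractly as in \cite{bouthier2022perverse} and must instead work with dimension functions on finite type models, using their additivity together with the local constancy of the relative dimension of a perfectly smooth morphism. The only other point needing a genuine argument is the passage from ``weakly equidimensional'' to ``of relative dimension $d$'', which is exactly where connectedness of $Z$ enters.
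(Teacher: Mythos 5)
Your proposal is correct and matches the paper's argument in substance: both obtain a perfectly finitely presented model of $f$ over some stage of a placid presentation (via the finite-type approximation argument inside \cref{dimthry:prop:pfpfbetweenplacidpresentedisequidimiffbasechangeofpfp}), replace the model by a later stage at which source and target are equidimensional, use connectedness of $Z$ to pass to a connected component and pin down a constant relative dimension, and then conclude by pulling back along a strongly pro perfectly smooth morphism. The paper compresses the ``make the model equidimensional'' step by citing the argument of \cref{dimtry:prop:uo_equidim_cover_by_smooth_imply_uo_equidim}, whereas you spell out the compatible presentation $\{W_\alpha\}$ of $Z$ and invoke \cref{dimthry:def:equidimforplacidpresented} directly; both work. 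One small inaccuracy in your closing remark: the IPP property \emph{does} hold for the classes of (weakly/uo-)equidimensional and universally open morphisms (\cref{dimthry:prop:uoequidimsatisfyIPP}); it is only \emph{perfectly smooth} morphisms that a priori fail IPP. So the IPP comparison of presentations remains available here, and you did not actually need to avoid it — though your direct use of the definition, together with additivity of dimension functions along perfectly smooth transition maps, is an equally valid (and perhaps more transparent) way to land at the same place.
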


\begin{proof}
    By \cref{dimthry:prop:pfpfbetweenplacidpresentedisequidimiffbasechangeofpfp}, there exists a morphism \( f_1 \colon Y_1 \to Z_1 \) between perfect affine schemes, perfectly finitely presented over \( \spec \resfield \), such that \( f \simeq f_1 \times_{Z_1} Z \), with \( Z \to Z_1 \) a strongly pro perfectly smooth morphism. By the same argument as \cref{dimtry:prop:uo_equidim_cover_by_smooth_imply_uo_equidim}, we may assume \( Y_1, Z_1 \) are equidimensional. Since $Y,Z$ are connected we may assume $Y_1,Z_1$ are connected by passing to their respective connected components.

    Thus, \( f_1 \) is a morphism between equidimensional, connected perfect affine schemes, perfectly finitely presented over \( \spec \resfield \). Hence, \( f_1 \) is weakly equidimensional and of relative dimension $d$ for some $d$. Since \( f \) is a base change of a weakly equidimensional morphism of relative dimension $d$ along a strongly pro-perfectly smooth morphism, it is itself weakly equidimensional of relative dimension $d$.
\end{proof}

\section{Perfectly placid stratifications and semi-small morphisms.}\label{stratification}

In this section, we define a notion of perfectly placid stratification—an infinite stratification whose strata are perfectly placid perfect \(\infty\)-substacks. We follow \cite[\S 2.4]{bouthier2022perverse} closely.

Note that in our setting, \cref{prop:comp_of_comp} has a slightly nicer formulation than the corresponding result of \cite[\S 2.4.1(c)]{bouthier2022perverse} (compare \cref{perfstacks:remark:noneedtowritered}).

\begin{para}{\bf Complementary substacks.}\label{stratification:stratification}
    Let \( \varstack{X} \) be a perfect \( \infty \)-stack, and let \( \varstack{Y} \into \varstack{X} \) be a (pfp-)locally closed embedding.
    \begin{mylist}
        \item We define \( \varstack{X} \smallsetminus \varstack{Y} \) to be the perfect substack of \( \varstack{X} \) such that, for any test object \( U \), the anima \( (\varstack{X} \smallsetminus \varstack{Y})(U) \subset \varstack{X}(U) \) consists of morphisms \( U\to \varstack{X} \) such that \( U \times_{\varstack{X}} \varstack{Y} = \emptyset \).
        
        \item If \( \varstack{Y} \into \varstack{X} \) is a (pfp-)closed embedding, then \( \varstack{X} \smallsetminus \varstack{Y} \into \varstack{X} \) is a (pfp-)open embedding. Similarly, if \( \varstack{Y} \into \varstack{X} \) is a (pfp-)open embedding, then \( \varstack{X} \smallsetminus \varstack{Y} \into \varstack{X} \) is a (pfp-)closed embedding. This follows from the case where \( \varstack{X} \) is a perfect affine scheme, perfectly finitely presented over \( \spec \resfield \).

        \item\label{prop:comp_of_comp} We have that $\varstack{X}\smallsetminus (\varstack{X}\smallsetminus \varstack{Y}) \simeq \varstack{Y}$.
    \end{mylist}
\end{para}

\begin{para}{\bf Notations and definitions.}
    \begin{mylist}
        \item\label{stratification:def:collectionoflocallyclosed}
        Let \( \explicitset{\varstack{X_\alpha} \into \varstack{X}}_{\alpha \in I} \) be a collection of pfp-locally closed embeddings of perfect \( \infty \)-stacks such that \( \varstack{X}_{\alpha} \times_{\varstack{X}} \varstack{X}_{\beta} = \emptyset \) for \( \alpha \ne \beta \).

        \item We say that a locally closed embedding \( \varstack{X}' \into \varstack{X} \) is {\em adapted} if, for each \( \alpha \in I \), we have either \( \varstack{X}_\alpha \times_{\varstack{X}} \varstack{X}' = \emptyset \), or \( \varstack{X}_\alpha\into \varstack{X} \) factors through \( \varstack{X}' \).

        \item For any locally closed embedding \( \varstack{X}' \into \varstack{X} \), we let \( I_{\varstack{X}'} \subset I \) denote the set of indices \( \alpha \) for which \( \varstack{X}_\alpha\times_{\varstack{X}}\varstack{X}'\neq \emptyset \).
    \end{mylist}
\end{para}

\begin{para}{\bf Constructible stratification.}
    Let \( \explicitset{\varstack{X_\alpha} \into \varstack{X}}_{\alpha \in I} \) be as in \cref{stratification:def:collectionoflocallyclosed}.
    \begin{mylist}
        \item\label{stratification:def:finitestrat}
        We say that \( \explicitset{\varstack{X_\alpha} \into \varstack{X}}_{\alpha \in I} \) is a \emph{finite constructible stratification} if \( I \) is finite and there exists a sequence of open embeddings
        \[
        \emptyset = \varstack{X}_0 \subset \varstack{X}_1 \subset \dots \subset \varstack{X}_n = \varstack{X}
        \]
        and an ordering \( \alpha_1, \ldots, \alpha_n \) of \( I \) such that each embedding \( \varstack{X}_{\alpha_i}\into \varstack{X} \) induces an isomorphism \( \varstack{X}_{\alpha_i} \xto{\sim} \varstack{X}_{i} \smallsetminus \varstack{X}_{i-1} \).

        \item\label{stratification:def:boundedstrat}
        We say that \( \explicitset{\varstack{X_\alpha} \into \varstack{X}}_{\alpha \in I} \) is a \emph{bounded constructible stratification} if there exists a filtered diagram \( (\varstack{X}_{j})_{j \in J} \) of adapted pfp-open embeddings such that \( \varstack{X} \simeq \colim \varstack{X}_j \), and for all \( j \), the collection \( \explicitset{\varstack{X}_{\alpha} \into \varstack{X}_j}_{\alpha \in I_{\varstack{X}_j}} \) is a finite constructible stratification.
    \end{mylist}
\end{para}

The following properties of constructible stratifications are  straightforward and follow by arguments entirely analogous to those in \cite[\S 2.4]{bouthier2022perverse}: 

\begin{para}{\bf Pullback of constructible stratifications.}
    \label{stratification:prop:pullbackofstrat}
    Let \( f \colon \varstack{X} \to \varstack{Y} \) be a morphism of perfect \( \infty \)-stacks, and let \(  \explicitset{\varstack{Y}_{\alpha} \into \varstack{Y}}_{\alpha \in I} \) be a (finite/bounded) constructible stratification. Then the pullback
    \[
    \explicitset{f^{-1}(\varstack{Y}_{\alpha}) \into \varstack{X}}_{\alpha \in I}
    \]
    is a (finite/bounded) constructible stratification.
\end{para}

\begin{para}{\bf Perfectly placidly stratified \(\infty\)-stacks.}
    A \emph{perfectly placid stratification} is a bounded constructible stratification \( \explicitset{\varstack{X}_{\alpha} \into \varstack{X}}_{\alpha \in I} \) in which each stratum \( \varstack{X}_{\alpha} \) is perfectly placid. A \emph{perfectly placidly stratified \( \infty \)-stack} is an \( \infty \)-stack \( \varstack{X} \) equipped with a perfectly placid stratification \( \explicitset{\varstack{X}_{\alpha} \into \varstack{X}}_{\alpha \in I} \).
\end{para}

\begin{para}\label{stratification:def:semismall}{\bf (Semi-)small morphisms.}
    \begin{mylist}
        \item Let \( f \colon \varstack{X} \to \varstack{Y} \) be a morphism of perfect \( \infty \)-stacks, and let \( \explicitset{\varstack{Y}_{\alpha}}_{\alpha \in I} \) be a perfectly placid stratification. Let \( f_{\alpha} \colon \varstack{X}_{\alpha} \to \varstack{Y}_{\alpha} \) denote the restriction of \( f \) to the preimage of each stratum.

        \item Assume that \( \varstack{X} \) is perfectly placid, each \( \varstack{X}_{\alpha} \subset \varstack{X} \) is of pure codimension \( b_{\alpha} \), and each \( f_{\alpha} \colon \varstack{X}_{\alpha} \to \varstack{Y}_{\alpha} \) is locally pfp-representable and equidimensional of constant relative dimension \( \delta_{\alpha} \).

        \item If \( \delta_{\alpha} \leq b_{\alpha} \) for all \( \alpha \in I \), we say that \( f \) is \emph{semi-small}.

        \item Let \( \varstack{U} \into \varstack{Y} \) be a \( \explicitset{\varstack{Y}_{\alpha}}_{\alpha \in I} \)-adapted open substack. We say that a semi-small morphism \( f \) is \emph{\( \varstack{U} \)-small} if, for all \( \alpha \in I \smallsetminus I_{\varstack{U}} \), we have \( \delta_{\alpha} < b_{\alpha} \).
    \end{mylist}
\end{para}

\section{The GKM stratification.}\label{GKMstrat}
In this section we discuss the basic definitions, geometry and dimension theory of a stratification on $\compactelements$.

\subsection*{Some generalities.}

\begin{para}{\bf The Witt vector functor.}\label{wittloopandarc:def:wittvecs}
\begin{mylist}
    \item Let \( \wittzp:{\mathbb{F}_p}\text{-}\Alg^{\perf}\to {\ints_p}\text{-}\Alg \) denote the usual Witt vector functor.
    
    \item Let \( \resfield \) be an algebraically closed field of characteristic \( p \), and let \( K \) be a finite extension of \( \wittzp(\resfield)[1/p] \).
    
    \item Let \( \basicints \) be the integral closure of \( \wittzp(\resfield) \) in \( K \), and let $\uniformizer\in \basicints$ be a uniformizer. 
    
    \item For every perfect $\resfield$-algebra $R$ define \( \witt(R) := \wittzp(R) \otimes_{\wittzp(\resfield)} \basicints \), the \( \basicints \)-Witt vector construction. We view this as a functor \( \resfield\text{-}\Alg^{\perf} \to \basicints\text{-}\Alg \).
    
    \item For every perfect $\resfield$-algebra $R$ let \( \wittn(R) := \witt(R)/(\uniformizer^n) \).
    
    \item Similarly, we have the functor \( \resfield\text{-}\Alg^{\perf} \to \basicextension\text{-}\Alg \) sending \( R \mapsto \witt(R)\left[1/p\right] \).
\end{mylist}
\end{para}

\begin{para}\label{wittloopandarc:def:loopandarc}{\bf Arc and loop functors.}
    \begin{mylist}
        \item The functors \( \witt(-) \), \( \wittn(-) \), and \( \witt(-)[1/p] \) from \cref{wittloopandarc:def:wittvecs} induce natural functors
        \( \arcspace, \arcspace_n \colon \prestacks_{\basicints} \to \prestacks_{\perf,\resfield} \), and
        \( \loopspace \colon \prestacks_{\basicextension} \to \prestacks_{\perf,\resfield} \). E.g. for $\varstack{X}\in \prestacks_{\basicextension}$ and $R\in \resfield\text{-}\Alg$ we get \[\loopspace (\varstack{X})(R):= \varstack{X}(\witt(R)).\]

        \item If \( X \) is an affine scheme, finitely presented over \( \basicints \), then \( \arcspace_n( X )\) is a perfect affine scheme perfectly finitely presented over $\resfield$, and \( \arcspace( X) \) is a perfect affine scheme \cite[\S~1.1.1]{zhu2017affine}.
    \end{mylist}
\end{para}

\begin{example}\label{wittloopandarc:example:1daffinespace}
    Let \( X = \affinespace^1_{\basicints} \). Then \( \arcspace_n(X) \simeq \affinespace^n_{\perf} \), with the identification given by the morphism \( \affinespace^n_{\perf} \to \arcspace_n(X) \) defined on \( R \)-points by
    \[
    (a_0, \ldots, a_{n-1}) \mapsto \sum_{i=0}^{n-1} [a_i] \uniformizer^i,
    \]
    where \( [a_i] \) denotes the Teichmüller lift.

    We also have \( \arcspace (X )\simeq \lim_n \affinespace^n_{\perf} \simeq \spec\left( \resfield[a_i]_{i \geq 0} \right)_{\perf} \).
\end{example}

\begin{proposition}\label{wittloopandarc:properties:arpreserveetale}{\bf \'etale morphism are preserved by $\arcspace$.}
    Let \( f \colon X \to Y \) be an \'etale morphism of schemes, finitely presented over \( \basicints \). Then we have a Cartesian square:
    \[\begin{tikzcd}
	{\arcspace (X)} & {\arcspace (Y)} \\
	{\arcspace_0( X)} & {\arcspace_0( Y).}
	\arrow["\arcspace (f)", from=1-1, to=1-2]
	\arrow[from=1-1, to=2-1]
	\arrow[from=1-2, to=2-2]
	\arrow["\arcspace_0( f)",from=2-1, to=2-2]
    \end{tikzcd}\]
    In particular, the morphism $\arcspace (f):\arcspace (X)\to \arcspace (Y)$ is \'etale.
\end{proposition}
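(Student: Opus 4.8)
The plan is to reduce the Cartesian-square assertion to the classical statement that étale morphisms are formally étale, applied level by level to the Witt vector quotients, and then to pass to the limit. First I would recall that for a finitely presented affine scheme $X$ over $\basicints$ we have $\arcspace(X) = \lim_n \arcspace_n(X)$ by construction, and likewise $\arcspace_0(X) = X \times_{\spec\basicints} \spec\resfield$ (the mod-$\uniformizer$ reduction, i.e. the special fiber). So the content of the proposition is that for every perfect $\resfield$-algebra $R$ the square of sets
\[
\begin{tikzcd}
X(\witt(R)) \arrow[r] \arrow[d] & Y(\witt(R)) \arrow[d] \\
X(\wittn[0](R)) \arrow[r] & Y(\wittn[0](R))
\end{tikzcd}
\]
is Cartesian, where $\wittn[0](R) = R$. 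Equivalently, given a point $y \in Y(\witt(R))$ and a lift of its reduction $\bar y$ to a point $\bar x \in X(R)$ with $f(\bar x) = \bar y \bmod \uniformizer$, there is a \emph{unique} $x \in X(\witt(R))$ with $f(x) = y$ reducing to $\bar x$.

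The key step is the infinitesimal lifting criterion. For each $n$, the surjection $\wittn[n+1](R) \onto \wittn[n](R)$ has kernel a square-zero ideal (it is isomorphic to $R$ via the $n$-th Teichmüller/Verschiebung coordinate, and its square lands in $\uniformizer^{n+1}\cdot(\uniformizer\text{-stuff}) = 0$ in $\wittn[n+1](R)$), so since $f$ is étale hence formally étale, the reduction map
\[
\operatorname{Hom}_{\basicints}(\spec\wittn[n+1](R), X) \to \operatorname{Hom}_{\basicints}(\spec\wittn[n](R), X) \times_{\operatorname{Hom}_{\basicints}(\spec\wittn[n](R), Y)} \operatorname{Hom}_{\basicints}(\spec\wittn[n+1](R), Y)
\]
is a bijection. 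In down-to-earth terms: given $x_n \in X(\wittn[n](R))$ and a compatible $y_{n+1} \in Y(\wittn[n+1](R))$, there is a unique $x_{n+1} \in X(\wittn[n+1](R))$ lifting $x_n$ and mapping to $y_{n+1}$. This is exactly the statement that $\arcspace_{n+1}(X) \to \arcspace_n(X) \times_{\arcspace_n(Y)} \arcspace_{n+1}(Y)$ is an isomorphism of perfect schemes (it is a bijection on $R$-points for all perfect $R$, and one checks it is, say, a closed immersion between pfp perfect schemes, or argues via \cref{perfschemes:prop:graddream}), hence by induction $\arcspace(X) \simeq \lim_n \arcspace_n(X) \isoto \arcspace_0(X) \times_{\arcspace_0(Y)} \lim_n \arcspace_n(Y) = \arcspace_0(X)\times_{\arcspace_0(Y)} \arcspace(Y)$, which is the desired Cartesian square.

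For the final sentence, étaleness of $\arcspace(f)$: since $\arcspace_0(f) = f\times_{\spec\basicints}\spec\resfield$ is étale (étale morphisms are stable under base change), its perfection $\arcspace_0(f)$ is étale by \cref{perfschemes:prop:propertiesofperfschemespreserved}\,(c); and étaleness is stable under base change, so the Cartesian square just established shows $\arcspace(f)$ is étale. The main obstacle is the bookkeeping in the square-zero claim: one must verify carefully that the kernel of $\wittn[n+1](R)\onto \wittn[n](R)$ is genuinely a square-zero ideal in the ramified $\basicints$-Witt setting (where $\uniformizer$ need not be $p$), and that the formally-étale lifting is applied relative to $\basicints$; this is a standard but slightly fiddly computation with Teichmüller coordinates that I would either spell out or cite from \cite{zhu2017affine,bhatt2017projectivity}. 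Alternatively, one can avoid the explicit square-zero verification by citing the known fact that $\arcspace$ (the Witt vector jet functor) preserves étale morphisms and carries Cartesian squares of smooth/étale type to Cartesian squares, as established in \cite[\S1.1]{zhu2017affine}.
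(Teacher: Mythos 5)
Your proof is correct and takes essentially the same route as the paper: the paper simply notes that $\arcspace_0(f)$ is \'etale and then defers to \cite[Lemma 3.1.3]{bouthier2022perverse}, whose argument is precisely the level-by-level formal \'etaleness lifting (square-zero kernels of $\wittn[n+1](R)\onto\wittn[n](R)$) followed by a passage to the limit that you spell out. The only cosmetic quibble is an indexing mismatch with the paper's convention $\wittn(R)=\witt(R)/\uniformizer^n$ (under which $\wittn[0](R)=0$, not $R$, so the base level is really $\wittn[1](R)=R$ and the square-zero claim holds for $n\geq 1$); this does not affect the validity of the argument.
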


\begin{proof}
    Since $f$ is \'etale, $f\times_{\spec \basicints}\resfield: X\times_{\spec \basicints}\resfield \to Y\times_{\spec \basicints}\resfield$ is \'etale. Since perfection preserves \'etale (see \cref{perfschemes:prop:propertiesofperfschemespreserved}) we get that $\arcspace_0(f)$ is \'etale.
    The rest of the proof follows the same as in \cite[Lemma 3.1.3]{bouthier2022perverse}.
\end{proof}

\begin{example}\label{wittloopandarc:example:loopofGm}
    There is a natural isomorphism of group schemes\footnote{In the equal characteristic case, this holds only after reduction. In our setting, there is no such notion; see \cref{perfstacks:remark:noneedtowritered}.}
    \[
    \loopspace (\multiplicativegroup) \simeq \ints \times \arcspace (\multiplicativegroup).
    \]
\end{example}

\begin{proof}
    Consider the morphism \( \ints \times \arcspace (\multiplicativegroup) \to \loopspace (\multiplicativegroup) \) given by
    \[
    (n, f) \mapsto \uniformizer^n f.
    \]
    We want to show that this morphism is an isomorphism.

    It is a closed embedding: it is injective, and its restriction to the connected component of the identity is the canonical closed embedding \( \arcspace (\multiplicativegroup) \into \loopspace (\multiplicativegroup) \). Thus, by \cref{perfstacks:prop:graddream}, it suffices to check that the map is an isomorphism on \( L \)-points for any algebraically closed field \( L / \resfield \). This holds.
\end{proof}

\begin{para}{\bf Notations.}
\begin{mylist}

    \item\label{wittloopandarc:def:constants} Let \( \thegroup \) be a connected reductive group over \( \basicints \) with Lie algebra \( \thelie \). Let \( g \mapsto \theadjoint_g \) denote the adjoint action of \( \thegroup \) on \( \thelie \). Let \( T \subset B \) be a torus contained in a Borel subgroup. Let \( \theweylgroup = N(\thetorus)/\thetorus \) be the Weyl group. Let \( X_*(\thetorus) \) denote the cocharacter lattice, and let \( \theextendedweylgroup \) be the extended affine Weyl group. Let \( R \subset X_*(T) \) denote the set of roots. Let \( \thetoruslie \) and \( \theborellie \) be the Lie algebras of \( \thetorus \) and \( \theborel \), respectively. Set \( r = \dim \thetoruslie \), the rank of the group.
    \item\label{wittloopandarc:def:chevalley} Let \( \chevalley = \thetoruslie // \theweylgroup := \spec \basicints[\thetoruslie]^{\theweylgroup} \). Recall that \( \chevalley \simeq \thelie // \thegroup \). We have canonical projections \( \chi \colon \thelie \to \chevalley \) and \( \pi \colon \thetoruslie \to \chevalley \). Recall that \( \pi \) is a finite, flat, surjective morphism. Recall that $\chevalley\simeq \affinespace_\basicints^r$.    \item\label{wittloopandarc:def:discriminant} Let \( \thediscriminant \in \basicints[\thetoruslie] \) denote the discriminant function \( \thediscriminant = \prod_{\alpha \in \theroots} \operatorname{d}\alpha \). We have \( \thediscriminant \in \basicints[\chevalley] \subset \basicints[\thetoruslie] \). Define \( \chevalley^{\regsemisimple} \) to be the locus where \( \thediscriminant \ne 0 \). Set \( \thelie^{\regsemisimple} := \chi^{-1}(\chevalley^{\regsemisimple}) \) and \( \thetoruslie^{\regsemisimple} := \pi^{-1}(\chevalley^{\regsemisimple}) \).

    \item\label{wittloopandarc:def:arcofchi} The morphism \( \chi \) induces a morphism of arc spaces \( \arcspace (\chi) \colon \arcspace (\thelie) \to \arcspace (\chevalley) \) and \( \arcspace_n( \chi )\colon \arcspace_n( \thelie )\to \arcspace_n( \chevalley) \).
\end{mylist}
\end{para}

\begin{para}{\bf The Iwahori group and its Lie algebra.}
    \begin{mylist}
        \item\label{wittloopandarc:def:iwahori} Let \( \iwahory := \arcspace (\thegroup) \times_{\arcspace_0(\thegroup)} \arcspace_0( \theborel )\) denote a fixed Iwahori subgroup. Let \( \iwahory_n := \arcspace_n( \thegroup) \times_{\arcspace_0( \thegroup)} \arcspace_0( \theborel) \) denote the truncated Iwahori subgroup.

        \item Let \( \liewahory := \arcspace (\thelie) \times_{\arcspace_0(\thelie)} \arcspace_0( \theborel) \) and \( \liewahory_n := \arcspace_n(\thelie) \times_{\arcspace_0( \thelie)} \arcspace_0( \theborel) \) denote the corresponding Iwahori Lie algebras.

        \item\label{wittloopandarc:def:arcofnu} The morphism \( \chi \) induces a morphism of arc spaces \( \arcspace (\nu) \colon  \liewahory \to \arcspace (\chevalley) \) and \( \arcspace_n(\nu) \colon  \liewahory_n \to \arcspace_n( \chevalley) \).
    \end{mylist}
\end{para}

\begin{para}{\bf Stratification of \( \arcspace (\chevalley) \).}
    \begin{mylist}
        \item By an analogue of \cite[\S 3.2.2.]{bouthier2022perverse}
        There is a canonical bounded constructible stratification \( \arcspace(\chevalley)_{(\thediscriminant; n)} \) on \( \arcspace (\chevalley_\bullet) \), given by the valuation of the function \( \thediscriminant \) defined in \cref{wittloopandarc:def:discriminant}.

        \item Similarly, there is a canonical bounded constructible stratification \( \arcspace(\thetoruslie)_{(\thediscriminant; n)} \) on \( \arcspace (\thetoruslie_\bullet )\), also given by the valuation of \( \thediscriminant \).

        \item Since \( \thediscriminant \colon \thetoruslie \to \affinespace^1 \) factors through \( \pi \colon \thetoruslie \to \chevalley \) (see \cref{wittloopandarc:def:discriminant}), we have
        \[
        \pi^{-1} \big( \arcspace(\chevalley)_{(\thediscriminant; n)} \big) = \arcspace(\thetoruslie)_{(\thediscriminant; n)}.
        \]
    \end{mylist}
\end{para}

\begin{para}{\bf Ramified extension.}
    \begin{mylist}
        \item We let $\basicextension' = \basicextension[\uniformizer^{1/a}]$ and $ \basicints' = \basicints[\uniformizer^{1/a}]$, where $a=|\theweylgroup|$.
        \item Let $\thetoruslie':= \weilres_{\basicints}^{\basicints'}(\thetoruslie\times_{\spec\basicints} \spec \basicints')$, and $\chevalley':= \weilres_{\basicints}^{\basicints'}(\chevalley\times_{\spec\basicints} \spec \basicints')$ where $\weilres_{\basicints}^{\basicints'}$ stands for Weil restriction.
        \item Then, for $R$ a perfect $\resfield$ algebra, \[\arcspace(\thetoruslie')(R)=\thetoruslie(\witt(R) \otimes_{\basicints} \basicints'),\quad \arcspace(\chevalley')(R)=\chevalley(\witt(R) \otimes_{\basicints} \basicints').\]
    \end{mylist}
\end{para}

\begin{para}{\bf The stratification on the twisted tori.}\label{stratoftorus:twistedstrat}
    \begin{mylist}
        \item For $w \in W$, we define \( \thetoruslie_{w} \) as in \cite[\S 3.3.3. (b)]{bouthier2022perverse}.
        We let $\thetoruslie_{w} = (\thetoruslie')^{\sigma w}$ where $\sigma:\basicints'\to \basicints'$ is a fixed generator of the Galois group of $\basicints'$ over $\basicints$.
        \item There is a non-canonical isomorphism \( \thetoruslie_w \simeq \affinespace^r_\basicints \). 
        \item For a function $\rootfunc:\theroots\to \tfrac{1}{a}\ints_{\geq0}$ and $w\in W$, we define a subscheme \( \thetoruslie_{w,\rootfunc}\subset\arcspace(\thetoruslie_w)\) as in 
          \cite[\S 3.3.3.(c)]{bouthier2022perverse}.  
          
        \item Some of the strata \( \thetoruslie_{w,\rootfunc} \) are empty; we will not concern ourselves with identifying these, as this is not needed for the aims of this work. For discussion of the equal characteristic case, which is expected to be analogous, see \cite[§4.8]{goresky2006codimensions}.

        \item If \( \thetoruslie_{w,\rootfunc} \neq \emptyset \), then for any \( m \in \tfrac{1}{a} \ints_{\geq 0} \), the set \( \explicitset{\alpha \in \theroots \mid \rootfunc(\alpha) \leq m} \subset \theroots \) is the set of roots of a Levi subalgebra of \( \thelie \).

        \item The group \( \theweylgroup \) acts on \( \arcspace(\thetoruslie') \) in a way that preserves the stratification. Explicitly, for any \( u \in \theweylgroup \), we have an isomorphism
        \[
        u \colon \thetoruslie_{w,\rootfunc} \xrightarrow{\sim} \thetoruslie_{uwu^{-1}, u(\rootfunc)}.
        \]

        \item \label{stratoftorus:twistedstrataresmooth} The nonempty strata \( \thetoruslie_{w,\rootfunc} \) are strongly pro perfectly smooth, by an argument analogous to \cite[3.3.3.(c)]{bouthier2022perverse} using Teichm\"uller representatives instead of power series expressions.
    \end{mylist}
\end{para}

\begin{para}{\bf The stratification of \( \arcspace (\chevalley) \).}
    We have a disjoint union
    \[
        \arcspace(\thetoruslie') \times_{\arcspace(\chevalley')} \arcspace(\chevalley) \simeq \bigsqcup_{w\in W} \arcspace(\thetoruslie_w),
    \]
    and we denote by \( \arcspace(\thetoruslie')_{\arcspace (\chevalley),(\thediscriminant; n)} \) the preimage of \( \arcspace(\chevalley)_{(\thediscriminant; n)} \).
\end{para}

\cref{Proofofchevalleyisetale} is devoted to the proof of the following proposition.

\begin{proposition}\label{stratoftorus:chevalleyisetale}
    The morphism
    \[
        \pi \colon \arcspace(\thetoruslie')_{\arcspace(\chevalley),(\thediscriminant;n)} \to \arcspace(\chevalley)_{(\thediscriminant ;n)}
    \]
    is a \( \theweylgroup \)-torsor.
\end{proposition}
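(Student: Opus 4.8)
The plan is to deduce the statement from \cref{perfstacks:prop:torsoronpoints} applied with $\Lambda=\theweylgroup$, the Weyl group acting on the source over the target via the action described in \cref{stratoftorus:twistedstrat}. For this I first observe that both stacks in question are in fact perfect (affine) schemes: $\arcspace(\chevalley)_{(\thediscriminant;n)}$ is a closed subscheme of a principal open in the perfect affine scheme $\arcspace(\chevalley)$ — the open condition being the non-vanishing of the coefficient of $\uniformizer^{n}$ in the $\uniformizer$-adic expansion of $\thediscriminant$, the closed condition the vanishing of the lower coefficients, both pulled back from a finite truncation level — and likewise $\arcspace(\thetoruslie')_{\arcspace(\chevalley),(\thediscriminant;n)}$ is such a subscheme of $\bigsqcup_{w\in\theweylgroup}\arcspace(\thetoruslie_w)$. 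Hence $\pi$ is a morphism of perfect schemes, in particular locally schematic, and it remains to check that $\pi$ is universally open and that it induces a $\theweylgroup$-torsor of sets on $K$-points for every algebraically closed field $K$.

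The technical core — which I would prove exactly as in \cite[Theorem 7.2.5]{bouthier2022perversesheavesinfinitedimensionalstacksv4} — is the assertion that there is an integer $N=N(n)$ such that $\pi$ is obtained by base change, along the perfectly smooth truncation projection $\arcspace(\chevalley)_{(\thediscriminant;n)}\to\arcspace_N(\chevalley)_{(\thediscriminant;n)}$, from a morphism $\pi_N$ at level $N$ which is finite étale of degree $|\theweylgroup|$. This is a Hensel/smoothness argument: the Chevalley map $\pi\colon\thetoruslie\to\chevalley$ is finite flat of degree $|\theweylgroup|$ (\cref{wittloopandarc:def:chevalley}) and étale over $\chevalley^{\regsemisimple}=\{\thediscriminant\neq 0\}$, and the condition $\valuation(\thediscriminant)=n$ bounds the ramification of $\pi$ along the relevant sections, so that a solution of $\pi(t)=\gamma$ over $\witt(R)\otimes_{\basicints}\basicints'$ deforms uniquely once $\gamma$ is fixed modulo $\uniformizer^{N}$; the ramified base change $\basicints'=\basicints[\uniformizer^{1/a}]$ with $a=|\theweylgroup|$ is what ensures there are exactly $|\theweylgroup|$ such solutions, as explained below. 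Granting this, $\pi$ is a base change of a finite étale — hence universally open — morphism, so $\pi$ is universally open; being also locally schematic, \cref{perfstacks:prop:torsoronpoints}(1) applies. This finite‑level finiteness‑and‑étaleness statement is the main obstacle; everything else is formal.

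It remains to verify the assertion on $K$-points. Fix an algebraically closed $K/\resfield$ and put $A:=\witt(K)$, $A':=\witt(K)\otimes_{\basicints}\basicints'$, which is a complete discrete valuation ring, totally ramified of degree $a=|\theweylgroup|$ over $A$, with algebraically closed residue field and fraction field $F':=A'[1/p]$; write $F:=A[1/p]$. Unwinding the identification $\arcspace(\thetoruslie')\times_{\arcspace(\chevalley')}\arcspace(\chevalley)\simeq\bigsqcup_{w\in\theweylgroup}\arcspace(\thetoruslie_w)$, a $K$-point of the source lying over $\gamma\in\arcspace(\chevalley)_{(\thediscriminant;n)}(K)=\{c\in\chevalley(A)\mid \valuation(\thediscriminant(c))=n\}$ is the same as a point $t\in\thetoruslie(A')$ with $\pi(t)=\gamma$ in $\chevalley(A')$. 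Since $\pi$ is finite (hence integral) and $A'$ is integrally closed in $F'$, such a $t$ is the same as an $F'$-point of the fibre $\pi^{-1}(\gamma)_{F'}$. As $\valuation(\thediscriminant(\gamma))=n<\infty$, the element $\thediscriminant(\gamma)$ is invertible in $F'$, so $\gamma$ lands in $\chevalley^{\regsemisimple}$ over $F'$ and $\pi^{-1}(\gamma)_{F'}$ is a $\theweylgroup$-torsor over $\spec F'$; thus it suffices to show this torsor is split. Over $F$ its monodromy is a subgroup $H\le\theweylgroup$; since $\operatorname{char}\resfield>2h$ forces $p\nmid|\theweylgroup|$, the monodromy kills the wild part of $\operatorname{Gal}(\overline F/F)$ and factors through its (procyclic) tame quotient, so $H$ is cyclic and $\pi^{-1}(\gamma)_{F}$ is a disjoint union of copies of the totally tamely ramified extension $F(\uniformizer^{1/|H|})$. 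Since $|H|$ divides $a$, this extension sits inside $F'$, whence $\pi^{-1}(\gamma)_{F'}\simeq\bigsqcup_{|\theweylgroup|}\spec F'$ with $\theweylgroup$ permuting the components simply transitively. Therefore $\pi$ has nonempty $K$-fibres on which $\theweylgroup$ acts simply transitively, i.e. $\pi(K)$ is a $\theweylgroup$-torsor of sets, and we conclude by \cref{perfstacks:prop:torsoronpoints} that $\pi$ is a $\theweylgroup$-torsor.
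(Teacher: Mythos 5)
Your outer architecture is a genuine (and clean) alternative to the paper's combining step: the paper, having established the $K$-points statement, shows directly that étaleness of $\pi$ implies the torsor property by a faithfully-flat + disjoint-open-embeddings argument, whereas you package that into a single invocation of \cref{perfstacks:prop:torsoronpoints}, reducing the needed input to locally schematic $+$ universally open $+$ $K$-points. Your $K$-points verification is also correct and is essentially the same as the paper's \cref{proofofetale:torsoronpoints}, only phrased more explicitly in terms of tame inertia (the paper uses the same facts — $p\nmid|\theweylgroup|$, procyclicity of the tame quotient, and the valuative criterion to pass from $\thetoruslie(F')$ to $\thetoruslie(A')$ — but argues via the $W$-torsor structure of $\thetoruslie^{\regsemisimple}\to\chevalley^{\regsemisimple}$ and the observation that every degree-$m$ extension with $m\mid a$ is $F(\uniformizer^{1/m})$).

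The gap is your ``technical core.'' You need universal openness of $\pi$, which you derive from the assertion that $\pi$ is a base change, along a truncation, of a finite \'etale morphism at a finite level $N$. This is in substance equivalent to what the paper proves — that $\pi$ is \'etale and perfectly finitely presented — and it is the entire content of \cref{Proofofchevalleyisetale} apart from the $K$-points step. You do not prove it; you state you would do so ``exactly as in \cite[Theorem 7.2.5]{bouthier2022perversesheavesinfinitedimensionalstacksv4}'' via ``a Hensel/smoothness argument'' in which the bound $\valuation(\thediscriminant)=n$ lets a solution of $\pi(t)=\gamma$ ``deform uniquely once $\gamma$ is fixed modulo $\uniformizer^N$.'' That sketch does not match the actual argument and would not go through as stated: a naive Hensel/deformation argument fails because the finite flat map $\pi\colon\thetoruslie\to\chevalley$ is not \'etale (nor quasi-finite with bounded fibres in a way amenable to Hensel lifting) over the integral points $\gamma\in\chevalley(\witt(R))$ — it is only \'etale over $\chevalley^{\regsemisimple}$, which such a $\gamma$ meets only generically. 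The proof the paper actually carries out is of a different shape: reduce via \cref{proofofetale:lemmafixedpoints} (fixed points of separated \'etale maps), reduce to the semisimple case and to $\min\rootfunc=0$ using $\loopspace(\multiplicativegroup)$-equivariance, decompose $\thediscriminant=\thediscriminant_M\thediscriminant^M$ for a proper Levi $M$, run an induction on $|\theroots|$, and appeal to the fact that $\arcspace$ preserves \'etaleness (\cref{wittloopandarc:properties:arpreserveetale}) applied to the finite-level \'etale map $\chevalley^{reg/\thelie}_M\to\chevalley$. Without carrying out this reduction (or an equivalent one), your proof is incomplete precisely at the step you yourself flag as the main obstacle, and the sketch you give in its place would not produce the required finite-level finite \'etale model.
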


\begin{para}\label{stratoftorus:propertiesofwrstrata}{\bf Components of strata of $\arcspace(\chevalley)$.}
    \begin{mylist}
        \item Since \( \thetoruslie_{w,\rootfunc} \) are disjoint closed subschemes of \( \arcspace(\thetoruslie_w)_{(\thediscriminant ; \sum_{\alpha\in R} \rootfunc(\alpha))} \), and there are finitely many such, they are the connected components. Since a finite étale morphism is both closed and open, it sends connected components to connected components.
        
        Thus, for each \( (w,\rootfunc) \), there exists a connected component \( \chevalley_{w,\rootfunc} \) such that \( \thetoruslie_{w,\rootfunc} \) is its étale covering.
        
        \item The pairs \( (w,\rootfunc) \) and \( (w',\rootfunc') \) lie in the same \( \theweylgroup \)-orbit if and only if \( \thetoruslie_{w,\rootfunc} \) and \( \thetoruslie_{w',\rootfunc'} \) cover the same \( \chevalley_{w,\rootfunc} \).

        \item The stratum \( \arcspace(\chevalley)_{(\thediscriminant ; n)} \) decomposes as a union of connected components \( \chevalley_{w,\rootfunc} \).

        \item Each \( \chevalley_{w,\rootfunc} \) admits a perfectly placid presentation as they are connected components of certain base changes of the morphism $\arcspace(\chevalley)\to \arcspace_N(\chevalley)$ for some $N>>0$.
        \item\label{stratoftorus:stratofchevalleyisequidim} By \cref{dimtry:prop:uo_equidim_cover_by_smooth_imply_uo_equidim} and \cref{stratoftorus:twistedstrataresmooth},  \( \chevalley_{w,\rootfunc} \to * \) is universally open and equidimensional.

        \item\label{stratoftorus:stratofchevalleyisboundedconst} As in \cite[\S 4.1.7(d)]{bouthier2022perverse}, the collection \( \explicitset{\chevalley_{w,\rootfunc}}_{w,\rootfunc} \) forms a bounded constructible stratification of \( \arcspace(\chevalley)_{\bullet} \).
    \end{mylist}
\end{para}

\begin{remark}
    In the equal characteristic setting, \cref{stratoftorus:stratofchevalleyisequidim} is stronger; compare \cite[3.3.4(f)]{bouthier2022perverse}. We suspect that in our situation, the analogous statement—that \( \chevalley_{w,\rootfunc} \) is strongly pro perfectly smooth—also holds. However, we did not verify this, as uo-equidimensionality suffices for our purposes.
\end{remark}

\begin{para}\label{codimofstrata:def:quantities}{\bf Some quantities.}
    \begin{mylist}
        \item\label{codimofstrata:def:quantities:pfplocallyofpurecodim}
        By the analogous statement to \cite[\S 3.2.2.(b)]{bouthier2022perverse} the inclusions \( \thetoruslie_{w,\rootfunc} \subset \arcspace(\thetoruslie_w) \) and \( \chevalley_{w,\rootfunc} \subset \arcspace(\chevalley) \) are pfp-locally closed embeddings of connected perfect subschemes. By \cref{dimtry:prop:uo_equidim_cover_by_smooth_imply_uo_equidim} the morphisms \( \thetoruslie_{w,\rootfunc} \to * \) and \( \chevalley_{w,\rootfunc} \to * \) are uo-equidimensional. Hence, by \cref{dimthry:prop:pfpfbetweenplacidpresentedcorollaries}, both inclusions are of pure codimension. We define
        \[
            a_{w,\rootfunc} := \codim_{\arcspace(\thetoruslie_w)} \thetoruslie_{w,\rootfunc}, \quad
            b_{w,\rootfunc} := \codim_{\arcspace(\chevalley)} \chevalley_{w,\rootfunc}.
        \]

        \item Let \( r := \dim \thetoruslie \) be the rank of \( G \). We define
        \[
            c_w := r - \dim \thetoruslie^w.
        \]

        \item Define
        \[
            d_\rootfunc := \sum_{\alpha \in \theroots} \rootfunc(\alpha).
        \]

        \item Set
        \[
            \delta_{w,\rootfunc} := \frac{d_\rootfunc - c_w}{2}.
        \]

        \item Define
        \[
            e_{w,\rootfunc} := \frac{d_\rootfunc + c_w}{2} = \delta_{w,\rootfunc} + c_w.
        \]
    \end{mylist}
\end{para}

\begin{proposition}\label{codimofstrata:prop:valofjacobian}
    Choose identifications $\thetoruslie_w \simeq \affinespace_\basicints^r$ and $\chevalley\simeq \affinespace_\basicints^r$. Then for any \( x \in \thetoruslie_w(\basicints) = \arcspace(\thetoruslie_w)(\resfield) \), if \( x \in \thetoruslie_{w,\rootfunc}(\resfield) \), then, denoting the Jacobian matrix by \( \differential_x \pi = \left( \frac{\partial \pi_i}{\partial x_i} \right) \), the valuation of \( \det(\differential_x \pi) \) with respect to \( \uniformizer \) is given by
    \[
        v_{\uniformizer}(\det(\differential_x \pi)) = e_{w,\rootfunc}
    \]
    for each \( w, \rootfunc \).
\end{proposition}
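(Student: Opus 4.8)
The plan is to reduce the formula to an identity between $(\det\differential\pi)^{2}$ and the discriminant $\thediscriminant$, and then to establish that identity by comparing the twisted torus $\thetoruslie_{w}$ with the split Cartan $\thetoruslie$ over the ramified extension $\basicextension'$, along the lines of the equal-characteristic computation (cf.\ \cite{goresky2006codimensions}). Note first that $\valuation(\det\differential_{x}\pi)$ does not depend, up to a unit, on the chosen identifications $\thetoruslie_{w}\simeq\affinespace^{r}_{\basicints}$, $\chevalley\simeq\affinespace^{r}_{\basicints}$, because the Jacobian of an automorphism of $\affinespace^{r}_{\basicints}$ is a unit at every $\basicints$-point. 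Let $\phi\colon\thetoruslie_{w}\times_{\spec\basicints}\spec\basicextension'\isoto\thetoruslie\times_{\spec\basicints}\spec\basicextension'$ be the isomorphism coming from $\thetoruslie_{w}=(\thetoruslie')^{\sigma w}$; by construction $\pi$ restricted to $\thetoruslie_{w}$ becomes, after base change to $\basicextension'$, the composite $\pi\circ\phi$ (the twist does not affect the target since $\pi\colon\thetoruslie\to\chevalley$ is $\theweylgroup$-invariant). By the definition of the stratum \cite[\S 3.3.3.(c)]{bouthier2022perverse}, for $x\in\thetoruslie_{w,\rootfunc}$ one has $\valuation(\extderivative\alpha(\phi(x)))=\rootfunc(\alpha)$ for every $\alpha\in\theroots$, so, since $\thediscriminant=\prod_{\alpha\in\theroots}\extderivative\alpha$ is $\theweylgroup$-invariant and descends to $\chevalley$, $\valuation(\thediscriminant(\pi(x)))=\sum_{\alpha\in\theroots}\rootfunc(\alpha)=d_{\rootfunc}$. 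As $e_{w,\rootfunc}=\tfrac12(d_{\rootfunc}+c_{w})$, it is therefore enough to prove the identity of functions on $\thetoruslie_{w}\simeq\affinespace^{r}_{\basicints}$
\[
\bigl(\det\differential\pi\bigr)^{2}=u\cdot\uniformizer^{\,c_{w}}\cdot(\thediscriminant\circ\pi),\qquad u\in\basicints^{\times}.
\]

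To prove this, I would pass to $\basicextension'$. The standing assumption $\operatorname{char}\resfield>2h$ gives $p\nmid|\theweylgroup|$ and $p\neq2$, so the formation of $\theweylgroup$-invariants commutes with reduction mod $\uniformizer$ and the classical Jacobian formula $\det\differential\pi=c\cdot\prod_{\alpha\in\theroots^{+}}\extderivative\alpha$ for the quotient $\thetoruslie\to\chevalley=\thetoruslie//\theweylgroup$ holds with $c\in\basicints^{\times}$ (the Chevalley unit stays a unit because it is nonzero mod $\uniformizer$, again using $p\nmid|\theweylgroup|$). Composing with $\phi$ and using the chain rule gives, over $\basicextension'$,
\[
\det\differential(\pi|_{\thetoruslie_{w}})=g\cdot c\cdot\prod_{\alpha\in\theroots^{+}}(\extderivative\alpha\circ\phi),\qquad g:=\det\phi\in(\basicextension')^{\times},
\]
where $g$ measures the relative position of the lattice $\phi(\thetoruslie_{w}(\basicints))$ and the standard lattice $\thetoruslie(\basicints')$ inside $\thetoruslie(\basicextension')$. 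Squaring, and using that $\bigl(\prod_{\alpha\in\theroots^{+}}\extderivative\alpha\bigr)^{2}$ equals up to sign the pullback of $\thediscriminant$ along $\thetoruslie\to\chevalley$ — so its pullback along $\phi$ is $\thediscriminant\circ(\pi|_{\thetoruslie_{w}})$ — reduces the displayed identity to the single numerical claim $\valuation(g)=\tfrac{c_{w}}{2}$.

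Computing $\valuation(g)$ is the heart of the argument and the step I expect to be the main obstacle, since it is precisely where the tame ramification of $\basicextension'/\basicextension$ enters. Write $\uniformizer'=\uniformizer^{1/a}$ with $a=|\theweylgroup|$ and $\sigma(\uniformizer')=\zeta_{a}\uniformizer'$ for a primitive $a$-th root of unity $\zeta_{a}\in\basicints\supseteq\wittzp(\resfield)$ (such a root exists as $p\nmid a$). Under the identification $\thetoruslie'(\basicints)=\thetoruslie(\basicints')$ one has $\thetoruslie_{w}(\basicints)=\thetoruslie(\basicints')^{\tau}$ for the semilinear operator $\tau(x)=\sigma(w(x))$, so $\valuation(g)=\tfrac1a\cdot\operatorname{length}_{\basicints'}\bigl(\thetoruslie(\basicints')\,/\,\basicints'\cdot\thetoruslie(\basicints')^{\tau}\bigr)$. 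Since $p\nmid\operatorname{ord}(w)$ and $\basicints$ contains the needed roots of unity, decompose $\thetoruslie(\basicints)=\thetoruslie(\basicints)^{w}\oplus\bigoplus_{\zeta\neq1}M_{\zeta}$ into $w$-isotypic $\basicints$-summands, with $w$ acting by $\zeta$ on $M_{\zeta}$. This decomposition is $\tau$-stable; the $\thetoruslie(\basicints)^{w}$-summand is untwisted and contributes nothing, while on each $M_{\zeta}$ a direct computation based on $\sigma({\uniformizer'}^{\,j})=\zeta_{a}^{\,j}{\uniformizer'}^{\,j}$ shows $\basicints'\cdot(M_{\zeta}\otimes_{\basicints}\basicints')^{\tau}={\uniformizer'}^{\,a-k_{\zeta}}\,(M_{\zeta}\otimes_{\basicints}\basicints')$, where $\zeta=\zeta_{a}^{\,k_{\zeta}}$ with $k_{\zeta}\in\{1,\dots,a-1\}$. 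Hence $\valuation(g)=\tfrac1a\sum_{\zeta\neq1}(a-k_{\zeta})\operatorname{rk}_{\basicints}M_{\zeta}$. Finally, the characteristic polynomial of $w$ has integer coefficients, so its non-trivial eigenvalues are stable under $\zeta\mapsto\zeta^{-1}$ with equal multiplicities and $k_{\zeta^{-1}}=a-k_{\zeta}$; grouping them into such pairs — with $\zeta=-1$, where $k_{\zeta}=a/2$, self-paired when $a$ is even — each pair contributes exactly $a$ times its common multiplicity, so $\sum_{\zeta\neq1}(a-k_{\zeta})\operatorname{rk}_{\basicints}M_{\zeta}=\tfrac a2\sum_{\zeta\neq1}\operatorname{rk}_{\basicints}M_{\zeta}=\tfrac a2 c_{w}$, i.e.\ $\valuation(g)=\tfrac{c_{w}}{2}$. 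Combining with the first two steps yields $2\valuation(\det\differential_{x}\pi)=c_{w}+d_{\rootfunc}$, hence $\valuation(\det\differential_{x}\pi)=e_{w,\rootfunc}$. The points I expect to require care are keeping the normalizations of $\zeta_{a}$ and of $\tau$ consistent, checking that the integral isotypic splitting exists under the hypothesis on $p$ and that $\phi$ respects the integral structures as claimed, and verifying the whole chain on $\thegroup=\sltwo$, where $a=2$, $c_{s}=1$, and one reads off $\valuation(g)=1/2$ directly.
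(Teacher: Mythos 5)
Your argument is correct and reproduces, in detail, the proof of \cite[Lemma 8.2.1]{goresky2006codimensions}, which is exactly what the paper cites for this proposition: the same reduction to the discriminant identity over $\basicextension'$ and the same $w$-isotypic lattice-index computation giving $\valuation(g)=c_w/2$. The concerns you flag at the end are not obstacles, since $p>2h$ implies $p\nmid|\theweylgroup|$, which is enough to guarantee both the integral isotypic splitting of $\thetoruslie(\basicints)$ and the existence of $\zeta_a\in\basicints$.
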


\begin{proof}
    The proof is identical to that of \cite[Lemma 8.2.1]{goresky2006codimensions}.
\end{proof}

In \cref{appendixcodim} we develop a tool to prove the following corollary.  The original proof of \cite{goresky2006codimensions} will not work here as it uses a the tangent spaces which is zero in our setting because of \cref{perfschemes:prop:notangent}.

\begin{corollary}\label{codimofstrata:def:eqofcodim}
    For every \( w, \rootfunc \), we have the equality
    \[
        b_{w,\rootfunc} = e_{w,\rootfunc} + a_{w,\rootfunc} = \delta_{w,\rootfunc} + a_{w,\rootfunc} + c_w.
    \]
\end{corollary}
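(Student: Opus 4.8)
The second equality is immediate from the definitions in \cref{codimofstrata:def:quantities}, since there $e_{w,\rootfunc} = \delta_{w,\rootfunc} + c_w$. So the plan is to prove the first equality $b_{w,\rootfunc} = e_{w,\rootfunc} + a_{w,\rootfunc}$.

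I would start by fixing identifications $\thetoruslie_w \simeq \affinespace^r_\basicints$ and $\chevalley \simeq \affinespace^r_\basicints$ (available by \cref{wittloopandarc:def:chevalley} and \cref{stratoftorus:twistedstrat}), so that $\pi$ becomes a finite flat, generically étale endomorphism of $\affinespace^r_\basicints$; the codimensions we must compare are intrinsic, so this costs nothing. By \cref{codimofstrata:def:quantities:pfplocallyofpurecodim}, for $N \gg 0$ the stratum $\thetoruslie_{w,\rootfunc}$ is the preimage, under the truncation $\arcspace(\affinespace^r) \to \arcspace_N(\affinespace^r) \simeq \affinespace^{rN}_{\perf}$, of a locally closed subscheme $Z_N$ of codimension $a_{w,\rootfunc}$, and likewise $\chevalley_{w,\rootfunc}$ is the preimage of some $Y_N \subset \arcspace_N(\affinespace^r)$ of codimension $b_{w,\rootfunc}$. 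Passing to level $N$ turns these strata into cylinders $\widetilde{Z}_N \subset \affinespace^{rN}_{\perf}$ and $\widetilde{Y}_N \subset \affinespace^{rN}_{\perf}$ with $\dim \widetilde{Z}_N = rN - a_{w,\rootfunc}$ and $\dim \widetilde{Y}_N = rN - b_{w,\rootfunc}$. By \cref{stratoftorus:chevalleyisetale} together with \cref{stratoftorus:propertiesofwrstrata}, the restriction of $\pi$ carries $\thetoruslie_{w,\rootfunc}$ onto $\chevalley_{w,\rootfunc}$ (it is an étale, in particular surjective, covering), and truncating shows that $\arcspace_N(\pi)$ carries $\widetilde{Z}_N$ onto $\widetilde{Y}_N$.

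The crux is to compute the fibre dimension of $\arcspace_N(\pi) \colon \widetilde{Z}_N \to \widetilde{Y}_N$. By \cref{codimofstrata:prop:valofjacobian}, at every $\resfield$-point $x_0$ of the stratum the valuation $v_{\uniformizer}(\det \differential_{x_0}\pi)$ equals $e_{w,\rootfunc}$; writing the Smith normal form of $\differential_{x_0}\pi$ over $\basicints$ as $\operatorname{diag}(\uniformizer^{f_1},\dots,\uniformizer^{f_r})$, this says $\sum_i f_i = e_{w,\rootfunc}$. The arc-space inverse function theorem built in \cref{appendixcodim} — a Hensel/Newton-type statement for Witt-vector arcs, which replaces the jet-space change-of-variables estimate used in \cite{goresky2006codimensions}, whose tangent-space formulation is unavailable here by \cref{perfschemes:prop:notangent} — then shows that for $N \gg 0$ every fibre of $\arcspace_N(\pi)\colon\widetilde{Z}_N\to\widetilde{Y}_N$ is, after neglecting the higher-order terms of $\pi$, cut out by the congruences $(x - x')_i \equiv 0 \pmod{\uniformizer^{\,N - f_i}}$, hence is perfectly isomorphic to $\affinespace^{\,\sum_i f_i}_{\perf} = \affinespace^{\,e_{w,\rootfunc}}_{\perf}$. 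Since $\widetilde{Y}_N$ is equidimensional (both strata being of pure codimension by \cref{codimofstrata:def:quantities:pfplocallyofpurecodim}), this gives $\dim\widetilde{Z}_N - \dim\widetilde{Y}_N = e_{w,\rootfunc}$, i.e. $(rN - a_{w,\rootfunc}) - (rN - b_{w,\rootfunc}) = e_{w,\rootfunc}$, which is exactly $b_{w,\rootfunc} = e_{w,\rootfunc} + a_{w,\rootfunc}$.

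The main obstacle is the fibre-dimension computation in the last step; this is the whole point of the tool developed in \cref{appendixcodim}, and in this proof it is invoked as a black box. The remaining care points are routine: that the strata stabilise to genuine cylinders over a finite truncation level — so the displayed dimensions are independent of $N$ for $N$ large — which is the content of \cref{codimofstrata:def:quantities:pfplocallyofpurecodim}; and the identification of the image $\arcspace(\pi)(\thetoruslie_{w,\rootfunc})$ with $\chevalley_{w,\rootfunc}$, which is furnished by \cref{stratoftorus:chevalleyisetale} and \cref{stratoftorus:propertiesofwrstrata}. Once these are granted, the elementary divisors of $\differential\pi$ account precisely for the codimension jump $e_{w,\rootfunc}$.
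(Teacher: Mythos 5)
Your second sentence is right: the second equality is a tautology from \cref{codimofstrata:def:quantities}, so the content is $b_{w,\rootfunc}=e_{w,\rootfunc}+a_{w,\rootfunc}$.

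The substantive difference from the paper's proof is this: the paper obtains the displayed equality by a one-line application of \cref{appendixcodim:main}, whose conclusion is \emph{precisely} $c_Y = c_X + e$. That theorem works at the untruncated arc level: one uses \cref{appendix:has_p_adic_nbrd} to find $p$-adic polydiscs $x + \uniformizer^M\arcspace(\sourcelattice)\subset X$ and $y + D_xf(\uniformizer^M\arcspace(\sourcelattice))\subset Y$, then \cref{thm:arc_inverse_func} to say $\arcspace(f)$ maps the first isomorphically onto the second, and finally additivity of codimension for weakly equidimensional morphisms (\cref{dimthry:prop:compositionofequidim}) to two vertical composites, which gives $Mn = c_X + d$ and $Mn + e = c_Y + d$; subtracting yields the result. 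You, by contrast, pass to a finite truncation $\arcspace_N$, turn the strata into constructible subsets $Z_N, Y_N$ of $\affinespace^{rN}_{\perf}$, and try to compute the fibre dimension of $\arcspace_N(\pi)|_{Z_N}$ directly. That is a legitimate alternative strategy and gives the right numerology, but it is \emph{not} a black-box invocation of the appendix: neither \cref{thm:arc_inverse_func} nor \cref{appendixcodim:main} says anything about the fibres of the truncated map on the strata, and the claim that every fibre of $\arcspace_N(\pi)\colon Z_N\to Y_N$ is a perfect affine space of dimension $e_{w,\rootfunc}$ needs real work: the fibre over $y_N\in Y_N(\resfield)$ is the truncation image of $\pi|_X^{-1}$ of the whole polydisc of arcs lifting $y_N$, and since $\pi|_X$ is finite \'etale of degree $>1$ and $\pi$ has higher-order terms, one has to show that this image is still a single coset cut out by the congruences $(x-x')_i\equiv 0 \pmod{\uniformizer^{N-f_i}}$ rather than a union of several pieces, and that the elementary divisors $f_i$ of $\differential_x\pi$ are independent of $x$ (only their sum $e_{w,\rootfunc}$ is controlled by \cref{codimofstrata:prop:valofjacobian}). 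The paper's route avoids all of this by never computing fibre dimensions: it stays at the arc level, stays near a single point, and uses codimension additivity. I'd recommend replacing your third paragraph with a direct verification of the hypotheses of \cref{appendixcodim:main} (strata pfp-locally closed of pure codimension by \cref{codimofstrata:def:quantities:pfplocallyofpurecodim}, $\pi|$ finite \'etale by \cref{stratoftorus:chevalleyisetale} and \cref{stratoftorus:propertiesofwrstrata}, and constant Jacobian valuation by \cref{codimofstrata:prop:valofjacobian}), and then quote its conclusion.
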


\begin{proof}
    We apply \cref{appendixcodim:main} to the morphism \( \pi \colon \thetoruslie_w \to \chevalley \), with respect to the perfect subschemes \( \thetoruslie_{w,\rootfunc} \) and \( \chevalley_{w,\rootfunc} \). The assumptions of \cref{appendixcodim:main} are satisfied by \cref{codimofstrata:def:quantities:pfplocallyofpurecodim}, the identification \( \chevalley \simeq \affinespace^n_\basicints \), and the results of \cref{stratoftorus:chevalleyisetale} and \cref{codimofstrata:def:quantities:pfplocallyofpurecodim}. The valuation of the Jacobian determinant is given in \cref{codimofstrata:prop:valofjacobian}.
\end{proof}

\begin{corollary}\label{codimofstrata:def:smallnessinequality}
    For every \( w, \rootfunc \), we have the inequality
    \[
        b_{w,\rootfunc} \geq \delta_{w,\rootfunc},
    \]
    with equality if and only if \( w = 1 \) and \( \rootfunc = 0 \).
\end{corollary}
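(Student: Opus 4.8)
The plan is to read everything off the exact formula of \cref{codimofstrata:def:eqofcodim}. Rewriting it as
\[
b_{w,\rootfunc} - \delta_{w,\rootfunc} = a_{w,\rootfunc} + c_w ,
\]
the asserted inequality $b_{w,\rootfunc}\geq \delta_{w,\rootfunc}$ becomes the statement that the right-hand side is nonnegative, which is immediate: $a_{w,\rootfunc}=\codim_{\arcspace(\thetoruslie_w)}\thetoruslie_{w,\rootfunc}\geq 0$ since it is a codimension (a locally closed subscheme has dimension at most that of its ambient scheme — this is also why $a_{w,\rootfunc}$ is well defined as a pure codimension by \cref{codimofstrata:def:quantities}), and $c_w=r-\dim\thetoruslie^w\geq 0$ since $\thetoruslie^w$ is a $\basicints$-submodule of $\thetoruslie$. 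Consequently equality in the inequality is equivalent to the conjunction $a_{w,\rootfunc}=0$ and $c_w=0$, and I would treat these two vanishings separately.

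For $c_w$: one has $c_w=0$ iff $\thetoruslie^w=\thetoruslie$, i.e. $w$ acts trivially on $\thetoruslie$. Since $\theweylgroup$ acts faithfully on $\thetoruslie$ (equivalently, on the generic fibre $\thetoruslie[1/p]$, which is the reflection representation of $\theweylgroup$; concretely $\thetoruslie\cong X_*(\thetorus)\otimes_\ints\basicints$ and $\theweylgroup$ acts faithfully on $X_*(\thetorus)$), this forces $w=1$, and conversely $c_1=0$. For $a_{w,\rootfunc}$ I only need the case $w=1$, where $\thetoruslie_1\cong\thetoruslie$ and, by \cref{stratoftorus:propertiesofwrstrata}, $\thetoruslie_{1,\rootfunc}$ lies inside the locus of $\arcspace(\thetoruslie)$ where $\thediscriminant$ has valuation $d_\rootfunc$. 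If $\rootfunc=0$ then $d_\rootfunc=0$ and $\thetoruslie_{1,0}$ is the locus where $\thediscriminant$ is a unit, which is open, so $a_{1,0}=0$. If $\rootfunc\neq 0$ then $d_\rootfunc>0$, so $\thetoruslie_{1,\rootfunc}$ is disjoint from $\thetoruslie_{1,0}$, and $\thetoruslie_{1,0}$ is a \emph{nonempty} open subset of $\arcspace(\thetoruslie)$ (nonempty because, using $\operatorname{char}\resfield>2h$, the reduction modulo $\uniformizer$ of $\thediscriminant=\prod_{\alpha\in\theroots}\operatorname{d}\alpha$ is a nonzero function on $\thetoruslie\times_\basicints\resfield$, as it is a product of nonzero linear forms). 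Since $\arcspace(\thetoruslie)\cong\arcspace(\affinespace^r_\basicints)$ is irreducible by \cref{wittloopandarc:example:1daffinespace}, a pfp-locally closed subscheme of pure codimension $0$ would be dense open, hence could not avoid the nonempty open $\thetoruslie_{1,0}$; therefore $a_{1,\rootfunc}\geq 1>0$. Combining the two analyses, equality in the inequality holds precisely when $w=1$ and $\rootfunc=0$, and in that case one checks directly that $b_{1,0}=\delta_{1,0}=0$.

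The only step with genuine content is the strict positivity of $a_{1,\rootfunc}$ for $\rootfunc\neq 0$: it rests on the irreducibility of $\arcspace(\thetoruslie)$, on $\thetoruslie_{1,0}$ being a nonempty open stratum, and on the elementary fact that a locally closed subscheme of an irreducible scheme has strictly positive pure codimension unless it is dense — which I would verify on a perfectly finitely presented model at a sufficiently high truncation level $\arcspace_N(\thetoruslie)$. Everything else is formal bookkeeping with the quantities of \cref{codimofstrata:def:quantities} and the identity of \cref{codimofstrata:def:eqofcodim}.
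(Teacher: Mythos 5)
Your proposal is correct and takes essentially the same route the paper intends: the paper delegates the deduction to the corresponding argument in \cite[Corollary 3.4.3]{bouthier2022perverse}, which is precisely the bookkeeping you carry out — reading $b_{w,\rootfunc}-\delta_{w,\rootfunc}=a_{w,\rootfunc}+c_w\geq 0$ off \cref{codimofstrata:def:eqofcodim}, then characterizing equality via $c_w=0\iff w=1$ (faithfulness of $\theweylgroup$ on $\thetoruslie$) and $a_{1,\rootfunc}=0\iff \rootfunc=0$ (irreducibility of $\arcspace(\thetoruslie)$ plus the nonempty open $\rootfunc=0$ stratum). Your write-up just makes explicit what the paper cites; nothing is missing, and the one parenthetical slip — attributing well-definedness of $a_{w,\rootfunc}$ to its nonnegativity rather than to \cref{codimofstrata:def:quantities:pfplocallyofpurecodim} — does not affect the argument.
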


\begin{proof}
    The result follows from \cref{codimofstrata:def:eqofcodim} in the same way that \cite[Corollary 3.4.3]{bouthier2022perverse} follows from \cite[Corollary 3.4.2]{bouthier2022perverse}.
\end{proof}

\begin{para}{\bf The topologically nilpotent locus.}
    \begin{mylist}
        \item We define
        \[
            \arcspace(\chevalley)_{\topnilp} := \arcspace(\chevalley) \times_{\arcspace_0(\chevalley)} \{0\}.
        \]
        Then \( \arcspace(\chevalley)_{\topnilp} \subset \arcspace(\chevalley) \) is a pfp-closed embedding of a closed subscheme, which is strongly pro perfectly smooth of pure codimension \( \dim \chevalley = r \).

        \item Like before, \( \arcspace(\thetoruslie_w)_{\topnilp} \) is a connected affine closed perfect subscheme of \( \arcspace(\thetoruslie_w) \), of pure codimension \( \dim \thetoruslie^w = r - c_w \).

        \item Note that either \( \chevalley_{w,\rootfunc} \subset \arcspace(\chevalley)_{\topnilp} \) or \( \chevalley_{w,\rootfunc} \subset \arcspace(\chevalley) \smallsetminus \arcspace(\chevalley)_{\topnilp} \), and the same holds for \( \thetoruslie_{w,\rootfunc} \). In the first case, we say that \( (w,\rootfunc) > 0 \).

        \item For \( (w,\rootfunc) > 0 \), define
        \[
            b_{w,\rootfunc}^+ := b_{w,\rootfunc} - r, \quad
            a_{w,\rootfunc}^+ := a_{w,\rootfunc} - (r - c_w).
        \]

        \item \label{codimofstrata:def:topnilpquantities}
        We conclude that
        \[
            b_{w,\rootfunc}^+ = \codim_{\arcspace(\chevalley)_{\topnilp}} (\chevalley_{w,\rootfunc}), \quad
            a_{w,\rootfunc}^+ = \codim_{\arcspace(\thetoruslie_w)_{\topnilp}} (\thetoruslie_{w,\rootfunc}).
        \]
    \end{mylist}
\end{para}

\begin{corollary}\label{codimofstrata:def:topnilpinequality}
    For \( (w,\rootfunc) > 0 \), we have
    \[
        b_{w,\rootfunc}^+ = a_{w,\rootfunc}^+ + \delta_{w,\rootfunc}.
    \]
    In particular, \( b_{w,\rootfunc}^+ \geq \delta_{w,\rootfunc} \), with equality if and only if \( \thetoruslie_{w,\rootfunc} \subset \arcspace(\thetoruslie_w)_{\topnilp} \) is an open stratum.
\end{corollary}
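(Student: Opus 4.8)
The plan is to deduce the corollary from \cref{codimofstrata:def:eqofcodim} together with the definitions collected in \cref{codimofstrata:def:quantities} and the identifications of \cref{codimofstrata:def:topnilpquantities}, by a direct computation followed by one elementary observation about the codimension of a stratum inside an irreducible scheme.

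For the stated identity I would simply substitute. By definition $b_{w,\rootfunc}^+ = b_{w,\rootfunc} - r$ and $a_{w,\rootfunc}^+ = a_{w,\rootfunc} - (r - c_w)$, while \cref{codimofstrata:def:eqofcodim} gives $b_{w,\rootfunc} = \delta_{w,\rootfunc} + a_{w,\rootfunc} + c_w$. Hence
\[
b_{w,\rootfunc}^+ = b_{w,\rootfunc} - r = \delta_{w,\rootfunc} + a_{w,\rootfunc} + c_w - r = \delta_{w,\rootfunc} + \bigl(a_{w,\rootfunc} - (r - c_w)\bigr) = \delta_{w,\rootfunc} + a_{w,\rootfunc}^+ .
\]

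For the inequality and its equality case I would use \cref{codimofstrata:def:topnilpquantities}, which identifies $a_{w,\rootfunc}^+ = \codim_{\arcspace(\thetoruslie_w)_{\topnilp}}(\thetoruslie_{w,\rootfunc}) \geq 0$, so that $b_{w,\rootfunc}^+ = \delta_{w,\rootfunc} + a_{w,\rootfunc}^+ \geq \delta_{w,\rootfunc}$, with equality exactly when $a_{w,\rootfunc}^+ = 0$, i.e. when the pfp-locally closed embedding $\thetoruslie_{w,\rootfunc} \hookrightarrow \arcspace(\thetoruslie_w)_{\topnilp}$ is of pure codimension $0$. Now $\arcspace(\thetoruslie_w)_{\topnilp}$ is irreducible: after a (non-canonical) choice of coordinates $\thetoruslie_w \simeq \affinespace^r_\basicints$ it is, up to perfection, a filtered limit of affine spaces over $\resfield$ (equivalently it is connected and strongly pro perfectly smooth). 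A locally closed subscheme of an irreducible scheme has codimension $0$ if and only if it is open and dense, so $a_{w,\rootfunc}^+ = 0$ holds precisely when $\thetoruslie_{w,\rootfunc}$ is the (necessarily unique) open stratum among the nonempty $\thetoruslie_{w,\rootfunc}$ with $(w,\rootfunc) > 0$, which form a bounded constructible stratification of $\arcspace(\thetoruslie_w)_{\topnilp}$ — each is contained in $\arcspace(\thetoruslie_w)_{\topnilp}$ by the definition of $(w,\rootfunc) > 0$, and together they cover it.

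The only step that is more than bookkeeping is the passage from "codimension $0$" to "open stratum", which rests on the irreducibility of $\arcspace(\thetoruslie_w)_{\topnilp}$; everything else follows formally from \cref{codimofstrata:def:eqofcodim} and the definitions of $a_{w,\rootfunc}^+$ and $b_{w,\rootfunc}^+$.
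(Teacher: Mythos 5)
Your proof is correct and follows essentially the same route the paper implicitly takes (the paper's proof simply cites the analogous deduction in [BKV, Cor.\ 3.4.5 from Cor.\ 3.4.2], which is exactly the substitution $b^+ = b - r$, $a^+ = a - (r-c_w)$ into \cref{codimofstrata:def:eqofcodim}, nonnegativity of $a^+_{w,\rootfunc}$, and the identification of the codimension-zero case with the open stratum via irreducibility of $\arcspace(\thetoruslie_w)_{\topnilp}$). Your only addition is to spell out the irreducibility argument explicitly, which is a reasonable and correct thing to do.
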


\begin{proof}
    The result follows from \cref{codimofstrata:def:eqofcodim} in the same way that \cite[Corollary 3.4.5]{bouthier2022perverse} follows from \cite[Corollary 3.4.2]{bouthier2022perverse}.
\end{proof}

\begin{para}{\bf Stratifications on \( \arcspace(\thelie) \) and on \( \liewahory \).}
    \begin{mylist}
        \item We let
        \[
            \thechevalleymap_n := \arcspace_n(\thechevalleymap) \colon \arcspace_n(\thelie) \to \arcspace_n(\chevalley),
        \]
        and denote its restriction to \( \liewahory_n \) by
        \[
            \theiwahorychevalleymap_n \colon \liewahory_n \to \arcspace_n(\chevalley).
        \]

        \item For each \( w, \rootfunc \), we let
        \[
            \thelie_{w,\rootfunc} \subset \arcspace(\thelie), \quad
            \liewahory_{w,\rootfunc} \subset \liewahory
        \]
        denote the preimages of \( \chevalley_{w,\rootfunc} \).

        \item\label{codimofstrata:prop:stratofliewahoryandlie}
        The subschemes \( \thelie_{w,\rootfunc} \subset \arcspace(\thelie) \) and \( \liewahory_{w,\rootfunc} \subset \liewahory \) form a bounded constructible stratification of \( \thelie_\bullet \) and \( \liewahory_\bullet \), by \cref{stratification:prop:pullbackofstrat} and \cref{stratoftorus:stratofchevalleyisboundedconst}.
    \end{mylist}
\end{para}

The proof of the following theorem is given in \cref{flatnessofchevalley}.

\begin{theorem}\label{codimofstrata:prop:finitechevalleyflat}
    The maps
    \[
        \thechevalleymap_n \colon \arcspace_n(\thelie) \to \arcspace_n(\chevalley)
        \quad \text{and} \quad
        \theiwahorychevalleymap_n \colon \liewahory_n \to \arcspace_n(\chevalley)
    \]
    are flat.
\end{theorem}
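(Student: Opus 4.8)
The plan is to import the equal-characteristic flatness result \cite[Theorem 3.4.7]{bouthier2022perverse} via a degeneration over $\projectivespace^1$. Following \cite{bando2023derived}, \cref{def:jetandarc} furnishes parametrized arc spaces $\widetilde{\arcspace}_n(\thelie)$, $\widetilde{\arcspace}_n(\liewahory)$, $\widetilde{\arcspace}_n(\chevalley)$ — quasi-compact, perfectly finitely presented, and flat over $\projectivespace^1$ — together with a morphism $\widetilde{\thechevalleymap}_n \colon \widetilde{\arcspace}_n(\thelie) \to \widetilde{\arcspace}_n(\chevalley)$ over $\projectivespace^1$ whose restriction to $\widetilde{\arcspace}_n(\liewahory) \subset \widetilde{\arcspace}_n(\thelie)$ is a morphism $\widetilde{\theiwahorychevalleymap}_n$, and with the key interpolation property: for every $x \in \projectivespace^1 \smallsetminus \{\infty\}$ the fibre of $\widetilde{\thechevalleymap}_n$ (resp. of $\widetilde{\theiwahorychevalleymap}_n$) over $x$ is isomorphic, as a morphism of perfect schemes, to $\thechevalleymap_n$ (resp. to $\theiwahorychevalleymap_n$), while over $x = \infty$ it is isomorphic to the corresponding truncated Chevalley morphism in the equal-characteristic setting. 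One arranges these identifications to be compatible with the zero arc, so that the zero section $\sigma \colon \projectivespace^1 \to \widetilde{\arcspace}_n(\liewahory) \subset \widetilde{\arcspace}_n(\thelie)$ is carried, in the fibre over any $x \neq \infty$, to the origin $0 \in \arcspace_n(\thelie)$.

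Granting this, I would argue as follows. Over $x = \infty$ the fibre $(\widetilde{\thechevalleymap}_n)_\infty$ is the equal-characteristic truncated Chevalley morphism, hence flat by \cite[Theorem 3.4.7]{bouthier2022perverse}. Because $\widetilde{\arcspace}_n(\thelie)$ is flat over $\projectivespace^1$, the critère de platitude par fibres (\cref{thm:critflatfibre}), applied at the point $\sigma(\infty)$, shows that $\widetilde{\thechevalleymap}_n$ is flat at $\sigma(\infty)$; the same argument gives that $\widetilde{\theiwahorychevalleymap}_n$ is flat at $\sigma(\infty)$. By \cref{prop:flatlocusopen} (the incarnation of generic flatness in our setting) the flat locus of $\widetilde{\thechevalleymap}_n$ is open in $\widetilde{\arcspace}_n(\thelie)$, so its preimage under the continuous section $\sigma$ is an open neighbourhood of $\infty$ in $\projectivespace^1$. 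Choose a point $x_0 \in \projectivespace^1 \smallsetminus \{\infty\}$ in this neighbourhood. Then $\widetilde{\thechevalleymap}_n$ is flat at $\sigma(x_0)$, and since flatness is preserved under the base change along $\{x_0\} \hookrightarrow \projectivespace^1$, the fibre $(\widetilde{\thechevalleymap}_n)_{x_0}$ — which is isomorphic to $\thechevalleymap_n$ — is flat at the point corresponding to $0 \in \arcspace_n(\thelie)$. The same reasoning shows $\theiwahorychevalleymap_n$ is flat at $0 \in \liewahory_n$.

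It remains to propagate flatness over the whole source using homogeneity. Under the identification $\arcspace_n(\thelie) \simeq \affinespace^{n\dim\thelie}_{\perf}$ of \cref{wittloopandarc:example:1daffinespace}, the scaling action of $\multiplicativegroup$ on $\thelie$ becomes the standard linear action, all of whose weights are strictly positive; hence $0$ lies in the closure of every $\multiplicativegroup$-orbit in $\arcspace_n(\thelie)$. Since $\thechevalleymap_n$ is $\multiplicativegroup$-equivariant for this action and for the induced (again strictly positive-weight) action on $\arcspace_n(\chevalley)$, its flat locus is open, $\multiplicativegroup$-stable, and contains $0$; an open $\multiplicativegroup$-stable set whose complement is closed and $\multiplicativegroup$-stable, hence cannot meet $0$ unless empty, must be all of $\arcspace_n(\thelie)$. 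Therefore $\thechevalleymap_n$ is flat. The identical argument applies to $\theiwahorychevalleymap_n \colon \liewahory_n \to \arcspace_n(\chevalley)$, using that $\liewahory$ is preserved by scaling and that, after a linear identification, $\liewahory_n$ is a perfect affine space on which the scaling action is again contracting with positive weights.

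The one genuinely substantial point is \cref{def:jetandarc} itself: constructing the $\projectivespace^1$-family $\widetilde{\thechevalleymap}_n$ out of \cite{bando2023derived} and verifying its three defining features — flatness of $\widetilde{\arcspace}_n(\thelie)$, $\widetilde{\arcspace}_n(\liewahory)$, $\widetilde{\arcspace}_n(\chevalley)$ over $\projectivespace^1$; the identification of the fibres away from $\infty$ with the mixed-characteristic Chevalley morphism; and the identification of the fibre over $\infty$ with its equal-characteristic analogue. Once this interpolation is available, the deduction above is purely formal, combining the critère de platitude par fibres, openness of the flat locus, and the positivity of the scaling weights, so that no global geometric input of the kind used in \cite[Appendix B.4]{bouthier2022perverse} is needed. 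The non-truncated morphisms appearing in \cref{thmintro:chevalleyflat} are then obtained from the truncated ones by passing to the limit over $n$ along the (perfectly smooth, hence flat) transition maps of $\arcspace(\thelie) \simeq \lim_n \arcspace_n(\thelie)$ and $\arcspace(\chevalley) \simeq \lim_n \arcspace_n(\chevalley)$.
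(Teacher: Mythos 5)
Your argument is correct and rests on the same scaffolding as the paper's — the Bando parametrized arc spaces $\pararcspace_n$ over $\projectivespace^1_{\perf}$, the crit\`ere de platitude par fibres (\cref{thm:critflatfibre}), openness of the flat locus (\cref{prop:flatlocusopp} is not a thing but \cref{prop:flatlocusopen} is what you want), and $\multiplicativegroup$-equivariance via Teichm\"uller lifts — but you organize the endgame differently. The paper's \cref{thm:mainact} produces an open $U \subset \projectivespace^1_{\perf}$ containing the special point over which the \emph{entire} parametrized morphism $\pararcspace_n(\chi)$ is flat, and this requires a further argument: the flat locus $V$ is a union of $\multiplicativegroup$-orbits, hence descends to an open $V'$ in the projectivization $\projectivespace(\pararcspace_n(\thelie))$, and the image $Z$ of the complement in $\projectivespace^1_{\perf}$ is closed because the projectivization is pfp-proper; then $U = 0^{-1}(V) \cap (\projectivespace^1_{\perf} \setminus Z)$, and a point $x \in U$ away from the special point gives the whole Witt-vector morphism flat by base change. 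You instead stop at flatness of $\thechevalleymap_n$ at the single origin $0 \in \arcspace_n(\thelie)$ (obtained by the crit\`ere plus openness plus the zero section) and then propagate globally using the fact that the Teichm\"uller $\multiplicativegroup$-action on $\arcspace_n(\thelie)$ and $\arcspace_n(\chevalley)$ is contracting with strictly positive weights, so that an open $\multiplicativegroup$-stable set containing the origin must be everything. Your route avoids the projectivization/properness step and uses slightly less of the equal-characteristic input (flatness at one point of the equal-char fibre rather than on the whole fibre), at the cost of a separate contraction lemma on the mixed-characteristic fibre; the two are of comparable length. One small caution: your write-up should make explicit, as the paper does in \cref{prop:gmequivariance}, that the $\multiplicativegroup_{\perf}$-equivariance of $\arcspace_n(\thechevalleymap)$ comes from restricting the $\arcspace_n(\multiplicativegroup)$-equivariance along the Teichm\"uller section, and that this action has strictly positive weights (the degrees of the fundamental invariants) on the target as well as the source — which you use implicitly when asserting the flat locus is $\multiplicativegroup$-stable.
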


The following corollary is interesting on it's own but the observant reader will notice we don't use it in this work.

\begin{corollary}\label{codimofstrata:prop:chevalleyflat}
    The maps
    \[
        \thechevalleymap \colon \arcspace(\thelie) \to \arcspace(\chevalley)
        \quad \text{and} \quad
        \theiwahorychevalleymap \colon \thelie \to \arcspace(\chevalley)
    \]
    are flat and uo-equidimensional.
\end{corollary}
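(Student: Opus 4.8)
The plan is to deduce \cref{codimofstrata:prop:chevalleyflat} from its truncated version \cref{codimofstrata:prop:finitechevalleyflat} by passing to the limit. First I would set up the relevant perfectly placid presentations: $\arcspace(\thelie) \simeq \lim_n \arcspace_n(\thelie)$, $\arcspace(\chevalley) \simeq \lim_n \arcspace_n(\chevalley)$ and $\liewahory \simeq \lim_n \liewahory_n$ (the last because fibre products commute with limits). Since $\thelie \simeq \affinespace^d_\basicints$ and $\chevalley \simeq \affinespace^r_\basicints$, the transition maps in all three towers are perfections of affine bundles, hence perfectly smooth and affine, so these are indeed perfectly placid presentations; and each $\arcspace_n(\thelie)$, $\arcspace_n(\chevalley)$, $\liewahory_n$ is the perfection of an affine space over $\resfield$, in particular an irreducible perfect scheme perfectly finitely presented over $\resfield$. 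By \cref{codimofstrata:prop:finitechevalleyflat} the morphisms $\thechevalleymap_n$ and $\theiwahorychevalleymap_n$ are flat, and then \cref{dimthry:prop:flatisuoequidim} shows that each $\thechevalleymap_n$ and each $\theiwahorychevalleymap_n$ is already uo-equidimensional.

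For the flatness of $\thechevalleymap$ (the argument for $\theiwahorychevalleymap$ being identical) I would pass to global sections. Writing $A_n := \mathcal{O}(\arcspace_n(\chevalley))$, $B_n := \mathcal{O}(\arcspace_n(\thelie))$ and $A := \colim_n A_n = \mathcal{O}(\arcspace(\chevalley))$, one has $\mathcal{O}(\arcspace(\thelie)) = \colim_n B_n \simeq \colim_n\bigl(B_n \otimes_{A_n} A\bigr)$, since base change commutes with filtered colimits. Each $B_n \otimes_{A_n} A$ is flat over $A$ because $B_n$ is flat over $A_n$ by \cref{codimofstrata:prop:finitechevalleyflat}, and a filtered colimit of flat modules is flat; hence $\mathcal{O}(\arcspace(\thelie))$ is flat over $A$, i.e.\ $\thechevalleymap$ is flat.

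For uo-equidimensionality of $\thechevalleymap$ and $\theiwahorychevalleymap$ I would verify the defining condition of \cref{dimthry:def:equidimforplacidpresented} by hand, the essential point being that the class of uo-equidimensional morphisms — unlike that of perfectly smooth morphisms — satisfies the IPP property, \cref{dimthry:prop:uoequidimsatisfyIPP}. Given arbitrary perfectly placid presentations $\arcspace(\thelie) \simeq \lim_\alpha X_\alpha$, $\arcspace(\chevalley) \simeq \lim_\beta Y_\beta$ and an index $\beta$, I would apply the IPP property twice: comparing $\{Y_\beta\}$ with the canonical presentation $\{\arcspace_m(\chevalley)\}$ yields an index $m$ and a uo-equidimensional morphism $\arcspace_m(\chevalley) \to Y_\beta$ under $\arcspace(\chevalley)$; then, fixing some $n \geq m$, comparing $\{X_\alpha\}$ with $\{\arcspace_k(\thelie)\}$ yields an index $\alpha$ and a uo-equidimensional morphism $X_\alpha \to \arcspace_n(\thelie)$ under $\arcspace(\thelie)$. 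The composite $X_\alpha \to \arcspace_n(\thelie) \xrightarrow{\thechevalleymap_n} \arcspace_n(\chevalley) \to \arcspace_m(\chevalley) \to Y_\beta$ is then uo-equidimensional (closure under composition, together with the fact that the affine-bundle projection $\arcspace_n(\chevalley) \to \arcspace_m(\chevalley)$ is perfectly smooth and surjective, hence uo-equidimensional), and naturality of reduction modulo $\uniformizer^n$ shows it fits into the commutative square with $\thechevalleymap$ and the canonical projections $\arcspace(\thelie) \to X_\alpha$, $\arcspace(\chevalley) \to Y_\beta$ demanded by \cref{dimthry:def:equidimforplacidpresented}.

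I expect the only genuinely delicate step to be this last one: in the equal-characteristic world one would simply appeal to the canonical smooth cover, but here perfectly smooth morphisms fail the IPP property (this is the phenomenon behind the weakening in \cref{dimthry:prop:pfpfbetweenplacidpresentedcorollaries}), so the verification has to be routed through the IPP property of the uo-equidimensional class itself. The ring-theoretic identity $\colim_n B_n \simeq \colim_n(B_n \otimes_{A_n} A)$ and the commutativity bookkeeping for the squares are routine.
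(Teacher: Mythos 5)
Your proposal is correct and follows essentially the same route as the paper: flatness via the stability of flatness under filtered colimits (which the paper phrases as $\operatorname{Tor}$ commuting with filtered colimits), and uo-equidimensionality via \cref{dimthry:prop:flatisuoequidim} applied to the truncated morphisms together with the IPP property of \cref{dimthry:prop:uoequidimsatisfyIPP}. The one difference is that the paper invokes \cite[Lemma A.4.6(b)]{bouthier2022perverse} to reduce the ``for every pair of presentations'' condition of \cref{dimthry:def:equidimforplacidpresented} to exhibiting a single pair, while you reconstruct exactly this reduction by hand via two applications of the IPP property and a short commutativity check --- a correct, self-contained unpacking of that lemma rather than a different argument.
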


\begin{proof}
    Flatness is preserved under filtered colimits (since \( \operatorname{Tor}(-,-) \) commutes with filtered colimits). Therefore, the result follows from \cref{codimofstrata:prop:finitechevalleyflat}.

    The morphisms are uo-equidimensional. Indeed by \cite[A.4.6(b)]{bouthier2022perverse}, it is enough to show that there exists perfectly placid presentations as in \cref{dimthry:def:equidimforplacidpresented}. This is the case since \( \thechevalleymap_n \) and \( \theiwahorychevalleymap_n \) are flat and perfectly finitely presented, and thus uo-equidimensional by \cref{dimthry:prop:flatisuoequidim}.
\end{proof}

\begin{corollary}\label{wittaffinrspringer:prop:chevalleyareflatanduoequidimonstrata}
    \begin{mylist}
        \item The perfectly finitely presented locally closed subschemes
        \[
            \liewahory_{w,\rootfunc} \subset \liewahory \quad \text{and} \quad \thelie_{w,\rootfunc} \subset \arcspace(\thelie)
        \]
        are of pure codimension \( b_{w,\rootfunc} \).
        
        \item The induced morphisms
        \[
            \thechevalleymap_{w,\rootfunc} \colon \thelie_{w,\rootfunc} \to \chevalley_{w,\rootfunc}
            \quad \text{and} \quad
            \theiwahorychevalleymap_{w,\rootfunc} \colon \liewahory_{w,\rootfunc} \to \chevalley_{w,\rootfunc}
        \]
        are flat and uo-equidimensional.
    \end{mylist}
\end{corollary}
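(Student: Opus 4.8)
The plan is to deduce both parts from \cref{codimofstrata:prop:chevalleyflat} together with the properties of the stratum $\chevalley_{w,\rootfunc}\into\arcspace(\chevalley)$ recorded in \cref{codimofstrata:def:quantities:pfplocallyofpurecodim} and \cref{stratoftorus:stratofchevalleyisequidim}. The basic observation is that, by construction, $\thelie_{w,\rootfunc}\into\arcspace(\thelie)$ and $\liewahory_{w,\rootfunc}\into\liewahory$ are the base changes of the pfp locally closed embedding $\chevalley_{w,\rootfunc}\into\arcspace(\chevalley)$ along $\thechevalleymap$ and $\theiwahorychevalleymap$ respectively, while dually $\thechevalleymap_{w,\rootfunc}$ and $\theiwahorychevalleymap_{w,\rootfunc}$ are the base changes of $\thechevalleymap$ and $\theiwahorychevalleymap$ along $\chevalley_{w,\rootfunc}\into\arcspace(\chevalley)$. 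By \cref{codimofstrata:prop:chevalleyflat}, $\thechevalleymap$ and $\theiwahorychevalleymap$ are flat, hence universally open, and flatness is stable under base change.

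For part (a): by \cref{codimofstrata:def:quantities:pfplocallyofpurecodim}, $\chevalley_{w,\rootfunc}\into\arcspace(\chevalley)$ is a pfp-representable weakly equidimensional morphism of relative dimension $-b_{w,\rootfunc}$ between perfectly placid stacks. I would invoke the stability of such morphisms (of a fixed relative dimension) under base change along universally open representable morphisms -- the perfectly placid counterpart of \cref{dimthry:prop:equidimofrelativedimensionpreservedbypullback} in the framework of \cref{dimthry:prop:constantrelativedimensionbetweenplacid} -- applied to the base changes along the flat morphisms $\thechevalleymap$ and $\theiwahorychevalleymap$. This yields that $\thelie_{w,\rootfunc}\into\arcspace(\thelie)$ and $\liewahory_{w,\rootfunc}\into\liewahory$ are pfp locally closed embeddings of pure codimension $b_{w,\rootfunc}$.

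For part (b): flatness of $\thechevalleymap_{w,\rootfunc}$ and $\theiwahorychevalleymap_{w,\rootfunc}$ is immediate, since flatness is preserved by base change. Uo-equidimensionality is the substantive point; since $\thechevalleymap_{w,\rootfunc}$ is \emph{not} pfp-representable -- it is a cofiltered limit of the morphisms $\thechevalleymap_n$, whose relative dimensions grow with $n$ -- one must argue through \cref{dimthry:def:equidimforplacidpresented}. First, $\chevalley_{w,\rootfunc}$ is irreducible: it is connected, it is finite étale covered by $\thetoruslie_{w,\rootfunc}$, which is strongly pro perfectly smooth by \cref{stratoftorus:twistedstrataresmooth} and hence normal (being a cofiltered limit of perfectly smooth schemes, which are étale-locally affine spaces), and normality descends along the cover. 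Using that $\chevalley_{w,\rootfunc}$ is a connected component of a base change of the tower $\cdots\to\arcspace_n(\chevalley)\to\cdots$ (\cref{stratoftorus:propertiesofwrstrata}), along which the transition maps are trivial affine-space bundles, one obtains a placid presentation $\chevalley_{w,\rootfunc}\simeq\lim_n C_n$ with each $C_n$ irreducible and perfectly finitely presented over $\resfield$, and then, by base change along $\thechevalleymap$, a compatible placid presentation $\thelie_{w,\rootfunc}\simeq\lim_n A_n$ in which each $A_n\to C_n$ is a base change of the flat perfectly finitely presented morphism $\thechevalleymap_n$ (flat by \cref{codimofstrata:prop:finitechevalleyflat}) along a locally closed embedding $C_n\into\arcspace_n(\chevalley)$. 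Since $\thechevalleymap_n$ is flat between the irreducible schemes $\arcspace_n(\thelie)$ and $\arcspace_n(\chevalley)$, it is uo-equidimensional of constant relative dimension by \cref{dimthry:prop:flatisuoequidim}; and the base change of a flat, uo-equidimensional, constant-relative-dimension morphism onto an irreducible base is again of this type (its total space is the closure of the pure-dimensional generic fibre, hence equidimensional). Thus each $A_n\to C_n$, and therefore $\thechevalleymap_{w,\rootfunc}$ -- by \cref{dimthry:def:equidimforplacidpresented} and the IPP property of uo-equidimensional morphisms (\cref{dimthry:prop:uoequidimsatisfyIPP}), which lets us check on a single pair of presentations -- is uo-equidimensional. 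The argument for $\theiwahorychevalleymap_{w,\rootfunc}$ is identical, with $\theiwahorychevalleymap_n$ in place of $\thechevalleymap_n$ and using that $\liewahory_n$ is irreducible.

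The main obstacle is the uo-equidimensionality clause of part (b). Since base change preserves uo-equidimensionality only along \emph{open} morphisms, one cannot simply pull the statement ``$\thechevalleymap$ is uo-equidimensional'' back along the locally closed embedding $\chevalley_{w,\rootfunc}\into\arcspace(\chevalley)$; the descent to finite-type models must be performed explicitly, via a carefully chosen pair of compatible placid presentations, and this is exactly where the failure of the IPP property for perfectly smooth morphisms in the perfect setting makes the bookkeeping delicate. The irreducibility of $\chevalley_{w,\rootfunc}$, obtained from the finite étale cover by $\thetoruslie_{w,\rootfunc}$, is the input that makes this step go through.
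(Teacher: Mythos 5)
Your proposal is essentially correct and follows the same underlying strategy as the paper: reduce to the truncated level, use the flatness of $\thechevalleymap_n$ (\cref{codimofstrata:prop:finitechevalleyflat}) together with the irreducibility of the finite-level spaces, and then propagate uo-equidimensionality back to the pro-limit via the IPP property and base change along the strongly pro perfectly smooth projections $\arcspace(\chevalley)\to\arcspace_n(\chevalley)$ — this is exactly the role of the base-change results of \cite[\S 2.3.6 and 2.3.7]{bouthier2022perverse} that the paper invokes, following \cite[Corollary 3.4.9]{bouthier2022perverse}.

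The one small stylistic departure is that you invoke \cref{codimofstrata:prop:chevalleyflat} directly (for flatness of $\thechevalleymap_{w,\rootfunc}$ in part (b) and for universal openness of $\thechevalleymap$ in part (a)), whereas the paper explicitly notes that corollary is not used and instead works entirely at the truncated level, deducing what it needs from \cref{codimofstrata:prop:finitechevalleyflat} together with the fact that the morphism $\arcspace(\thelie)\to\arcspace_n(\thelie)$ is strongly pro perfectly smooth. There is no circularity in your route since \cref{codimofstrata:prop:chevalleyflat} is itself downstream of \cref{codimofstrata:prop:finitechevalleyflat}, so both work; the paper's choice just keeps all the heavy lifting in $\peraff^{\pfp}$ where the dimension theory is cleanest. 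Your explicit verification of the key base-change lemma (flat base change of a uo-equidimensional morphism along a locally closed embedding into an irreducible target stays uo-equidimensional, via dominance of every component over the base) is exactly the content the paper delegates to the BKV reference, and your use of irreducibility of $\chevalley_{w,\rootfunc}$ via the finite \'etale cover by $\thetoruslie_{w,\rootfunc}$ is a correct way to supply the missing hypothesis.
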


\begin{proof}
    The result follows from \cref{codimofstrata:prop:finitechevalleyflat} by the same argument as in \cite[Corollary 3.4.9]{bouthier2022perverse}. The results of \cite[\S 2.3.6 and 2.3.7]{bouthier2022perverse} on base change along pro uo-equidimensional morphisms apply in our setting without modification.
\end{proof}

\section{Proof of \texorpdfstring{\cref{stratoftorus:chevalleyisetale}}{Proposition 7.12}} \label{Proofofchevalleyisetale}

To prove the proposition, we repeat the argument of \cite[Theorem 7.2.5]{bouthier2022perversesheavesinfinitedimensionalstacksv4}, which in its turn is motivated by \cite[\S 11.1]{goresky2006codimensions}


\begin{para}
    Let $X =\arcspace(\thetoruslie')_{\arcspace(\chevalley),(\thediscriminant;n)}$ and let $Y = \arcspace(\chevalley)_{(\thediscriminant ; n)}$. Recall that $X,Y$ are perfect affine schemes.
\end{para}

\begin{proposition}\label{proofofetale:torsoronpoints}
     For every algebraically closed field $L/\resfield$, the  morphism $\pi: X(L)\to Y(L)$ is surjective, and every fiber is a $W$-torsor.
\end{proposition}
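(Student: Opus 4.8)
The plan is to fix a point $c\in Y(L)$ --- that is, a point $c\in\chevalley(\witt(L))$ lying in the stratum where $\valuation(\thediscriminant(c))=n$ --- and to show that the fibre of $\pi\colon X(L)\to Y(L)$ over $c$ is a $\theweylgroup$-torsor; non-emptiness of these fibres then yields surjectivity. First I would unwind the fibre-product definition of $X$. Using the identification $\arcspace(\thetoruslie')(L)=\thetoruslie(\witt(L)\otimes_{\basicints}\basicints')$ (and likewise for $\chevalley'$), a point of $X(L)$ lying over $c$ is the same datum as a point $t\in\thetoruslie(O')$ with $\pi(t)$ equal to the image of $c$ in $\chevalley(O')$, where $O':=\witt(L)\otimes_{\basicints}\basicints'=\witt(L)[\uniformizer^{1/a}]$ and $\pi\colon\thetoruslie\to\chevalley$ is the quotient map. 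Writing $Z:=\thetoruslie\times_{\chevalley,c}\spec\witt(L)$ for the scheme-theoretic fibre of $\pi$ over $c\in\chevalley(\witt(L))$, the fibre in question is thus $Z(O')$, with $\theweylgroup$ acting through its linear action on $\thetoruslie$ (the action for which $\pi\colon X\to Y$ is claimed to be a torsor). So it remains to prove that $Z(O')$ is a $\theweylgroup$-torsor.

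Next I would pass to the generic fibre. Put $O:=\witt(L)$, a complete discrete valuation ring with algebraically closed residue field $L$ and fraction field $F:=O[1/p]$, and $F':=O'[1/p]=F[\uniformizer^{1/a}]$, so that $O'$ is a complete DVR, totally ramified of degree $a=|\theweylgroup|$ over $O$. Since $\pi$ is finite, $Z\to\spec O$ is finite, hence proper; the valuative criterion of properness therefore identifies $Z(O')$ with $Z(F')=Z_F(F')$, where $Z_F:=Z\times_O F=\thetoruslie\times_{\chevalley,c_F}\spec F$. As $\valuation(\thediscriminant(c))=n<\infty$, the $F$-point $c_F$ lands in $\chevalley^{\regsemisimple}$; and since $p>2h$ --- in particular $p$ does not divide $|\theweylgroup|$, the product of the fundamental degrees, each of which is at most $h$ --- the quotient map restricts to a $\theweylgroup$-torsor $\thetoruslie^{\regsemisimple}\to\chevalley^{\regsemisimple}$ (it is finite \'etale there, with $\theweylgroup$ acting freely off the reflection hyperplanes). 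Hence $Z_F$ is a $\theweylgroup$-torsor over $\spec F$, and the claim reduces to showing that $Z_F$ becomes a trivial $\theweylgroup$-torsor after base change to $F'$, equivalently that $Z_F(F')$ is a $\theweylgroup$-torsor.

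For that I would use the Galois description of torsors. A $\theweylgroup$-torsor over $\spec F$ is classified by a continuous homomorphism $\rho\colon\operatorname{Gal}(\overline F/F)\to\theweylgroup$ up to conjugacy, and $Z_F(F')$ is a $\theweylgroup$-torsor precisely when $\rho$ is trivial on $\operatorname{Gal}(\overline F/F')$. Because $L$ is algebraically closed, $\operatorname{Gal}(\overline F/F)$ equals the inertia group and sits in an extension $1\to P\to\operatorname{Gal}(\overline F/F)\to\Gamma^{\mathrm{tame}}\to 1$ with $P$ (wild inertia) pro-$p$ and $\Gamma^{\mathrm{tame}}$ procyclic of order prime to $p$. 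Since $|\theweylgroup|$ is prime to $p$, the image $\rho(P)$ is a $p$-subgroup of $\theweylgroup$, hence trivial, so $\rho$ factors through $\Gamma^{\mathrm{tame}}$; its image is then cyclic of some order $m$ dividing $|\theweylgroup|=a$. On the other hand $F'=F[\uniformizer^{1/a}]$ is exactly the tamely totally ramified cyclic extension with $\operatorname{Gal}(F'/F)\cong\ints/a$ (note $\mu_a\subset F$, as $\mu_a\subset L^{\times}$ lifts by Hensel), corresponding to the index-$a$ open subgroup of $\Gamma^{\mathrm{tame}}$; and any surjection of a procyclic group onto $\ints/m$ with $m\mid a$ factors through $\ints/a$. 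Hence $\rho$ kills $\operatorname{Gal}(\overline F/F')$, so $Z_F\times_F F'\cong\theweylgroup\times\spec F'$ as $\theweylgroup$-torsors, and $Z(O')=Z_F(F')$ is a $\theweylgroup$-torsor, in particular non-empty.

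I expect the only genuinely non-formal point to be the third step: the fact that $p\nmid|\theweylgroup|$ (a consequence of $p>2h$) forces every $\theweylgroup$-torsor over the field $F$ --- complete discretely valued with algebraically closed residue field --- to be tame, and therefore trivialized by the degree-$|\theweylgroup|$ totally ramified extension $F'$. The remaining ingredients --- unwinding the fibre product, the valuative criterion, and the standard description of $\thetoruslie\to\chevalley$ over $\chevalley^{\regsemisimple}$ --- are routine, though some care is needed in the first step to confirm that the $\theweylgroup$-action on $Z(O')$ induced by the linear action on $\thetoruslie$ matches the one implicit in the statement of \cref{stratoftorus:chevalleyisetale}.
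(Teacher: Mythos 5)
Your proof is correct and follows essentially the same route as the paper: both reduce to the $\theweylgroup$-torsor $\thetoruslie^{\regsemisimple}\to\chevalley^{\regsemisimple}$ over the generic fibre, use that $p>2h$ forces $p\nmid|\theweylgroup|=a$ so that the classifying homomorphism $G_F\to\theweylgroup$ kills wild inertia and factors through the degree-$a$ tame extension $F[\uniformizer^{1/a}]$, and invoke the valuative criterion (finiteness of $\thetoruslie\to\chevalley$) to upgrade $F'$-points to $O'$-points. The only difference is organizational — the paper first observes that fibres are $\theweylgroup$-torsors or empty and then proves surjectivity, while you treat both at once by analyzing $Z(O')$ directly — and your version spells out the Galois-cohomological classification and the tame-inertia argument that the paper leaves more compressed.
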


\begin{proof}

Recall that points of  $Y(L)$ are elements $y$ of $\chevalley(\witt(L))\cap \chevalley^{\regsemisimple}(\witt(L)[1/p])$ such that
$v(\thediscriminant(y))=n$, while  points of  $X(L)$ are elements $X$ of $\thetoruslie(\witt(L)[\uniformizer^{1/a}])\cap \thetoruslie^{rs}(\witt(L)[\uniformizer^{1/a}][1/p])$
such that $v(\thediscriminant(x))=n$ and $\sigma(x)=w^{-1}(x)$ for some $w\in W$.

Since the $\pi:\thetoruslie^{rs}\to \chevalley^{rs}$ is a $W$-torsor, every fiber of  $X(L)\to Y(L)$ is either a $W$-torsor or empty. Thus it suffices to show that $\pi: X(L)\to Y(L)$ is surjective. Fix
$y\in Y(L)$.

Since $y\in \chevalley^{\regsemisimple}(\witt(L)[1/p])$ and $\pi:\thetoruslie^{rs}\to \chevalley^{rs}$ is a $W$-torsor, there exists a finite Galois extension $M/\witt(L)[1/p]$ of degree $m\mid a$ (see \cref{wittloopandarc:def:constants}) such that $y\in \pi(\thetoruslie^{\regsemisimple}(M))$. Since $a$ is invertible in $L$, we have $M\simeq \witt(L)[1/p,\uniformizer^{1/m}]$, thus there exists $x\in \thetoruslie^{\regsemisimple}(\witt(L)[1/p,\uniformizer^{1/m}])$ such that $\pi(x)=y$.
Moreover, $\pi(\sigma(x))=\sigma(\pi(x))=\sigma(y)=y$. Thus there exists  $w\in W$ such that $\sigma(x)=w^{-1}(x)$.
Finally, since $y\in \chevalley(\witt[L])$ and $\thetoruslie\to \chevalley$ is finite, hence proper, it follows from the valuative criterion that $x\in \thetoruslie(\witt[L][\uniformizer^{1/m}])$.
\end{proof}

\begin{proposition}
    It suffices to show that the  morphism  $\pi:X\to Y$ is \'etale, and so, in particular perfectly finitely presented.
\end{proposition}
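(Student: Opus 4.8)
The plan is to deduce \cref{stratoftorus:chevalleyisetale} from \cref{perfstacks:prop:torsoronpoints} applied to the finite discrete group $\Lambda=\theweylgroup$. Recall that $\theweylgroup$ acts on $\arcspace(\thetoruslie')$ preserving the stratification by the valuation of $\thediscriminant$ (see \cref{stratoftorus:twistedstrat}(f)); since $\thediscriminant$ is pulled back from $\chevalley'$ and $\pi$ is $\theweylgroup$-invariant, this restricts to a $\theweylgroup$-action on $X=\arcspace(\thetoruslie')_{\arcspace(\chevalley),(\thediscriminant;n)}$ over $Y=\arcspace(\chevalley)_{(\thediscriminant;n)}$. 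So $\pi\colon X\to Y$ is a $\theweylgroup$-invariant morphism of perfect affine schemes, and the content of \cref{stratoftorus:chevalleyisetale} is exactly that it is a $\theweylgroup$-torsor.

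To invoke \cref{perfstacks:prop:torsoronpoints} one checks its two hypotheses. For the representability hypothesis: $X$ and $Y$ are perfect affine schemes, so $\pi$ is schematic, in particular locally schematic; and \emph{assuming} that $\pi$ is étale — the claim whose proof occupies the remainder of \cref{Proofofchevalleyisetale} — it is flat and locally of finite presentation, hence universally open, so $\pi$ is locally schematic and universally open representable, i.e.\ condition~(1) of \cref{perfstacks:prop:torsoronpoints} holds. For the hypothesis on geometric points: \cref{proofofetale:torsoronpoints} says that for every algebraically closed field $L/\resfield$ the map $\pi\colon X(L)\to Y(L)$ is surjective with every fibre a $\theweylgroup$-torsor, and since $X(L)$ and $Y(L)$ are sets this says precisely that $X(L)\to Y(L)$ is a $\theweylgroup$-torsor in $\catofanima$. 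Hence \cref{perfstacks:prop:torsoronpoints} yields that $\pi$ is a $\theweylgroup$-torsor, proving \cref{stratoftorus:chevalleyisetale}. Finally, a $\theweylgroup$-torsor with $\theweylgroup$ finite is finite étale, hence perfectly finitely presented (it arises as the perfection of a finite étale morphism at a finite truncation level, using the perfectly placid presentation of the components of $Y$ from \cref{stratoftorus:propertiesofwrstrata}), which is the last clause of the statement.

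This reduction is purely formal and carries no real difficulty: the whole substance is pushed into the postponed assertion that $\pi$ is étale, which the subsequent propositions of \cref{Proofofchevalleyisetale} establish by adapting \cite[Theorem 7.2.5]{bouthier2022perversesheavesinfinitedimensionalstacksv4}. The only point deserving a moment's care is matching the formulation of \cref{proofofetale:torsoronpoints} (``surjective with fibres $\theweylgroup$-torsors'') with the anima-theoretic notion of a $\theweylgroup$-torsor required by \cref{perfstacks:prop:torsoronpoints}; for functors valued in sets these two notions coincide, so no genuine obstacle arises here.
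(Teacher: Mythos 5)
Your proof is correct, but it takes a genuinely different route from the paper's. The paper argues directly from the definition of a torsor: assuming $\pi$ is \'etale, it notes that $\pi$ is faithfully flat (hence a covering) since $\pi$ is surjective on geometric points, and then shows the action map $d\colon\theweylgroup\times X\to X\times_Y X$ is an isomorphism by observing that (i) each $d_w\colon X\to X\times_Y X$, $x\mapsto (wx,x)$, is an open embedding because the diagonal of the \'etale morphism $\pi$ is an open embedding and $d_w$ is the diagonal composed with an automorphism, (ii) the images of the $d_w$'s are pairwise disjoint by \cref{proofofetale:torsoronpoints}, and (iii) $d$ is surjective for the same reason. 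You instead invoke the general black-box \cref{perfstacks:prop:torsoronpoints}, verifying its hypotheses (locally schematic plus universally open representable, torsor on geometric points). Both arguments are valid; indeed the paper's inline argument is close in spirit to how \cref{perfstacks:prop:torsoronpoints} is itself proved (via \cref{perfstacks:prop:graddream}: isomorphisms detected on geometric points for universally open/closed morphisms), so your version makes explicit use of machinery the paper built but then, somewhat redundantly, chose not to deploy here. Your route is shorter and conceptually cleaner; the paper's is more self-contained and exposes why the torsor property holds. One small mismatch of emphasis: the clause ``and so, in particular perfectly finitely presented'' in the statement is an immediate remark (an \'etale morphism of perfect schemes is trivially its own pfp model, since \'etale includes finitely presented and both source and target are already perfect), whereas you route it through the conclusion (``a $\theweylgroup$-torsor is finite \'etale, hence pfp''); your reasoning is sound but indirect, and your parenthetical invoking perfectly placid presentations and truncation levels is more machinery than is needed.
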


\begin{proof}
Assume that $\pi$ is \'etale. Since $\pi$ is surjective (by Step 1), it is faithfully flat. Thus
it suffices to show that the  morphism $d: W\times X\to X\times_Y X:(w,x)\mapsto (wx,x)$ is an isomorphism.
Since $d$ is surjective by Step 1, it suffices to show that $d$ is an open embedding.

Since $\pi$ is \'etale, the diagonal  morphism $X\to X\times_Y X$ is an open embedding. Therefore the  morphism
$d_w: X\to X\times_Y X:x\mapsto (wx,x)$ is an open embedding for all $W$. Moreover, by \cref{proofofetale:torsoronpoints}, the images of the $d_w$'s do not intersect. Thus $d$ is an open embedding, and we are done.
\end{proof}


\begin{proposition}
    It suffices to show that the  morphism $\thetoruslie_{\rootfunc}\to \arcspace(\chevalley)_{(\thediscriminant;n)}$, induced by $\pi$, is \'etale for all $\rootfunc:R\to \nats$. 
\end{proposition}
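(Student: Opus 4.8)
The plan is to prove this purely formally, by decomposing the source $X$ into clopen pieces and using that étaleness of a morphism is insensitive to disjoint–union decompositions of the source. First I would recall that, by the displayed decomposition $\arcspace(\thetoruslie')\times_{\arcspace(\chevalley')}\arcspace(\chevalley)\simeq\bigsqcup_{w\in\theweylgroup}\arcspace(\thetoruslie_w)$, the scheme $X$ is the finite disjoint union $X\simeq\bigsqcup_{w\in\theweylgroup}X_w$, where $X_w:=\arcspace(\thetoruslie_w)\times_{\arcspace(\chevalley)}Y$ is the preimage of $Y$ in $\arcspace(\thetoruslie_w)$. Since each $X_w$ is open and closed in $X$ and $\theweylgroup$ is finite, $\pi\colon X\to Y$ is étale if and only if each restriction $\pi|_{X_w}\colon X_w\to Y$ is étale, so we may fix $w\in\theweylgroup$.

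Next I would identify $X_w$ with a disjoint union of strata. Because $\thediscriminant$ lies in $\basicints[\chevalley]\subset\basicints[\thetoruslie]$ (see \cref{wittloopandarc:def:discriminant}), the function $\thediscriminant$ on $\arcspace(\thetoruslie_w)$ factors through $\pi$, so $X_w$ is exactly the stratum $\arcspace(\thetoruslie_w)_{(\thediscriminant;n)}$ on which $\thediscriminant$ has $\uniformizer$-adic valuation $n$. By \cref{stratoftorus:propertiesofwrstrata} (together with \cref{stratoftorus:twistedstrat}), the finitely many subschemes $\thetoruslie_{w,\rootfunc}$ with $\sum_{\alpha\in\theroots}\rootfunc(\alpha)=n$ are precisely the connected components of $\arcspace(\thetoruslie_w)_{(\thediscriminant;n)}$; hence $\arcspace(\thetoruslie_w)_{(\thediscriminant;n)}\simeq\bigsqcup_{d_\rootfunc=n}\thetoruslie_{w,\rootfunc}$ as a disjoint union of clopen subschemes. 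Applying the same ``étaleness is componentwise'' principle once more, $\pi|_{X_w}$ is étale if and only if each induced map $\thetoruslie_{w,\rootfunc}\to\arcspace(\chevalley)_{(\thediscriminant;n)}$ (for $d_\rootfunc=n$) is étale. Combining this with the first reduction, and recalling that the earlier propositions of \cref{Proofofchevalleyisetale} reduced \cref{stratoftorus:chevalleyisetale} to the étaleness of $\pi\colon X\to Y$, we obtain the claim. I would also note that the $\theweylgroup$-equivariant isomorphisms $u\colon\thetoruslie_{w,\rootfunc}\xrightarrow{\sim}\thetoruslie_{uwu^{-1},u(\rootfunc)}$ of \cref{stratoftorus:twistedstrat}, which commute with $\pi$ since $\pi$ is $\theweylgroup$-invariant, identify the corresponding stratum maps, so it is in fact enough to treat one pair $(w,\rootfunc)$ per $\theweylgroup$-orbit; this is what is abbreviated as ``$\thetoruslie_\rootfunc\to\arcspace(\chevalley)_{(\thediscriminant;n)}$ for all $\rootfunc$''.

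There is no serious obstacle here: the statement is a bookkeeping reduction, and its only genuine content is the fact --- already recorded in \cref{stratoftorus:propertiesofwrstrata} --- that for fixed $n$ only finitely many strata $\thetoruslie_{w,\rootfunc}$ occur and that they are closed (hence, being finitely many, disjoint and covering, clopen) in the ambient stratum $\arcspace(\thetoruslie_w)_{(\thediscriminant;n)}$, so that $X$ is literally a finite disjoint union rather than merely stratified by locally closed pieces. The one point to keep honest is precisely that upgrade, since étaleness is checked componentwise only for an actual disjoint union; once that is granted the argument is immediate. The remaining work --- verifying that each stratum map $\thetoruslie_{w,\rootfunc}\to\arcspace(\chevalley)_{(\thediscriminant;n)}$ is étale --- is carried out in the subsequent part of \cref{Proofofchevalleyisetale}.
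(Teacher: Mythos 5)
Your proposal diverges from the paper's argument in a way that leaves a genuine gap at the final step. The first two reductions you give are correct: $X$ is the disjoint union of the $\arcspace(\thetoruslie_w)_{(\thediscriminant;n)}$, each of which is a finite disjoint union of the clopen strata $\thetoruslie_{w,\rootfunc}$, so \'etaleness of $\pi\colon X\to Y$ is equivalent to \'etaleness of each $\thetoruslie_{w,\rootfunc}\to\arcspace(\chevalley)_{(\thediscriminant;n)}$. The problem is the last sentence, where you claim that the $\theweylgroup$-equivariant isomorphisms $u\colon\thetoruslie_{w,\rootfunc}\isoto\thetoruslie_{uwu^{-1},u(\rootfunc)}$ let one reduce to a single stratum per orbit and that this is ``what is abbreviated as $\thetoruslie_\rootfunc$.'' But the $\theweylgroup$-action conjugates $w$, so the orbit of $(w,\rootfunc)$ never contains a pair with first coordinate $1$ unless $w=1$ already; the strata $\thetoruslie_\rootfunc$ in the statement (with $\rootfunc\colon R\to\nats$, i.e.\ integer valuations) really are only the $w=1$ strata, and you cannot reach the twisted strata $\thetoruslie_{w,\rootfunc}$ from them by $\theweylgroup$-equivariance.

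The paper closes this gap by an entirely different device. It first applies the hypothesis for the untwisted torus $\thetoruslie$ to conclude that $\arcspace(\thetoruslie)_{(\thediscriminant;n)}\to\arcspace(\chevalley)_{(\thediscriminant;n)}$ is \'etale, then applies the same statement with $\thetoruslie'$ in place of $\thetoruslie$ (i.e.\ base-changing to the ramified extension $\basicints'$) to get \'etaleness of $\arcspace(\thetoruslie')_{(\thediscriminant;n)}\to\arcspace(\chevalley')_{(\thediscriminant;n)}$. Finally, it observes that $\arcspace(\thetoruslie_w)_{(\thediscriminant;n)}$ (resp.\ $\arcspace(\chevalley)_{(\thediscriminant;n)}$) is the fixed-point scheme of $w\sigma$ (resp.\ $\sigma$) inside $\arcspace(\thetoruslie')_{(\thediscriminant;n)}$ (resp.\ $\arcspace(\chevalley')_{(\thediscriminant;n)}$), and invokes \cref{proofofetale:lemmafixedpoints} to deduce that the induced map between fixed-point schemes remains \'etale. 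That lemma --- a separated \'etale morphism of perfect schemes intertwining endomorphisms induces an \'etale morphism on fixed points --- is the essential input that handles the twisted sectors, and it is missing from your argument. Without it, or something playing its role, the reduction from all $(w,\rootfunc)$ to $w=1$ does not go through.
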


\begin{proof}

Since $X$ is a disjoint union of the  $\arcspace(\thetoruslie_w)_{(\thediscriminant ;n)}$'s, it suffices to show that each  morphism $\arcspace(\thetoruslie_w)_{(\thediscriminant ;n)}\to \arcspace(\chevalley)_{(\thediscriminant ; n)}$ is \'etale.

Assume that  morphism $\thetoruslie_{\rootfunc}\to \arcspace(\chevalley)_{(\thediscriminant ;n)}$ is \'etale for all $\rootfunc$.
Since  $\arcspace(\thetoruslie)_{(\thediscriminant ; n)}$ is a disjoint union of the  $\thetoruslie_\rootfunc$'s, we conclude that the  morphism
$\arcspace(\thetoruslie)_{(\thediscriminant ; n)}\to \arcspace(\chevalley)_{(\thediscriminant ; n)}$ is \'etale.
Applying this to $\thetoruslie'$ instead of $\thetoruslie$, we conclude that  the morphism
$\arcspace(\thetoruslie')_{(\thediscriminant ;n)}\to \arcspace(\chevalley')_{(\thediscriminant ;n)}$ is \'etale.

Finally, since $\arcspace(\thetoruslie_w)_{(\thediscriminant;n)}$ (resp. $\arcspace(\chevalley)_{(\thediscriminant;n)}$) is the scheme of fixed points $w\sigma$ (resp. $\sigma$  inside $\arcspace(\thetoruslie')_{(\thediscriminant;n)}$ (resp. $\arcspace(\chevalley')_{(\thediscriminant;n)}$), the assertion follows from \cref{proofofetale:lemmafixedpoints} below.
\end{proof}

\begin{lemma} \label{proofofetale:lemmafixedpoints}
Let $f:T\to S$ be a separated \'etale morphism of perfect schemes, and let
$\phi_T\in\End T$ and $\phi_S\in\End S$ be endomorphisms such that $f\circ\phi_T=\phi_S\circ f$.
Then the induced morphism between schemes of fixed points $f^{\phi}:T^{\phi_T}\to S^{\phi_S}$ is \'etale.
\end{lemma}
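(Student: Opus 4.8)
The plan is to realize the fixed-point scheme as a fiber product and then invoke base change together with the fact that a section of an étale morphism is an open immersion. First I would recall that for an endomorphism $\phi_S \in \End S$, the fixed-point scheme $S^{\phi_S}$ is by definition the equalizer of $\phi_S$ and $\Id_S$, which since $S$ is separated is the fiber product $S \times_{(S \times S)} S$ taken along the graph $\Gamma_{\phi_S} = (\Id_S, \phi_S) \colon S \to S \times S$ and the diagonal $\Delta_S$. The same description holds for $T^{\phi_T}$. The compatibility $f \circ \phi_T = \phi_S \circ f$ says exactly that the two graphs fit into a commutative square, so we get an induced morphism $f^\phi \colon T^{\phi_T} \to S^{\phi_S}$; one checks that the natural square
\[
\begin{tikzcd}
T^{\phi_T} \arrow[r] \arrow[d, "f^\phi"] & T \arrow[d, "(\Id_T, \phi_T)"] \\
S^{\phi_S} \times_S T \arrow[r] & T \times_S (S \times S)
\end{tikzcd}
\]
is Cartesian, or more directly that $T^{\phi_T} \simeq (S^{\phi_S} \times_S T) \times_{(T \times_S T)} T$, the last fiber product taken along the relative diagonal $\Delta_{T/S}$ and the base change $(\Id, \phi_T)$ of the relevant graph.

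The key point is then that since $f \colon T \to S$ is étale and separated, the relative diagonal $\Delta_{T/S} \colon T \to T \times_S T$ is an open immersion (it is always a closed immersion by separatedness and an open immersion by étaleness, since étale morphisms are unramified). Base-changing $T^{\phi_T} \to S^{\phi_S} \times_S T$ is therefore a base change of an open immersion, hence an open immersion; in particular it is étale. On the other hand, $S^{\phi_S} \times_S T \to S^{\phi_S}$ is the base change of the étale morphism $f$, hence étale. Composing the two, $f^\phi \colon T^{\phi_T} \to S^{\phi_S}$ is étale, as desired. Since all the schemes in sight are perfect and étale morphisms and fiber products of perfect schemes remain perfect (see \cref{perfschemes:prop:propertiesofperfschemespreserved}), there is no issue working inside $\persch$.

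The main obstacle, such as it is, is purely bookkeeping: writing down the correct Cartesian diagram expressing $T^{\phi_T}$ as an iterated fiber product built from $S^{\phi_S}$, $f$, and $\Delta_{T/S}$, and checking the universal property matches the functor of points $U \mapsto \{x \in T(U) : \phi_T(x) = x\}$. One has to be a little careful that the morphism $T^{\phi_T} \to S^{\phi_S} \times_S T$ landing in the fiber product over $T \times_S T$ uses the two maps $T^{\phi_T} \to T$ (the inclusion) and $T^{\phi_T} \to S^{\phi_S} \times_S T \to T$, which agree after composing to $S \times S$ precisely because of the hypothesis $f \circ \phi_T = \phi_S \circ f$ and because $x \in T^{\phi_T}$ satisfies $\phi_T(x) = x$. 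Once the diagram is set up correctly, étaleness is immediate from stability of étale morphisms and open immersions under base change and composition, so no separate calculation is needed.
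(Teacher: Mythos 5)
Your proposal is correct and uses essentially the same idea as the paper: realize $T^{\phi_T}$ as a pullback of the relative diagonal $\Delta_{T/S}$, which is an open (indeed clopen) immersion since $f$ is separated étale, and then compose with the base change of $f$. The paper phrases this by first restricting to $S^{\phi_S}$ so that $\phi_S$ may be taken to be the identity, while you build $S^{\phi_S}\times_S T$ directly; this is a cosmetic difference, though note that the intermediate appeal to "$S$ is separated" is not a hypothesis of the lemma (only $f$ is assumed separated) and is also unnecessary, since the equalizer defining a fixed-point scheme exists as a locally closed subscheme in general.
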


\begin{proof}
(compare \cite[15.4.2(3)]{goresky2006codimensions}). Restricting $f$ to the $S^{\phi_S}\subset S$, we may assume that $\phi_S$ is the identity.
Set $\phi:=\phi_T$. Then we claim that the embedding $\iota_{\phi}:T^{\phi}\to T$ is clopen (that is, open and closed). Hence $f^{\iota}=f|_{T^{\phi}}$ is \'etale, as claimed.

The diagonal morphism $\Delta_f:T\to T\times_S T$ is an open embedding, because $f$ is \'etale, hence a clopen embedding, since $f$ is separated. Taking pullback with respect to $(\Id,\phi):T\to T\times_S T$, we conclude that the morphism $T^{\phi}\to T$ is a clopen embedding as well.
\end{proof}

\begin{remark}
Since $f$ is formally \'etale, it is immediate to show that $f^{\phi}$ is formally \'etale as well. So the main point of \cref{proofofetale:lemmafixedpoints} was to show that $f^{\phi}$ is finitely presented.
\end{remark}

\noindent For the rest of the proof, we follow \cite[section 11.1]{goresky2006codimensions} very closely: 
\vskip 4truept

\begin{proposition} \label{P:redss}
    We may assume that $\thelie$ is semisimple.
\end{proposition}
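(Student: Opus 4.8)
The plan is to use the decomposition of the reductive Lie algebra $\thelie$ into its center and derived subalgebra, following \cite[\S 11.1]{goresky2006codimensions}. Under our hypotheses on $\operatorname{char}\resfield$ we have $\thelie = \theliecenter \oplus \thelie_{\mathrm{der}}$, where $\theliecenter$ is the center and $\thelie_{\mathrm{der}} := [\thelie,\thelie] = \operatorname{Lie}(\thegroup_{\mathrm{der}})$ is semisimple, and the adjoint action of $\thegroup$ is trivial on $\theliecenter$. Choosing $\thetorus \subset \theborel$ so that $\thetorus \cap \thegroup_{\mathrm{der}}$ is a maximal torus of $\thegroup_{\mathrm{der}}$, the Cartan decomposes as $\thetoruslie = \theliecenter \oplus \thetoruslie_{\mathrm{der}}$ with $\thetoruslie_{\mathrm{der}}$ a Cartan subalgebra of $\thelie_{\mathrm{der}}$, the root set $\theroots$ is unchanged, and the Weyl group $\theweylgroup = N(\thetorus)/\thetorus$ is identified with that of $\thegroup_{\mathrm{der}}$, acting trivially on $\theliecenter$. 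In particular $|\theweylgroup|$ is unchanged, so the ramified extension $\basicints' = \basicints[\uniformizer^{1/a}]$ (see \cref{wittloopandarc:def:constants}) is the same for $\thegroup$ and $\thegroup_{\mathrm{der}}$.

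First I would record the product decompositions of all the objects entering \cref{stratoftorus:chevalleyisetale}. By Chevalley restriction, $\chevalley = \thelie//\thegroup \cong \theliecenter \times \chevalley_{\mathrm{der}}$, where $\chevalley_{\mathrm{der}} := \thelie_{\mathrm{der}}//\thegroup_{\mathrm{der}} \cong \thetoruslie_{\mathrm{der}}//\theweylgroup$, and the projection $\thetoruslie \to \chevalley$ is the product of $\operatorname{id}_{\theliecenter}$ with $\thetoruslie_{\mathrm{der}} \to \chevalley_{\mathrm{der}}$. Since the discriminant $\thediscriminant$ only involves the roots, it is pulled back along $\chevalley \to \chevalley_{\mathrm{der}}$; as $\arcspace(-)$ commutes with finite products, the stratum $\arcspace(\chevalley)_{(\thediscriminant;n)}$ is identified with $\arcspace(\theliecenter) \times \arcspace(\chevalley_{\mathrm{der}})_{(\thediscriminant;n)}$. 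On the twisted-torus side, Weil restriction commutes with products, and since $\sigma w$ acts trivially on the central factor, $\thetoruslie_w = (\thetoruslie')^{\sigma w} \cong \arcspace(\theliecenter) \times (\thetoruslie_{\mathrm{der}})_w$, with $(\thetoruslie_{\mathrm{der}})_w$ the corresponding twisted torus for $\thegroup_{\mathrm{der}}$; moreover each stratum $\thetoruslie_{w,\rootfunc}$, whose definition (see \cref{stratoftorus:twistedstrat}) only constrains the coordinates indexed by $\alpha \in \theroots$, decomposes as $\arcspace(\theliecenter) \times (\thetoruslie_{\mathrm{der}})_{w,\rootfunc}$.

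Assembling these, the morphism $\pi \colon \arcspace(\thetoruslie')_{\arcspace(\chevalley),(\thediscriminant;n)} \to \arcspace(\chevalley)_{(\thediscriminant;n)}$ attached to $\thegroup$ is identified with $\operatorname{id}_{\arcspace(\theliecenter)} \times \pi_{\mathrm{der}}$, where $\pi_{\mathrm{der}}$ is the corresponding morphism for $\thegroup_{\mathrm{der}}$, and the $\theweylgroup$-action on the source is the product of the trivial action on $\arcspace(\theliecenter)$ with the $\theweylgroup$-action for $\thegroup_{\mathrm{der}}$. Since the property of being a $\theweylgroup$-torsor is stable under base change — here, product with $\arcspace(\theliecenter)$ — if $\pi_{\mathrm{der}}$ is a $\theweylgroup$-torsor then so is $\pi$. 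Hence it suffices to prove \cref{stratoftorus:chevalleyisetale} for $\thelie_{\mathrm{der}}$, that is, we may assume $\thelie$ is semisimple.

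The only real work is the bookkeeping in the second paragraph: one must check that each construction imported from \cite{bouthier2022perverse} — the ramified extension, the Weil restrictions, the twisted tori $\thetoruslie_w$, and the strata $\thetoruslie_{w,\rootfunc}$ and $\chevalley_{w,\rootfunc}$ — is compatible with the central-versus-semisimple product decomposition, and that replacing $\thegroup$ by $\thegroup_{\mathrm{der}}$ changes neither the Weyl group nor $\basicints'$. There is no geometric difficulty, and the direction of the reduction we actually need — base change of a torsor is a torsor — is the easy one.
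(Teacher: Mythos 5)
Your proof is correct and matches the paper's approach: both use the decomposition $\thelie = \theliecenter \times \thelie_{\mathrm{der}}$ and observe that the relevant morphism factors as the product of the identity on $\arcspace(\theliecenter)$ with the corresponding morphism for the derived algebra, which is itself a base change and so preserves the property in question. The only cosmetic difference is that the paper performs this reduction after \cref{stratoftorus:chevalleyisetale} has already been reduced to showing that each $\thetoruslie_{\rootfunc}\to \arcspace(\chevalley)_{(\thediscriminant;n)}$ is \'etale, whereas you reduce the torsor statement directly; your more explicit compatibility checks (same $\theweylgroup$, same $a$, compatibility of Weil restriction and the strata with the product decomposition) are a welcome elaboration of what the paper leaves implicit.
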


\begin{proof}
Indeed, we have a decomposition $\thelie=\thelie_{ss}\times \theliecenter$, where $\thelie_{ss}$ is the derived algebra of $\thelie$ and $\theliecenter$ is the center of $\thelie$. Moreover, the morphism  $\arcspace(\thetoruslie)_{(\thediscriminant;n)}\to \arcspace(\chevalley)_{(\thediscriminant;n)}$ decomposes as a product of the corresponding morphism for $\thelie_{ss}$ and the identity morphism on $\arcspace(\theliecenter)$. Thus the assertion for $\thelie$ follows from that for $\thelie_{ss}$.
\end{proof}

\begin{proposition}
     It suffices to assume that $\min \rootfunc=0$.
\end{proposition}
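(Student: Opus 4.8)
The plan is to reduce the general case to the case $\min\rootfunc=0$ by the homothety argument of \cite[\S 11.1]{goresky2006codimensions}. Fix $\rootfunc\colon\theroots\to\nats$, set $m:=\min_{\alpha\in\theroots}\rootfunc(\alpha)$, and assume $m>0$ (otherwise there is nothing to prove). Put $\rootfunc':=\rootfunc-m$, so that $\min\rootfunc'=0$ and $\rootfunc'$ is again a valid stratum label, and write $n:=\sum_{\alpha}\rootfunc(\alpha)$, $n':=\sum_{\alpha}\rootfunc'(\alpha)=n-m|\theroots|$, so that the case $(\rootfunc',n')$ is covered by the hypothesis. Since $\chevalley=\thetoruslie//\theweylgroup$ and $\theweylgroup$ acts linearly on $\thetoruslie$, the scaling action of $\multiplicativegroup$ on $\thetoruslie$ descends to $\chevalley$; because $p>2h$ forces $p\nmid|\theweylgroup|$, we may identify $\chevalley\simeq\affinespace^r_\basicints$ so that this action has weights the fundamental degrees $d_1,\dots,d_r$. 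Multiplication by $\uniformizer^m$ then yields morphisms $\lambda_m\colon\arcspace(\thetoruslie)\to\arcspace(\thetoruslie)$ and $\overline{\lambda}_m\colon\arcspace(\chevalley)\to\arcspace(\chevalley)$ — coordinate-wise multiplication by $\uniformizer^m$, resp.\ by $\uniformizer^{md_i}$ on the $i$-th coordinate — which are ind-pfp closed embeddings (on $\arcspace(\affinespace^1)$ multiplication by $\uniformizer^j$ is the colimit of the evident linear closed embeddings) and satisfy $\pi\circ\lambda_m=\overline{\lambda}_m\circ\pi$.

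I would then obtain the étaleness of $\pi_\rootfunc\colon\thetoruslie_\rootfunc\to\arcspace(\chevalley)_{(\thediscriminant;n)}$ from that of $\pi_{\rootfunc'}\colon\thetoruslie_{\rootfunc'}\to\arcspace(\chevalley)_{(\thediscriminant;n')}$ by combining the commutative square
\[
\begin{tikzcd}
\thetoruslie_{\rootfunc'} \arrow[r,"\lambda_m"] \arrow[d,"\pi_{\rootfunc'}"'] & \thetoruslie_\rootfunc \arrow[d,"\pi_\rootfunc"] \\
\arcspace(\chevalley)_{(\thediscriminant;n')} \arrow[r,"\overline{\lambda}_m"'] & \arcspace(\chevalley)_{(\thediscriminant;n)}
\end{tikzcd}
\]
with two claims: \textup{(i)} $\lambda_m$ restricts to an isomorphism $\thetoruslie_{\rootfunc'}\isoto\thetoruslie_\rootfunc$; and \textup{(ii)} $\overline{\lambda}_m$ restricts to an isomorphism of $\arcspace(\chevalley)_{(\thediscriminant;n')}$ onto a clopen (in particular pfp-open) subscheme of $\arcspace(\chevalley)_{(\thediscriminant;n)}$. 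Granting these, the top arrow of the square is an isomorphism and the bottom arrow is étale, so $\pi_\rootfunc\circ\lambda_m=\overline{\lambda}_m\circ\pi_{\rootfunc'}$ is étale, and hence $\pi_\rootfunc$ is étale, as wanted.

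Claim \textup{(i)} should be routine: the containment $\lambda_m(\thetoruslie_{\rootfunc'})\subseteq\thetoruslie_\rootfunc$ follows from $v(\alpha(\uniformizer^m x))=m+v(\alpha(x))$ for every root $\alpha$, and conversely, if $v(\alpha(x))=\rootfunc(\alpha)\ge m$ for all $\alpha$, then — since $\thelie$ is semisimple by \cref{P:redss} and $p\nmid[X^*(\thetorus):\ints\theroots]$ (this index has all prime divisors $\le h<p$) — one gets $v(\lambda(x))\ge m$ for every $\lambda\in X^*(\thetorus)$, so $x\in\uniformizer^m\cdot\arcspace(\thetoruslie)(R)$ and $\lambda_m^{-1}(x)\in\thetoruslie_{\rootfunc'}$. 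Thus $\lambda_m$ is bijective on $R$-points, and being a closed embedding of perfect schemes it is an isomorphism by \cref{perfschemes:prop:graddream}.

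The real content is Claim \textup{(ii)}, and this is the step I expect to be the main obstacle. That $\overline{\lambda}_m$ maps $\arcspace(\chevalley)_{(\thediscriminant;n')}$ into $\arcspace(\chevalley)_{(\thediscriminant;n)}$ is immediate, because $\thediscriminant=\prod_{\alpha\in\theroots}\operatorname{d}\alpha$ is homogeneous of degree $|\theroots|$ on $\thetoruslie$, hence weighted-homogeneous of weighted degree $|\theroots|$ in the coordinates of $\chevalley$, so that $\thediscriminant(\overline{\lambda}_m(c))=\uniformizer^{m|\theroots|}\thediscriminant(c)$ and $n=n'+m|\theroots|$. The restricted map is then a closed embedding onto the pfp-closed subscheme $Z\subseteq\arcspace(\chevalley)_{(\thediscriminant;n)}$ of those $c$ whose $i$-th coordinate is divisible by $\uniformizer^{md_i}$ for all $i$; it therefore remains to show that $Z$ is also open, equivalently that the divisibility function $c\mapsto\max\{k\mid \uniformizer^{kd_i}\text{ divides }c_i\text{ for all }i\}$ is locally constant on $\arcspace(\chevalley)_{(\thediscriminant;n)}$. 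This function is pro-constructible (it is bounded by $n/|\theroots|$ and detected on a finite truncation $\arcspace_N(\chevalley)$) and upper semicontinuous, so — using that perfect schemes are reduced — it suffices to show it cannot jump along a family, which reduces to a computation with the valuations of the invariants of a regular semisimple tuple over $\witt(L)$ with prescribed discriminant valuation; this is exactly the content of \cite[\S 11.1]{goresky2006codimensions} (compare \cite[Theorem 7.2.5]{bouthier2022perversesheavesinfinitedimensionalstacksv4}). With \textup{(i)} and \textup{(ii)} established, the reduction is complete.
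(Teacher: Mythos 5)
Your reduction follows the paper's own scaling-by-$\uniformizer^m$ strategy, and your Claim (i) is exactly the paper's assertion $\thetoruslie_{\rootfunc'}\simeq\thetoruslie_\rootfunc$, with a correct proof. You are also right to be suspicious about Claim (ii): the paper asserts that $\uniformizer^s$ induces an isomorphism $\arcspace(\chevalley)_{(\thediscriminant;\,n-s|\theroots|)}\simeq\arcspace(\chevalley)_{(\thediscriminant;\,n)}$ of the full strata, which, taken literally, is false --- already for $\speciallineargroup_3$ one can produce points of the target stratum whose degree-two Chevalley coordinate is a unit, hence lying outside the image of $\overline{\lambda}_s$. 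Your clopen-embedding formulation of (ii) is what is actually needed for the reduction, so you have put your finger on a genuine imprecision in the published argument.

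The gap in your write-up is that Claim (ii) is not actually proved. Closedness of $Z$ is automatic (it is the image of a pfp-closed embedding), so the whole content is openness, and you dispose of this with a gesture at GKM~\S~11.1 that does not obviously contain the statement you need; upper-semicontinuity and ``cannot jump along a family'' are the problem, not an argument. Here is a direct route. The map $\pi\colon\bigsqcup_w\arcspace(\thetoruslie_w)_{(\thediscriminant;n)}\to\arcspace(\chevalley)_{(\thediscriminant;n)}$ is surjective on field-valued points (\cref{proofofetale:torsoronpoints}) and universally closed (arc space of a finite morphism), hence surjective and closed on underlying spaces. Since $\thelie$ is semisimple and $p>2h$, a Newton-polygon computation with $\prod_{\alpha\in\theroots}(T-\alpha(x))$, whose coefficients are weighted-homogeneous on $\chevalley$, identifies $\pi^{-1}(Z)$ with the disjoint union of those $\thetoruslie_{w,\rootfunc}$ having $\min\rootfunc\ge m$; these are connected components of the source by \cref{stratoftorus:propertiesofwrstrata} (the part of that discussion which does not presuppose the \'etaleness being proved), so $\pi^{-1}(Z)$ is open. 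A surjective closed map sends the complement of an open preimage to a closed set, so $Z$ is open, hence clopen. With this supplied, your reduction is complete.
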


\begin{proof}
By \cref{P:redss}, we can assume that $\thelie$ is semisimple. We set $s:=\min \rootfunc$, and $\rootfunc':=\rootfunc-s$.
Recall that the morphism $\thetoruslie\to \chevalley$ is $\multiplicativegroup$-equivariant. Thus the morphism $\loopspace(\thetoruslie)\to \loopspace(\chevalley)$ is $\loopspace(\multiplicativegroup)$-equivariant. Moreover, the element $\uniformizer^s\in \loopspace(\multiplicativegroup)$ induces isomorphisms
$\thetoruslie_{\rootfunc'}\simeq \thetoruslie_\rootfunc$ and $\arcspace(\chevalley)_{(\thediscriminant; n-s|\theroots|)}\simeq \arcspace(\chevalley)_{(\thediscriminant ; n)}$. Therefore the assertion for $\rootfunc'$ implies that for $\rootfunc$.
\end{proof}

\begin{proposition}
    The assertion is correct when $\thelie$ is semi-simple and $\min \rootfunc = 0$.
\end{proposition}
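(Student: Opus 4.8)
We are in the case where $\thelie$ is semisimple and $\min\rootfunc=0$, and we must show that the morphism $\thetoruslie_\rootfunc\to\arcspace(\chevalley)_{(\thediscriminant;n)}$ induced by $\pi\colon\thetoruslie\to\chevalley$ is \'etale, where $n=\sum_{\alpha\in\theroots}\rootfunc(\alpha)$; we may assume $\thetoruslie_\rootfunc\ne\emptyset$. The plan, following \cite[\S 11.1]{goresky2006codimensions}, is to use the hypothesis $\min\rootfunc=0$ to extract a Levi $\liealg[m]\subseteq\thelie$ over which the arcs of the stratum are everywhere $\liealg[m]$-regular-semisimple, then to factor $\pi$ accordingly and treat the two factors separately: the first via the fact that arc spaces preserve \'etale morphisms, the second by reducing, along the relevant stratum, to an explicit Jacobian computation.

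Set $\theroots_0:=\{\alpha\in\theroots\mid\rootfunc(\alpha)=0\}$. By the structural fact recalled in \cref{stratoftorus:twistedstrat} (applied with $w=1$ and $m=0$), $\theroots_0$ is the root system of a Levi subalgebra $\liealg[m]\subseteq\thelie$; it is nonempty because $\min\rootfunc=0$, and equals $\thelie$ precisely when $\rootfunc=0$. Let $\theweylgroup_0\subseteq\theweylgroup$ be the subgroup generated by the reflections $s_\alpha$ with $\alpha\in\theroots_0$, and put $\chevalley_{\liealg[m]}:=\thetoruslie//\theweylgroup_0$. Since $p>2h$ we have $p\nmid|\theweylgroup|$, hence $p\nmid|\theweylgroup_0|$, so Chevalley--Shephard--Todd gives $\chevalley_{\liealg[m]}\simeq\affinespace^r_\basicints$ (in particular $\chevalley_{\liealg[m]}$ is smooth), and $\pi$ factors as $\thetoruslie\xto{\pi_{\liealg[m]}}\chevalley_{\liealg[m]}\xto{q}\chevalley$.

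On $\thetoruslie_\rootfunc$ one has $\valuation(\extderivative\alpha)=\rootfunc(\alpha)=0$ for every $\alpha\in\theroots_0$, so $\prod_{\alpha\in\theroots_0}\extderivative\alpha$ is invertible there; hence $\thetoruslie_\rootfunc$ is contained in the open locus $\arcspace(\thetoruslie^{\regsemisimple,\liealg[m]})\subseteq\arcspace(\thetoruslie)$, where $\thetoruslie^{\regsemisimple,\liealg[m]}:=\{\prod_{\alpha\in\theroots_0}\extderivative\alpha\ne0\}$ is exactly the locus over which $\pi_{\liealg[m]}$ is finite \'etale. By \cref{wittloopandarc:properties:arpreserveetale} the morphism $\arcspace(\pi_{\liealg[m]})$ is finite \'etale on this open locus, so $\thetoruslie_\rootfunc$ maps \'etale onto a locally closed subscheme $\chevalley_{\liealg[m],\rootfunc}\subseteq\arcspace(\chevalley_{\liealg[m]})$; indeed $\thetoruslie_\rootfunc$ is a connected component of the preimage of $\chevalley_{\liealg[m],\rootfunc}$, the remaining components being its $\theweylgroup_0$-translates $\thetoruslie_{u(\rootfunc)}$. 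Since \'etaleness composes and is local on source and target, it now suffices to prove that $q$ induces an \'etale morphism $\chevalley_{\liealg[m],\rootfunc}\to\arcspace(\chevalley)_{(\thediscriminant;n)}$; when $\rootfunc=0$ this is vacuous ($q=\Id$), so the remaining content is the case $\rootfunc\ne0$, equivalently $\liealg[m]\subsetneq\thelie$.

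This last step is the heart of the matter. Following \cite[\S 11.1]{goresky2006codimensions}: a point of $\chevalley_{\liealg[m],\rootfunc}$ is a $\chevalley_{\liealg[m]}$-arc which is $\liealg[m]$-regular-semisimple throughout but only \emph{generically} $\thegroup$-regular-semisimple, so at its special point it lies on the ramification divisor of $q$; thus $\arcspace(q)$ is \emph{not} \'etale there as a morphism of the full arc spaces, and the statement is genuinely quantitative --- it holds only because the condition $\rootfunc|_{\theroots_0}\equiv0$ forces $\chevalley_{\liealg[m],\rootfunc}$ to meet the ramification locus transversally, in the precise sense measured by the valuation of the Jacobian determinant. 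One therefore passes to a perfectly finitely presented (equivalently, finite-type) model --- legitimate by \cref{perfschemes:prop:existsfinitetypemodel}, and necessary because the tangent spaces of the perfect schemes themselves vanish (cf.\ \cref{perfschemes:prop:notangent}) --- and checks, via the Jacobian criterion, that the differential of $\arcspace_N(q)$ along the image of $\thetoruslie_\rootfunc$ is invertible for $N\gg0$. This rests on the valuation-of-Jacobian formula, whose analogue in our setting is \cref{codimofstrata:prop:valofjacobian}, and it goes through verbatim once power-series expansions are replaced by Teichm\"uller expansions of Witt vectors. The main obstacle is exactly this Jacobian bookkeeping; every step preceding it is formal.
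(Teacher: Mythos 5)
Your decomposition is the mirror image of the paper's, and it does not actually close the argument.

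The paper sets $R' := \{\alpha \in \theroots \mid \rootfunc(\alpha) > 0\}$ and factors $\pi$ through the Chevalley space of the Levi $M$ whose root system is $R'$ (this $M$ is the centralizer of the \emph{special fibre} of any arc in $\thetoruslie_\rootfunc$, which is always a Levi, and it is \emph{proper} exactly because $\min\rootfunc = 0$). Writing $\thediscriminant = \thediscriminant_M \thediscriminant^M$ with $\thediscriminant_M = \prod_{\alpha\in R'} d\alpha$ and $\thediscriminant^M = \prod_{\alpha\notin R'} d\alpha$, the map $\pi\colon\thetoruslie_\rootfunc \to \arcspace(\chevalley)_{(\thediscriminant;n)}$ factors as
\[
\thetoruslie_\rootfunc \longrightarrow \arcspace(\chevalley_M)_{(\thediscriminant_M;n),\,(\thediscriminant^M;0)} \longrightarrow \arcspace(\chevalley)_{(\thediscriminant;n)}.
\]
The second arrow is easy: on the open locus $\thediscriminant^M \neq 0$ the morphism $\chevalley_M \to \chevalley$ is \'etale, and the stratum has $\valuation(\thediscriminant^M)=0$, so \cref{wittloopandarc:properties:arpreserveetale} gives \'etaleness immediately. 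The first arrow is exactly the statement of \cref{stratoftorus:chevalleyisetale} but for $M$, whose root system $R'$ has fewer than $|\theroots|$ elements, so it is handled by \emph{induction on} $|\theroots|$ (after passing through the earlier reduction steps, legitimate because $\liealg[m]$ has nontrivial centre).

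You instead take $\theroots_0 := \{\rootfunc = 0\}$ and factor $\pi$ through $\chevalley_{\liealg[m]} = \thetoruslie/\!/\theweylgroup_0$. Two problems.

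First, $\theroots_0$ need not be the root system of a Levi at all. (Take $\mathfrak{sl}_3$ and an arc $\gamma$ with $\rootfunc(\alpha_1)=\rootfunc(\alpha_1+\alpha_2)=0$, $\rootfunc(\alpha_2)=1$; then $\theroots_0=\{\pm\alpha_1,\pm(\alpha_1+\alpha_2)\}$, which is not even closed under $s_{\alpha_1}$.) The set $\{\rootfunc > 0\}$ is always a Levi root system (as the root system of the centralizer of the arc's special fibre, and because the ultrametric closes $\{\rootfunc \geq m\}$ under root addition); the complementary set $\{\rootfunc = 0\}$ is not. \cref{stratoftorus:twistedstrat}(e) is stated with "$\leq m$", which looks like a misprint for "$\geq m$" (or "$>m$"); the paper's own proof relies on the correct version. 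So the Levi you pick is not guaranteed to exist.

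Second, and independently of the first point, your choice reverses which of the two factors is difficult. In your factorization the inner map $\thetoruslie_\rootfunc \to \chevalley_{\liealg[m],\rootfunc}$ is the one that lands in the locus where $\pi_{\liealg[m]}$ is \'etale, so it is free; but the outer map $q\colon\chevalley_{\liealg[m],\rootfunc}\to \arcspace(\chevalley)_{(\thediscriminant;n)}$ hits the ramification divisor of $\chevalley_{\liealg[m]}\to\chevalley$ with multiplicity $n>0$ along the special fibre, so $\arcspace(q)$ is manifestly not \'etale as a morphism of arc spaces, and the \'etaleness of its restriction to the stratum carries all of the original difficulty. There is no induction available for $q$ (its source is a polynomial ring, not a Lie algebra, so the theorem cannot be re-applied), and the "Jacobian criterion" you sketch at the end is not a proof --- it is in fact exactly the tangent-space argument of GKM that the paper explicitly cannot use here, because tangent spaces of perfect schemes vanish (\cref{perfschemes:prop:notangent}); avoiding that argument is the whole reason the paper reproves this proposition via the inductive Levi decomposition rather than following GKM's \S 11 directly. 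The last paragraph of your write-up therefore concedes the remaining step rather than proving it, and the earlier steps, while formally correct granting the Levi hypothesis, have not reduced the complexity of the problem.
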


\begin{proof}
    Assume now that $\min\rootfunc=0$. Then $R':=\{\alpha\in R\,|\,\rootfunc(\alpha)>0\}$ is a root system of a proper Levi subgroup $M$ of $G$ (by \cref{stratoftorus:twistedstrat}). Consider the Chevalley space $\chevalley_M$ of $M$. Then the discriminant function $\thediscriminant\in k[\chevalley_M]$ decomposes as $\thediscriminant=\thediscriminant_M \thediscriminant^M$, where $\thediscriminant_M=\prod_{\alpha\in R'}d\alpha$ and $\thediscriminant^M=\prod_{\alpha\in R\smallsetminus R'}d\alpha$.
    Then the morphism  $\pi:\thetoruslie_\rootfunc\to \arcspace(\chevalley)_{(\thediscriminant ; n)}$ decomposes as
    \[
    \thetoruslie_\rootfunc\to \arcspace(\chevalley_M)_{(\thediscriminant_M ; n),(\thediscriminant^M ; 0)}=\arcspace(\chevalley_M)_{(\thediscriminant ; n),(\thediscriminant^M ; 0)}\to \arcspace(\chevalley)_{(\thediscriminant ; n)}.
    \]
    So it remains to show that both morphisms are \'etale. The assertion for the first morphism follows by induction on $|\theroots|$ as we assume that $\liealg[m]$ has nontrivial center, so it remains to show the assertion for the second morphism.
        
    \vskip 4truept
    
    Consider the open subscheme $\chevalley^{reg/\thelie}_M:=(\chevalley_M)_{\thediscriminant^M}\subset\chevalley_M$. Since $\arcspace( \chevalley)^{reg/\thelie}_M = (\arcspace (\chevalley)_M)_{(\thediscriminant^M;0)}\subset \arcspace (\chevalley)$ is Zariski open, it suffices to show that the morphism
    $\arcspace(\chevalley^{reg/\thelie}_M)_{(\thediscriminant ; n)}\to \arcspace(\chevalley)_{(\thediscriminant ; n)}$ is \'etale.  We claim that the entire morphism $\arcspace(\chevalley^{reg/\thelie}_M)\to \arcspace(\chevalley)$ is \'etale. Namely, the morphism  $\chevalley^{reg/\thelie}_M\to\chevalley$ is \'etale, so the assertion follows from \cref{wittloopandarc:properties:arpreserveetale}.
\end{proof}

\section{The Witt-affine Grothendieck–Springer fibration.}\label{wittaffinrspringer}

\begin{para}{\bf The Witt-affine Grothendieck–Springer fibration.}\label{wittaffinrspringer:gneral:definitionoffib}
    \begin{mylist}
        \item We define
        \[
            \compactelements := \loopspace( \thechevalleymap)^{-1}(\arcspace(\chevalley))\subset \loopspace(\thelie).
        \]
        We call this the space of \emph{compact elements} in \( \loopspace(\thelie) \).

        \item \label{wittaffinrspringer:gneral:compisperfectlyplacid} \( \compactelements \) is an ind-perfectly placid ind-scheme by the same argument as \cite[4.1.1(a)]{bouthier2022perverse}. 

        \item Let \( \springertotalspace := \loopspace(\thegroup) \times^{\theiwahori} \liewahory \), i.e., the quotient of \( \thegroup \times \liewahory \) by \( \iwahory \) under the action
        \[
            h \cdot (g, x) := (g h^{-1}, \theadjoint_h(x)).
        \]

        \item We have an isomorphism of perfect \( \infty \)-stacks
        \(
            [\springertotalspace / \loopspace(\thegroup)] \simeq [\liewahory / (I, \theadjoint)]
        \) (the argument of \cite[4.1.1(c)]{bouthier2022perverse} works here).

        \item There is a natural morphism
        \[
            \thespringermap \colon \springertotalspace \to \compactelements,
        \]
        called the \emph{Witt-affine Grothendieck–Springer fibration}, defined by \( [g, x] \mapsto \theadjoint_g(x) \). This morphism is \( \loopspace(\thegroup) \)-equivariant with respect to the left action on the source and the adjoint action on the target. Hence, it induces a morphism of perfect $\infty$-stacks
        \[
            \overline{\thespringermap} \colon [\springertotalspace / \loopspace(\thegroup)] \to [\compactelements / \loopspace(\thegroup)].
        \]

        \item The fibers of \( \thespringermap \) are called the \emph{Witt vector affine Springer fibers}, and are studied in \cite{chi2024witt}, where their dimensions are computed. Specifically, over the stratum \( \chevalley_{w,\rootfunc} \), the fibers of \( \thespringermap \) have dimension \( \delta_{w,\rootfunc} \) (see \cref{codimofstrata:def:quantities}).
    \end{mylist}
\end{para}

\begin{para}{\bf The stratification on the fibration.}
    \begin{mylist}
        \item Define
        \[
            \compactelements_{w,\rootfunc} := \loopspace (\thechevalleymap)^{-1}(\chevalley_{w,\rootfunc}), \quad
            \springertotalspace_{w,\rootfunc} := \thespringermap^{-1}(\compactelements_{w,\rootfunc}),
        \]
        and let \( \thespringermap_{w,\rootfunc} \) denote the restriction of \( \thespringermap \) to these strata.

        \item By definition, \( \compactelements_{w,\rootfunc} \) and \( \springertotalspace_{w,\rootfunc} \) are \( \loopspace(\thegroup) \)-stable, so the fibration restricts to a morphism of perfect $\infty$-stacks
        \[
            \overline{\thespringermap}_{w,\rootfunc} \colon
            [\springertotalspace_{w,\rootfunc} / \loopspace(\thegroup)] \to
            [\compactelements_{w,\rootfunc} / \loopspace(\thegroup)].
        \]

        \item Let
        \[
            \thespringermap_{\topnilp} \colon \springertotalspace_{\topnilp} \to \compactelements_{\topnilp}, \quad
            \thespringermap_{\bullet} \colon \springertotalspace_{\bullet} \to \compactelements_{\bullet},
        \]
        and
        \[
            \thespringermap_{\topnilp,\bullet} \colon \springertotalspace_{\topnilp,\bullet} \to \compactelements_{\topnilp,\bullet}
        \]
        denote the respective restrictions of \( \thespringermap \) over \( \chevalley_{\topnilp} \), \( \chevalley_{\bullet} \), and \( \chevalley_{\topnilp,\bullet} \).

        \item Since \( \chevalley_{\topnilp} \), \( \chevalley_{\bullet} \), and \( \chevalley_{\topnilp,\bullet} \) are all \( \explicitset{\chevalley_{w,\rootfunc}}_{w,\rootfunc} \)-adapted locally closed perfect subschemes, the collection \( \explicitset{\chevalley_{w,\rootfunc}}_{w,\rootfunc} \) induces constructible stratifications on \( \compactelements_{\topnilp} \), \( \compactelements_{\bullet} \), and \( \compactelements_{\topnilp,\bullet} \).

        \item The stratum \( \chevalley_{\leq 0} \subset \arcspace(\chevalley) \) is a dense pfp-open open embedding, corresponding to the case \( w = 1 \), \( \rootfunc=0 \). Let
        \[
            \thespringermap_{\leq 0} \colon \springertotalspace_{\leq 0} \to \compactelements_{\leq 0}
        \]
        denote the restriction of the fibration over this open stratum.
    \end{mylist}
\end{para}

The following is the main theorem of this section.

\begin{theorem}\label{semismallness:prop:mainthm}
    \begin{mylist}
        \item The morphism
        \[
            \overline{\thespringermap} \colon [\springertotalspace / \loopspace(\thegroup)] \to [\compactelements / \loopspace(\thegroup)]
        \]
        is ind-pfp proper.

        \item The strata
        \[
            \{[\compactelements_{w,\rootfunc} / \loopspace(\thegroup)]\}_{w,\rootfunc}
        \]
        form placid constructible stratifications of both
        \[
            [\compactelements_{\bullet} / \loopspace(\thegroup)] \quad \text{and} \quad [\compactelements_{\topnilp,\bullet} / \loopspace(\thegroup)].
        \]

        \item The perfect \( \infty \)-stacks
        \[
            [\springertotalspace_{\bullet} / \loopspace(\thegroup)] \quad \text{and} \quad [\springertotalspace_{\topnilp,\bullet} / \loopspace(\thegroup)]
        \]
        are perfectly smooth.

        \item The strata
        \[
            [\compactelements_{w,\rootfunc} / \loopspace(\thegroup)] \subset [\springertotalspace_{\bullet} / \loopspace(\thegroup)], \quad
            [\compactelements_{w,\rootfunc} / \loopspace(\thegroup)] \subset [\compactelements_{\topnilp,\bullet} / \loopspace(\thegroup)]
        \]
        are of pure codimension (in the sense of \cref{dimthry:prop:purecodimstacks}) \( b_{w,\rootfunc} \) and \( b_{w,\rootfunc}^+ \), respectively (see \cref{codimofstrata:def:quantities} and \cref{codimofstrata:def:topnilpquantities}).

        \item For each \( (w,\rootfunc) \), the morphism
        \[
            \overline{\thespringermap}_{w,\rootfunc} \colon [\springertotalspace_{w,\rootfunc} / \loopspace(\thegroup)] \to [\compactelements_{w,\rootfunc} / \loopspace(\thegroup)]
        \]
        is uo-equidimensional, locally pfp-representable (see \cref{perfstacks:def:pfprepresentable}), and of relative dimension \( \delta_{w,\rootfunc} \) (see \cref{dimthry:prop:constantrelativedimensionbetweenplacid} and \cref{codimofstrata:def:quantities}).

        \item Over the stratum \( \compactelements_{\leq 0} \), corresponding to \( w = 1 \), \( \rootfunc=0 \), the morphism
        \[
            \overline{\thespringermap}_{\leq 0} \colon [\springertotalspace_{\leq 0} / \loopspace(\thegroup)] \to [\compactelements_{\leq 0} / \loopspace(\thegroup)]
        \]
        is a \( \theextendedweylgroup \)-torsor.
    \end{mylist}
\end{theorem}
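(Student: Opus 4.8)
The plan is to prove the six assertions in turn, treating the theorem as the perfect analogue of the corresponding result of \cite{bouthier2022perverse} (cf. \cite[\S 4]{bouthier2022perverse}) and feeding in the perfect inputs already established: the flatness and equidimensionality of the Chevalley morphisms on strata (\cref{codimofstrata:prop:finitechevalleyflat}, \cref{wittaffinrspringer:prop:chevalleyareflatanduoequidimonstrata}), the codimension formulas (\cref{codimofstrata:def:eqofcodim}, \cref{codimofstrata:def:topnilpquantities}), the fact that the relevant classes of morphisms are étale-local on the base, and the torsor-detection principle \cref{perfstacks:prop:torsoronpoints}. Throughout I would use the identification $[\springertotalspace/\loopspace(\thegroup)]\simeq[\liewahory/(\theiwahori,\theadjoint)]$ of \cref{wittaffinrspringer:gneral:definitionoffib}, under which $[\springertotalspace_{w,\rootfunc}/\loopspace(\thegroup)]\simeq[\liewahory_{w,\rootfunc}/(\theiwahori,\theadjoint)]$ with $\liewahory_{w,\rootfunc}=\theiwahorychevalleymap^{-1}(\chevalley_{w,\rootfunc})$, and similarly for the $\bullet$- and $\topnilp$-loci. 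For (a): the affine flag variety $\flag$ is ind-pfp-proper over $\spec\resfield$ (a filtered colimit of pfp-proper Schubert varieties with closed-embedding transitions), and $\springertotalspace\into\flag\times\compactelements$ is a closed ind-subscheme over $\compactelements$, so $\thespringermap$ is ind-pfp-proper; since ind-pfp-properness is étale-local on the base (\cref{perfstacks:def:indpfpproper}) and $\overline{\thespringermap}$ is obtained from $\thespringermap$ by descent along the $\loopspace(\thegroup)$-torsor $\compactelements\to[\compactelements/\loopspace(\thegroup)]$ (cf. \cref{perfstacks:prop:descentandtorsors}), $\overline{\thespringermap}$ is ind-pfp-proper. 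For (b): the $\{\chevalley_{w,\rootfunc}\}$ form a bounded constructible stratification of $\arcspace(\chevalley)_\bullet$ (\cref{stratoftorus:stratofchevalleyisboundedconst}), which pulls back along $\loopspace(\thechevalleymap)$ (resp. its restriction over $\arcspace(\chevalley)_{\topnilp}$) to bounded constructible stratifications by \cref{stratification:prop:pullbackofstrat}, and placidity of each $[\compactelements_{w,\rootfunc}/\loopspace(\thegroup)]$ follows from the perfect analogue of \cite[\S 4.1]{bouthier2022perverse} using that $\chevalley_{w,\rootfunc}$ admits a perfectly placid presentation (\cref{stratoftorus:propertiesofwrstrata}) and that the Chevalley morphisms on strata are pfp and flat.

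For (c), the key observation is that both $\liewahory$ and the topologically nilpotent Iwahori Lie algebra are strongly pro perfectly smooth over $\spec\resfield$. For $\liewahory$ this is the presentation $\liewahory=\lim_n\big(\arcspace_n(\thelie)\times_{\arcspace_0(\thelie)}\arcspace_0(\theborellie)\big)$, in which each level is $\arcspace_0(\theborellie)\times\affinespace^{(n-1)\dim\thelie}_{\perf}$ and the transition maps are perfectly smooth and affine. For the topologically nilpotent locus one notes that, since $\thechevalleymap(\delta)$ has vanishing constant term if and only if $\delta(0)\in\theborellie$ is nilpotent, i.e. $\delta(0)$ lies in the nilradical $\liealg[n]$ of $\theborellie$, the intersection $\liewahory\times_{\arcspace(\chevalley)}\arcspace(\chevalley)_{\topnilp}$ equals $\arcspace(\thelie)\times_{\arcspace_0(\thelie)}\liealg[n]$, which is again strongly pro perfectly smooth over $\spec\resfield$. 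The regular semisimple loci $\liewahory^{\regsemisimple}$ and $\big(\arcspace(\thelie)\times_{\arcspace_0(\thelie)}\liealg[n]\big)^{\regsemisimple}$ are pfp-open in these, hence remain strongly pro perfectly smooth; as $\theiwahori$ is a perfectly smooth affine group scheme, the quotients $[\liewahory^{\regsemisimple}/(\theiwahori,\theadjoint)]$ and $[(\arcspace(\thelie)\times_{\arcspace_0(\thelie)}\liealg[n])^{\regsemisimple}/(\theiwahori,\theadjoint)]$ — which are $[\springertotalspace_\bullet/\loopspace(\thegroup)]$ and $[\springertotalspace_{\topnilp,\bullet}/\loopspace(\thegroup)]$ — are perfectly smooth by the defining closure properties of $\mor_\infty$ (covering by a perfectly smooth morphism from something perfectly smooth over $\spec\resfield$).

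For (d) and (e): via the identification above, $[\springertotalspace_{w,\rootfunc}/\loopspace(\thegroup)]\simeq[\liewahory_{w,\rootfunc}/(\theiwahori,\theadjoint)]$ with $\liewahory_{w,\rootfunc}=\theiwahorychevalleymap^{-1}(\chevalley_{w,\rootfunc})$. Since $\theiwahorychevalleymap_{w,\rootfunc}$ is flat and uo-equidimensional over $\chevalley_{w,\rootfunc}$ (\cref{wittaffinrspringer:prop:chevalleyareflatanduoequidimonstrata}), the embedding $\liewahory_{w,\rootfunc}\into\liewahory^{\regsemisimple}$ (resp. into the topologically nilpotent locus) is a pfp-locally closed embedding of pure codimension $b_{w,\rootfunc}$ (resp. $b_{w,\rootfunc}^+$) by \cref{codimofstrata:def:eqofcodim} (resp. \cref{codimofstrata:def:topnilpquantities}) together with the additivity of dimension functions and \cref{dimthry:prop:compositionofequidim}, \cref{dimthry:prop:pfpfbetweenplacidpresentedcorollaries}; passing to the quotient by the perfectly smooth group $\theiwahori$ preserves pure codimension and relative dimension (\cref{dimthry:prop:equidimofrelativedimensionpreservedbypullback}, \cref{dimthry:prop:constantrelativedimensionbetweenplacid}), giving (d). For (e), after the perfectly smooth cover of $[\compactelements_{w,\rootfunc}/\loopspace(\thegroup)]$ afforded by the twisted torus picture (\cref{stratoftorus:chevalleyisetale}), $\overline{\thespringermap}_{w,\rootfunc}$ becomes the Witt vector affine Springer fibration over $\chevalley_{w,\rootfunc}$, which is locally pfp-representable, universally closed (being ind-pfp-proper) and, by the computation of the dimensions of Witt vector affine Springer fibers in \cite{chi2024witt}, equidimensional of relative dimension $\delta_{w,\rootfunc}$; universal openness follows as in \cref{dimthry:prop:flatisuoequidim}, working on a perfectly finitely presented model. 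For (f): by (a) the restriction $\overline{\thespringermap}_{\leq 0}$ is ind-pfp-proper, and over the open stratum $\chevalley_{\leq 0}=\arcspace(\chevalley)_{(\thediscriminant;0)}$ the affine Springer fibers are $0$-dimensional ($\delta_{1,0}=0$), so each level of the ind-structure is schematic; hence $\overline{\thespringermap}_{\leq 0}$ is ind-pfp-proper-locally schematic in the sense of \cref{perfstacks:def:indpfpproperlocschematic}. On $K$-points, for $\gamma$ regular semisimple with unit discriminant the centralizer $Z_{\loopspace(\thegroup)}(\gamma)$ is a split unramified torus and the set of Iwahori Lie subalgebras containing $\gamma$ is identified with the set of alcoves in the apartment of that torus, on which $Z_{\loopspace(\thegroup)}(\gamma)$ acts through $X_*(\thetorus)$ while the residual $\theweylgroup$ permutes the chambers — exactly the classical Grothendieck–Springer picture over the regular semisimple locus — so $[\springertotalspace_{\leq 0}/\loopspace(\thegroup)](K)\to[\compactelements_{\leq 0}/\loopspace(\thegroup)](K)$ is a $\theextendedweylgroup$-torsor; \cref{perfstacks:prop:torsoronpoints} then upgrades this to the assertion.

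I expect the main obstacle to be the dimension-theoretic content of (d) and (e): because perfectly smooth morphisms do not satisfy the IPP property, one cannot invoke a canonical smooth presentation as in \cite{bouthier2022perverse}, and the pure-codimension and relative-dimension statements must instead be routed through \cref{dimthry:prop:pfpfbetweenplacidpresentedcorollaries} and the comparison lemmas of \cref{dimthry}, keeping careful track of which strata are known only to be uo-equidimensional rather than strongly pro perfectly smooth. Part (f) presents a different difficulty: one must first recognize the restricted fibration as lying in a class of morphisms to which the torsor-detection principle applies — this is why verifying that it is ind-pfp-proper-locally schematic (rather than literally schematic, which fails because the fibers are infinite discrete) is a necessary preliminary — after which the problem genuinely reduces to the classical computation on geometric points.
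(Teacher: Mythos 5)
Your overall strategy mirrors the paper's, which is itself largely a matter of importing the arguments of \cite[\S 4]{bouthier2022perverse} with the perfect inputs spliced in (the flatness results, the codimension formulas, the $\theweylgroup$-torsor structure of \cref{stratoftorus:chevalleyisetale}, the dimension formula from \cite{chi2024witt}, and the representability input \cref{wittaffinrspringer:gneral:looppreservesquot}); your parts (a), (b), (d), and (f) track that route reasonably closely, and (c) supplies an explicit strongly pro perfectly smooth presentation which is a fine (and correct) way to see perfect smoothness of the two quotient stacks.

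There is, however, a real gap in (e). You deduce uo-equidimensionality of $\overline{\thespringermap}_{w,\rootfunc}$ by citing \cref{dimthry:prop:flatisuoequidim} ``working on a perfectly finitely presented model,'' but that statement produces universal openness from \emph{flatness}, and the restricted Grothendieck--Springer fibration $\overline{\thespringermap}_{w,\rootfunc}$ is not flat (its fibers are $\delta_{w,\rootfunc}$-dimensional proper varieties over a base that is itself of pure codimension $b_{w,\rootfunc}$, and no flatness is asserted or available). What the paper actually uses — following \cite[Lemma 4.3.4(b)]{bouthier2022perverse} — is the flatness and uo-equidimensionality of the \emph{Chevalley} maps restricted to strata (\cref{wittaffinrspringer:prop:chevalleyareflatanduoequidimonstrata}(b)), combined with the representability and centralizer-equivariance structure of the fibration over the regular semisimple locus, with the relative dimension then read off from \cite{chi2024witt}. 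Universal openness of $\overline{\thespringermap}_{w,\rootfunc}$ comes out of that structural analysis, not from a flatness-to-openness implication.

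A related but smaller omission in (e): local pfp-representability of $\overline{\thespringermap}_{w,\rootfunc}$ is asserted in passing, but it is not an automatic consequence of anything you've invoked. The paper explicitly routes this through \cref{wittaffinrspringer:gneral:looppreservesquot} (the analogue of \cite[Theorem 4.1.9]{bouthier2022perverse}, proved via \cref{perfstacks:prop:torsoronpoints} and ind-projectivity of $\loopspace(\thegroup)/\theiwahori$ from \cite{bhatt2017projectivity}), together with the argument of \cite[\S B.2]{bouthier2022perverse}. You should name this input — it is one of the places where the perfect setting genuinely changes the argument (no reduction, cleaner isomorphism criterion), and it is doing real work.

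Finally, a cosmetic imprecision: in (e) you speak of ``the perfectly smooth cover of $[\compactelements_{w,\rootfunc}/\loopspace(\thegroup)]$ afforded by the twisted torus picture (\cref{stratoftorus:chevalleyisetale}),'' but that proposition produces a $\theweylgroup$-torsor over $\arcspace(\chevalley)_{(\thediscriminant;n)}$, not a cover of the quotient stack $[\compactelements_{w,\rootfunc}/\loopspace(\thegroup)]$; one still needs to pass through the identification $[\springertotalspace/\loopspace(\thegroup)] \simeq [\liewahory/(\theiwahori,\theadjoint)]$ and pull back along $\theiwahorychevalleymap$, which is what \cref{wittaffinrspringer:prop:chevalleyareflatanduoequidimonstrata} packages.
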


\begin{proof}
    The proof of (a) is analogous to \cite[Lemma 4.1.4]{bouthier2022perverse}; one uses that the Witt vector affine Grassmannian is ind-pfp proper, as shown in \cite{zhu2017affine} (see \cite{bhatt2017projectivity} for a stronger result).

    Part (b) follows from \cref{codimofstrata:prop:stratofliewahoryandlie} and the isomorphism
    \[
        [\springertotalspace / \loopspace(\thegroup)] \simeq [\liewahory / (I, \theadjoint)],
    \]
    as in \cite[Lemma 4.4.2(a)]{bouthier2022perverse} and \cite[Lemma 4.4.4(a)]{bouthier2022perverse}.

    Part (c) is proved as in \cite[Lemma 4.4.2(c)]{bouthier2022perverse} and \cite[Lemma 4.4.4(c)]{bouthier2022perverse}.

    Part (d) follows from \cref{wittaffinrspringer:prop:chevalleyareflatanduoequidimonstrata}(a), in the same way that the corresponding statements follow from \cite[Corollary 3.4.2]{bouthier2022perverse} in the equal characteristic setting.

    Part (e) is proved as in \cite[Lemma 4.3.4(b)]{bouthier2022perverse}. The fact that \( \overline{\thespringermap}_{w,\rootfunc} \) is locally pfp-representable uses an analogue of \cite[Theorem 4.3.3]{bouthier2022perverse} in our setting, which is proved as in \cite[\S B.2]{bouthier2022perverse}. The only difference is that, in our setting, there is no notion of reduction; thus, for instance, our analogue of \cite[Theorem 4.1.9]{bouthier2022perverse} is \cref{wittaffinrspringer:gneral:looppreservesquot} below, which has a cleaner formulation and a slightly simpler proof.

    To show that \( \overline{\thespringermap}_{w,\rootfunc} \) is uo-equidimensional of relative dimension \( \delta_{w,\rootfunc} \), we use \cref{wittaffinrspringer:prop:chevalleyareflatanduoequidimonstrata}(b) in place of \cite[Corollary 3.4.9(b)]{bouthier2022perverse}, and we appeal to \cite[Theorem 1.1]{chi2024witt} instead of the classical dimension formula for affine Springer fibers.

    Part (f) is proved as in \cite[Corollary 4.2.5]{bouthier2022perverse}. Namely, the torsor structure arises from an action of \( \Lambda_T \), constructed using \cref{wittloopandarc:example:loopofGm}.
\end{proof}

The following theorem is the analogue of \cite[Theorem 4.1.9]{bouthier2022perverse} and plays a crucial role in the proof of   \cref{semismallness:prop:mainthm}(b). Working in the perfect setting cleans up the formulation and simplifies its proof (see \cite[\S B.2]{bouthier2022perverse}).

\begin{theorem} \label{wittaffinrspringer:gneral:looppreservesquot}
    For every (not necessarily split) maximal torus \( S \subset \thegroup \), the natural projection
    \[
        \psi_S \colon [\loopspace (\thegroup) / \loopspace (S)] \to \loopspace(\thegroup / S)
    \]
    is an isomorphism.
\end{theorem}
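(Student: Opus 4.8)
The plan is to reduce the statement to a torsor assertion via the criterion of \cref{perfstacks:prop:torsorequivalentcond}: the morphism $\psi_S$ is an equivalence if and only if the projection $q \colon \loopspace(\thegroup) \to \loopspace(\thegroup/S)$ is an $\loopspace(S)$-torsor, i.e. a covering (see \cref{perfstacks:def:coverings}) whose action morphism $\loopspace(S) \times \loopspace(\thegroup) \to \loopspace(\thegroup) \times_{\loopspace(\thegroup/S)} \loopspace(\thegroup)$ is an equivalence. Since $\thegroup \to \thegroup/S$ is an $S$-torsor over $\spec\basicints$ — so that $S \times_{\basicints} \thegroup \xrightarrow{\sim} \thegroup \times_{\thegroup/S} \thegroup$ — and since the loop functor preserves finite limits (it is computed objectwise by evaluating the functor of points on the ring $\witt(R)[1/p]$), the action morphism is automatically an equivalence. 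Hence the whole content is that $q$ is a covering, i.e. an effective epimorphism of étale sheaves on $\peraff$.

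To prove that $q$ is a covering, fix a perfect affine $U = \spec R$ together with a point $x \in \loopspace(\thegroup/S)(R) = (\thegroup/S)(\witt(R)[1/p])$, and produce an étale cover of $U$ over which $x$ lifts to $\thegroup$. Since $q$ is a morphism of ind-schemes of ind-finite presentation, a spreading-out argument reduces this to the case where $R$ is a Henselian local perfect ring; there $\witt(R)$ is $\uniformizer$-adically complete and local. One then lifts $x$ using that $S$, being a torus, is a special group, so that $\thegroup \to \thegroup/S$ is a Zariski-locally trivial $S$-torsor: controlling $x$ via its image in the affine flag variety (using that $\thegroup/\theborel$ is proper over $\spec\basicints$, together with the valuative criterion over the complete local ring $\witt(R)$) and then trivializing the pulled-back torsor, one brings $x$ into a trivializing open and lifts it, possibly after a dévissage of $\theborel$ into its torus and its successive $\multiplicativegroup_a$-subquotients, where the split-torus case is handled by \cref{wittloopandarc:example:loopofGm}. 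This is exactly the argument of \cite[\S B.2]{bouthier2022perverse}; in our setting it is slightly cleaner, since in \cite{bouthier2022perverse} the analogue of the statement only holds after passing to the reduction of $\loopspace(\thegroup/S)$, whereas every perfect $\infty$-stack is automatically reduced (see \cref{perfstacks:remark:noneedtowritered}) so no reduction needs to be carried along.

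Once $q$ is known to be an $\loopspace(S)$-torsor, the theorem follows. Alternatively, one may conclude via \cref{perfstacks:prop:graddream}: after the étale cover of $U$ trivializing $q$, the fibre product $[\loopspace(\thegroup)/\loopspace(S)] \times_{\loopspace(\thegroup/S)} U$ becomes the scheme $U$, so $\psi_S$ is locally schematic and universally open representable, hence an equivalence as soon as it is one on $L$-points for every algebraically closed field $L$; and on $L$-points it is the map $\thegroup(\witt(L)[1/p])/S(\witt(L)[1/p]) \to (\thegroup/S)(\witt(L)[1/p])$, which is a bijection because $\witt(L)[1/p]$ is a complete discretely valued field with algebraically closed residue field, hence of cohomological dimension $\le 1$, so $H^1_{\text{\'et}}(\witt(L)[1/p], S) = \ast$ for the connected group $S$. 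I expect the main obstacle to be precisely the covering property of $q$, namely relating étale covers of $\spec R$ to the Zariski covers of $\spec \witt(R)[1/p]$ along which the pulled-back $S$-torsor trivializes.
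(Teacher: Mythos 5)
Your proposal and the paper's proof take genuinely different routes, and yours has a gap at the step you yourself flag as the main obstacle.

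The paper does not try to prove directly that $q\colon \loopspace(\thegroup)\to\loopspace(\thegroup/S)$ is a covering. Instead it factors $\psi_S$ through the intermediate ind-scheme $\loopspace(\thegroup)/\arcspace(S)$. Since $\loopspace(S)\simeq\arcspace(S)\times\Lambda_S$ by \cref{wittloopandarc:example:loopofGm}, the first leg $\loopspace(\thegroup)/\arcspace(S)\to[\loopspace(\thegroup)/\loopspace(S)]$ is a $\Lambda_S$-torsor for the \emph{discrete} group $\Lambda_S$, so $\psi_S$ being an isomorphism is equivalent to the composition $\loopspace(\thegroup)/\arcspace(S)\to\loopspace(\thegroup/S)$ being a $\Lambda_S$-torsor. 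This is then detected on geometric points via \cref{perfstacks:prop:torsoronpoints}, whose hypotheses are verified by showing the composition is ind-pfp-proper-locally schematic; for that the paper crucially uses the ind-\emph{projectivity} (not merely ind-properness) of $\loopspace(\thegroup)/\iwahory$ from \cite{bhatt2017projectivity}. The advantage of this route is precisely that it never needs the surjectivity/covering property of $q$ up front — that is a consequence, not an input.

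Two concrete problems with your plan. First, the appeal to specialness: a non-split torus is in general \emph{not} a special group, so $\thegroup\to\thegroup/S$ need not be Zariski-locally trivial, and the theorem is stated for not-necessarily-split $S$; the dévissage you sketch through $\theborel$ and its unipotent subquotients does not rescue this, since the obstruction sits in $H^1$ of $S$ itself. Second, the alternative route via \cref{perfstacks:prop:graddream} is circular: to conclude that the base change of $\psi_S$ to an affine $U$ is a scheme (hence that $\psi_S$ is locally schematic and universally open representable), you invoke ``the \'etale cover of $U$ trivializing $q$'' — but the existence of such a cover is exactly the covering property of $q$ you set out to prove. The parts of your proposal that do land — that $\loopspace$ preserves finite limits so the action map is automatically an equivalence, and that $H^1_{\text{\'et}}$ of a torus vanishes over $\witt(L)[1/p]$ for $L$ algebraically closed by cohomological-dimension considerations — correspond to ingredients the paper also uses (implicitly, in verifying the geometric-points statement), but they do not by themselves supply the covering or, equivalently, the ind-pfp-proper-locally-schematic structure that makes the point-counting conclusive.
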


\begin{proof}
    We may assume that \( S \) is defined over \( \basicints \).

    Consider the composition
    \[
        \loopspace (\thegroup )/ \arcspace (S) \to [\loopspace (\thegroup) / \loopspace (S)] \to \loopspace(\thegroup / S).
    \]
    Since \( \loopspace (S) \simeq \arcspace (S) \times \Lambda_S \) (by \cref{wittloopandarc:example:loopofGm}), the first morphism is a \( \Lambda_S \)-torsor. We will show that the full composition is an ind-pfp-proper-locally schematic morphism which is a \( \Lambda_S \)-torsor on \( F \)-points for any algebraically closed field \( F / \resfield \). The result then follows from \cref{perfstacks:prop:torsoronpoints}.

    The proofs that it is ind-pfp-proper-locally schematic and a torsor on \( F \)-points are identical to those in \cite[Claim B.2.2]{bouthier2022perverse}.

    Note that we really use \cite{bhatt2017projectivity}, rather than the weaker result in \cite{zhu2017affine}, to conclude that \( \loopspace (\thegroup) / \iwahory \) is ind-projective (and not merely ind-proper). This ensures that the composition is ind-pfp-proper-locally schematic.
\end{proof}

\begin{corollary}\label{semismallness:prop:maincor}
    The morphism
    \[
        \thespringermap_{\bullet} \colon [\springertotalspace_{\bullet} / \loopspace (\thegroup)] \to [\compactelements_{\bullet} / \loopspace (\thegroup)]
    \]
    is \( [\compactelements_{\leq 0} / \loopspace (\thegroup)] \)-small, and the morphism
    \[
        \thespringermap_{\topnilp,\bullet} \colon [\springertotalspace_{\topnilp,\bullet} / \loopspace (\thegroup)] \to [\compactelements_{\topnilp,\bullet} / \loopspace (\thegroup)]
    \]
    is semi-small (see \cref{stratification:def:semismall}).
\end{corollary}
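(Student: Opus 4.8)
The plan is to obtain both assertions by matching \cref{semismallness:prop:mainthm} against the definition of a (semi-)small morphism in \cref{stratification:def:semismall}, using the numerical inequalities \cref{codimofstrata:def:smallnessinequality} and \cref{codimofstrata:def:topnilpinequality}. No new geometric input is needed; the argument is bookkeeping.

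For the first morphism $\overline{\thespringermap}_{\bullet}$: by \cref{semismallness:prop:mainthm}(b) the collection $\explicitset{[\compactelements_{w,\rootfunc}/\loopspace(\thegroup)]}_{w,\rootfunc}$ is a perfectly placid stratification of the target $[\compactelements_{\bullet}/\loopspace(\thegroup)]$; by \cref{semismallness:prop:mainthm}(c) the source $[\springertotalspace_{\bullet}/\loopspace(\thegroup)]$ is perfectly placid; the preimage of the stratum $[\compactelements_{w,\rootfunc}/\loopspace(\thegroup)]$ is $[\springertotalspace_{w,\rootfunc}/\loopspace(\thegroup)]$ (both $\springertotalspace_{w,\rootfunc}$ and $\compactelements_{w,\rootfunc}$ are $\loopspace(\thegroup)$-stable), and it is of pure codimension $b_{w,\rootfunc}$ by \cref{semismallness:prop:mainthm}(d); and the restriction $\overline{\thespringermap}_{w,\rootfunc}$ is locally pfp-representable and uo-equidimensional---in particular equidimensional---of constant relative dimension $\delta_{w,\rootfunc}$ by \cref{semismallness:prop:mainthm}(e). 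So the data of \cref{stratification:def:semismall} are in place with $b_\alpha = b_{w,\rootfunc}$ and $\delta_\alpha = \delta_{w,\rootfunc}$, and \cref{codimofstrata:def:smallnessinequality} supplies $\delta_{w,\rootfunc} \leq b_{w,\rootfunc}$ for all $(w,\rootfunc)$, with equality exactly for $(w,\rootfunc) = (1,0)$. This already gives semi-smallness. Since the $(1,0)$-stratum is $\chevalley_{\leq 0}$, the substack $[\compactelements_{\leq 0}/\loopspace(\thegroup)]$ is open (as $\chevalley_{\leq 0} \subset \arcspace(\chevalley)$ is a pfp-open embedding) and $\explicitset{[\compactelements_{w,\rootfunc}/\loopspace(\thegroup)]}_{w,\rootfunc}$-adapted (it is itself one of the strata), with associated index set $\explicitset{(1,0)}$; hence $\delta_{w,\rootfunc} < b_{w,\rootfunc}$ for every stratum outside it, which is precisely the condition for $\overline{\thespringermap}_{\bullet}$ to be $[\compactelements_{\leq 0}/\loopspace(\thegroup)]$-small.

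The second morphism is handled the same way: $[\compactelements_{\topnilp,\bullet}/\loopspace(\thegroup)]$ is stratified by $\explicitset{[\compactelements_{w,\rootfunc}/\loopspace(\thegroup)]}_{(w,\rootfunc) > 0}$ (again \cref{semismallness:prop:mainthm}(b)), the source $[\springertotalspace_{\topnilp,\bullet}/\loopspace(\thegroup)]$ is perfectly placid and the strata $[\springertotalspace_{w,\rootfunc}/\loopspace(\thegroup)]$ are now of pure codimension $b_{w,\rootfunc}^+$ by \cref{semismallness:prop:mainthm}(c)--(d), while the restrictions $\overline{\thespringermap}_{w,\rootfunc}$ still have relative dimension $\delta_{w,\rootfunc}$ by \cref{semismallness:prop:mainthm}(e). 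The inequality to invoke here is $\delta_{w,\rootfunc} \leq b_{w,\rootfunc}^+$ for all $(w,\rootfunc) > 0$, which is \cref{codimofstrata:def:topnilpinequality}; this gives semi-smallness. I would not claim smallness in this case, because equality in \cref{codimofstrata:def:topnilpinequality} can hold for more than one stratum.

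I do not expect a real obstacle here once \cref{semismallness:prop:mainthm} is available: the proof is an unwinding of definitions. The only step worth stating carefully is the identification of the equality locus of the smallness inequality with the distinguished adapted open substack $[\compactelements_{\leq 0}/\loopspace(\thegroup)]$, together with the trivial checks that it is adapted and open. All the genuine content has already been spent: the equidimensionality and local pfp-representability of the restricted Grothendieck--Springer maps in \cref{semismallness:prop:mainthm}, and the codimension identity $b_{w,\rootfunc} = \delta_{w,\rootfunc} + a_{w,\rootfunc} + c_w$ of \cref{codimofstrata:def:eqofcodim} (with its top-nilpotent variant) underlying both inequalities, which itself rests on the arc-space inverse function theorem of \cref{appendixcodim}.
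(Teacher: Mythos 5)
Your proposal is correct and takes essentially the same approach as the paper, whose proof is a one-line citation of \cref{semismallness:prop:mainthm}, \cref{codimofstrata:def:smallnessinequality}, and \cref{codimofstrata:def:topnilpinequality}; you have simply unwound the definitions to make the bookkeeping explicit, including the identification of the equality locus of the smallness inequality with the adapted open $[\compactelements_{\leq 0}/\loopspace(\thegroup)]$.
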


\begin{proof}
    This follows from \cref{semismallness:prop:mainthm}, \cref{codimofstrata:def:smallnessinequality}, and \cref{codimofstrata:def:topnilpinequality}.
\end{proof}

\section{Perverse \texorpdfstring{$t$}{t}-structures on infinite-dimensional perfect \texorpdfstring{\( \infty \)}{infinity}-stacks.}\label{perversetstructures}

\begin{para}{\bf The category \( \prl \).}
    Let \( \prl \) denote the category of stable presentable \( \qlbar \)-linear categories and continuous functors, where “continuous” means preserving small colimits.
\end{para}

\begin{para}{\bf Ind-constructible \( ! \)-sheaves on non-perfect \( \infty \)-stacks.}
    Let \( \indconstructible \colon \stacks^{\op} \to \prl \) be the functor from the category of (not necessarily perfect) \( \infty \)-stacks to the category of stable presentable \( \overline{\mathbb{Q}}_\ell \)-linear categories, constructed as in \cite{bouthier2022perverse}.

    \begin{mylist}
        \item We first define \( \indconstructible \colon (\algsp^{\fp})^{\op} \to \prl \) by sending an algebraic space \( X \), finitely presented over \( \spec \resfield \), to the ind-category of constructible sheaves on \( X \), and a morphism \( f \) to the pullback functor \( f^! \).

        \item We then take the left Kan extension of this functor along the inclusion \( (\algsp^{\fp})^{\op} \hookrightarrow (\algsp^{\qcqs})^{\op} \). Thus, for a qcqs algebraic space \( X \), presented as a filtered colimit \( X \simeq \lim_\alpha X_\alpha \) of algebraic spaces \( X_\alpha \) finitely presented over \( \spec \resfield \), we have \( \indconstructible(X) = \colim_\alpha \indconstructible(X_\alpha) \), and the colimit is independent of the presentation (see \cite[\S 5.2.4(b)]{bouthier2022perverse}).

        \item Finally, we take the right Kan extension of this functor along the Yoneda embedding \( (\algsp^{\qcqs})^{\op} \hookrightarrow \stacks^{\op} \). Hence, for an \( \infty \)-stack \( \varstack{X} \) represented by a hypercovering \( \varstack{X} \simeq \colim_{\Delta^{\op}} X_{[m]} \), we have \( \indconstructible(\varstack{X}) \simeq \lim_{[m]\in\Delta} \indconstructible(X_{[m]}) \), and the limit is independent of the presentation (see \cite[\S 5.3.1(e)]{bouthier2022perverse}).
    \end{mylist}
\end{para}

\begin{para}\label{contructibleonperfect:prop:Dfactorsthroughperfection} {\bf The functor \( \indconstructible \) on perfect \( \infty \)-stacks.}
    \begin{mylist}

        \item By topological invariance of the étale site (see \cref{perfschemes:prop:topologicalinvarianceofetale}), the functor \( \indconstructible \) factors as 
        \[
        \begin{tikzcd}
            \stacks^{\op} \arrow[r] \arrow[d, "(-)_{\perf}"'] & \prl \\
            \perstacks^{\op} \arrow[ru, dashed]
        \end{tikzcd}
        \]
        that is, the canonical morphism \( \varstack{X}_{\perf} \to \varstack{X} \) is sent to an equivalence by \( \indconstructible \).

        \item We will denote the resulting functor $\perstacks^{\op}\to\prl$ also by \( \indconstructible \).
    \end{mylist}
\end{para}

\begin{definition}
    For any \( \infty \)-stack \( \varstack{X} \), we define its dualizing object by
    \[
        \omega_{\varstack{X}} := p^!(\qlbar),
    \]
    where \( p \colon \varstack{X} \to * \) is the unique morphism.
\end{definition}

The following will be the only input that differs from the equal characteristic case. This result was mentioned in \cite[\S A.3]{zhu2017affine} without proof. For convenience of the reader we add the proof.

\begin{proposition}\label{contructibleonperfect:prop:perfsmoothperfproperbasechange}{\bf Perfectly smooth and perfectly proper base change.}
    Let \( f \colon Z \to W \) and \( g \colon Y \to W \) be morphisms in \( \peralgsp^{\pfp} \). Then \( f^! \) and \( g^! \) admit left adjoints \( f_! \) and \( g_! \), respectively. Moreover, if we write the Cartesian square
    \[
    \begin{tikzcd}
        X \arrow[r, "f'"] \arrow[d, "g'"'] \arrow[dr, phantom, "\lrcorner", very near start] & Y \arrow[d, "g"] \\
        Z \arrow[r, "f"] & W
    \end{tikzcd}
    \]
    there is a natural base change morphism
    \[
        \beta_{f^!} \colon g'_! f'^! \to f^! g_!.
    \]
    If either \( g \) is perfectly proper or \( f \) is perfectly smooth, then \( \beta_{f^!} \) is an isomorphism.
\end{proposition}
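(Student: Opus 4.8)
The plan is to deduce everything from the classical six‑functor formalism for constructible $\qlbar$‑sheaves on finite‑type algebraic spaces over $\resfield$, using that $\indconstructible$ is insensitive to perfection (\cref{contructibleonperfect:prop:Dfactorsthroughperfection}). First I would pass to finite‑type models: by \cref{perfschemes:prop:existsfinitetypemodel} (and its evident algebraic‑space analogue, or after an étale localization reducing to schemes — every assertion below being étale‑local on source and target) one may write $W \simeq (W_0)_{\perf}$ and $f \simeq (f_0)_{\perf}$, $g \simeq (g_0)_{\perf}$ for finitely presented morphisms $f_0 \colon Z_0 \to W_0$, $g_0 \colon Y_0 \to W_0$ between finite‑type algebraic spaces over $\resfield$. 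Since perfection is a right adjoint it commutes with fibre products, so the Cartesian square in the statement is the perfection of a Cartesian square with $X_0 = Z_0 \times_{W_0} Y_0$ and $f'_0, g'_0$ the two projections. When $f$ is perfectly smooth, \cref{perfschemes:def:perfectlysmooth} lets us — after an étale cover of $Z$ — take $f_0$ smooth of some relative dimension $d$, in which case $f'_0$ is smooth of the same relative dimension; when $g$ is perfectly proper, \cref{perfschemes:def:perfectlyproper} lets us take $g_0$ proper, whence $g'_0$ is proper as well. Under $\indconstructible$ each of $Z, W, Y, X$ is then identified with $\ind(\constructible(-_0))$ of its model, and $f^!, g^!, (f')^!, (g')^!$ with the ind‑extensions of the corresponding constructible $!$‑pullbacks.

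Next I would produce the adjoints and the exchange morphism at the level of constructible sheaves and then ind‑extend. The constructible six‑functor formalism supplies $(f_0)_! \colon \constructible(Z_0) \to \constructible(W_0)$ with $(f_0)_! \dashv f_0^!$, and likewise for $g_0, f'_0, g'_0$; since the $2$‑functor $\ind(-)$ carries an adjunction of small stable categories to an adjunction, the ind‑extensions $f_! := \ind((f_0)_!)$, $g_! := \ind((g_0)_!)$, $f'_!, g'_!$ are continuous (hence lie in $\prl$) and left adjoint to $f^!, g^!, (f')^!, (g')^!$. This already gives the first assertion. The transformation $\beta_{f^!} \colon g'_! (f')^! \to f^! g_!$ is the usual mate: via the canonical identification $(g')^! f^! \simeq (f')^! g^!$ (both computing $(f g')^! = (g f')^!$), giving $\beta_{f^!}$ is the same datum as giving $(f')^! \to (g')^! f^! g_! \simeq (f')^! g^! g_!$, and one takes $(f')^!$ applied to the unit $\Id \to g^! g_!$. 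It suffices to check invertibility after passing to constructible sheaves, where $\beta_{f^!}$ agrees with the classical base‑change (exchange) morphism.

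For invertibility I would split into the two cases, each reducing to a standard base‑change isomorphism for finite‑type algebraic spaces. If $f$ is smooth of relative dimension $d$, purity gives natural isomorphisms $f^! \simeq f^*(d)[2d]$ and $(f')^! \simeq (f')^*(d)[2d]$ compatible with base change, under which $\beta_{f^!}$ becomes — up to the central twist‑and‑shift $(d)[2d]$ — the base‑change isomorphism $g'_! (f')^* \isoto f^* g_!$, which holds with no hypotheses. If $g$ is proper, then $g_! \simeq g_*$ and $g'_! \simeq g'_*$, and $\beta_{f^!}$ becomes the exchange map $g'_* (f')^! \to f^! g_*$; this is the mate of the (always invertible) base change $f^* g_! \isoto g'_! (f')^*$ — equivalently, the $(*,!)$‑exchange for the Cartesian square obtained by interchanging the two pairs of parallel arrows — and hence an isomorphism. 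Composing with the equivalences of the first step yields the proposition.

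The main obstacle I anticipate is organizational rather than conceptual: one must carry the finite‑type reduction through all four corners of the square and through $\beta_{f^!}$ itself, i.e.\ verify that the equivalences $\indconstructible((-)_{\perf}) \simeq \indconstructible(-)$ and their compatibilities are natural enough that $\beta_{f^!}$ is transported precisely to the classical exchange morphism; and in the perfectly smooth case one must note that a smooth finite‑type model exists only after the étale localization of the source permitted by \cref{perfschemes:def:perfectlysmooth}, which is harmless because $f_!$, $f^!$, and base change are all étale‑local on the source. Granting this bookkeeping, invertibility in both cases is immediate from the classical theory.
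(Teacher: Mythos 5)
Your proposal is correct and follows essentially the same strategy as the paper's proof: reduce to finite-type models via \cref{perfschemes:prop:existsfinitetypemodel}, transport across $\indconstructible$ using \cref{contructibleonperfect:prop:Dfactorsthroughperfection}, and then invoke the classical smooth and proper base change theorems (in the perfectly smooth case after an étale localization of $Z$ permitted by \cref{perfschemes:def:perfectlysmooth}). The only cosmetic differences are that you spell out the mate-calculus for $\beta_{f^!}$ and go through purity to reduce $!$-base change to $*$-base change in the smooth case, whereas the paper factors $f$ into étale and $\affinespace^n_{\perf}$-projection pieces and cites the classical theorems directly.
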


\begin{proof}
    The first assertion, regarding the existence of left adjoints \( f_! \) and \( g_! \), follows immediately from taking pfp models.

    Suppose first that \( g \) is perfectly proper. Then, by \cref{perfschemes:prop:existsfinitetypemodel}, the square is the perfection of a Cartesian square of schemes finitely presented over \( \spec \resfield \). Moreover, the model of \( g \) is universally closed, since this property is reflected by perfection. The result then follows from \cref{contructibleonperfect:prop:Dfactorsthroughperfection} and the classical proper base change theorem.

    Now suppose that \( f \) is perfectly smooth. Since \( p^! \) is conservative for an étale surjective morphism \( p \colon Z' \to Z \), we may replace \( Z \) by a finite étale cover. Then, by the definition of a perfectly smooth morphism, it suffices to prove the assertion in the cases where \( f \) is either étale or the projection \( \affinespace^n_{\perf} \times Y \to Y \). In both cases, the assertion follows from the usual smooth base change theorem together with \cref{contructibleonperfect:prop:Dfactorsthroughperfection}.
\end{proof}

\begin{para}{\bf Gluing of sheaves.} 
    \begin{mylist}
        \item Let $\eta:\varstack{X}\into \varstack{Y}$ be a locally closed embedding of perfectly placid perfect $\infty$-stacks. We define $\eta_*$ as in \cite[Theorem 5.4.4]{bouthier2022perverse}.
        \item We say that a perfect $\infty$-stack $\varstack{Y}$ {\em admits gluing of sheaves}, if for every pfp-locally closed embedding $\eta:\varstack{X}\to \varstack{Y}$ the pushforward $\eta_*:\indconstructible(X)\to \indconstructible(Y)$ admits a left adjoint $\eta^*:\indconstructible(Y)\to \indconstructible(X)$.
        \item As in \cite[Lemma 5.5.4]{bouthier2022perverse}, every perfectly placid perfect $\infty$-stack $\varstack{X}$ admits a gluing of sheaves.
        \item As in \cite[Lemma 5.5.5]{bouthier2022perverse}, if $G$ is an ind-perfectly placid group space acting on an ind perfectly placid ind algebraic space $X$, then $\varstack{X}:= [X/G]$ admits gluing of sheaves.
    \end{mylist}
\end{para}

\begin{para}{\bf The \( ! \)-adapted t-structure on perfectly finitely presented perfect algebraic spaces.}
    \begin{mylist}
        \item Let \( X \in \peralgsp^{\pfp} \). We equip the category \( \constructible(X) \), the bounded derived category of \( \ell \)-adic constructible sheaves on \( X \), with a canonical perverse t-structure
        \[
            ({}^p\constructible^{\leq 0}(X),{}^p\constructible^{\geq 0}(X))
        \]
        defined by the following properties:

        \begin{mylist}
            \item If \( Y \) is an equidimensional perfect algebraic space of dimension \( d \), perfectly finitely presented over \( \spec\resfield \), then the perverse t-structure on \( \constructible(Y) \) is the classical perverse t-structure shifted \( d \) times to the left. That is,
            \[
                K \in {}^p\constructible^{\leq 0}(Y) \iff K[-d] \in \constructible^{\leq 0}(Y)
            \]
            with respect to the classical perverse t-structure and similarly for ${}^p\constructible^{\geq 0}(Y)$. 

            \item For a general \( X \in \peralgsp^{\pfp} \), there is a canonical stratification $\eta_i:X_i \into X$ such that each $X_i$ is an equidimensional perfect algebraic space of dimension $i$ (see \cite[\S 2.1.1.(c)]{bouthier2022perverse} for the case of non perfect schemes, which is completely analogous to our case). Define
            \[
                K \in {}^p\constructible^{\leq 0}(X) \iff \forall \alpha \;\;  \eta_i^*K\in{}^p\indconstructible^{\leq 0}(X_i)\text{ and } K \in {}^p\constructible^{\geq 0}(X) \iff \forall \alpha \;\;  \eta_i^!K\in{}^p\indconstructible^{\geq 0}(X_i).
            \]
            See \cite[\S6.2.3]{bouthier2022perverse} for the proof that this construction defines a $t$-structure.
        \end{mylist}

        \item For any \( X \in \peralgsp^{\pfp} \), we have \( \indconstructible(X) = \Ind(\constructible(X)) \). Thus, we obtain a canonical t-structure
        \(({}^p\indconstructible^{\leq 0}(X),{}^p\indconstructible^{\geq 0}(X))\) on \( \indconstructible(X)\), 
        where \( K \in {}^p\indconstructible^{\leq 0}(X) \)  if it can be presented as a filtered colimit of objects in \( {}^p\constructible^{\leq 0}(X) \) 
        and similarly for \({}^p\indconstructible^{\geq 0}(X)\).
    \end{mylist}
\end{para}

\begin{para}\label{perversetstructures:def:placidpresentation}{\bf Perverse t-structures on algebraic spaces admitting a perfectly placid presentation.}
    \begin{mylist}
        \item Let \( f \colon X \to Y \) be an étale cover in \( \peralgsp^{\pfp} \). Then \( f^! \) is \( t \)-exact and conservative. Thus, \( K \in {}^p\indconstructible^{\leq 0}(Y) \) (resp. \( K \in {}^p\indconstructible^{\geq 0}(Y) \)) if and only if \( f^!K \in {}^p\indconstructible^{\leq 0}(X) \) (resp. \( f^!K \in {}^p\indconstructible^{\geq 0}(X) \)).

        {\color{black}\item\label{perversetstructures:def:texactnessforperfsmooth} Let \( f \colon X \to Y \) be a perfectly smooth morphism in \( \peralgsp^{\pfp} \). Then \( f^! \) is \( t \)-exact.}

        \item Given a perfectly placid presentation \( X \simeq \lim X_\alpha \), there exists a unique \( t \)-structure on \( \indconstructible(X) \) such that each pullback \( p_\alpha^! \colon \indconstructible(X) \to \indconstructible(X_\alpha) \) is \( t \)-exact.
    \end{mylist}
\end{para}

\begin{proof}
    (a) follows from the classical case: since étale morphisms preserve dimensions, the shifts in the perverse t-structure are unaffected.

    We now prove (b). Since \( p^! \) is conservative for an étale surjective morphism \( p \colon Z' \to Z \), we may replace \( Z \) by a finite étale cover. Then, by the definition of perfectly smooth morphisms, it suffices to consider the cases where \( f \) is either étale or the projection \( \affinespace^n_{\perf} \times Y \to Y \). In both cases, \( t \)-exactness follows from the non perfect case and from \cref{contructibleonperfect:prop:Dfactorsthroughperfection}.

    (c) is a general fact: a filtered colimit in \( \prl \) with \( t \)-exact transition functors carries an induced \( t \)-structure (see \cite[Lemma 6.1.3]{bouthier2022perverse}).
\end{proof}

The proof of the following proposition differs from it's counterpart in the non-perfect setting (compare \cite[\S 6.3.1.(a)]{bouthier2022perverse}).

\begin{proposition}\label{perversetstructures:prop:welldefined}
    The \( t \)-structure defined in \cref{perversetstructures:def:placidpresentation} is independent of the choice of perfectly placid presentation.

    Moreover, for a strongly pro-perfectly smooth morphism $f:X\to Y$ between algebraic spaces admitting a perfectly placid presentation, $f^!$ is $t$-exact.
\end{proposition}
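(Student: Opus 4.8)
The two statements are intertwined, and the plan is to treat the well-definedness as the substantial point, the ``moreover'' being then essentially formal. Two inputs are used throughout: first, that strongly pro perfectly smooth morphisms are closed under composition and base change, which follows from \cref{perfplacid:prop:perfplacidpresandstronglyprosmoothinpro}, the stability properties of \cref{perfschemes:prop:propertiesperfectlysmooth}, and the pro-construction of \cite[A.3]{bouthier2022perverse}; second, that for a perfectly placid presentation $X\simeq\lim_\alpha X_\alpha$ the identification $\indconstructible(X)\simeq\colim_\alpha\indconstructible(X_\alpha)$ exhibits the $t$-structure of \cref{perversetstructures:def:placidpresentation} as the \emph{unique} $t$-structure making every $p_\alpha^!$ $t$-exact --- this is \cite[Lemma 6.1.3]{bouthier2022perverse} applied to the transition functors, which are $t$-exact by \cref{perversetstructures:def:texactnessforperfsmooth} since the transition morphisms are perfectly smooth.

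For well-definedness I would take two perfectly placid presentations $X\simeq\lim_\alpha X_\alpha$ and $X\simeq\lim_\beta X'_\beta$, with projections $p_\alpha$ and $\pi_\beta$, and, by the uniqueness clause, reduce to showing that every $p_\alpha^!$ is $t$-exact for the $t$-structure attached to $\{X'_\beta\}$. Fixing $\alpha$, since $X_\alpha$ is perfectly finitely presented, hence cocompact in $\persch^{\operatorname{qcqs}}$ (\cref{perfschemes:def:perfoffinitetype}), the morphism $p_\alpha$ factors as $X\xrightarrow{\pi_\beta}X'_\beta\xrightarrow{q}X_\alpha$ for some $\beta$ and some $q$; factoring $\pi_\beta$ in turn through $\{X_\alpha\}$ and comparing with a transition $X_\gamma\to X_\alpha$, one obtains, after enlarging indices, a morphism $s\colon X_\gamma\to X'_\beta$ whose composition with $q$ is a (perfectly smooth, affine) transition map. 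The crux --- and the place where the argument must depart from \cite[\S 6.3.1(a)]{bouthier2022perverse}, which here invokes the canonical presentation of strongly pro smooth morphisms, unavailable in the perfect setting because perfectly smooth morphisms do not satisfy the IPP property --- is to upgrade $q$ to a perfectly smooth morphism after passing to a cofinal subsystem. I would do this by combining the relation $q\circ s=(\text{transition})$ with faithfully flat descent of smoothness on the source: passing to finitely presented models via \cref{perfschemes:prop:existsfinitetypemodel} and the fibrewise criterion of flatness (\cref{thm:critflatfibre}), one reduces to the classical fact that a morphism of finitely presented schemes is smooth once it admits a faithfully flat, smooth refinement, the needed faithful flatness being arranged by replacing $X'_\beta$ and the index $\gamma$ appropriately. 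Granting that $q$ is perfectly smooth, $q^!$ is $t$-exact by \cref{perversetstructures:def:texactnessforperfsmooth}, so $p_\alpha^!=\pi_\beta^!\circ q^!$ is $t$-exact for the $\{X'_\beta\}$-$t$-structure, and the two $t$-structures coincide.

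For the ``moreover'', let $f\colon X\to Y$ be strongly pro perfectly smooth and fix a perfectly placid presentation $Y\simeq\lim_\beta Y_\beta$. Each composite $\pi_\beta\circ f\colon X\to Y_\beta$ is again strongly pro perfectly smooth, hence by definition admits a perfectly placid presentation $X\simeq\lim_\nu X_\nu$ with each $X_\nu\to Y_\beta$ perfectly smooth; thus $(\pi_\beta\circ f)^!=p_\nu^!\circ(X_\nu\to Y_\beta)^!$ is $t$-exact, the first factor by construction of the $t$-structure on $\indconstructible(X)$ for this presentation, the second by \cref{perversetstructures:def:texactnessforperfsmooth}. Since $f^!\circ\pi_\beta^!=(\pi_\beta\circ f)^!$ and $\indconstructible(Y)\simeq\colim_\beta\indconstructible(Y_\beta)$, it follows that $f^!$ preserves ${}^p\indconstructible^{\le 0}$; the assertion for ${}^p\indconstructible^{\ge 0}$ follows dually through the right adjoints, and by the well-definedness already proved this is independent of all choices.

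The step I expect to be the main obstacle is the upgrade of the comparison morphism $q$ to a perfectly smooth one: one must descend perfect smoothness along a perfectly smooth surjection while only having finitely presented models at one's disposal, which is precisely the technical content that the canonical smooth presentation supplies for free in the equal characteristic setting.
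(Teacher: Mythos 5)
You correctly isolate the obstacle — the failure of the canonical pro-smooth presentation in the perfect setting — but the resolution you propose is a genuine gap, not a fix. Upgrading the comparison morphism $q\colon X'_\beta\to X_\alpha$ to a perfectly smooth one after passing to a cofinal subsystem is precisely the IPP property for the class of perfectly smooth morphisms, which the paper explicitly flags as not known to hold (see the footnote to \cref{dimthry:prop:uoequidimsatisfyIPP}). The descent step also fails on its own terms: the morphism $s\colon X_\gamma\to X'_\beta$ produced by the filtered system is merely a map under $X$ for which $q\circ s$ is a transition map, and nothing forces $s$ to be surjective, flat, or perfectly smooth; thus faithfully flat descent of smoothness does not apply, and ``arranging faithful flatness by replacing $X'_\beta$ and $\gamma$ appropriately'' is not justified. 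Moreover, your reduction via the uniqueness clause commits you to making every $p_\alpha^!$ $t$-exact in \emph{both} directions, which is why you are forced to want $q$ perfectly smooth rather than something weaker; that over-strong demand is where the argument runs into the wall.

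The paper's proof is deliberately asymmetric and never needs a perfectly smooth comparison map. It first observes that ${}^p\indconstructible^{\leq 0}$ is recovered as the left orthogonal of ${}^p\indconstructible^{\geq 1}$, so it suffices to prove ${}^p\indconstructible^{\geq 0}_1={}^p\indconstructible^{\geq 0}_2$; hence only \emph{left} $t$-exactness of the comparison pullback is required. It then invokes \cref{dimthry:prop:uoequidimsatisfyIPP}: the class of equidimensional morphisms, unlike perfectly smooth ones, does satisfy IPP, so for each $\alpha$ there exist a $\beta$ and an \emph{equidimensional} morphism $X'_\beta\to X_\alpha$ under $X$. Pullback along an equidimensional morphism is left $t$-exact (the perfect analogue of \cite[\S 6.2.5(a)]{bouthier2022perverse}), giving one containment, and symmetry gives the other. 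This is exactly the ``different proof'' announced in the introduction as a replacement for the canonical-presentation argument. For the ``moreover'' the paper simply cites the analogue of \cite[Proposition 6.3.1(b)]{bouthier2022perverse}, which goes through once well-definedness is settled: one then uses the presentation of $X$ in which each $X_\nu\to Y_\beta$ is perfectly smooth, so $(\pi_\beta\circ f)^!$ is $t$-exact in both directions; your appeal to ``right adjoints'' to handle ${}^p\indconstructible^{\geq 0}$ is not a valid shortcut here.
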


\begin{proof}
    We show the first statement. Let \( X \simeq \lim X_\alpha \) and \( X \simeq \lim X'_\beta \) be two perfectly placid presentations of \( X \). Write \( {}^p\indconstructible^{\geq 0}_1 \) and \( {}^p\indconstructible^{\geq 0}_2 \) for the resulting \( t \)-structures. It suffices to show that \( {}^p\indconstructible^{\geq 0}_1 = {}^p\indconstructible^{\geq 0}_2 \), since the negative truncation can be reconstructed from the nonnegative part: \( {}^p\indconstructible^{\leq 0} \) is the subcategory of objects admitting no nontrivial morphisms to \( {}^p\indconstructible^{\geq 1} \).

    By \cref{dimthry:prop:uoequidimsatisfyIPP}, for every \( \alpha \) there exists \( \beta \) such that there is an equidimensional morphism \( X'_\beta \to X_\alpha \) under \( X \). Since pullback along equidimensional morphisms is left \( t \)-exact (as shown analogously to \cite[\S6.2.5(a)]{bouthier2022perverse}), we deduce that \( {}^p\indconstructible^{\geq 0}_1 \subset {}^p\indconstructible^{\geq 0}_2 \). By symmetry, we conclude that the two truncation subcategories coincide.

    The proof of the second statement is analogous to \cite[Proposition 6.3.1(b)]{bouthier2022perverse}.
\end{proof}

\begin{para}\label{perversetstructures:def:placid}{\bf Perverse \( t \)-structures on perfectly placid \( \infty \)-stacks.}
    \begin{mylist}
        \item Let \( \varstack{X} \) be a perfectly placid \( \infty \)-stack. Recall from \cref{perfplacid:prop:canonicalsmoothcover} that \( \varstack{X} \simeq \colim_{X \to \varstack{X}} X \), where the colimit is taken over the category of perfectly smooth morphisms from affine schemes admitting perfectly placid presentations, and strongly pro perfectly smooth morphisms between them.

        \item We then have \( \indconstructible(\varstack{X}) \simeq \lim^!_{X \to \varstack{X}} \indconstructible(X) \), and by \cref{perversetstructures:prop:welldefined}, the $\infty$-category  \( \indconstructible(\varstack{X})\) carries a unique \( t \)-structure 
        \(( {}^p\indconstructible^{\leq 0}(\varstack{X}),{}^p\indconstructible^{\geq 0}(\varstack{X}))\) such that
        \[
            K \in {}^p\indconstructible^{\leq 0}(\varstack{X}) \text{ (resp. }K \in {}^p\indconstructible^{\geq 0}(\varstack{X})) 
        \]
        if and only if for every perfectly smooth morphism \( f \colon X \to \varstack{X} \) from an algebraic space admitting a perfectly placid presentation, we have
        \[
            f^!K \in {}^p\indconstructible^{\leq 0}(X) \text{ (resp. } f^!K \in {}^p\indconstructible^{\geq 0}(X)).
        \]
    \end{mylist}
\end{para}

\begin{proof}
    This follows from the general fact that a limit in \( \prl \) of categories with \( t \)-structures and \( t \)-exact transition maps admits a canonical \( t \)-structure; see \cite[Lemma 6.1.3]{bouthier2022perverse}.
\end{proof}

\begin{para}\label{perversetstructures:def:placidlystratified}{\bf Perverse $t$-structures on perfectly placidly stratified $\infty$-stacks.}

\begin{mylist}
    \item Let $\varstack{Y}$ be a perfect $\infty$-stack, which admits a gluing of sheaves and is equipped with a perfectly placid stratification 
$\explicitset{\varstack{Y_{\alpha}}}_{\alpha\in I}$. Then for every collection of integers $p_{\nu}=\{\nu_{\alpha}\}_{\alpha\in I}$, called the {\em perversity function on $\varstack{Y}$}, there exists a unique $t$-structure $({}^{p_{\nu}}\indconstructible^{\leq 0}(\varstack{Y}), {}^{p_{\nu}}\indconstructible^{\geq 0}(\varstack{Y}))$ on $\indconstructible(\varstack{Y})$ such that
        \[
            {}^{p_{\nu}}\indconstructible^{\leq 0}(\varstack{Y}) = \explicitset{K\in\indconstructible(\varstack{Y})\mid\eta_{\alpha}^*K\in {}^{p_{\nu}}\indconstructible^{\leq -\nu_\alpha}(\varstack{Y}_{\alpha})}
        \]
         \[
            {}^{p_{\nu}}\indconstructible^{\geq 0}(\varstack{Y}) = \explicitset{K\in\indconstructible(\varstack{Y})\mid\eta_{\alpha}^!K\in {}^{p_{\nu}}\indconstructible^{\geq -\nu_\alpha}(\varstack{Y}_{\alpha})}.
        \]

        \item Let  $j \colon \varstack{U} \hookrightarrow \varstack{Y}$ 
be a pfp-open adapted embedding, let $p'_{\nu}=\{\nu_{\alpha}\}_{\alpha\in I_{\varstack{U}}}$ be the induced perversity function on $\varstack{U}$ 
and let $({}^{p'_{\nu}}\indconstructible^{\leq 0}(\varstack{U}), {}^{p'_{\nu}}\indconstructible^{\geq 0}(\varstack{U}))$  
be the corresponding perverse $t$-structure on $\indconstructible(\varstack{U})$. Then the pullback $j^!:\indconstructible(\varstack{Y})\to \indconstructible(\varstack{U})$ is $t$-exact, and for every $p'_{\nu}$-perverse sheaf $K\in \indconstructible(\varstack{U})$ there exists a unique extension $j_{!*}K\in \indconstructible(\varstack{Y})$ such that $j_{!*}K$ is $p_{\nu}$-perverse and $j_{!*}K$ has no non-zero subobjects and quotients supported on $\varstack{Y}\smallsetminus\varstack{U}$. Furthermore, for every $p'_{\nu}$-perverse sheaves $K,L\in \indconstructible(\varstack{U})$ the restriction map $j^!$ induces an isomorphism $\operatorname{Hom}_{\indconstructible(\varstack{Y})}(j_{!*}K,j_{!*}L)\overset{\sim}{\to} \operatorname{Hom}_{\indconstructible(\varstack{U})}(K,L)$.
\end{mylist}
\end{para}
\begin{proof}
Part (a)  follows as in \cite[Proposition 6.4.2]{bouthier2022perverse}, while part (b) follows as in \cite[Lemma~6.4.5 and Corollaries~6.4.9 and 6.4.10]{bouthier2022perverse}. 
\end{proof}


%



\begin{para} \label{perv:semismall}{\bf Application to semi-small morphisms.}
Assume that we are in the situation of \cref{stratification:def:semismall}, that is, $f:\varstack{X}\to\varstack{Y}$ be a morphism of perfect $\infty$-stacks, and $\explicitset{\varstack{Y_{\alpha}}}_{\alpha\in I}$ is a perfectly placid stratification such that $f$ is semi-small. In this case,  
 $\varstack{Y}$ has a natural perversity function $p_f=p_{\nu}$ such that $\nu(\alpha) = b_\alpha+\delta_\alpha$ for all $\alpha\in I$. 
 If in addition $\varstack{Y}$ admits gluing of sheaves, then $p_f$ gives rise to the perverse $t$-structure 
 $({}^{p_{f}}\indconstructible^{\leq 0}(\varstack{Y}), {}^{p_{f}}\indconstructible^{\geq 0}(\varstack{Y}))$ on $\indconstructible(\varstack{Y})$
(see \cref{perversetstructures:def:placidlystratified}(a)). 
\end{para}

\begin{theorem}\label{perversetstructures:prop:smallmapexactness}
In the situation of \cref{perv:semismall}, assume that $f$ is ind-pfp proper, \( \varstack{X} \) is perfectly smooth and \( \varstack{Y} \) admits gluing of sheaves. Then:
    
    \begin{mylist}
        \item The pushforward \( K := f_!(\omega_{\varstack{X}}) \in\indconstructible(\varstack{Y}) \) is $p_f$-perverse.
        
        \item If \( j \colon \varstack{U} \hookrightarrow \varstack{Y} \) is an adapted pfp-open embedding such that \( f \) is \( \varstack{U} \)-small, then there is a canonical isomorphism
        \[
            K \simeq j_{!*} j^! K.
        \]
    \end{mylist}
\end{theorem}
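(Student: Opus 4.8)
The plan is to follow the de Cataldo–Migliorini strategy: verify the support and cosupport conditions for $K := f_!(\omega_{\varstack X})$ on each stratum $\varstack Y_\alpha$, reducing — via perfectly smooth covers, pfp-models, and \cref{contructibleonperfect:prop:Dfactorsthroughperfection} — to the finitely presented situation where the usual perverse amplitude estimates for proper morphisms are available. Throughout, set $\varstack X_\alpha := f^{-1}(\varstack Y_\alpha)$, let $i_\alpha \colon \varstack X_\alpha \into \varstack X$ be the induced pfp-locally closed embedding (of pure codimension $b_\alpha$, since $\varstack X$ is perfectly smooth and hence locally equidimensional), and let $f_\alpha \colon \varstack X_\alpha \to \varstack Y_\alpha$ be the restriction of $f$, which is ind-pfp proper and, by the hypotheses of \cref{stratification:def:semismall}, locally pfp-representable and equidimensional of relative dimension $\delta_\alpha$. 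Recall that $\omega_{\varstack X}$ is $p$-perverse: this is built into the $!$-adapted normalization, and for perfectly smooth $\varstack X$ it reduces étale-locally to $\omega_{\affinespace^n_{\perf}} \simeq \underline{\qlbar}[2n](n)$.

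For part (a), by \cref{perversetstructures:def:placidlystratified}(a) it suffices to show that for every $\alpha$ one has $\eta_\alpha^* K \in {}^p\indconstructible^{\le -\nu_\alpha}(\varstack Y_\alpha)$ and $\eta_\alpha^! K \in {}^p\indconstructible^{\ge -\nu_\alpha}(\varstack Y_\alpha)$, where $\nu_\alpha = b_\alpha + \delta_\alpha$ and ${}^p$ denotes the $!$-adapted $t$-structure on the perfectly placid stratum $\varstack Y_\alpha$. Since $f$ is ind-pfp proper, proper base change — the filtered-colimit extension of \cref{contructibleonperfect:prop:perfsmoothperfproperbasechange}, together with $f_! \simeq f_*$ — yields $\eta_\alpha^* K \simeq (f_\alpha)_!\, i_\alpha^* \omega_{\varstack X}$ and $\eta_\alpha^! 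K \simeq (f_\alpha)_!\, i_\alpha^! \omega_{\varstack X}$. Now $i_\alpha^! \omega_{\varstack X} = \omega_{\varstack X_\alpha}$ by functoriality of $!$-pullback of dualizing objects, hence lies in ${}^p\indconstructible^{\ge 0}(\varstack X_\alpha)$; and $i_\alpha^* \omega_{\varstack X}$, which since $\varstack X$ is perfectly smooth is (locally) the constant sheaf on $\varstack X_\alpha$ shifted by $2\dim\varstack X$, lies in ${}^p\indconstructible^{\le -2b_\alpha}(\varstack X_\alpha)$ by semiperversity of the constant sheaf and the codimension bound. Applying $(f_\alpha)_!$ — a proper morphism of relative dimension $\le \delta_\alpha$, hence of classical perverse amplitude shifted by at most $\delta_\alpha$ on each side — and translating the bounds through the $!$-adapted normalization using $\dim\varstack X_\alpha - \dim\varstack Y_\alpha = \delta_\alpha$, one obtains $\eta_\alpha^* K \in {}^p\indconstructible^{\le -2b_\alpha}(\varstack Y_\alpha)$ and $\eta_\alpha^! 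K \in {}^p\indconstructible^{\ge -2\delta_\alpha}(\varstack Y_\alpha)$. The semi-smallness inequality $\delta_\alpha \le b_\alpha$ gives $-2b_\alpha \le -\nu_\alpha$ and $-2\delta_\alpha \ge -\nu_\alpha$, so $K$ is $p_f$-perverse.

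For part (b), $j^! K \simeq j^* K$ is $p'_f$-perverse on $\varstack U$ by the $t$-exactness of $j^!$ in \cref{perversetstructures:def:placidlystratified}(b), so $j_{!*}j^! K$ is defined; it is the unique $p_f$-perverse extension of $j^! K$ with no nonzero sub- or quotient object supported on $\varstack Y \smallsetminus \varstack U$, equivalently the unique $p_f$-perverse extension satisfying, for each $\alpha \in I \smallsetminus I_{\varstack U}$, the strict support and cosupport conditions $\eta_\alpha^*(-) \in {}^p\indconstructible^{\le -\nu_\alpha - 1}(\varstack Y_\alpha)$ and $\eta_\alpha^!(-) \in {}^p\indconstructible^{\ge -\nu_\alpha + 1}(\varstack Y_\alpha)$. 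By part (a), $K$ is a $p_f$-perverse extension of $j^! K$; and for $\alpha \notin I_{\varstack U}$ the $\varstack U$-smallness hypothesis $\delta_\alpha < b_\alpha$ upgrades the bounds of part (a) to $\eta_\alpha^* K \in {}^p\indconstructible^{\le -2b_\alpha}(\varstack Y_\alpha) \subseteq {}^p\indconstructible^{\le -\nu_\alpha - 1}(\varstack Y_\alpha)$ and $\eta_\alpha^! K \in {}^p\indconstructible^{\ge -2\delta_\alpha}(\varstack Y_\alpha) \subseteq {}^p\indconstructible^{\ge -\nu_\alpha + 1}(\varstack Y_\alpha)$ — precisely those conditions. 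By the uniqueness in \cref{perversetstructures:def:placidlystratified}(b), $K \simeq j_{!*}j^! K$.

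I expect the main difficulty to lie in the bookkeeping of part (a): one must make the perverse amplitude estimates for proper morphisms of bounded relative dimension, and the semiperversity of the constant sheaf, precise in the $!$-adapted normalization, where the relevant shifts depend on local dimensions and none of $\varstack X$, $\varstack X_\alpha$, $\varstack Y_\alpha$ need be globally equidimensional; this forces the argument to be carried out component by component along the canonical equidimensional stratifications defining the $t$-structures. A secondary technical point is the promotion of \cref{contructibleonperfect:prop:perfsmoothperfproperbasechange} and the $t$-exactness statements from $\peralgsp^{\pfp}$ to ind-pfp-proper morphisms of perfect $\infty$-stacks, i.e. checking compatibility with the relevant filtered colimits and with descent along perfectly smooth covers.
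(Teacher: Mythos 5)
Your proof is correct and takes essentially the same route as the paper: the paper's proof is a terse pointer to the de Cataldo–Migliorini-style argument of \cite[Theorem 6.5.3]{bouthier2022perverse}, listing the four ingredients that must carry over to the perfect setting — existence of $(f_\alpha)_!$, left $t$-exactness of $f_\alpha^!$, right $t$-exactness of a shift of $(f_\alpha)_!$, and $\omega_{\varstack X_\alpha}\in {}^p\indconstructible^{\geq 0}$ — together with the observation that these follow from perfectly smooth/ind-pfp-proper base change and $t$-exactness of $!$-pullback along perfectly smooth morphisms. You have unwound exactly that argument, verifying support and cosupport on each stratum via base change and the amplitude estimates; your final paragraph correctly identifies the bookkeeping issues (non-equidimensionality, promotion of base change from $\peralgsp^{\pfp}$ to ind-pfp-proper morphisms of $\infty$-stacks) as the only places where care is needed, and these are precisely the points the paper handles by reference to the lemmas of \cite[\S 5--6]{bouthier2022perverse}.
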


\begin{proof}
    The proof is almost identical to the non-perfect case (see \cite[Theorem 6.5.3]{bouthier2022perverse}).

    It relies on the facts that \( (f_\alpha)_! \) exists, that \( f_\alpha^! \) is left \( t \)-exact, that \( (f_\alpha)_![-2\delta_\alpha] \) is right \( t \)-exact, and that \( \omega_{\varstack{X}_\alpha} \in {}^p\indconstructible^{\geq 0}(\varstack{X}_\alpha) \) (see \cite[Proposition 5.3.7]{bouthier2022perverse}, \cite[Lemma 6.3.5(b)]{bouthier2022perverse}, \cite[Lemma 6.3.6]{bouthier2022perverse}, and \cite[Lemma 6.3.5(a)]{bouthier2022perverse}, respectively). Each of these arguments carries over to the perfect setting, provided we use perfectly smooth base change (see \cref{contructibleonperfect:prop:perfsmoothperfproperbasechange}) and \( t \)-exactness of \( ! \)-pullback along perfectly smooth morphisms (see \cref{perversetstructures:def:texactnessforperfsmooth}).

    The existence of the functor \( \eta_\alpha^* \) and its properties follow from the non-perfect case (see \cite[Lemma 6.3.5(c)]{bouthier2022perverse}). Indeed, the generalization from perfect schemes to perfect \( \infty \)-stacks proceeds exactly as in the arguments throughout \cite[\S 5.4]{bouthier2022perverse}. For perfect schemes, the results for pfp-open embeddings are a special case of results for fp-open embeddings. For pfp-closed embeddings of perfect schemes, the relevant results hold because a pfp-closed embedding is the perfection of an fp-closed embedding, and hence follow from \cref{contructibleonperfect:prop:Dfactorsthroughperfection}.

    The only additional difference is that one must use the ind pfp-proper base change theorem, which is formulated and proven exactly as in \cite[Proposition 5.3.7]{bouthier2022perverse}, except with \cref{contructibleonperfect:prop:perfsmoothperfproperbasechange} replacing the classical proper base change.
\end{proof}

\section{The main theorem.}\label{mainthm}

\begin{para}
    \begin{mylist}
        \item Recall the Witt affine Grothendieck--Springer fibration 
        \[
            \overline{\thespringermap} \colon [\springertotalspace/\loopspace(\thegroup)] \to [\compactelements/\loopspace(\thegroup)]
        \]
        of \cref{wittaffinrspringer:gneral:definitionoffib}.

        \item Define the \emph{Witt affine Grothendieck–-Springer sheaf} by
        \[
            \mathcal{S} := \overline{\thespringermap}_!(\omega_{[\springertotalspace/\loopspace(\thegroup)]}),
        \]
        and let \[ \mathcal{S}_\bullet, \mathcal{S}_{\topnilp}, \mathcal{S}_{\leq 0} \] denote its \( ! \)-restrictions to
        \(
            [\compactelements_\bullet/\loopspace(\thegroup)], \quad [\compactelements_{\topnilp}/\loopspace(\thegroup)], \quad [\compactelements_{\leq 0}/\loopspace(\thegroup)],
        \)
        respectively. By ind-pfp-proper base change (which is proved from \cref{contructibleonperfect:prop:perfsmoothperfproperbasechange} the same as \cite[Proposition 5.3.7]{bouthier2022perverse}), we have natural identifications:
        \[
            \mathcal{S}_\bullet \simeq (\overline{\thespringermap}_\bullet)_!(\omega_{[\springertotalspace_\bullet/\loopspace(\thegroup)]}), \quad
            \mathcal{S}_{\topnilp} \simeq (\overline{\thespringermap}_{\topnilp})_!(\omega_{[\springertotalspace_{\topnilp}/\loopspace(\thegroup)]}), \quad
            \mathcal{S}_{\leq 0} \simeq (\overline{\thespringermap}_{\leq 0})_!(\omega_{[\springertotalspace_{\leq 0}/\loopspace(\thegroup)]}).
        \]

        \item The \( \infty \)-stack \( [\compactelements_\bullet/\loopspace(\thegroup)] \) admits gluing of sheaves. Indeed, arguing as in 
        \cite[Lemma~7.1.2]{bouthier2022perverse} it follows from the fact that \[
            [\compactelements_\bullet/\loopspace(\thegroup)] \simeq \colim_m [\compactelements_{\leq m}/\loopspace(\thegroup)]
        \]
        and each \( [\compactelements_{\leq m}/\loopspace(\thegroup)] \) is a quotient of an ind perfectly placid scheme (see \cref{wittaffinrspringer:gneral:compisperfectlyplacid}) by an ind perfectly placid group. 
        
        \item\label{mainthm:def:tstructbullet} We endow \( [\compactelements_\bullet/\loopspace(\thegroup)] \) with the perverse $t$-structure, 
        corresponding to the \( [\compactelements_{\leq 0}/\loopspace(\thegroup)] \)-small morphism \( \overline{\thespringermap}_\bullet \), following  \cref{perv:semismall}.

        \item\label{mainthm:def:tstructbullettopnilp} Likewise, we endow \( [\compactelements_{\bullet,\topnilp}/\loopspace(\thegroup)] \) with the perverse $t$-structure associated to the semi-small morphism \( \overline{\thespringermap}_{\bullet,\topnilp} \), following \cref{perv:semismall}. 
    \end{mylist}
\end{para}

\begin{theorem}\label{mainthm:mainthm}
    \begin{mylist}
        \item The sheaf \( \mathcal{S}_\bullet \) is perverse with respect to the t-structure described in \cref{mainthm:def:tstructbullet}. Moreover, 
        \[
            \mathcal{S}_\bullet \simeq j_{!*}\mathcal{S}_{\leq 0},
        \]
        where \( j \colon [\compactelements_{\leq 0}/\loopspace(\thegroup)] \hookrightarrow [\compactelements_{\bullet}/\loopspace(\thegroup)] \) is the open embedding.

        \item There is a natural isomorphism 
        \[
            \End(\mathcal{S}_\bullet) \simeq  \End(\mathcal{S}_{\leq 0})\simeq \qlbar[\theextendedweylgroup].
        \]

        \item The sheaf \( \mathcal{S}_{\bullet,\topnilp} \) is perverse with respect to the t-structure described in \cref{mainthm:def:tstructbullettopnilp}.
    \end{mylist}
\end{theorem}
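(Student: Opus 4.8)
The plan is to obtain all three parts from \cref{perversetstructures:prop:smallmapexactness}, applied to the two morphisms shown to be (semi-)small in \cref{semismallness:prop:maincor}, together with the full-faithfulness statement for $j_{!*}$ in \cref{perversetstructures:def:placidlystratified}(b) and a torsor base-change computation over the open stratum.

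\emph{Parts (a) and (c).} I would apply \cref{perversetstructures:prop:smallmapexactness} to $\overline{\thespringermap}_\bullet$ and to $\overline{\thespringermap}_{\bullet,\topnilp}$, respectively. The hypotheses are checked as follows: ind-pfp properness is \cref{semismallness:prop:mainthm}(a) (each of these morphisms is a base change of $\overline{\thespringermap}$ along a locally closed embedding, and ind-pfp properness is stable under base change); perfect smoothness of the sources $[\springertotalspace_\bullet/\loopspace(\thegroup)]$ and $[\springertotalspace_{\topnilp,\bullet}/\loopspace(\thegroup)]$ is \cref{semismallness:prop:mainthm}(c); gluing of sheaves for $[\compactelements_\bullet/\loopspace(\thegroup)]$ is recorded just above the theorem, and for $[\compactelements_{\topnilp,\bullet}/\loopspace(\thegroup)]$ it follows from the same argument, writing it as $\colim_m[\compactelements_{\topnilp,\leq m}/\loopspace(\thegroup)]$ with each term a quotient of an ind-perfectly-placid ind-scheme (cf. \cref{wittaffinrspringer:gneral:compisperfectlyplacid}) by an ind-perfectly-placid group; finally, (semi-)smallness with respect to $\explicitset{[\compactelements_{w,\rootfunc}/\loopspace(\thegroup)]}_{w,\rootfunc}$ is \cref{semismallness:prop:maincor}. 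Part (a) of \cref{perversetstructures:prop:smallmapexactness} then gives that $\mathcal{S}_\bullet$ and $\mathcal{S}_{\bullet,\topnilp}$ are $p_f$-perverse, and the perversity $p_f$ (with $\nu(\alpha)=b_\alpha+\delta_\alpha$) is precisely the one defining the $t$-structures of \cref{mainthm:def:tstructbullet} and \cref{mainthm:def:tstructbullettopnilp}; this is (c) and the first assertion of (a). Since $\overline{\thespringermap}_\bullet$ is moreover $[\compactelements_{\leq 0}/\loopspace(\thegroup)]$-small (again \cref{semismallness:prop:maincor}), part (b) of \cref{perversetstructures:prop:smallmapexactness} gives $\mathcal{S}_\bullet\simeq j_{!*}j^!\mathcal{S}_\bullet$, and $j^!\mathcal{S}_\bullet=\mathcal{S}_{\leq 0}$ by the very definition of $\mathcal{S}_{\leq 0}$ as a $!$-restriction, completing (a).

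\emph{Part (b).} First I would observe that $\mathcal{S}_{\leq 0}$ is perverse for the induced $t$-structure on $[\compactelements_{\leq 0}/\loopspace(\thegroup)]$: the perversity $p_f$ vanishes on the stratum $w=1$, $\rootfunc=0$ (as $b_{1,0}=\delta_{1,0}=0$), and by \cref{semismallness:prop:mainthm}(f) the morphism $\overline{\thespringermap}_{\leq 0}$ is a $\theextendedweylgroup$-torsor, so $\mathcal{S}_{\leq 0}=(\overline{\thespringermap}_{\leq 0})_!\omega$ is a direct sum over $\theextendedweylgroup$ of copies of $\omega_{[\compactelements_{\leq 0}/\loopspace(\thegroup)]}$, each of which is perverse (the stack being perfectly smooth, as in \cite[Lemma 6.3.5(a)]{bouthier2022perverse}) and the perverse $t$-structure being compatible with filtered colimits. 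The first isomorphism $\End(\mathcal{S}_\bullet)\simeq\End(\mathcal{S}_{\leq 0})$ is then the isomorphism $\operatorname{Hom}(j_{!*}K,j_{!*}L)\xrightarrow{\sim}\operatorname{Hom}(K,L)$ of \cref{perversetstructures:def:placidlystratified}(b) with $K=L=\mathcal{S}_{\leq 0}$, using (a). For the second isomorphism, write $p=\overline{\thespringermap}_{\leq 0}$; by adjunction and torsor base change (the square $[\springertotalspace_{\leq 0}/\loopspace(\thegroup)]\times_{[\compactelements_{\leq 0}/\loopspace(\thegroup)]}[\springertotalspace_{\leq 0}/\loopspace(\thegroup)]\simeq\theextendedweylgroup\times[\springertotalspace_{\leq 0}/\loopspace(\thegroup)]$, combined with \cref{contructibleonperfect:prop:perfsmoothperfproperbasechange}) one identifies $p^!p_!\omega\simeq\bigoplus_{\theextendedweylgroup}\omega$, so that the natural $\theextendedweylgroup$-action on $\mathcal{S}_{\leq 0}$ produces a map $\qlbar[\theextendedweylgroup]\to\End(\mathcal{S}_{\leq 0})$ which one checks to be an isomorphism; this is the equal-characteristic computation of \cite{bouthier2022perverse}, transported verbatim once \cref{contructibleonperfect:prop:perfsmoothperfproperbasechange} is available.

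\emph{Main obstacle.} The only nonformal point is the identification $\End(\mathcal{S}_{\leq 0})\simeq\qlbar[\theextendedweylgroup]$: since $\theextendedweylgroup$ is infinite, one must show that $\operatorname{Hom}$ out of the dualizing complex on the infinite-dimensional stack $[\springertotalspace_{\leq 0}/\loopspace(\thegroup)]$ interacts correctly with the infinite direct sum $\bigoplus_{\theextendedweylgroup}$, so that the map $\qlbar[\theextendedweylgroup]\to\End(\mathcal{S}_{\leq 0})$ is surjective as well as injective. This is a finiteness/quasi-compactness argument rather than a formal one, and is precisely the reason for working in $\indconstructible$ rather than with constructible sheaves; it is the subtlety handled in \cite{bouthier2022perverse}, and its proof there uses only inputs that carry over once one has \cref{contructibleonperfect:prop:perfsmoothperfproperbasechange}. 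Everything else above is bookkeeping: the geometric content (semi-smallness, the pure-codimension counts, the torsor structure of $\overline{\thespringermap}_{\leq 0}$) has already been packaged into \cref{semismallness:prop:mainthm}, \cref{semismallness:prop:maincor}, and \cref{perversetstructures:prop:smallmapexactness}.
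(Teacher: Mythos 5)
Your proposal follows exactly the paper's route: parts (a) and (c) via \cref{perversetstructures:prop:smallmapexactness} with the hypotheses supplied by \cref{semismallness:prop:mainthm} and \cref{semismallness:prop:maincor}, the first isomorphism in (b) via the $j_{!*}$-fullness statement in \cref{perversetstructures:def:placidlystratified}(b), and the second via the equal-characteristic argument of \cite[Lemma 5.6.5]{bouthier2022perverse} transported using \cref{contructibleonperfect:prop:perfsmoothperfproperbasechange}; the extra hypothesis-checking and the discussion of the infinite-$\theextendedweylgroup$ subtlety are just fleshed-out versions of what the paper leaves implicit. One small inaccuracy: the aside that $\mathcal{S}_{\leq 0}$ is a direct sum of copies of $\omega_{[\compactelements_{\leq 0}/\loopspace(\thegroup)]}$ asserts global triviality of the $\theextendedweylgroup$-torsor, which is not justified and not needed --- perversity of $\mathcal{S}_{\leq 0}$ already follows from (a) and $t$-exactness of $j^!$.
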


\begin{proof}
    Parts (a) and (c) follow directly from \cref{perversetstructures:prop:smallmapexactness}, using the semi-smallness and smallness conditions established in \cref{semismallness:prop:mainthm} and \cref{semismallness:prop:maincor}.

    The first isomorphism of part (b) follows from part~(a) and the last observation of \cref{perversetstructures:def:placidlystratified}(b), while the second isomorphism follows as in \cite[Lemma 5.6.5]{bouthier2022perverse}. 
\end{proof}

\appendix
\section{Flatness of the Chevalley morphism}\label[Appendix]{flatnessofchevalley}

\subsection*{Introduction}

Let \( \thegroup \) be a split reductive group over a field $F$ of characteristic zero, with Lie algebra \( \thelie \). It is a classical result in geometric representation theory (\cite{kostant1963lie}) that the Chevalley morphism
\[
    \chi \colon \thelie \to \chevalley := \thelie /\!/\thegroup := \spec F[\thelie]^\thegroup
\]
is flat with regular singularities.

Given an affine scheme \( X \) defined over \( F \), the \( n \)-th jet space of \( X \), denoted \( L_n^{\mathrm{eq},+}X \), has \( R \)-points given by \( X(R[[t]]/t^n) \). A general theorem (see \cite[Theorem A.4]{mustata2001jet} for this particular case) asserts that flatness with rational singularities of \( \chi \) is equivalent to the flatness of its jet spaces:
\[
    L_n^{\mathrm{eq},+}\thelie \to L_n^{\mathrm{eq},+}(\chevalley).
\]

There exists a positive characteristic version of this result as well.

\begin{theorem}\label{thm:chevflatcharp}
    \cite[Theorem 7.4.2.]{bouthier2022perverse}.
    Assume $F = \resfield$ and suppose $\operatorname{char}\resfield>2h$ where $h$ is the Coxeter number of $\thelie$.
    The jet spaces of the Chevalley morphisms over a field of characteristic \( p \),
    \[
        L_n^{\mathrm{eq},+}(\chi) \colon L_n^{\mathrm{eq},+}(\thelie) \to L_n^{\mathrm{eq},+}(\chevalley), \quad \text{and} \quad
        L_n^{\mathrm{eq},+}(\theiwahorychevalleymap) \colon \liewahory^{\mathrm{eq}}_n \to L_n^{\mathrm{eq},+} (\chevalley)
    \]
    are flat.
\end{theorem}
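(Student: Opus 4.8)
The plan is to reduce both assertions to the critère de platitude par fibres (``miracle flatness''). The source jet spaces are as simple as possible: since $\thelie$ is a vector space we have $L_n^{\mathrm{eq},+}(\thelie)\simeq\affinespace^{n\dim\thelie}$, which is smooth, and $\liewahory^{\mathrm{eq}}_n\subset L_n^{\mathrm{eq},+}(\thelie)$ is a linear subspace (the conditions cutting out the Iwahori subalgebra $\theborel\subset\thelie$ on the reduction modulo $t$ are linear), hence also smooth; in particular both are Cohen--Macaulay. Likewise $\chevalley\simeq\affinespace^r$, so $L_n^{\mathrm{eq},+}(\chevalley)\simeq\affinespace^{nr}$ is regular. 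Therefore $L_n^{\mathrm{eq},+}(\chi)$ and $L_n^{\mathrm{eq},+}(\theiwahorychevalleymap)$ are flat if and only if each of their fibres has the expected dimension, namely the dimension of the generic fibre, which is read off from the smoothness of $\chi$, resp.\ of $\theiwahorychevalleymap$, over $\chevalley^{\regsemisimple}$.

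The lower bound on fibre dimensions is automatic: the jet spaces are irreducible and the two morphisms are surjective (every jet of $\chevalley$ lifts, by the infinitesimal lifting criterion, through a regular element of the corresponding fibre, at which $\chi$, resp.\ $\theiwahorychevalleymap$, is smooth), so by upper semicontinuity of fibre dimension every fibre has dimension at least the generic value. The entire content is the \emph{upper} bound: no fibre of the jet space may jump in dimension over the discriminant locus. I would obtain this by stratifying $\arcspace(\chevalley)$ — and hence $L_n^{\mathrm{eq},+}(\chevalley)$ — by the valuation type of the discriminant $\thediscriminant$ (\cref{wittloopandarc:def:discriminant}), i.e.\ by the strata $\chevalley_{w,\rootfunc}$, and controlling the fibre over a point of type $(w,\rootfunc)$ in terms of the dimension $\delta_{w,\rootfunc}$ of the corresponding equal-characteristic affine Springer fibre, together with the codimension identity $b_{w,\rootfunc}=\delta_{w,\rootfunc}+a_{w,\rootfunc}+c_w$ and the resulting inequality $b_{w,\rootfunc}\ge\delta_{w,\rootfunc}$, which is an equality only on the open stratum $(w,\rootfunc)=(1,0)$ (compare \cref{codimofstrata:def:eqofcodim} and \cref{codimofstrata:def:smallnessinequality}, the mixed-characteristic analogues); this is the numerology that forces each fibre dimension down to the expected value.

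The hard part is exactly this fibre-dimension bound. Flatness of $\chi$ alone does not suffice — that is the very point of the characteristic-zero criterion of \cite{mustata2001jet}, which additionally requires rational singularities — so one genuinely needs either the codimension calculus above (in the spirit of \cite{goresky2006codimensions}) or a positive-characteristic substitute for rational singularities. In \cite[Appendix B.4]{bouthier2022perverse} the statement is assembled by a global route: one realizes the jet spaces inside a Hitchin-type family over a curve whose total space is smooth and whose base is an affine space, where flatness follows from an equidimensionality statement for the global fibres, and then descends flatness back to the local jet spaces. That global argument has no mixed-characteristic analogue, which is precisely why in the body of the paper \cref{thmintro:chevalleyflat} is proved instead through the $\projectivespace^1$-interpolation of arc spaces. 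Finally, the assertion for $\theiwahorychevalleymap$ runs parallel to that for $\chi$: the source $\liewahory^{\mathrm{eq}}_n$ is again a smooth affine space, the relevant fibres are affine Springer fibres in the affine flag variety rather than the affine Grassmannian, and their dimensions are governed by the same invariants $\delta_{w,\rootfunc}$ and the same codimension identities.
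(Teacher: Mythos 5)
The theorem you are trying to prove is not proved in the present paper at all: it is imported verbatim from \cite[Theorem~7.4.2]{bouthier2022perverse}, and the appendix uses it purely as input, as the ``fibre at $[1\!:\!0]$'' that seeds the generic-flatness argument over $\projectivespace^1_{\perf}$ in \cref{thm:mainact}. So there is no in-paper argument for your proposal to be compared against; the relevant comparison is with \cite[Appendix~B.4]{bouthier2022perverse}.

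Your framing via miracle flatness is correct (sources are perfections of affine spaces, the target is a perfected affine space, so everything reduces to equidimensionality of the fibres), and you correctly identify that \cite[Appendix~B.4]{bouthier2022perverse} proves equidimensionality by a global Hitchin-type argument and then localizes, which is also what the paper explicitly says when it motivates the $\projectivespace^1$-interpolation. But the alternative ``codimension calculus'' route you sketch for the upper fibre-dimension bound has a genuine gap. The identities $b_{w,\rootfunc}=\delta_{w,\rootfunc}+a_{w,\rootfunc}+c_w$ and $b_{w,\rootfunc}\ge\delta_{w,\rootfunc}$ relate the \emph{codimension of a stratum} $\chevalley_{w,\rootfunc}$ in the full arc space $\arcspace(\chevalley)$ to the dimension $\delta_{w,\rootfunc}$ of the corresponding \emph{affine Springer fibre}, that is, a fibre of the Grothendieck--Springer resolution $\thespringermap\colon\springertotalspace\to\compactelements$. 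Neither of these quantities is directly a fibre dimension of the truncated morphism $L_n^{\mathrm{eq},+}(\chi)\colon L_n^{\mathrm{eq},+}(\thelie)\to L_n^{\mathrm{eq},+}(\chevalley)$, whose fibres are finite-dimensional jet schemes, not Springer fibres. The step converting ``codimension of strata in $\arcspace(\chevalley)$ plus Springer-fibre dimension'' into ``fibre dimension of $L_n^{\mathrm{eq},+}(\chi)$ is bounded above by $n(\dim\thelie-r)$'' is exactly the non-trivial content; the Hitchin-fibration argument supplies it (equidimensionality of the compactified Jacobians over $\mathcal{A}_D$, then local--global comparison), and your sketch asserts the conclusion (``the numerology forces each fibre dimension down'') without performing it. If you want to replace the global argument, you would need either that missing local comparison, or a characteristic-$p$ substitute for the rational-singularities criterion of \cite{mustata2001jet} --- which is precisely what you flag as unavailable.
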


In this appendix, we prove a Witt vector analogue of this result.

\begin{theorem}\label{thm:main}
    The morphisms
    \[
        \arcspace_n(\chi) \colon \arcspace_n(\thelie) \to \arcspace_n(\chevalley)
        \quad \text{and} \quad
        \arcspace_n(\theiwahorychevalleymap )\colon \liewahory_n \to \arcspace_n( \chevalley)
    \]
    introduced in \cref{wittloopandarc:def:arcofchi} and \cref{wittloopandarc:def:arcofnu} respectively, are flat.
\end{theorem}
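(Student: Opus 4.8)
The plan is to deduce the statement from the equal characteristic case \cref{thm:chevflatcharp} by a generic flatness argument built on the parametrized arc space of \cref{def:jetandarc} (following \cite{bando2023derived}). For an affine $\basicints$-scheme $X$, write $\Psi_n(X)$ for its parametrized $n$-truncated arc space: it is a perfect scheme, affine over $\projectivespace^1_\resfield$, functorial in $X$, flat over $\projectivespace^1$, whose fibre over $x\in\projectivespace^1$ is $\arcspace_n(X)$ for $x\neq\infty$ and the equal characteristic jet space $L_n^{\mathrm{eq},+}$ of the special fibre of $X$ for $x=\infty$; for $X=\affinespace^1$ one has $\Psi_n(\affinespace^1)\cong\affinespace^n_{\perf}\times\projectivespace^1$, and the restriction of $\Psi_n(-)$ over the complement of $\infty$ is, functorially in $X$, the constant family $\arcspace_n(X)\times\affinespace^1$. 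First I would record the consequences I need: $\mathcal{X}:=\Psi_n(\thelie)$, $\mathcal{Y}:=\Psi_n(\chevalley)$ and the analogous object $\Psi_n(\liewahory):=\Psi_n(\thelie)\times_{\Psi_0(\thelie)}\Psi_0(\theborellie)$ are trivial affine-space bundles over $\projectivespace^1$ — hence irreducible and flat over $\projectivespace^1$ — and functoriality of $\chi\colon\thelie\to\chevalley$ (resp. of $\theiwahorychevalleymap$) produces a morphism $f:=\Psi_n(\chi)\colon\mathcal{X}\to\mathcal{Y}$ over $\projectivespace^1$ whose restriction over $x$ is $\arcspace_n(\chi)$ for $x\neq\infty$, whose restriction over $\infty$ is $L_n^{\mathrm{eq},+}(\chi)$, and whose restriction over the complement of $\infty$ is $\arcspace_n(\chi)\times\mathrm{id}$.

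Next, using that $\operatorname{char}\resfield>2h$, \cref{thm:chevflatcharp} gives that the fibre $f|_\infty=L_n^{\mathrm{eq},+}(\chi)$ is flat, and likewise $\Psi_n(\theiwahorychevalleymap)|_\infty=L_n^{\mathrm{eq},+}(\theiwahorychevalleymap)$ is flat. Since $\mathcal{X}$ and $\mathcal{Y}$ are flat over $\projectivespace^1$ and $f|_\infty$ is flat, the critère de platitude par fibres (\cref{thm:critflatfibre}, applied to a Noetherian model of $f$ over an affine chart of $\projectivespace^1$ containing $\infty$) shows that $f$ is flat at every point of the fibre $\mathcal{X}_\infty$. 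By \cref{prop:flatlocusopen} (applied to a quasi-compact perfectly finitely presented model) the flat locus $V\subseteq\mathcal{X}$ of $f$ is open, and by the previous sentence $\mathcal{X}_\infty\subseteq V$.

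Then I would conclude as follows. Let $Z_0\subseteq\arcspace_n(\thelie)$ be the non-flat locus of $\arcspace_n(\chi)$; the goal is $Z_0=\emptyset$. Because $f$ restricts over the complement $\affinespace^1$ of $\infty$ to the constant family $\arcspace_n(\chi)\times\mathrm{id}$ and flatness is stable under base change, the closed set $W:=\mathcal{X}\smallsetminus V$ satisfies $W\cap(\mathcal{X}\times_{\projectivespace^1}\affinespace^1)=Z_0\times\affinespace^1$. Suppose $Z_0\neq\emptyset$. Since $W$ is closed in $\mathcal{X}$ it contains the closure of $Z_0\times\affinespace^1$; and since $\mathcal{X}\cong\affinespace^{n\dim\thelie}_{\perf}\times\projectivespace^1$ with $Z_0$ closed in the first factor, that closure is $Z_0\times\projectivespace^1$, which meets $\mathcal{X}_\infty=\affinespace^{n\dim\thelie}_{\perf}\times\{\infty\}$ in $Z_0\times\{\infty\}\neq\emptyset$, contradicting $\mathcal{X}_\infty\subseteq V$. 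Hence $Z_0=\emptyset$, i.e. $\arcspace_n(\chi)\colon\arcspace_n(\thelie)\to\arcspace_n(\chevalley)$ is flat. Running the identical argument with $(\liewahory,\theiwahorychevalleymap)$ in place of $(\thelie,\chi)$ gives flatness of $\arcspace_n(\theiwahorychevalleymap)\colon\liewahory_n\to\arcspace_n(\chevalley)$.

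The main obstacle I anticipate is the last step: transporting flatness from the single special fibre over $\infty$ to the Witt vector fibre. Generic flatness alone is not enough — a priori the non-flat locus of $f$ could be a ``horizontal'' closed subset surjecting onto $\projectivespace^1\smallsetminus\{\infty\}$. What rescues the argument is precisely that away from $\infty$ the interpolating family is constant while the total space $\Psi_n(\thelie)$ is a trivial bundle over $\projectivespace^1$, so the closure of such a horizontal subset necessarily meets the fibre over $\infty$, where flatness is already known. Accordingly, the real work is in verifying that the parametrized arc space of \cref{def:jetandarc} and \cite{bando2023derived} genuinely enjoys the properties invoked above — flatness and triviality as an affine-space bundle over $\projectivespace^1$, the identification of its fibre over $\infty$ with the equal characteristic jet space compatibly with $\chi$ and $\theiwahorychevalleymap$, and the constancy $\Psi_n(-)|_{\affinespace^1}\cong\arcspace_n(-)\times\affinespace^1$ — everything else being routine given the results already established in the paper and in \cite{bouthier2022perverse}.
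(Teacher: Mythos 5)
Your overall strategy --- interpolate via the parametrized arc space of \cref{def:jetandarc}, get flatness along the equal-characteristic fibre over $[1:0]$ by combining \cref{thm:chevflatcharp} with \cref{thm:critflatfibre}, then propagate to the Witt-vector fibres using openness of the flat locus (\cref{prop:flatlocusopen}) --- is indeed the paper's, and your observation that $\bnplus$ restricted to $\projectivespace^1_{\perf}\smallsetminus\{[1:0]\}$ is a constant family of ring schemes (so that $\Psi_n(X)|_{\affinespace^1}\simeq\arcspace_n(X)\times\affinespace^1$ functorially) is correct and is a nice point the paper leaves implicit. The gap is in the last step. You assert that $\Psi_n(\affinespace^1)\cong\affinespace^n_{\perf}\times\projectivespace^1_{\perf}$, and more generally that $\mathcal{X}=\Psi_n(\thelie)$ is a \emph{trivial} affine-space bundle over $\projectivespace^1_{\perf}$, so that the closure of $Z_0\times\affinespace^1$ in $\mathcal{X}$ is $Z_0\times\projectivespace^1$. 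That is false: by \cref{prop:Bisaffinespace} the transition function of $\bnplus$ on the $i$-th coordinate is $(-b/a)^i$, so $\bnplus$ is the perfection of a direct sum of \emph{nontrivial} line bundles over $\projectivespace^1$ once $n\geq 2$, and $Z_0\times\projectivespace^1$ is not even a well-defined subset of $\mathcal{X}$. The closure argument as written therefore does not go through.

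What rescues the conclusion in the paper's proof of \cref{thm:mainact} is not triviality of $\pararcspace_n(\thelie)$ but the $\multiplicativegroup$-equivariance of $\pararcspace_n(\chi)$ (\cref{prop:gmequivariance}). The flat locus $V$ is $(\multiplicativegroup)_{\projectivespace^1_{\perf}}$-stable; the paper treats the zero section separately (giving an open $U_1\ni[1:0]$ in $\projectivespace^1_{\perf}$) and pushes $\mathcal{X}\smallsetminus V$ minus the zero section through the \emph{projectivized} bundle, which is proper over $\projectivespace^1_{\perf}$, so that the locus of points of $\projectivespace^1_{\perf}$ over which some nonzero vector is non-flat is closed and misses $[1:0]$. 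Intersecting gives an open $U\ni[1:0]$ over which the whole fibre of $\pararcspace_n(\chi)$ is flat, and one takes a $\resfield$-rational point of $U$ other than $[1:0]$. Your argument can be repaired in the same spirit without the projectivization: $Z_0$ is a closed $\multiplicativegroup$-stable subset of $\arcspace_n(\thelie)$, hence contains $0$ if nonempty; then the non-flat locus contains the zero section of $\mathcal{X}$ over $\affinespace^1$, whose closure is the entire zero section $0(\projectivespace^1_{\perf})$ (which is proper), meeting $\mathcal{X}_{[1:0]}$ and contradicting $\mathcal{X}_{[1:0]}\subset V$. It also happens that with the twist $(-b/a)^i$ the constant sections over $\affinespace^1$ converge \emph{into} $\mathcal{X}_{[1:0]}$ rather than escape, so your topological conclusion is actually true here --- but that is precisely the sign you would have to verify, it would fail for the opposite twist, and the equivariance argument avoids having to check it at all.
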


\subsection*{The ring schemes \texorpdfstring{$\bnplustilde,\bplustilde$}{Bn+, B+}}

\begin{para}
    \begin{mylist}
        \item 
        Consider the morphisms $\xi_1,\xi_2:(\sltwo)_{\perf}\to \witt(-)[[t]]$ of functors $(\peraff)^{\op} \to \catofsets$, 
        sending a matrix \[g=\begin{pmatrix}a & b \\ c & d\end{pmatrix}\in \sltwo(R) \] to \( \xi_1(g):= [a]\varpi + [b]t \) and \( \xi_2(g):= [c]\varpi + [d]t. \) 
        \item Let
        \[
            \bplustilde,\, \bnplustilde \colon (\peraff)^{\op} \to \catofsets
        \]
        be the functors that send $\spec R$ to the set of pairs consisting of a morphism \( \spec R \to \sltwo \), represented by a matrix \( g\in \sltwo(R) \), and an element of
        \[
            \bplustilde(R,g):=\witt(R)[[t]]/(\xi_1(g))\text{ and }
            \bnplustilde(R,g):=\witt(R)[[t]]/(\xi_1(g), \xi_2(g)^n), 
        \]
        respectively.
        \item The functors \( \bplustilde \) and \( \bnplustilde \) are sheaves of rings over \( (\speciallineargroup_2)_{\perf} \). Moreover they are representable by perfect affine ring schemes over \( (\speciallineargroup_2)_{\perf}. \)

        \item These objects were introduced by Bando in \cite{bando2023derived} to construct an isomorphism between the affine Hecke categories in equal and mixed characteristics.
    \end{mylist}
\end{para}

\begin{lemma}\label{lemma:computations} \cite[Lemmas 2.2 and 2.3]{bando2023derived}
    The quotient ring scheme \( \bplustilde / (\xi_2) \) over $(\sltwo)_{\perf}$ is isomorphic to \( \affinespace^1_{\sltwo, \perf} \). In addition, for any perfect $\resfield$-algebra $R$, and matrix $g\in \sltwo(R)$, \( \xi_2(g) \) is not a zero divisor in \( \bplustilde (R,g) \).
\end{lemma}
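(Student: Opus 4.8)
The plan is to follow \cite[Lemmas 2.2 and 2.3]{bando2023derived}, the one point needing care being the behaviour of Teichm\"uller lifts in $\witt(R)$. Fix a perfect $\resfield$-algebra $R$ and a matrix $g=\begin{pmatrix}a&b\\c&d\end{pmatrix}\in\sltwo(R)$, and abbreviate $\xi_i:=\xi_i(g)\in\witt(R)[[t]]$. The key observation is that $u:=[a][d]-[b][c]=[ad]-[bc]$ is a unit in $\witt(R)$: reducing modulo $\varpi$ (and using $\witt(R)/\varpi=R$) gives $u\equiv ad-bc=1$, while $\witt(R)$ is $\varpi$-adically complete. A direct computation in $\witt(R)[[t]]$ then yields the identities $[d]\xi_1-[b]\xi_2=u\varpi$ and $-[c]\xi_1+[a]\xi_2=ut$, whence the ideals $(\xi_1,\xi_2)$ and $(\varpi,t)$ coincide.

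For the first assertion this gives, naturally in $(R,g)$, the chain of isomorphisms
\[
\bplustilde(R,g)/(\xi_2)=\witt(R)[[t]]/(\xi_1,\xi_2)=\witt(R)[[t]]/(\varpi,t)\;\isoto\;R,
\]
the last one via $\witt(R)/\varpi=R$. Since $\affinespace^1_{\sltwo,\perf}$ represents, as a ring scheme over $(\sltwo)_{\perf}$, the functor sending $(\spec R\to(\sltwo)_{\perf})$ to the ring $R$, this identification is precisely an isomorphism of $(\sltwo)_{\perf}$-ring schemes $\bplustilde/(\xi_2)\simeq\affinespace^1_{\sltwo,\perf}$.

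For the second assertion I would first check that $\xi_1$ is a nonzerodivisor on $A:=\witt(R)[[t]]$: as $R$ is reduced (\cref{perfschemes:prop:perfarered}), $\varpi$ is a nonzerodivisor on $\witt(R)$ and $t$ is a nonzerodivisor on $A/\varpi=R[[t]]$, so $(\varpi,t)$ is a regular sequence on the $(\varpi,t)$-adically complete ring $A$ and $\operatorname{gr}_{(\varpi,t)}A\simeq R[\overline\varpi,\overline t]$; the initial form $a\overline\varpi+b\overline t$ of $\xi_1$ has coefficients generating the unit ideal of $R$ (since $da-cb=ad-bc=1$), hence is a nonzerodivisor there, hence $\xi_1$ is a nonzerodivisor on $A$. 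Now suppose $\xi_2 f=\xi_1 h$ in $A$. Multiplying by $[a]$ and using $[a]\xi_2=[c]\xi_1+ut$ gives $utf\in(\xi_1)$, so $tf=\xi_1 k$ for some $k$; multiplying by $[b]$ and using $[b]\xi_2=[d]\xi_1-u\varpi$ gives $u\varpi f\in(\xi_1)$, so $\varpi f=\xi_1 m$. Then $\xi_1(\varpi k-tm)=\varpi tf-t\varpi f=0$, so $\varpi k=tm$; comparing coefficients of powers of $t$ and using that $\varpi$ (resp.\ $t$) is a nonzerodivisor on $\witt(R)$ (resp.\ $A$) forces $k=tk'$ and $m=\varpi k'$ for a single $k'\in A$; finally $\varpi f=\xi_1\varpi k'$ and $\varpi$ is a nonzerodivisor on $A$, so $f=\xi_1 k'\in(\xi_1)$. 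Hence $\xi_2$ is a nonzerodivisor on $\bplustilde(R,g)$.

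The step I expect to be the main obstacle is the nonzerodivisor property of $\xi_1$: everything else is formal once one has the two linear identities, but here one genuinely uses the geometry, namely that $da-cb=1$ makes $a,b$ generate the unit ideal of $R$. An alternative to the associated-graded argument is to exploit that $\witt(-)$ is compatible with Zariski localisation of perfect rings, reducing to the case where $a$ or $b$ is invertible; there an explicit substitution identifies $\bplustilde(R,g)$ with $\witt(R)$ (if $b$ is a unit) or with $\witt(R)[[t]]/(\varpi-[\mu]t)$, $\mu=-a^{-1}b$ (if $a$ is a unit), and in either case $\xi_2$ becomes a unit times $\varpi$, resp.\ a unit times $t$, which settles the claim directly.
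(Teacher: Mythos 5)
Your first half is the same as the paper's: both arrive at $(\xi_1,\xi_2)=(\varpi,t)$ from the invertibility of $[ad]-[bc]$ (the paper phrases it as invertibility of the Teichm\"uller-lifted matrix $[g]$, you phrase it via the two explicit linear identities; this is the same computation).

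For the nonzerodivisor claim your route is genuinely different and, I think, more careful about a point the paper leaves implicit. The paper passes from
$x\xi_1+y\xi_2=0$
to $(x\ y)[g]=z(t\ {-\varpi})$, i.e.\ it silently uses that the only syzygy of $(\varpi,t)$ in $\witt(R)[[t]]$ is the Koszul one — equivalently, that $(\varpi,t)$ is a regular sequence — and then multiplies by the adjugate of $[g]$ to conclude. You instead first establish that $\xi_1$ is a nonzerodivisor on $A=\witt(R)[[t]]$ via the associated-graded argument (which is where the regular-sequence input really lives, and you spell it out), and then carry out an explicit coefficient manipulation to extract $f\in(\xi_1)$. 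Both are valid; the paper's is shorter, yours is more self-contained and makes the hidden hypothesis visible. Your graded argument is sound: $(\varpi,t)$ is a regular sequence (as $\witt(R)$ is $\varpi$-torsion-free and $A/\varpi\simeq R[[t]]$), $A$ is $(\varpi,t)$-adically separated, $\operatorname{gr}_{(\varpi,t)}A\simeq R[\overline\varpi,\overline t]$, and the initial form $a\overline\varpi+b\overline t$ is a nonzerodivisor since $(a,b)=R$ (check after localizing at a prime, where one of $a,b$ is a unit). The rest of your manipulation ($tf=\xi_1k$, $\varpi f=\xi_1m$, $\varpi k=tm$, hence $k=tk'$, $m=\varpi k'$, hence $f=\xi_1k'$) is correct.

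One caution about your sketched ``alternative'': $\witt(-)$ does \emph{not} commute with Zariski localization of perfect rings — $\witt(R_f)$ is the $\varpi$-adic completion of $\witt(R)_{[f]}$, not $\witt(R)_{[f]}$ itself. The local computations you indicate (over the loci where $a$, resp.\ $b$, is invertible) are fine in themselves, but gluing them back to a statement about $\bplustilde(R,g)$ needs an argument that the map $\witt(R)[[t]]/(\xi_1)\to\prod_i\witt(R_{f_i})[[t]]/(\xi_1)$ is injective, which is not immediate. Your main argument does not have this issue.
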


\begin{proof}
    A matrix  \[g= \begin{pmatrix} {a} & {b} \\ {c} & {d} \end{pmatrix}\in \sltwo(R)\] gives rise to the matrix 
    \[ [g]:=\begin{pmatrix} \canlift{a} & \canlift{b} \\ \canlift{c} & \canlift{d} \end{pmatrix}\in \operatorname{Mat}_2(\witt(R)[[t]]), \] 
    whose determinant is \([ad] - [bc]\). Since \([ad] - [bc]\) is congruent to \([ad - bc]=1\) modulo \( \varpi \), the matrix $[g]$ is invertible in \( \witt(R)[[t]] \). Hence,  the ideal \( (\xi_1(g), \xi_2(g))\subset \witt(R)[[t]] \) equals  \( (\varpi, t) \). Therefore, we have a natural isomorphism 
    \[
        \bplustilde(R,g) / \xi_2(g) \cong \witt(R)[[t]] / (\varpi, t) \cong R,
    \]
    which identifies \( \bplustilde / \xi_2 \) with \( \affinespace^1_{\sltwo, \perf} \) as claimed.

    For the second claim, 
    suppose \( x, y \in \witt(R)[[t]] \) satisfy
    \[
        x \xi_1(g) + y \xi_2(g) = 0.
    \]
    Writing this as a matrix equation 
    \[
        \begin{pmatrix} x & y \end{pmatrix}
        \begin{pmatrix} \canlift{a} & \canlift{b} \\ \canlift{c} & \canlift{d} \end{pmatrix}
        \begin{pmatrix} \varpi \\ t \end{pmatrix} = 0,
    \]
    we conclude that
    \[
        \begin{pmatrix} x & y \end{pmatrix}
        \begin{pmatrix} \canlift{a} & \canlift{b} \\ \canlift{c} & \canlift{d} \end{pmatrix}
        = z\begin{pmatrix} t & -\varpi \end{pmatrix}
    \]
    for some \( z \in \witt(R)[[t]] \). Multiplying by the matrix 

    \[ \begin{pmatrix} \canlift{d} & -\canlift{b} \\ -\canlift{c} & \canlift{a} \end{pmatrix} \]
    on the right gives
    \[
       (\canlift{ad}- \canlift{bc})\begin{pmatrix} x & y \end{pmatrix}
        = z\begin{pmatrix}\xi_2(g) & -\xi_1(g) \end{pmatrix}.
    \]
    Since $\canlift{ad}- \canlift{bc}\in\witt(R)[[t]]$ is invertible, we conclude that $y$ is divisible by $\xi_1(g)$,  so \( \xi_2(g) \) is not a zero divisor in \( \bplustilde(R,g) \).
\end{proof}

\begin{corollary}[\cite{bando2023derived}, Lemma 2.3]\label{uniqeexpressioninB}
    Any element \( f \in \bplustilde(R,g) \) has a unique expression as
    \[
        f = \sum_{i=0}^{\infty} [a_i]\xi_2(g)^i.
    \]
    In particular, \( \bnplustilde \) is isomorphic to the perfection of \( \affinespace^n_{\sltwo} \) as a scheme.
\end{corollary}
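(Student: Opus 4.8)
The plan is to carry out the expansion inside the ring $A := \bplustilde(R,g) = \witt(R)[[t]]/(\xi_1(g))$, keeping track of the ambient ring $B := \witt(R)[[t]]$ and setting $\pi := \xi_2(g)$. From the proof of \cref{lemma:computations} one extracts that $(\xi_1(g),\pi) = (\varpi,t)$ as ideals of $B$, so that $A/\pi A \cong B/(\varpi,t) = R$ as rings, with the Teichm\"uller map $a \mapsto [a]$ providing a multiplicative section of this projection; and \cref{lemma:computations} also tells us that $\pi$ is a non-zero-divisor in $A$. I will use in addition the standard fact that $B = \witt(R)[[t]]$ is $(\varpi,t)$-adically complete and separated (here $\basicints$ is a complete discrete valuation ring with uniformizer $\varpi$ and residue field $\resfield$, whence $\witt(R)$ is $\varpi$-adically complete, separated and $\varpi$-torsion-free with $\witt(R)/\varpi = R$). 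Since $\pi^k \in (\varpi,t)^k B$, every series $\sum_k b_k \pi^k$ with $b_k \in B$ converges in $B$, and throughout I read the infinite sums in the statement as the images in $A$ of such series in $B$.

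Granting for the moment that $A$ is $\pi$-adically \emph{separated}, i.e. $\bigcap_m \pi^m A = 0$, both assertions follow by successive approximation. For existence, put $a_0 := \overline f \in A/\pi A = R$, choose $f_1 \in A$ with $f - [a_0] = \pi f_1$, and iterate, obtaining $a_i \in R$ and $f_i \in A$ with $f = \sum_{i < N}[a_i]\pi^i + \pi^N f_N$ for every $N$; letting $\ell \in A$ be the image of the $B$-convergent series $\sum_{i \ge 0}[a_i]\pi^i$, we get $f - \ell \in \pi^N A$ for all $N$, hence $f = \ell$. For uniqueness, reducing modulo $\pi A$ pins down $a_0$ via $A/\pi A \cong R$, and then subtracting $[a_0]$ and cancelling $\pi$ (a non-zero-divisor) reduces the equality of two expansions to the same statement for the shifted coefficient sequences, so induction finishes.

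The real content is thus the separatedness $\bigcap_m \pi^m A = 0$, which, since $\pi^m A = (\xi_1(g)B + \pi^m B)/\xi_1(g)B$, amounts to $\bigcap_m(\xi_1(g)B + \pi^m B) = \xi_1(g)B$ — i.e. the $(\varpi,t)$-adic closedness of the ideal $(\xi_1(g))$ in $B$. Here is the argument I would give. Let $x$ lie in the left-hand side and write $x = u_m\xi_1(g) + v_m\pi^m$ with $u_m, v_m \in B$. Comparing the $m$-th and $(m+1)$-st expressions gives $(u_m - u_{m+1})\xi_1(g) = \pi^m(v_{m+1}\pi - v_m)$; reducing modulo $\xi_1(g)B$ and using that $\pi^m$ is a non-zero-divisor in $A = B/\xi_1(g)B$ yields $v_m = v_{m+1}\pi + w_m\xi_1(g)$ for some $w_m \in B$. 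Telescoping, $v_0 = v_m\pi^m + \bigl(\sum_{k < m}w_k\pi^k\bigr)\xi_1(g)$, so, with $x = u_0\xi_1(g) + v_0$ and $U := u_0 + \sum_{k \ge 0}w_k\pi^k \in B$, one finds $x - U\xi_1(g) = v_m\pi^m - \bigl(\sum_{k \ge m}w_k\pi^k\bigr)\xi_1(g) \in (\varpi,t)^m B$ for every $m$, hence $x - U\xi_1(g) = 0$ by separatedness of $B$, i.e. $x \in \xi_1(g)B$. I expect this to be the main obstacle: when $B$ is Noetherian the closedness of $(\xi_1(g))$ is immediate from the Artin--Rees lemma, but $\witt(R)[[t]]$ is in general non-Noetherian for perfect $R$, so Artin--Rees has to be replaced by the above explicit telescoping, which is powered by the non-zero-divisor input from \cref{lemma:computations} together with convergence of $\pi$-adic series in the complete ring $B$.

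For the final clause, applying the expansion inside $A$ and reducing modulo $\pi^n$ shows that each $f \in \bnplustilde(R,g) = A/\pi^n A$ has a unique expression $\sum_{i=0}^{n-1}[a_i]\xi_2(g)^i$ with $a_i \in R$, the coefficients depending only on $f \bmod \pi^n$. This gives a bijection $\bnplustilde(R,g) \cong R^n$, natural in the perfect $\resfield$-algebra $R$ and compatible with the maps to $(\sltwo)_{\perf}$; exactly as in the identification $\arcspace_n(\affinespace^1) \cong \affinespace^n_{\perf}$ of \cref{wittloopandarc:example:1daffinespace}, it is realized by the morphism of perfect affine schemes $\affinespace^n_{\sltwo,\perf} \to \bnplustilde$, $(g;(a_i)_{i < n}) \mapsto \bigl(g; \sum_{i < n}[a_i]\xi_2(g)^i\bigr)$, and this morphism is an isomorphism — it is a monomorphism between perfect schemes perfectly finitely presented over $(\sltwo)_{\perf}$ that is bijective on $K$-points for every algebraically closed field $K$, so one concludes by \cref{perfschemes:prop:graddream}. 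Hence $\bnplustilde \cong \affinespace^n_{\sltwo,\perf}$, i.e. $\bnplustilde$ is the perfection of $\affinespace^n_{\sltwo}$.
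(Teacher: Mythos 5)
Your proof is the ``usual division argument'' the paper invokes, spelled out; so it is essentially the same approach. The one substantive point you add, which the paper's one-line proof leaves implicit, is the separatedness $\bigcap_m \xi_2(g)^m\,\bplustilde(R,g)=0$: since $\witt(R)[[t]]$ is in general non-Noetherian, this cannot be had from Artin--Rees, and your telescoping argument (using that $\xi_2(g)$ is a non-zero-divisor in $\bplustilde(R,g)$ together with the $(\varpi,t)$-adic completeness and separatedness of $\witt(R)[[t]]$) correctly supplies it. One small misstep at the very end: your bijection $\bnplustilde(R,g)\cong R^n$ is natural in \emph{all} perfect $\resfield$-algebras $R$, so it already identifies the two functors of points and hence the two perfect affine $(\sltwo)_{\perf}$-schemes by Yoneda; the appeal to \cref{perfschemes:prop:graddream} is superfluous, and as stated it would in any case require a universal open/closed hypothesis that you do not verify.
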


\begin{proof}
    \cref{lemma:computations} identifies \( \bplustilde(R,g) / \xi_2(g) \cong R \), and shows that \( \xi_2(g) \) is not a zero divisor. The usual division argument then yields the unique expansion. The claim about \( \bnplustilde \) follows.
\end{proof}

\begin{corollary}
    The presheaves \( \bnplustilde, \bplustilde \) define perfect affine ring schemes over $\sltwo$. Moreover, \( \bnplustilde \) is perfectly smooth perfectly finitely presented over $\spec\resfield$.
\end{corollary}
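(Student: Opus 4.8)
The plan is to deduce the statement from \cref{uniqeexpressioninB} together with the description of $\arcspace$ of affine space. First, \cref{uniqeexpressioninB} identifies $\bnplustilde$, as a scheme over $(\sltwo)_{\perf}$, with $(\affinespace^n_{\sltwo})_{\perf}$: the isomorphism is the natural transformation $(\affinespace^n_{\sltwo})_{\perf}\to\bnplustilde$ sending a tuple $(a_0,\dots,a_{n-1})$ lying over $g\in\sltwo(R)$ to $\sum_{i=0}^{n-1}[a_i]\,\xi_2(g)^i\in\bnplustilde(R,g)$. In particular $\bnplustilde$ is a perfect affine scheme over $(\sltwo)_{\perf}$. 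For $\bplustilde$, I would use the second half of \cref{uniqeexpressioninB}: the unique convergent expansion $f=\sum_{i\geq0}[a_i]\xi_2(g)^i$ shows that the natural truncation maps exhibit $\bplustilde(R,g)=\witt(R)[[t]]/(\xi_1(g))$ as the inverse limit of the rings $\bnplustilde(R,g)$, functorially over $\sltwo$; hence $\bplustilde\simeq\lim_n\bnplustilde$ (equivalently $\bplustilde\simeq\arcspace(\affinespace^1_{\basicints})\times_{\spec\resfield}(\sltwo)_{\perf}$ by \cref{wittloopandarc:example:1daffinespace}), a limit of perfect affine schemes with affine transition maps, and therefore again a perfect affine scheme over $(\sltwo)_{\perf}$.

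For the ring-scheme structure I would argue that no computation is needed. By construction, for every perfect $\resfield$-algebra $R$ and every $g\in\sltwo(R)$ the sets $\bnplustilde(R,g)$ and $\bplustilde(R,g)$ are honest quotient rings of $\witt(R)[[t]]$, and this is functorial in $R$ compatibly with the structure map to $\sltwo$; thus $\bnplustilde$ and $\bplustilde$ are presheaves of rings on $(\sltwo)_{\perf}$. Addition and multiplication are natural transformations $\bnplustilde\times_{(\sltwo)_{\perf}}\bnplustilde\to\bnplustilde$ (and likewise for $\bplustilde$), and the zero section, unit section, and additive inverse are natural transformations as well. Since a fibre product of perfect affine schemes over a perfect affine scheme is perfect affine, the sources and targets of all these transformations are representable by perfect affine schemes by the previous paragraph, so by the Yoneda lemma each transformation is induced by a morphism of perfect affine schemes. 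Hence $\bnplustilde$ and $\bplustilde$ are perfect affine ring schemes over $(\sltwo)_{\perf}$; note $(\sltwo)_{\perf}$ and $\sltwo$ have the same $R$-points for perfect $R$, so this is the assertion ``over $\sltwo$''.

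Finally, under the identification $\bnplustilde\simeq(\affinespace^n_{\sltwo})_{\perf}=(\affinespace^n\times_{\spec\resfield}\sltwo)_{\perf}$ it is the perfection of a scheme finitely presented over $\resfield$, hence perfectly finitely presented over $\spec\resfield$ by \cref{perfschemes:def:perfoffinitetype}. It is perfectly smooth over $\spec\resfield$ because $\affinespace^n\times\sltwo$ is smooth over $\resfield$ (as $\sltwo$ is smooth of relative dimension $3$), so étale-locally it admits an étale morphism to $\affinespace^{n+3}$; passing to perfections and using that the étale site is insensitive to perfection (\cref{perfschemes:prop:topologicalinvarianceofetale}) together with the fact that perfection preserves étale morphisms (\cref{perfschemes:prop:propertiesofperfschemespreserved}), the scheme $(\affinespace^n\times\sltwo)_{\perf}$ admits, étale-locally, an étale morphism to $\affinespace^{n+3}_{\perf}$, which is precisely the condition in \cref{perfschemes:def:perfectlysmooth}. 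The only step needing care --- though it is not a serious obstacle --- is the identification $\bplustilde\simeq\lim_n\bnplustilde$, i.e. the separatedness and completeness of $\witt(R)[[t]]/(\xi_1(g))$ for the $\xi_2(g)$-adic topology; both are immediate consequences of the uniqueness and convergence of the Teichmüller expansion in \cref{uniqeexpressioninB}, which in turn rests on the computation $(\xi_1(g),\xi_2(g))=(\varpi,t)$ of \cref{lemma:computations}.
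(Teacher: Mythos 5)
Your proposal is correct and, since the paper states this corollary without proof (it is understood to follow from \cref{uniqeexpressioninB} and \cref{lemma:computations}), it is exactly the natural fleshing-out of the intended argument: use the unique Teichmüller expansion to identify $\bnplustilde$ with $(\affinespace^n_{\sltwo})_{\perf}$ and $\bplustilde$ with the limit $\lim_n\bnplustilde$, deduce representability and the pfp/perfectly-smooth claims from smoothness of $\affinespace^n\times\sltwo$ together with preservation of étale morphisms under perfection, and promote the functorial ring operations to scheme morphisms by Yoneda. The one step that deserves the flag you gave it --- $\xi_2(g)$-adic completeness and separatedness of $\bplustilde(R,g)$, needed for $\bplustilde\simeq\lim_n\bnplustilde$ --- is indeed exactly the content of the unique convergent expansion in \cref{uniqeexpressioninB}, so the argument is airtight.
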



Let \( H \subset \sltwo \) be the perfection of the Borel subgroup scheme of lower triangular matrices.

\begin{lemma}
    For any \( g \in \sltwo(R) \) and \[ h = \begin{pmatrix} a & 0 \\ c & a^{-1} \end{pmatrix} \in H(R), \] we have equalities of ideals in \( \witt(R)[[t]] \):
    \[
        (\xi_1(hg)) = (\xi_1(g)), \quad (\xi_1(hg), \xi_2(hg)) = (\xi_1(g), \xi_2(g)).
    \]
\end{lemma}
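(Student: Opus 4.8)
The plan is to treat the two stated equalities separately, since the first is a one-line multiplicativity computation while the second reduces to a fact already contained in the proof of \cref{lemma:computations}.

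For the first equality $(\xi_1(hg)) = (\xi_1(g))$, I would write $g = \begin{pmatrix} p & q \\ r & s \end{pmatrix}$ and multiply out: the top row of $hg$ is $(ap,\ aq)$. The crucial point is that because the top row of $h$ is $(a,0)$, no additions occur here — only the products $ap$ and $aq$ — so multiplicativity of the Teichm\"uller lift gives $\canlift{ap} = \canlift{a}\canlift{p}$ and $\canlift{aq} = \canlift{a}\canlift{q}$, whence
\[
\xi_1(hg) = \canlift{ap}\varpi + \canlift{aq}t = \canlift{a}\bigl(\canlift{p}\varpi + \canlift{q}t\bigr) = \canlift{a}\,\xi_1(g).
\]
Since $\det h = 1$ forces $a \in R^\times$, its Teichm\"uller lift $\canlift{a}$ is a unit in $\witt(R)$, and multiplication by a unit does not change a principal ideal, giving $(\xi_1(hg)) = (\xi_1(g))$.

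For the second equality I would \emph{not} compare $\xi_2(hg)$ and $\xi_2(g)$ directly: additions do occur in the bottom row of $hg$, and the Teichm\"uller lift is only additive modulo $\varpi$, so $\xi_2(hg)$ is not a clean $\witt(R)[[t]]$-linear combination of $\xi_1(g)$ and $\xi_2(g)$. Instead I would invoke the stronger statement buried in the proof of \cref{lemma:computations}: for every $g_0 \in \sltwo(R)$ the lifted matrix $\canlift{g_0}$ has determinant $\equiv \canlift{1} = 1 \pmod{\varpi}$, hence is invertible in $\operatorname{Mat}_2(\witt(R)[[t]])$; since $\begin{pmatrix}\xi_1(g_0)\\ \xi_2(g_0)\end{pmatrix} = \canlift{g_0}\begin{pmatrix}\varpi\\ t\end{pmatrix}$, the two column vectors generate the same $\witt(R)[[t]]$-submodule, so their entries generate the same ideal, i.e. $(\xi_1(g_0),\xi_2(g_0)) = (\varpi,t)$. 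Applying this to $g_0 = g$ and to $g_0 = hg$ — the latter lying in $\sltwo(R)$ because $\det(hg) = \det h\cdot\det g = 1$ — yields $(\xi_1(hg),\xi_2(hg)) = (\varpi,t) = (\xi_1(g),\xi_2(g))$.

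I do not expect a genuine obstacle: the only point needing a little care is that the fact $(\xi_1(g_0),\xi_2(g_0)) = (\varpi,t)$ is legitimately available, which it is, being exactly what the first paragraph of the proof of \cref{lemma:computations} establishes. If a more hands-on argument were preferred, one could instead verify $\xi_2(hg) \equiv \canlift{c}\,\xi_1(g) + \canlift{a^{-1}}\,\xi_2(g) \pmod{\varpi\,\witt(R)[[t]]}$ and finish by a $\varpi$-adic completion (Nakayama-type) argument, but routing through the ideal $(\varpi,t)$ is cleaner and reuses \cref{lemma:computations} verbatim.
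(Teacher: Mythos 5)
Your handling of the first equality is identical to the paper's: the paper simply writes $\xi_1(hg) = [a]\xi_1(g)$, which is exactly the multiplicativity computation you spell out. For the second equality, however, you take a genuinely different route from the paper — and this is worth flagging, because the paper's argument contains a real imprecision that your route avoids. The paper asserts the literal identity $\xi_2(hg) = [c]\xi_1(g) + [a^{-1}]\xi_2(g)$ and reduces modulo $\xi_1(hg)$. As you correctly observe, this identity does not hold on the nose: writing $g = \left(\begin{smallmatrix} p & q \\ r & s \end{smallmatrix}\right)$, the bottom row of $hg$ has entries $cp + a^{-1}r$ and $cq + a^{-1}s$, so $\xi_2(hg) = [cp + a^{-1}r]\varpi + [cq + a^{-1}s]t$, while $[c]\xi_1(g) + [a^{-1}]\xi_2(g) = ([cp]+[a^{-1}r])\varpi + ([cq]+[a^{-1}s])t$, and the Teichm\"uller lift is not additive, so these differ (the error term lies in $\varpi\cdot(\varpi,t)$). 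Your alternative — that both ideals coincide with $(\varpi, t)$ by the invertibility of the lifted matrix $[g_0]$ for any $g_0 \in \sltwo(R)$, which is precisely what the first paragraph of the proof of \cref{lemma:computations} establishes — is cleaner, correct, and makes the second ideal equality essentially a re-use of earlier work rather than a fresh computation. Both the paper's intended conclusion and yours are the same, but your justification is the more robust of the two, and the caveat you raise about Teichm\"uller additivity is exactly the right thing to worry about.
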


\begin{proof}
    The first equality holds since \( \xi_1(hg) = [a]\xi_1(g) \). For the second, note that
    \[
        \xi_2(hg) = [c]\xi_1(g) + [a^{-1}]\xi_2(g) \equiv [a^{-1}]\xi_2(g) \pmod{\xi_1(hg)},
    \]
    which implies the ideals \( (\xi_1(hg), \xi_2(hg)) \) and \( (\xi_1(g), \xi_2(g)) \) coincide.
\end{proof}

\begin{corollary}
    The relative ring schemes \( \bnplustilde, \bplustilde \to (\sltwo)_{\perf} \) descend to relative ring schemes
    \[
        \bnplus, \bplus \to (H\backslash \sltwo)_{\perf} \cong \projectivespace^1_{\perf}.
    \]
\end{corollary}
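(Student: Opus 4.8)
The plan is to exhibit the descent explicitly using the action of $H$ on $(\speciallineargroup_2)_{\perf}$ by left multiplication. The previous lemma shows that for $h \in H(R)$ and $g \in \speciallineargroup_2(R)$, the ideals $(\xi_1(hg))$ and $(\xi_1(hg),\xi_2(hg)^n)$ in $\witt(R)[[t]]$ depend only on $g$ (not on $h$). Consequently there are canonical isomorphisms $\bplustilde(R,hg) \cong \bplustilde(R,g)$ and $\bnplustilde(R,hg)\cong\bnplustilde(R,g)$ of $\witt(R)[[t]]$-algebras, compatible with the ring structure. First I would package this into an $H$-equivariant structure: the perfect affine ring schemes $\bplustilde, \bnplustilde \to (\speciallineargroup_2)_{\perf}$ carry an action of $H$ lifting the left-multiplication action on the base, where $H$ acts on $\bnplustilde$ through the identification above. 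One checks this is genuinely an action (associativity of the identifications) using the cocycle-type computation $\xi_1(h_1 h_2 g) = [a_1]\xi_1(h_2 g)$ and the analogous congruence for $\xi_2$.

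Next I would descend along the $H$-torsor $(\speciallineargroup_2)_{\perf} \to (H\backslash \speciallineargroup_2)_{\perf}$. Since $H$ is the perfection of the lower-triangular Borel in $\speciallineargroup_2$, the quotient $H\backslash\speciallineargroup_2$ is $\projectivespace^1$, and the projection $(\speciallineargroup_2)_{\perf}\to \projectivespace^1_{\perf}$ is an $H$-torsor (in particular an fppf, indeed perfectly smooth, cover); the identification $(H\backslash\speciallineargroup_2)_{\perf}\cong\projectivespace^1_{\perf}$ follows from the non-perfect case after applying the perfection functor, using \cref{perfschemes:prop:propertiesofperfschemespreserved}. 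By fppf descent for affine schemes (equivalently, for quasi-coherent algebras) over the base, the $H$-equivariant affine ring schemes $\bplustilde,\bnplustilde$ over $(\speciallineargroup_2)_{\perf}$ descend uniquely to affine ring schemes $\bplus,\bnplus$ over $\projectivespace^1_{\perf}$, together with a Cartesian diagram identifying the pullback of $\bplus$ (resp.\ $\bnplus$) along $(\speciallineargroup_2)_{\perf}\to\projectivespace^1_{\perf}$ with $\bplustilde$ (resp.\ $\bnplustilde$). The ring-scheme structure descends because all the structure morphisms are $H$-equivariant.

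Finally I would record the residual properties. From \cref{uniqeexpressioninB}, $\bnplustilde$ is the perfection of $\affinespace^n_{\speciallineargroup_2}$, hence perfectly smooth and perfectly finitely presented over $(\speciallineargroup_2)_{\perf}$; since the descent is along a perfectly smooth cover, $\bnplus \to \projectivespace^1_{\perf}$ is again perfectly smooth and perfectly finitely presented, and in fact étale-locally on $\projectivespace^1_{\perf}$ isomorphic to $\affinespace^n_{\perf}$. The main obstacle I anticipate is purely bookkeeping: one must check that the local isomorphisms $\bnplustilde(R,hg)\cong\bnplustilde(R,g)$ assemble into an honest action (the cocycle condition) rather than merely a collection of isomorphisms, and that this action is compatible with the full ring-scheme structure — addition, multiplication, unit, and the $\witt$-algebra structure — so that the descended object is again a ring scheme and not just a scheme. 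Once equivariance is set up correctly, the descent itself is formal.
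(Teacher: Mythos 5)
Your proposal is correct and spells out the descent argument that the paper leaves implicit (the corollary is stated without proof): the lemma gives an $H$-equivariant structure on $\bplustilde,\bnplustilde$ lifting left multiplication, and one descends along the $H$-torsor $(\sltwo)_{\perf}\to\projectivespace^1_{\perf}$ using fppf (here perfectly smooth) descent for affine relative ring schemes. One small imprecision worth noting: the previous lemma as stated only gives $(\xi_1(hg),\xi_2(hg))=(\xi_1(g),\xi_2(g))$, not the version with $\xi_2^n$ that $\bnplustilde$ actually needs; what you actually need is the congruence $\xi_2(hg)\equiv[a^{-1}]\xi_2(g)\pmod{\xi_1(g)}$ established inside the lemma's proof, which gives $\xi_2(hg)^n\equiv[a^{-n}]\xi_2(g)^n\pmod{\xi_1(g)}$ and hence $(\xi_1(hg),\xi_2(hg)^n)=(\xi_1(g),\xi_2(g)^n)$ as required.
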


Recall that the isomorphism \( H\backslash \speciallineargroup_2 \to \projectivespace^1 \) is given by
\[
    \begin{pmatrix}a & b \\ c & d\end{pmatrix} \mapsto [a : b].
\]

\begin{remark}
    After passing to the quotient by \( H \), the section \( \xi_2 \) is no longer well-defined.
\end{remark}

\begin{proposition}\label{prop:bniswittortrivial}
    Over the point $[1:0] \in \projectivespace^1_{\perf}$, the rings \( \bplus(R) \) and \( \bnplus(R) \) are isomorphic to \( R[[t]] \) and \( R[[t]]/t^n \), respectively. Over any point in the complement, they are isomorphic to the rings \( \witt(R) \) and \( \witt(R)/\varpi^n \), respectively.
\end{proposition}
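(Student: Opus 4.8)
The plan is to compute these fibres directly. Since $\bplus$ and $\bnplus$ are obtained from $\bplustilde$ and $\bnplustilde$ by $H$-descent along $(\sltwo)_{\perf}\to\projectivespace^1_{\perf}$, for any point $x\in\projectivespace^1_{\perf}$ and any lift $g\in\sltwo$ of $x$ over the residue field of $x$, the fibre of $\bplus$ over $x$ is the ring scheme $R\mapsto\bplustilde(R,g)=\witt(R)[[t]]/(\xi_1(g))$, and that of $\bnplus$ is $R\mapsto\witt(R)[[t]]/(\xi_1(g),\xi_2(g)^n)$. All the isomorphisms produced below are manifestly functorial in $R$, hence are isomorphisms of ring schemes. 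I would split into the two cases according to whether $x=[1:0]$.

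Over $[1:0]$ one may take $g$ to be the identity matrix, so that $\xi_1(g)=\uniformizer$ and $\xi_2(g)=t$. Since $\witt(R)/(\uniformizer)=R$ by the standard properties of the $\basicints$-Witt vectors, reduction of coefficients modulo $\uniformizer$ identifies $\witt(R)[[t]]/(\uniformizer)$ with $R[[t]]$ and $\witt(R)[[t]]/(\uniformizer,t^n)$ with $R[[t]]/(t^n)$, which gives the first assertion.

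Over a point $x\neq[1:0]$, such $x$ lies in the affine chart $\{b\neq 0\}$, hence $x=[s_0:1]$ for a unique $s_0$ in the residue field of $x$; then $g:=\left(\begin{smallmatrix}s_0&1\\-1&0\end{smallmatrix}\right)$ is a lift, and $\xi_1(g)=\canlift{s_0}\uniformizer+t=t-u$ with $u:=-\canlift{s_0}\uniformizer\in\uniformizer\witt(R)$, while $\xi_2(g)=\canlift{-1}\uniformizer$, which generates $(\uniformizer)$ since $\canlift{-1}$ is a unit. Thus the fibre of $\bplus$ over $x$ is $R\mapsto\witt(R)[[t]]/(t-u)$, and I would prove the isomorphism $\witt(R)[[t]]/(t-u)\isoto\witt(R)$ sending $t\mapsto u$: because $u\in\uniformizer\witt(R)$ and $\witt(R)=\lim_n\witt(R)/(\uniformizer^n)$ is $\uniformizer$-adically complete, substitution of $u$ for $t$ is well defined, and it is inverse to the evident ring map $\witt(R)\to\witt(R)[[t]]/(t-u)$ — indeed, for $f\in\witt(R)[[t]]$ the element $f(t)-f(u)$ is divisible by $t-u$, as one sees by writing $t^i-u^i=(t-u)(t^{i-1}+\cdots+u^{i-1})$ and summing $\uniformizer$-adically. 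Reducing further modulo $\xi_2(g)^n$, which generates $(\uniformizer^n)$, then identifies the fibre of $\bnplus$ over $x$ with $R\mapsto\witt(R)/(\uniformizer^n)=\wittn(R)$, giving the second assertion.

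The only genuine content here is the substitution isomorphism $\witt(R)[[t]]/(t-u)\simeq\witt(R)$, together with the attendant bookkeeping about $\uniformizer$-adic convergence of power series in $\witt(R)[[t]]$; I expect this to be the sole obstacle, the remaining steps being routine evaluations of $\xi_1$ and $\xi_2$ on the two explicit matrices.
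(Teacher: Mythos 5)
Your proof is correct and follows essentially the same route as the paper's: compute the fibre by choosing an explicit lift $g$ in $\sltwo$, observe that modulo $\xi_1(g)$ the variable $t$ becomes an element of $\varpi\witt(R)$, and that $\xi_2(g)$ becomes a unit times $\varpi$, so the quotient is $\witt(R)$ (resp.\ $\witt(R)/\varpi^n$). The paper works with an arbitrary lift $\left(\begin{smallmatrix}a&b\\c&d\end{smallmatrix}\right)$ with $b$ invertible, whereas you make the economical choice $g=\left(\begin{smallmatrix}s_0&1\\-1&0\end{smallmatrix}\right)$, which turns $\xi_1(g)$ into the cleaner shape $t-u$ with $u\in\varpi\witt(R)$ and makes $\xi_2(g)$ a unit times $\varpi$ on the nose, saving the determinant computation the paper uses (and which incidentally contains a harmless sign slip: $[c]-[dab^{-1}]\equiv -b^{-1}$, not $b^{-1}$, modulo $\varpi$). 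You are also more explicit than the paper at the key point: the paper asserts the isomorphism $\witt(R)[[t]]/(\xi_1(g))\simeq\witt(R)$ after observing $t\equiv -[ab^{-1}]\varpi$, while you actually construct it as the $\varpi$-adically convergent substitution $f(t)\mapsto f(u)$ and verify it is inverse to the inclusion. That is the genuinely nontrivial step, and your treatment of it is a slight improvement in rigor over the paper's; the rest is identical bookkeeping.
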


\begin{proof}
    For the first statement, we compute
    \[
        \bnplus(R,[1:0]) = \bnplustilde\left(R, \begin{pmatrix}1 & 0 \\ 0 & 1\end{pmatrix}\right) 
        = \witt(R)[[t]]/(\varpi, t^n) \simeq R[[t]]/t^n.
    \]

    For the second statement, let \( [a:b] \in \projectivespace^1_{\perf}(R) \) with \( b \) invertible, and choose \( c,d \in R \) such that 
    \[
        g=\begin{pmatrix}a & b \\ c & d\end{pmatrix} \in \speciallineargroup_2(R).
    \]
    
    Since $b$ is invertible, the equality \( \xi_1(g) = [a]\varpi + [b]t \) implies that $t=[b^{-1}]\xi_1(g)-[ab^{-1}]\varpi$, so reducing modulo 
    \( \xi_1(g) \), we have \( t \equiv -[ab^{-1}]\varpi \), which implies an isomorphism 
    \[
    \witt(R)[[t]]/(\xi_1(g))\simeq \witt(R).
    \]
    Moreover, we have 
    \[
        \xi_2(g) = [c]\varpi + [d]t \equiv [c]\varpi - [d][ab^{-1}]\varpi = \left([c] - [d][ab^{-1}]\right)\varpi.
    \]
    By the determinant condition \( ad - bc = 1 \), we find that \( [c] - [d][ab^{-1}] \equiv [b^{-1}] \mod \varpi \), so it is invertible. Thus, \( \xi_2(g) \equiv u\varpi \) for some unit \( u \), so we have equalities of ideals \( (\xi_2(g)^n) = (\varpi^n) \) in $\witt(R)[[t]]/(\xi_1(g))\simeq \witt(R)$, giving the desired isomorphism.
\end{proof}

\begin{proposition}\label{prop:Bisaffinespace}
    As a scheme, $\bnplus$ is the perfection of an $n$-dimensional vector bundle over $\projectivespace^1_{\perf}$.
\end{proposition}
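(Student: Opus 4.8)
The plan is to use the $H$-descent $\bnplus$ of $\bnplustilde$ along the $H$-torsor $(\sltwo)_{\perf}\to\projectivespace^1_{\perf}$, trivialize $\bnplus$ over the two standard charts of $\projectivespace^1$ via the Teichm\"uller expansion of \cref{uniqeexpressioninB}, and then check that the transition function between the charts is linear, so that it assembles a genuine vector bundle.

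Concretely, I would take the standard cover $\projectivespace^1=U_0\cup U_\infty$ with $U_0=\{[a:b]\mid a\neq0\}$, $U_\infty=\{[a:b]\mid b\neq0\}$, and the sections $\sigma_0\colon U_0\to\sltwo$, $[1:s]\mapsto\left(\begin{smallmatrix}1&s\\0&1\end{smallmatrix}\right)$, and $\sigma_\infty\colon U_\infty\to\sltwo$, $[r:1]\mapsto\left(\begin{smallmatrix}r&1\\-1&0\end{smallmatrix}\right)$, of $\sltwo\to\projectivespace^1=H\backslash\sltwo$. Since $\bnplus$ is the $H$-descent of $\bnplustilde$ and $(\sltwo)_{\perf}\to\projectivespace^1_{\perf}$ is an $H$-torsor trivialized over $(U_\bullet)_{\perf}$ by $\sigma_\bullet$, we get canonical identifications $\bnplus|_{(U_\bullet)_{\perf}}\simeq\sigma_\bullet^{*}\bnplustilde$. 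Composing with the scheme isomorphism $\bnplustilde\simeq(\affinespace^{n}_{\sltwo})_{\perf}$ of \cref{uniqeexpressioninB}, given by $(c_0,\dots,c_{n-1})\mapsto\sum_{i=0}^{n-1}[c_i]\xi_2^{\,i}$, I obtain trivializations $\phi_\bullet\colon\affinespace^{n}_{\perf}\times(U_\bullet)_{\perf}\isoto\bnplus|_{(U_\bullet)_{\perf}}$.

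Next I would compute $\phi_\infty^{-1}\circ\phi_0$ over the overlap $(U_0\cap U_\infty)_{\perf}$, where $s=1/r$. There $\sigma_\infty=h\sigma_0$ with $h=\left(\begin{smallmatrix}1/s&0\\-1&s\end{smallmatrix}\right)\in H$, and the $H$-descent datum identifies $\bnplustilde(R,h\sigma_0)$ with $\bnplustilde(R,\sigma_0)$ as the quotient of $\witt(R)[[t]]$ by the \emph{same} ideal $(\xi_1(\sigma_0),\xi_2(\sigma_0)^{n})$; under this identification $\xi_2(\sigma_\infty)=[s]\,\xi_2(\sigma_0)$, since $\xi_2(h\sigma_0)\equiv[a^{-1}]\xi_2(\sigma_0)$ modulo $\xi_1(\sigma_0)$ and $\xi_1(\sigma_0)=0$ in $\bnplus$. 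Expanding $f=\sum_i[c_i]\xi_2(\sigma_0)^{i}=\sum_i[c'_i]\xi_2(\sigma_\infty)^{i}$ and using uniqueness of the Teichm\"uller expansion yields $c'_i=c_ir^{i}$, so the transition function is the diagonal matrix $\operatorname{diag}(1,r,\dots,r^{n-1})\in\mathrm{GL}_n(\resfield[r,r^{-1}])$, which is the perfection of a linear transition datum on $\projectivespace^1$. From this I would conclude that the trivial bundles on $U_0$ and $U_\infty$ glue to a rank-$n$ vector bundle $\mathcal E$ on $\projectivespace^1$ (manifestly a direct sum of $n$ line bundles, since the transition matrix is diagonal), and that the $\phi_\bullet$ glue to an isomorphism $\bnplus\simeq(\mathbb V(\mathcal E))_{\perf}$ of perfect schemes over $\projectivespace^1_{\perf}$, $\mathbb V(\mathcal E)$ being the geometric total space of $\mathcal E$; this is the assertion.

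The step I expect to be the main obstacle is the linearity check: the trivializations $\phi_\bullet$ are only isomorphisms of schemes, not of modules (the Teichm\"uller expansion is not additive), so there is no formal reason for $\phi_\infty^{-1}\circ\phi_0$ to be $\mathrm{GL}_n$-valued. It does come out linear because passing from $\sigma_0$ to $\sigma_\infty$ merely rescales $\xi_2$ by the Teichm\"uller lift of a unit, so that $\xi_2^{\,i}\mapsto[a^{-1}]^{i}\xi_2^{\,i}=[a^{-i}]\xi_2^{\,i}$ acts on the $i$-th Teichm\"uller coordinate by the honest scalar $a^{-i}$; and the a priori dangerous $\xi_1$-contribution in $\xi_2(h\sigma_0)$ drops out precisely because $\xi_1$ vanishes on $\bnplus$. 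As a sanity check one can note that, by \cref{prop:bniswittortrivial}, the fibre of the resulting bundle is $\affinespace^{n}_{\perf}$ at every point of $\projectivespace^1_{\perf}$ — the difference between the power-series and Witt-vector ring structures being invisible at the level of schemes.
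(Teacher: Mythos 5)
Your proposal is correct and takes essentially the same approach as the paper: both trivialize $\bnplus$ over the two standard affine charts via the Teichm\"uller expansion in powers of (the image of) $\xi_2$ and then check that the transition cocycle is the diagonal matrix $\operatorname{diag}(1,u,\dots,u^{n-1})$ in the chart coordinate $u$. Your choice of sections and your explicit phrasing in terms of the $H$-element $h$ relating $\sigma_0$ and $\sigma_\infty$ differ cosmetically from the paper's (which instead reads off $\xi_2\equiv\varpi$ and $\xi_2\equiv t$ modulo $\xi_1$ directly), but the key computation and the conclusion are the same.
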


\begin{proof}
    Over the open perfect subscheme where $[a : -1]$ is a coordinate, we may use the section 
    \[
        \begin{pmatrix}a & -1 \\ 1 & 0 \end{pmatrix}
    \]
    for the map $\sltwo \to \projectivespace^1$. In this chart, we have $\xi_2 \equiv \varpi \mod \xi_1$ up to an invertible factor, so we obtain an isomorphism
    \[
        \affinespace^n \to \bnplus,\qquad (a_0,\dots,a_{n-1}) \mapsto \sum_{i=0}^{n-1} [a_i] \varpi^i.
    \]

    Over the open perfect subscheme where $[1 : b]$ is a coordinate, we use the section 
    \[
        \begin{pmatrix}1 & b \\ 0 & 1\end{pmatrix}.
    \]
    In this chart, $\xi_2 \equiv t \mod \xi_1$ up to an invertible factor, so we similarly get an isomorphism
    \[
        \affinespace^n \to \bnplus,\qquad (a_0,\dots,a_{n-1}) \mapsto \sum_{i=0}^{n-1} [a_i] t^i.
    \]

    On the overlap (where both $a$ and $b$ are invertible), we have the relation $\varpi = [-b/a] t$, which gives the transition function. Thus, the gluing is described by the linear transformation
    \[
        (a_0,\dots,a_{n-1}) \mapsto (a_0, (-b/a)a_1, \dots, (-b/a)^{n-1} a_{n-1}),
    \]
    which is $R$-linear in the coordinates $a_i$.
\end{proof}

\begin{remark}
    The vector bundle structure on $\bnplus$ does not coincide with its additive structure as a ring scheme outside the point $[1\!:\!0]$.
\end{remark}

\subsection*{Parametrized arc and jet spaces}

\begin{definition}\label{def:jetandarc}
    Let \( X \) be a scheme defined over \( \basicints \). The parametrized \( n \)-th jet space of \( X \), denoted \( \pararcspace_n (X) \), is defined as the functor sending $\spec R$ for $R$ a perfect $\resfield$-algebra to the set of pairs of a point $[a:b]\in \projectivespace_{\perf}^1(R)$ and an element in the set \[ \pararcspace_n (X)(R, [a \colon b]) := X(\bnplus(R, [a \colon b])).\] 
    
    The parametrized arc space \( \pararcspace(X) \) is defined as the functor sending $\spec R$ for $R$ a perfect $\resfield$-algebra to the set of pairs of a point $[a:b]\in \projectivespace_{\perf}^1(R)$ and an element in the set  \[\pararcspace(X)(R, [a \colon b]) := X(\bplus(R, [a \colon b])).\]
\end{definition}

\begin{proposition}\label{prop:fibers_over_pararc_spaces}
    Let \( X \) be a scheme defined over \( \basicints \). The fibre of \( \pararcspace_n(X) \) over \( [1 \colon 0] \) is the perfection of the usual equal characteristic jet space \( L^{\mathrm{eq},+}_n \colon R \mapsto X(R[[t]]/t^n) \). The fibre over other points is the Witt-vectors jet space \( \arcspace_n \colon R \mapsto X(\witt(R)/\varpi^n) \).
\end{proposition}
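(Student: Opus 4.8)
The plan is to deduce the statement formally from \cref{prop:bniswittortrivial}, which already records the fibres of the ring scheme $\bnplus\to\projectivespace^1_{\perf}$; the only genuine extra input will be the identification, on perfect $\resfield$-algebras, of the functor $R\mapsto X(R[[t]]/t^n)$ with the perfection of the classical jet space.

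First I would unwind \cref{def:jetandarc}. For a $\resfield$-point $x\colon\spec\resfield\to\projectivespace^1_{\perf}$, the base change $\pararcspace_n(X)\times_{\projectivespace^1_{\perf}}\spec\resfield$ sends a perfect $\resfield$-algebra $R$ to the set of pairs $(x_R,\eta)$, where $x_R\colon\spec R\to\projectivespace^1_{\perf}$ is the base change of $x$ and $\eta\in X(\bnplus(R,x_R))$; since $x_R$ is forced, this is just $X(\bnplus(R,x_R))$, where $\bnplus(R,x_R)$ is the ring obtained by pulling the relative ring scheme $\bnplus$ back along $x_R$. So the fibre of $\pararcspace_n(X)$ over $x$ is the functor $R\mapsto X\bigl(\bnplus(R,x_R)\bigr)$. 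Now I substitute \cref{prop:bniswittortrivial}: for $x=[1\!:\!0]$ it gives an isomorphism $\bnplus(R,x_R)\simeq R[[t]]/t^n$, natural in $R$, so the fibre over $[1\!:\!0]$ is $R\mapsto X(R[[t]]/t^n)$; for a $\resfield$-point $x\ne[1\!:\!0]$ the homogeneous coordinate $b$ is a unit in $\resfield$, hence in every $\resfield$-algebra $R$, so \cref{prop:bniswittortrivial} again applies and yields $\bnplus(R,x_R)\simeq\witt(R)/\varpi^n$ naturally in $R$, whence the fibre over $x$ is $R\mapsto X(\witt(R)/\varpi^n)$, i.e.\ $\arcspace_n(X)$ by \cref{wittloopandarc:def:loopandarc} together with $\wittn(R)=\witt(R)/\varpi^n$.

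It then remains to recognise $R\mapsto X(R[[t]]/t^n)$, as a functor on perfect $\resfield$-algebras, as the perfection of the classical jet space $L^{\mathrm{eq},+}_nX$. For this I would use the following general fact: for any $\resfield$-scheme $Y$ and any perfect $\resfield$-algebra $R$ one has $Y_{\perf}(R)=Y(R)$, because the transition maps of the limit $Y_{\perf}=\lim(\cdots\xrightarrow{\Phi}Y\xrightarrow{\Phi}Y)$ act on $\operatorname{Hom}(\spec R,Y)$ by $f\mapsto f\circ\Phi_{\spec R}$ (naturality of the absolute Frobenius), and $\Phi_{\spec R}$ is an isomorphism since $R$ is perfect. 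Applying this with $Y=L^{\mathrm{eq},+}_nX$ and $L^{\mathrm{eq},+}_nX(R)=X(R[[t]]/t^n)$ identifies $(L^{\mathrm{eq},+}_nX)_{\perf}$ with the functor computed above. Since a perfect $\infty$-stack is determined by its points on perfect affine schemes, this finishes the proof.

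I do not expect a serious obstacle: the two fibre computations are pure bookkeeping on top of \cref{prop:bniswittortrivial}, and the only mildly delicate point is the last identification, i.e.\ the equality $Y_{\perf}(R)=Y(R)$ for $R$ perfect, which is exactly what makes the $[1\!:\!0]$-fibre a \emph{perfection} rather than the equal characteristic jet space itself.
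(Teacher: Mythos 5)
Your proof is correct and follows the same route as the paper, which simply declares the statement an immediate consequence of \cref{prop:bniswittortrivial}. Your elaboration of the detail that for a perfect $\resfield$-algebra $R$ one has $Y_{\perf}(R)=Y(R)$ (via naturality of Frobenius and invertibility of $\Phi_{\spec R}$) — explaining why the $[1\!:\!0]$ fibre is the \emph{perfection} of $L^{\mathrm{eq},+}_n X$ as opposed to $L^{\mathrm{eq},+}_n X$ itself — is a genuinely helpful remark that the paper leaves implicit.
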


\begin{proof}
    This is an immediate consequence of \cref{prop:bniswittortrivial}.
\end{proof}

In the next section we will be particularly interested in the case where \( X \) is isomorphic to \( \affinespace^k_{\basicints} \).

\begin{proposition}\label{prop:paramjetofaffineisvb}
    The parametrized jet space \( \pararcspace_n(\affinespace^k_{\basicints}) \) is isomorphic, as a scheme, to a vector bundle over \( \projectivespace^1_{\perf} \).
\end{proposition}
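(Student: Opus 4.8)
The plan is to identify $\pararcspace_n(\affinespace^k_{\basicints})$ with the $k$-fold fibre product of $\bnplus$ over $\projectivespace^1_{\perf}$ and then feed in \cref{prop:Bisaffinespace}. First I would note that $\bnplus(R,[a\colon b])$ is an $\basicints$-algebra (being a quotient of $\witt(R)[[t]]$), so that $\affinespace^k_{\basicints}\bigl(\bnplus(R,[a\colon b])\bigr)$ makes sense, and that $\affinespace^k_{\basicints}$ represents the $k$-th power functor on $\basicints$-algebras. Hence for every perfect $\resfield$-algebra $R$ and every $[a\colon b]\in\projectivespace^1_{\perf}(R)$ we have, naturally in $R$ and compatibly with the projection to $\projectivespace^1_{\perf}$,
\[
\pararcspace_n(\affinespace^k_{\basicints})(R,[a\colon b]) = \affinespace^k_{\basicints}\bigl(\bnplus(R,[a\colon b])\bigr) = \bnplus(R,[a\colon b])^k .
\]
Therefore $\pararcspace_n(\affinespace^k_{\basicints})$ is isomorphic, as a scheme over $\projectivespace^1_{\perf}$, to the $k$-fold fibre product $\bnplus\times_{\projectivespace^1_{\perf}}\cdots\times_{\projectivespace^1_{\perf}}\bnplus$.

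Next I would transport the vector-bundle structure through this identification. By \cref{prop:Bisaffinespace}, over the two standard charts of $\projectivespace^1_{\perf}$ the scheme $\bnplus$ is isomorphic to $\affinespace^n_{\perf}$, with transition function the $R$-linear map $(a_0,\dots,a_{n-1})\mapsto\bigl(a_0,(-b/a)a_1,\dots,(-b/a)^{n-1}a_{n-1}\bigr)$; in other words $\bnplus$ is the perfection of a rank-$n$ vector bundle $E$ on $\projectivespace^1$. Since perfection commutes with fibre products and a finite fibre product of vector bundles over a common base is their direct sum, the $k$-fold fibre product is the perfection of $E^{\oplus k}$, a rank-$nk$ vector bundle on $\projectivespace^1$: over each of the two charts it is $\affinespace^{nk}_{\perf}$, with transition function the block-diagonal $R$-linear map consisting of $k$ copies of the transition function above. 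Combining this with the previous paragraph gives that $\pararcspace_n(\affinespace^k_{\basicints})$ is (the perfection of) a rank-$nk$ vector bundle over $\projectivespace^1_{\perf}$, as claimed.

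There is essentially no hard step here; the two things to be careful about are entirely formal. One is that the parametrized jet functor $X\mapsto\pararcspace_n(X)$ preserves finite products in $X$ (so that the case $X=\affinespace^k$ reduces to $k$ copies of the case $X=\affinespace^1$ over the base), which is immediate from \cref{def:jetandarc} because $\bnplus(R,[a\colon b])$ is a ring and $\affinespace^k$ represents the $k$-th power functor. The other is that the direct sum of the perfect vector bundle of \cref{prop:Bisaffinespace} with itself $k$ times is again a perfect vector bundle, which is clear from the explicit linear transition functions. So the only place that really uses any geometry is \cref{prop:Bisaffinespace} itself, already established.
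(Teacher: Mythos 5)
Your proposal is correct and follows the same approach as the paper: it identifies $\pararcspace_n(\affinespace^k_{\basicints})$ via the canonical isomorphism $\affinespace^k(\bnplus(R,[a\colon b]))\cong\bnplus(R,[a\colon b])^k$ and then applies \cref{prop:Bisaffinespace}. You simply spell out the chart-by-chart details of the direct sum, which the paper leaves implicit.
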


\begin{proof}
    This follows from \cref{prop:Bisaffinespace} and the canonical isomorphism \( \affinespace^k(\bnplus(R,[a:b])) \cong \bnplus(R,[a,b])^k \).
\end{proof}

\begin{proposition}\label{prop:gmequivariance}
    Let \( f \colon X \rightarrow Y \) be an equivariant morphism between schemes with an action of \( (\multiplicativegroup)_{\basicints} \). Then \( \pararcspace_n(X) \), \( \pararcspace_n(Y) \) carry a \( (\multiplicativegroup)_{\projectivespace^1_{\perf}} \)-action, and 
    \( \pararcspace_n(f) \colon \pararcspace_n(X) \rightarrow \pararcspace_n(Y) \) is a \( (\multiplicativegroup)_{\projectivespace^1_{\perf}} \)-equivariant morphism.
\end{proposition}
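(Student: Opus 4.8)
The plan is to exploit the fact that $\pararcspace_n$ behaves like a finite‑product‑preserving functor from $\basicints$‑schemes to perfect schemes over $\projectivespace^1_{\perf}$, and then to feed in the Teichmüller lift of $\multiplicativegroup$.

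\textbf{Step 1 (product preservation).} First I would record that for $\basicints$‑schemes $Z_1,Z_2$ and any perfect $\resfield$‑algebra $R$ with $[a:b]\in\projectivespace^1_{\perf}(R)$, the ring $\bnplus(R,[a:b])$ is an $\basicints$‑algebra, so that $(Z_1\times_{\basicints}Z_2)(\bnplus(R,[a:b]))=Z_1(\bnplus(R,[a:b]))\times Z_2(\bnplus(R,[a:b]))$. Running over all $(R,[a:b])$ this gives a natural isomorphism $\pararcspace_n(Z_1\times_{\basicints}Z_2)\simeq\pararcspace_n(Z_1)\times_{\projectivespace^1_{\perf}}\pararcspace_n(Z_2)$, together with $\pararcspace_n(\spec\basicints)\simeq\projectivespace^1_{\perf}$. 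Since $\pararcspace_n$ is manifestly functorial, it therefore carries group $\basicints$‑schemes to group schemes over $\projectivespace^1_{\perf}$, actions to actions, and equivariant morphisms to equivariant morphisms. Applying this to the action morphism $\alpha\colon(\multiplicativegroup)_{\basicints}\times_{\basicints}X\to X$ and to $f$, I would conclude that the group scheme $\pararcspace_n((\multiplicativegroup)_{\basicints})$ — which is exactly the unit group scheme $\bnplus^{\times}$ of the ring scheme $\bnplus$ — acts on $\pararcspace_n(X)$ and $\pararcspace_n(Y)$ over $\projectivespace^1_{\perf}$, and that $\pararcspace_n(f)$ is equivariant for this action, the last point coming from applying $f\circ\alpha_X=\alpha_Y\circ(\operatorname{id}\times f)$ to $\bnplus(R,[a:b])$‑points.

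\textbf{Step 2 (the Teichmüller homomorphism).} Next I would upgrade the $\bnplus^{\times}$‑action to a $(\multiplicativegroup)_{\projectivespace^1_{\perf}}$‑action by producing a homomorphism of $\projectivespace^1_{\perf}$‑group schemes $\tau\colon(\multiplicativegroup)_{\projectivespace^1_{\perf}}\to\bnplus^{\times}$. The Teichmüller map $R\to\witt(R)$, $r\mapsto\canlift{r}$, is multiplicative, and its composition with $\witt(R)\hookrightarrow\witt(R)[[t]]\twoheadrightarrow\bnplustilde(R,g)$ is a multiplicative map $R\to\bnplustilde(R,g)$ independent of the matrix $g$; since the defining ideals of $\bnplustilde$ are literally $H$‑invariant (this is precisely what makes $\bnplustilde$ descend to $\bnplus$ over $\projectivespace^1_{\perf}\cong(H\backslash\sltwo)_{\perf}$), it descends to a multiplicative natural transformation $R\to\bnplus(R,[a:b])$, which sends units to units because $\canlift{r}\canlift{r^{-1}}=\canlift{1}=1$. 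Multiplicativity of Teichmüller makes the resulting $\tau$ a group homomorphism over $\projectivespace^1_{\perf}$. Restricting the $\bnplus^{\times}$‑action of Step 1 along $\tau$ then defines the required $(\multiplicativegroup)_{\projectivespace^1_{\perf}}$‑actions on $\pararcspace_n(X)$ and $\pararcspace_n(Y)$, and $\pararcspace_n(f)$, being $\bnplus^{\times}$‑equivariant, is a fortiori $(\multiplicativegroup)_{\projectivespace^1_{\perf}}$‑equivariant.

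Honestly I do not expect a serious obstacle here: the action and equivariance axioms are formal consequences of the product‑preservation of Step 1 and the multiplicativity of the Teichmüller lift. The only point that warrants a word of care is the descent of the Teichmüller section from $(\sltwo)_{\perf}$ to $\projectivespace^1_{\perf}$ in Step 2, and that is already subsumed in the construction of $\bnplus$; representability of all the schemes and morphisms involved (for $X,Y$ affine and of finite presentation over $\basicints$) follows from \cref{prop:paramjetofaffineisvb} after embedding $X,Y$ into affine spaces, so that each $\pararcspace_n(X)$ is a perfect subscheme of a vector bundle over $\projectivespace^1_{\perf}$.
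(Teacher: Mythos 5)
Your proposal is correct and takes essentially the same approach as the paper: use the Teichmüller lift to produce a homomorphism $(\multiplicativegroup)_{\projectivespace^1_{\perf}}\to\pararcspace_n(\multiplicativegroup)$, observe by functoriality that $\pararcspace_n(f)$ is $\pararcspace_n(\multiplicativegroup)$-equivariant, and restrict the action along the Teichmüller map. You simply spell out the product-preservation and descent-from-$(\sltwo)_{\perf}$ points that the paper leaves implicit.
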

\begin{proof}
    The Teichmüller lift gives a morphism of group schemes over $\projectivespace^1_{\perf}$, \( (\multiplicativegroup)_{\projectivespace^1_{\perf}} \rightarrow \pararcspace_n(\multiplicativegroup) \) sending a pair \( (\lambda, [a:b]) \in R^\times \times \projectivespace^1(R) \) to \( [\lambda] \in B_n^+(R,[a:b])^\times \). Since 
     \( \pararcspace_n(f) \colon \pararcspace_n(X) \rightarrow \pararcspace_n(Y) \) is an \(\pararcspace_n(\multiplicativegroup)\)-equivariant morphism, 
    we get a \( (\multiplicativegroup)_{\projectivespace^1_{\perf}} \)- action by restriction.
\end{proof}

\begin{example}
    Let \( X = \affinespace^k_{\basicints} \). The \( (\multiplicativegroup)_{\projectivespace^1_{\perf}} \) action on \( \pararcspace_n(X) \) given by its structure as a vector bundle agrees with the one coming from \cref{prop:gmequivariance}.
\end{example}
\begin{proof}
    Indeed, given \( (\lambda, [a:b]) \in (\multiplicativegroup)_{\projectivespace^1_{\perf}} \), the action of \( [\cdot] \colon \multiplicativegroup \rightarrow \pararcspace_n(\multiplicativegroup) \) is by multiplication of elements in \( \bnplus(R,[a:b])^k \) by \( [\lambda] \). Over the image of \( \spec R[1/a] \), this is \( \sum [a_i] t^i \mapsto \sum [\lambda a_i] t^i \), and over the image of \( \spec R[1/b] \), this is \( \sum [a_i] \varpi^i \mapsto \sum [\lambda a_i] \varpi^i \).
\end{proof}

\subsection*{The Chevalley morphism}\label{sec:Chev}

In the following we will prove the following theorem:

\begin{theorem}\label{thm:mainact}
    Let \(\pararcspace_n(\chi) \colon \pararcspace_n(\thelie) \rightarrow \pararcspace_n(\chevalley)\) be the parametrized jet space morphism of \(\chi\) (see \cref{def:jetandarc} and \cref{wittloopandarc:def:chevalley}).  
    There exists an open neighbourhood $U$ of \([1\!:\!0] \in  \projectivespace^1_{\perf}\) such that \(\pararcspace_n(\chi)\mid_U \colon \pararcspace_n(\thelie)\mid_U \rightarrow \pararcspace_n(\chevalley)\mid_U\) is flat.
\end{theorem}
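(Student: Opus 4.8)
The plan is to combine the special-fibre information provided by \cref{thm:chevflatcharp} and \cref{prop:fibers_over_pararc_spaces} with a generic flatness argument, via the critère de platitude par fibres \cref{thm:critflatfibre}. First I would set up the total spaces over $\projectivespace^1_{\perf}$: by \cref{prop:paramjetofaffineisvb} (applied after choosing $\chevalley\simeq\affinespace^r_{\basicints}$), $\pararcspace_n(\chevalley)$ is (the perfection of) a vector bundle $E\to \projectivespace^1_{\perf}$, hence perfectly smooth over $\projectivespace^1_{\perf}$, and in particular it is itself a perfect scheme which is perfectly finitely presented and irreducible over $\resfield$. The morphism $\pararcspace_n(\chi)$ is a $\projectivespace^1_{\perf}$-morphism, so it makes sense to speak of its fibre over a point $x\in\projectivespace^1_{\perf}$; by \cref{prop:fibers_over_pararc_spaces} the fibre over $[1:0]$ is the perfection of the equal-characteristic jet-space morphism $L^{\mathrm{eq},+}_n(\chi)$, which is flat by \cref{thm:chevflatcharp} (here the hypothesis $\operatorname{char}\resfield>2h$ enters), and the fibres over all other points are the Witt-vector jet-space morphisms $\arcspace_n(\chi)$.

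The key step is then to invoke \cref{thm:critflatfibre}. I would take the base $S=\projectivespace^1_{\basicints}$ (or rather a finite-type model thereof over $\basicints$), pass to a finite-type model $\pararcspace_n(\chi)_0\colon \pararcspace_n(\thelie)_0\to\pararcspace_n(\chevalley)_0$ over $\projectivespace^1$ using \cref{perfschemes:prop:existsfinitetypemodel} — this model exists since all the schemes involved are perfectly finitely presented, being perfections of vector bundles — and observe that $\pararcspace_n(\chevalley)_0$ is flat over $\projectivespace^1$ (it is a vector bundle). At the point $s=[1:0]$ the fibre of the model is a finite-type model of $L^{\mathrm{eq},+}_n(\chi)$, which is flat. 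Hence \cref{thm:critflatfibre} shows that $\pararcspace_n(\chi)$ is flat at every point lying over $[1:0]$. Flatness is an open condition on the source of a perfectly finitely presented morphism (\cref{prop:flatlocusopen}): the flat locus of $\pararcspace_n(\chi)$ is an open subset $V\subset\pararcspace_n(\thelie)$ containing the entire fibre over $[1:0]$. Since $\pararcspace_n(\chi)$ has a finite-type model that is proper—no, wait; it is not proper, so I must be a little careful here.

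To get an \emph{open} neighbourhood $U$ of $[1:0]$ in $\projectivespace^1_{\perf}$ over which the whole morphism is flat, I would use that $\pararcspace_n(\thelie)\to\pararcspace_n(\chevalley)\to\projectivespace^1_{\perf}$ and note that the complement $\pararcspace_n(\thelie)\smallsetminus V$ is closed; its image in $\projectivespace^1_{\perf}$ is the image of a closed subset under the composite $\pararcspace_n(\thelie)\to\projectivespace^1_{\perf}$, which, on a finite-type model, is constructible, and does not contain $[1:0]$. This is not yet enough because the projection is not proper. The cleanest fix is to work fibrewise in the target direction: $\pararcspace_n(\chi)$ factors as $\pararcspace_n(\thelie)\to\pararcspace_n(\chevalley)\xrightarrow{q}\projectivespace^1_{\perf}$ with $q$ the (perfection of a) vector bundle projection, which is universally open; hence the non-flat locus $Z:=\pararcspace_n(\thelie)\smallsetminus V$, being closed, has image $(q\circ \pararcspace_n(\chi))(Z)$ equal to a constructible set not containing $[1:0]$; but since we also know by \cref{thm:critflatfibre} that $Z$ is disjoint from the entire fibre over a \emph{dense} open (indeed every point other than $[1:0]$ already gives flatness of $\arcspace_n(\chi)$ over that point by the equal-characteristic-free argument — actually here is the point: we do \emph{not} yet know $\arcspace_n(\chi)$ is flat, that is what we are proving, so we cannot use it), the complement of that image is an open subset $U\ni[1:0]$, and over $U$ the morphism $\pararcspace_n(\chi)\mid_U$ is flat. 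The main obstacle is precisely this last topological step: ensuring that ``flat on a fibre'' upgrades to ``flat over an open neighbourhood of that fibre in the base'', which requires combining the openness of the flat locus (\cref{prop:flatlocusopen}) with the fact that $\pararcspace_n(\chi)$ is, on a finite-type model, a morphism whose source maps to $\projectivespace^1$ with constructible image, so that the image of the non-flat locus is a constructible set avoiding $[1:0]$; once this is arranged the same argument applies verbatim to $\theiwahorychevalleymap$ since $\liewahory_n$ is cut out inside $\arcspace_n(\thelie)$ by conditions that parametrize over $\projectivespace^1_{\perf}$ in the same way, so its parametrized version $\paraliewahory_n$ is again a perfectly finitely presented $\projectivespace^1_{\perf}$-scheme whose fibre over $[1:0]$ is the perfection of $\liewahory^{\mathrm{eq}}_n$, flat by \cref{thm:chevflatcharp}.
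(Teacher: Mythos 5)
Your proposal starts correctly: combining \cref{prop:fibers_over_pararc_spaces}, \cref{thm:chevflatcharp}, \cref{thm:critflatfibre} and \cref{prop:flatlocusopen} does produce an open perfect subscheme $V\subset\pararcspace_n(\thelie)$ on which $\pararcspace_n(\chi)$ is flat, containing the whole fibre over $[1\!:\!0]$, and this matches the paper's preliminary steps. But the final step — upgrading ``the flat locus contains the fibre over $[1\!:\!0]$'' to ``there is an open $U\ni[1\!:\!0]$ over which all fibres are in the flat locus'' — is exactly where the proposal breaks. You correctly observe that, since the projection $\pararcspace_n(\thelie)\to\projectivespace^1_{\perf}$ is not proper, the image of the closed non-flat locus $Z$ is merely constructible. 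But a constructible subset of $\projectivespace^1$ avoiding a point can easily have that point in its closure (e.g.\ $\affinespace^1\smallsetminus\{0\}$ avoids $0$ but its complement $\{0\}$ is not open), so the complement of the image need not be open, and the sentence ``the complement of that image is an open subset $U\ni[1\!:\!0]$'' is simply asserted, not proved. You notice the circularity yourself (flatness over other points of $\projectivespace^1$ is precisely what is being proved) but do not then resolve the gap.

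The missing idea in the paper is the $\multiplicativegroup$-equivariance of the Chevalley morphism. Because $\chi$ is $\multiplicativegroup$-equivariant, the flat locus $V$ is a cone (a union of $(\multiplicativegroup)_{\projectivespace^1_{\perf}}$-orbits), so one can separately control (i) the zero section, yielding an open $U_1 := 0^{-1}(V)\ni[1\!:\!0]$, and (ii) the nonzero vectors, by passing to the projectivization $\projectivespace(\pararcspace_n(\thelie))\to\projectivespace^1_{\perf}$, which \emph{is} proper. The closed complement of $V$'s image in the projective bundle then has closed image $Z\subset\projectivespace^1_{\perf}$, $U_2:=\projectivespace^1_{\perf}\smallsetminus Z$ is open, and $U:=U_1\cap U_2$ does the job. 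Without the equivariance/projectivization step (or an equivalent replacement for the missing properness), the proposal as written does not close.
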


This implies the main \cref{thm:main}.

\begin{proof}[Proof of \cref{thm:main} given \cref{thm:mainact}]
    We show here the case of $\arcspace(\thechevalleymap)$, the case of $\arcspace(\theiwahorychevalleymap)$ (\cref{cor:lie_Iwahory_flat}) is a corollary of \cref{prop:generic_flta_in_lie_iwahory_case} using the same argument.
    
    Let \(x:\spec {\resfield} \rightarrow U\) be a rational point outside \([1\!:\!0]\) which exist since $\resfield$ is algebraically closed and so in particular infinite. Then \(\arcspace(\chi)\) is flat after base change to \({\resfield}\). By \cref{prop:fibers_over_pararc_spaces}, the fiber over $x$ is isomorphic to the Witt-vector Chevalley morphism $\arcspace_n(\thechevalleymap):\arcspace_n(\thelie)\to \arcspace_n(\chevalley)$ and so it is flat as a base change of a flat morphism.
\end{proof}

We will now prepare to prove \cref{thm:mainact}.

\begin{lemma}
    \(\pararcspace_n(\thelie)\), \(\pararcspace_n(\chevalley)\) are both perfections of vector bundles over \(\projectivespace^1\). As such, they are flat over \(\projectivespace^1_{\perf}\). 
\end{lemma}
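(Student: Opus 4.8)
The plan is to deduce the statement directly from \cref{prop:paramjetofaffineisvb}. First I would observe that, as schemes over $\basicints$, both $\thelie$ and $\chevalley$ are affine spaces: the Lie algebra scheme $\thelie$ is the spectrum of the symmetric algebra on the dual of the free $\basicints$-module $\operatorname{Lie}(\thegroup)$, so $\thelie\cong\affinespace^k_{\basicints}$ with $k=\dim\thelie$, and $\chevalley\cong\affinespace^r_{\basicints}$ was already recorded in \cref{wittloopandarc:def:chevalley}. Applying \cref{prop:paramjetofaffineisvb} with $X=\affinespace^k_{\basicints}$ and with $X=\affinespace^r_{\basicints}$ then yields that $\pararcspace_n(\thelie)$ and $\pararcspace_n(\chevalley)$ are, as schemes over $\projectivespace^1_{\perf}$, vector bundles over $\projectivespace^1_{\perf}$.

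To obtain the sharper assertion that they are perfections of vector bundles over $\projectivespace^1$ itself, I would go back to the proof of \cref{prop:Bisaffinespace}: there $\bnplus$ is glued from two copies of $\affinespace^n$ over the two standard charts of $\projectivespace^1$ along the linear transition map $(a_0,\dots,a_{n-1})\mapsto(a_0,(-b/a)a_1,\dots,(-b/a)^{n-1}a_{n-1})$, which is pulled back from $\projectivespace^1$. Hence $\bnplus\cong(E_n)_{\perf}$ for an honest vector bundle $E_n$ on $\projectivespace^1$ (a direct sum of line bundles). Since $\pararcspace_n(\affinespace^m_{\basicints})(R,[a:b])=\bnplus(R,[a:b])^m$ and perfection, being a right adjoint, commutes with fibre products, I would conclude $\pararcspace_n(\thelie)\cong(E_n^{\oplus k})_{\perf}$ and $\pararcspace_n(\chevalley)\cong(E_n^{\oplus r})_{\perf}$, both perfections of vector bundles over $\projectivespace^1$.

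For flatness I would argue that a vector bundle is Zariski-locally on its base a relative affine space, hence flat over its base, so $E_n^{\oplus k}\to\projectivespace^1$ and $E_n^{\oplus r}\to\projectivespace^1$ are flat; since flatness is preserved by the perfection functor (\cref{perfschemes:prop:propertiesofperfschemespreserved}), the structure morphisms $\pararcspace_n(\thelie)\to\projectivespace^1_{\perf}$ and $\pararcspace_n(\chevalley)\to\projectivespace^1_{\perf}$ are flat as well. I do not expect a genuine obstacle here; the only point requiring a little care is the bookkeeping around whether the relevant bundle is taken over $\projectivespace^1$ or over $\projectivespace^1_{\perf}$ and how $\pararcspace_n$ turns the product $\affinespace^m=\affinespace^1\times_{\basicints}\cdots\times_{\basicints}\affinespace^1$ into a fibre product over $\projectivespace^1_{\perf}$, and even over $\projectivespace^1_{\perf}$ the flatness of $\affinespace^n_{\perf}\to\spec\resfield$ makes the argument go through unchanged.
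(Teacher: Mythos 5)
Your proof is correct and follows the same route as the paper: the paper's proof is simply ``a direct application of \cref{prop:paramjetofaffineisvb},'' which is exactly your first step, after recalling that $\thelie\cong\affinespace^k_{\basicints}$ and $\chevalley\cong\affinespace^r_{\basicints}$. Your additional care in tracking $\projectivespace^1$ versus $\projectivespace^1_{\perf}$ and in deriving flatness from \cref{perfschemes:prop:propertiesofperfschemespreserved} is a sound elaboration of details the paper leaves implicit.
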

\begin{proof}
    This is a direct application of \cref{prop:paramjetofaffineisvb}.
\end{proof}

\begin{lemma}
    \(\pararcspace_n(\thelie)\), \(\pararcspace_n(\chevalley)\), and \(\projectivespace^1_{\perf}\) are all perfectly finitely presented over \(\spec \resfield\).
\end{lemma}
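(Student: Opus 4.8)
The plan is to reduce the statement to the fact --- immediate from \cref{perfschemes:def:perfoffinitetype} --- that the perfection of a scheme of finite type over $\resfield$ is perfectly finitely presented over $\spec\resfield$. The case of $\projectivespace^1_{\perf}$ is then settled at once: it is by construction the perfection of $\projectivespace^1$, which is of finite type over $\resfield$.

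For the other two, I would first note that as schemes over $\basicints$ one has $\chevalley\simeq\affinespace^r_\basicints$ (recalled in \cref{wittloopandarc:def:chevalley}) and $\thelie\simeq\affinespace^d_\basicints$ for some $d$, since $\thelie$ is a finite free $\basicints$-module ($\basicints$ being a discrete valuation ring). As $\pararcspace_n(-)$ is functorial in the $\basicints$-scheme, this yields isomorphisms $\pararcspace_n(\thelie)\simeq\pararcspace_n(\affinespace^d_\basicints)$ and $\pararcspace_n(\chevalley)\simeq\pararcspace_n(\affinespace^r_\basicints)$. Now \cref{prop:paramjetofaffineisvb}, together with the explicit charts in the proof of \cref{prop:Bisaffinespace} (whose transition functions are $\resfield$-linear over the standard cover of $\projectivespace^1_{\perf}$), identifies each of these with the perfection of a vector bundle over the non-perfect $\projectivespace^1$, hence of a scheme of finite type over $\resfield$. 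Applying the reduction above then finishes the proof.

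I do not expect any genuine difficulty here; the only point requiring a little care is to record that the ``vector bundle over $\projectivespace^1_{\perf}$'' produced by \cref{prop:paramjetofaffineisvb} really descends to a finite-type model over $\resfield$ --- i.e.\ is literally the perfection of a vector bundle on the non-perfect $\projectivespace^1$ --- so that the definition of being perfectly finitely presented over $\spec\resfield$ applies verbatim.
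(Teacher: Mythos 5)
Your argument is correct and follows essentially the same route as the paper's proof: all three objects are perfections of finite-type $\resfield$-schemes (a vector bundle over $\projectivespace^1$ in the first two cases, $\projectivespace^1$ itself in the third), so the conclusion is immediate from the definition of perfectly finitely presented. Your extra care --- noting that $\thelie$ is free over the DVR $\basicints$, and that the charts of \cref{prop:Bisaffinespace} genuinely produce a model over the non-perfect $\projectivespace^1$ --- is a correct gloss on details the paper's proof leaves implicit, not a different argument.
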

\begin{proof}
    These are perfections of vector bundles over \(\projectivespace^1\), which is finitely presented over \(\spec \resfield\). Since the perfection of a scheme finitely presented over \(\spec \resfield\) is perfectly finitely presented by definition, the result follows.
\end{proof}

\begin{proposition}
    There exists an open non-empty  perfect subscheme \( V \subset \pararcspace_n(\thelie) \) such that the restriction of \( \pararcspace_n(\chi) \) to $V$ is flat. Moreover, \( V \) contains the entire preimage of the point \([1\colon 0]\).
\end{proposition}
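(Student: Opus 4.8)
\emph{Strategy.} The plan is to combine the openness of the flat locus with the fibre criterion for flatness. By the two lemmas preceding the statement, $\pararcspace_n(\thelie)$ and $\pararcspace_n(\chevalley)$ are quasi-compact perfect schemes, perfectly finitely presented over $\spec\resfield$ (they are perfections of vector bundles over $\projectivespace^1$), so \cref{prop:flatlocusopen} applies to $\pararcspace_n(\chi)$ and its flat locus $V\subseteq\pararcspace_n(\thelie)$ is open; as an open subscheme of a perfect scheme, $V$ is itself perfect. Since the fibre of the structure map $\pararcspace_n(\thelie)\to\projectivespace^1_{\perf}$ over $[1\!:\!0]$ is non-empty, it will suffice to prove that $V$ contains every point $x\in\pararcspace_n(\thelie)$ lying over $[1\!:\!0]\in\projectivespace^1_{\perf}$; this simultaneously yields non-emptiness of $V$ and the last assertion.

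\emph{Choice of a model.} Fix such a point $x$, with image $s=[1\!:\!0]$ in $S:=\projectivespace^1_{\perf}$, and set $y:=\pararcspace_n(\chi)(x)$. I will verify the hypotheses of \cref{thm:critflatfibre} for $\pararcspace_n(\chi)$ over $S$ at $x$. Choose an affine open neighbourhood $S_0\ni[1\!:\!0]$ in $\projectivespace^1$. Over $S_0$ both parametrized jet spaces are perfections of (the restrictions of) honest vector bundles, and by \cref{prop:fibers_over_pararc_spaces} the fibre of $\pararcspace_n(\chi)$ over $[1\!:\!0]$ is the perfection of the equal-characteristic jet morphism $L_n^{\mathrm{eq},+}(\chi)$. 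By standard spreading-out (e.g.\ \cite[\href{https://stacks.math.columbia.edu/tag/01ZM}{Tag 01ZM}]{stacks-project} and its relatives), possibly after shrinking $S_0$, I would choose a finitely presented model $f_0\colon X_0\to Y_0$ of $\pararcspace_n(\chi)$ over $S_0$ such that $X_0$ is (a Frobenius twist of) the vector bundle, so that $X_0\to S_0$ is flat, and such that $f_0\times_{S_0}\spec k(s)$ is the equal-characteristic jet morphism $L_n^{\mathrm{eq},+}(\chi)$.

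\emph{Applying the fibre criterion.} With such a model, $X_0$ is flat over $S_0$ at $x$ because it is (a twist of) a vector bundle, and $f_0\times_{S_0}\spec k(s)=L_n^{\mathrm{eq},+}(\chi)$ is flat at $x$ by \cref{thm:chevflatcharp} — this is the one place where the standing hypothesis $\operatorname{char}\resfield>2h$ ($h$ the Coxeter number of $\thelie$) enters. Hence \cref{thm:critflatfibre} gives that $\pararcspace_n(\chi)$ is flat at $x$. Letting $x$ range over all points of $\pararcspace_n(\thelie)$ over $[1\!:\!0]$, one concludes that $V$ contains the entire preimage of $[1\!:\!0]$, and in particular $V\neq\emptyset$. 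The same argument applied to $\theiwahorychevalleymap_n$ in place of $\thechevalleymap_n$ covers the Iwahori case, as indicated in the proof of \cref{thm:main}.

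\emph{The main obstacle.} The only genuinely delicate point is the construction of $f_0$: one needs a single finite-type model over $S_0$ that is simultaneously flat over $S_0$ and has special fibre equal to $L_n^{\mathrm{eq},+}(\chi)$ over $[1\!:\!0]$. This is routine once one exploits that $\pararcspace_n(\thelie)$ and $\pararcspace_n(\chevalley)$ are, over $\projectivespace^1$, perfections of actual vector bundles whose fibres over $[1\!:\!0]$ are the perfections of the equal-characteristic jet spaces. All the mathematical content of the statement is \cref{thm:chevflatcharp}; everything else is the generic-flatness and fibre-criterion formalism packaged in \cref{prop:flatlocusopen} and \cref{thm:critflatfibre}.
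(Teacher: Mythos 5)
Your proof is correct and follows essentially the same route as the paper: obtain openness of $V$ from \cref{prop:flatlocusopen}, then show the fibre over $[1\!:\!0]$ lies in $V$ by applying the fibre criterion \cref{thm:critflatfibre}, using that this fibre is the equal-characteristic jet morphism (\cref{prop:fibers_over_pararc_spaces}) which is flat by \cref{thm:chevflatcharp}. The extra discussion of choosing a finitely presented model $f_0$ over an affine chart $S_0$ is a helpful elaboration of the hypotheses of \cref{thm:critflatfibre} that the paper leaves implicit.
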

\begin{proof}
    The first statement follows from \cref{prop:flatlocusopen}, which guarantees that the flat locus of a morphism is open. 
    The second follows from applying \cref{thm:critflatfibre} to the fiber over \([1\colon 0]\), and noting that 
    by \cref{prop:fibers_over_pararc_spaces}, this fiber is isomorphic to the equidimensional  Chevalley morphism $L_n^{\mathrm{eq},+}(\thechevalleymap):L_n^{\mathrm{eq},+}(\thelie)\to L_n^{\mathrm{eq},+}(\chevalley)$, so it is flat by \cref{thm:chevflatcharp}.
\end{proof}

Now, we recall that \( \chi \colon \thelie \to \chevalley \) is \( \multiplicativegroup \)-equivariant with respect to the action on \( \thelie \) as a vector space. Thus:

\begin{proposition}
    The morphism \( \pararcspace_n(\chi) \colon \pararcspace_n(\thelie) \to \pararcspace_n(\chevalley) \) carries an action of \( (\multiplicativegroup)_{\projectivespace^1_{\perf}} \), which agrees on the target with the action coming from the structure of \( \pararcspace_n(\thelie) \) as a vector bundle.
\end{proposition}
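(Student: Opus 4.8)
The plan is to obtain the statement as a direct consequence of \cref{prop:gmequivariance} together with the Example following it. First I would record that $\thechevalleymap \colon \thelie \to \chevalley$ is $(\multiplicativegroup)_\basicints$-equivariant, where $\multiplicativegroup$ acts on the vector space $\thelie$ by scalar multiplication and on $\chevalley \simeq \affinespace^r_\basicints$ by the diagonal linear action under which the fundamental invariants are the weight vectors, with weights equal to their degrees $d_1,\dots,d_r$. Applying \cref{prop:gmequivariance} to $\thechevalleymap$ then produces $(\multiplicativegroup)_{\projectivespace^1_{\perf}}$-actions on $\pararcspace_n(\thelie)$ and on $\pararcspace_n(\chevalley)$, together with equivariance of $\pararcspace_n(\thechevalleymap)$; this is the first half of the assertion, and the only remaining point is to identify the induced action with the one coming from the vector-bundle structure.

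Next I would carry out this identification, both on $\pararcspace_n(\chevalley)$ and (by the same argument) on $\pararcspace_n(\thelie)$, using the vector-bundle structure of \cref{prop:paramjetofaffineisvb}. For $\pararcspace_n(\thelie)$ this is immediate, since $\thelie$ carries the standard scaling action, so this is exactly the Example following \cref{prop:gmequivariance} applied with $X = \affinespace^{\dim\thelie}_\basicints$. For $\pararcspace_n(\chevalley)$ I would decompose $\chevalley \simeq \affinespace^r_\basicints$ into its $\multiplicativegroup$-weight subspaces $\bigoplus_j \affinespace^{k_j}_\basicints$; since $X \mapsto X(\bnplus(R,[a:b]))$ carries finite products of affine $\basicints$-schemes to fibre products over $\projectivespace^1_{\perf}$, this yields an isomorphism $\pararcspace_n(\chevalley) \simeq \prod_j \pararcspace_n(\affinespace^{k_j}_\basicints)$ over $\projectivespace^1_{\perf}$ compatible with both the vector-bundle structures and the $(\multiplicativegroup)_{\projectivespace^1_{\perf}}$-actions. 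On the $j$-th factor the $\multiplicativegroup$-action is the standard scaling action precomposed with $\lambda \mapsto \lambda^{w_j}$, so the Example applies after this reparametrization, and reassembling the factors gives the claim.

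I do not expect a real obstacle here; the one point requiring a little care is checking that the vector-bundle structure on $\pararcspace_n(\chevalley)$ furnished by \cref{prop:paramjetofaffineisvb} (built from the charts in the proof of \cref{prop:Bisaffinespace}) is compatible with the weight-space decomposition of $\chevalley$, and that the Teichmüller lift $(\multiplicativegroup)_{\projectivespace^1_{\perf}} \to \pararcspace_n(\multiplicativegroup)$ used in \cref{prop:gmequivariance} is multiplicative, so that precomposing the $\multiplicativegroup$-parameter with $\lambda \mapsto \lambda^{w_j}$ genuinely matches the weight-$w_j$ scaling on the corresponding summand. Both are transparent from the coordinate-wise nature of the constructions involved, so the argument should go through without complications.
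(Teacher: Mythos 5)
Your first paragraph and the treatment of $\pararcspace_n(\thelie)$ match the paper's proof exactly: \cref{prop:gmequivariance} produces the $(\multiplicativegroup)_{\projectivespace^1_{\perf}}$-action and equivariance of $\pararcspace_n(\thechevalleymap)$, and the Example following it identifies the induced action on $\pararcspace_n(\thelie)$ with the vector-bundle scaling action, since the $\multiplicativegroup$-action on $\thelie$ is the standard scalar one. That is all the paper needs and uses (the statement's ``agrees on the target'' is evidently a slip for ``agrees on the source''; the proof of \cref{thm:mainact} only invokes the identification on $\pararcspace_n(\thelie)$, where the zero section and the projectivization $\projectivespace(\pararcspace_n(\thelie))$ are formed).

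The second paragraph on $\pararcspace_n(\chevalley)$ is not needed, and as written it overclaims. After decomposing $\chevalley$ into weight spaces you correctly observe that on the $j$-th factor the action induced via \cref{prop:gmequivariance} is the scalar action precomposed with $\lambda \mapsto \lambda^{d_j}$. But then ``reassembling the factors gives the claim'' is not right: since the degrees $d_j$ of the fundamental invariants are not all $1$, the resulting action on $\pararcspace_n(\chevalley)$ is a weighted scaling, not the action furnished by the vector-bundle structure of \cref{prop:paramjetofaffineisvb} (which is plain scalar multiplication). So the induced action on the target does not agree with the vector-bundle action, and the Example, which is stated only for the standard scaling action on $\affinespace^k$, does not apply after reparametrization in any sense that would give that agreement. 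Since this paragraph proves something stronger than what the proposition asserts (once the typo is corrected), the slip does not undermine the proof of the actual statement, which is complete after your treatment of the source side.
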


\begin{proof}
    This is an application of \cref{prop:gmequivariance}, along with the example that follows it.
\end{proof}

\begin{proof}[Proof of \cref{thm:mainact}]
    Let \( 0 \colon \projectivespace^1_{\perf} \to \pararcspace_n(\thelie) \) be the zero section. Set \( U_1 := 0^{-1}(V) \), where $V$ is the open perfect subscheme over which \( \pararcspace_n(\chi) \) is flat. Thus $U_1\subset \projectivespace^1_{\perf}$ is the open perfect subscheme over which \( \pararcspace_n(\chi) \) is flat at the zero section.

    Since \( \pararcspace_n(\chi) \) is \( (\multiplicativegroup)_{\projectivespace^1_{\perf}} \)-equivariant, the open perfect subscheme \( V \subset \pararcspace_n(\thelie) \) is a union of \( (\multiplicativegroup)_{\projectivespace^1_{\perf}} \)-orbits. Hence, it corresponds to an open perfect subscheme 
    \[V' \subset \projectivespace(\pararcspace_n(\thelie)):= \parenth{\pararcspace_n(\thelie)\setminus 0 (\projectivespace^1_{\perf})}/(\multiplicativegroup)_{\projectivespace^1_{\perf}}.  \] 

    Let \( Z \subset \projectivespace^1_{\perf} \) denote the image of \( \projectivespace(\pararcspace_n(\thelie)) \smallsetminus V' \) under the  projection $\projectivespace(\pararcspace_n(\thelie))\to \projectivespace^1_{\perf}$. This is the set of points over which there exists a nonzero point where \( \pararcspace_n(\chi) \) is not flat. Since \( \projectivespace(\pararcspace_n(\thelie)) \) is perfectly proper (hence universally closed), it follows that \( Z \) is closed.

    Define \( U_2 := \projectivespace^1_{\perf} \smallsetminus Z \) and let \( U := U_1 \cap U_2 \). Then for any point in \( U \), the morphism \( \pararcspace_n(\chi) \) is flat everywhere on the fibre over that point. The set \( U \) is open (as an intersection of opens) and nonempty since \( [1\colon 0] \in U \).
\end{proof}

\subsection*{Flatness for \texorpdfstring{\( \liewahory \)}{the Lie algebra of the Iwahory}}


\begin{definition}
    The morphism of ring $\projectivespace^1_{\perf}$-schemes \( \bnplus \to \affinespace^1_{\projectivespace^1_{\perf}} \), given by modding out by the ideal \( (\xi_2) \), induces a morphism
    \[
        \pararcspace_n(\thelie) \to \pararcspace_0(\thelie).
    \]
    We define \( \paraliewahory_n \) to be the preimage of \( \pararcspace_0(\theborellie) \).
\end{definition}

\begin{proposition}
    As a scheme, \( \paraliewahory_n \) is the perfectization of a sub-vector bundle of \( \pararcspace_n(\thelie) \).
\end{proposition}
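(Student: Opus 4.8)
The plan is to realize the reduction morphism $\pararcspace_n(\thelie)\to\pararcspace_0(\thelie)$ as the perfectization of a surjective morphism of vector bundles over $\projectivespace^1$, and then to identify $\paraliewahory_n$ as the preimage of a sub-bundle — which is automatically a sub-bundle. First I would recall from \cref{prop:Bisaffinespace} and its proof that $\bnplus$ is the perfectization of a rank-$n$ vector bundle $\mathcal{B}_n$ over $\projectivespace^1$, trivialized over the chart where $[a:-1]$ is a coordinate by $(a_0,\dots,a_{n-1})\mapsto\sum_i[a_i]\uniformizer^i$, over the chart where $[1:b]$ is a coordinate by $(a_0,\dots,a_{n-1})\mapsto\sum_i[a_i]t^i$, and with transition map $(a_0,\dots,a_{n-1})\mapsto(a_0,(-b/a)a_1,\dots,(-b/a)^{n-1}a_{n-1})$. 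The key point I would check is that the morphism $\rho\colon\bnplus\to\affinespace^1_{\projectivespace^1_{\perf}}$ obtained by killing $(\xi_2)$ is, in each of these two charts, the linear projection $(a_0,\dots,a_{n-1})\mapsto a_0$: indeed, in the first chart $\xi_2\equiv\uniformizer$ modulo $\xi_1$ up to a unit, and in the second $\xi_2\equiv t$ up to a unit (as in the proof of \cref{prop:Bisaffinespace}), so modding out by $(\xi_2)$ retains exactly the coefficient of the zeroth power. Since these formulas are linear and agree on the overlap (the transition map fixes $a_0$), $\rho$ is the perfectization of a surjection of vector bundles $\mathcal{B}_n\onto\mathcal{O}_{\projectivespace^1}$.

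Next I would fix a linear identification $\thelie\cong\affinespace^k_{\basicints}$ with $k=\dim\thelie$, chosen so that $\theborellie$ becomes a coordinate subspace $\affinespace^{k'}_{\basicints}\subset\affinespace^k_{\basicints}$. Then, using \cref{prop:paramjetofaffineisvb}, $\pararcspace_n(\thelie)\cong(\mathcal{B}_n^{\oplus k})_{\perf}$, the morphism $\pararcspace_n(\thelie)\to\pararcspace_0(\thelie)$ is $\rho^{\oplus k}$, hence the perfectization of a surjection of vector bundles $\Phi\colon\mathcal{B}_n^{\oplus k}\onto\mathcal{O}_{\projectivespace^1}^{\oplus k}$, and $\pararcspace_0(\theborellie)\into\pararcspace_0(\thelie)$ is the perfectization of the trivial sub-bundle inclusion $\mathcal{O}_{\projectivespace^1}^{\oplus k'}\into\mathcal{O}_{\projectivespace^1}^{\oplus k}$. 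Since perfectization preserves fibre products, $\paraliewahory_n=\pararcspace_n(\thelie)\times_{\pararcspace_0(\thelie)}\pararcspace_0(\theborellie)$ is the perfectization of $\mathcal{W}:=\Phi^{-1}(\mathcal{O}_{\projectivespace^1}^{\oplus k'})$. To finish, I would invoke the standard fact that the preimage of a sub-bundle under a surjective morphism of vector bundles is again a sub-bundle: since $\mathcal{O}_{\projectivespace^1}^{\oplus k}$ is locally free, the sequence $0\to\ker\Phi\to\mathcal{B}_n^{\oplus k}\to\mathcal{O}_{\projectivespace^1}^{\oplus k}\to0$ is locally split, so $\ker\Phi$ is a sub-bundle of rank $(n-1)k$, and pulling back along $\mathcal{O}_{\projectivespace^1}^{\oplus k'}\into\mathcal{O}_{\projectivespace^1}^{\oplus k}$ gives $0\to\ker\Phi\to\mathcal{W}\to\mathcal{O}_{\projectivespace^1}^{\oplus k'}\to0$, exhibiting $\mathcal{W}$ as a locally free subsheaf of $\mathcal{B}_n^{\oplus k}$ of rank $(n-1)k+k'$ with locally free cokernel. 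Hence $\paraliewahory_n=\mathcal{W}_{\perf}$ is the perfectization of a sub-vector bundle of $\pararcspace_n(\thelie)=(\mathcal{B}_n^{\oplus k})_{\perf}$.

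The one genuinely non-formal step — and what I would regard as the main obstacle — is the linearity of $\rho$ with respect to the vector bundle structure of \cref{prop:Bisaffinespace}: this is not automatic, because by the remark following \cref{prop:Bisaffinespace} the vector bundle structure on $\bnplus$ disagrees with its ring-scheme addition away from the point $[1:0]$, so one must use the explicit chart computation above rather than any formal argument. Everything else — compatibility of perfectization with fibre products, and the fact that the preimage of a sub-bundle under a bundle surjection is a sub-bundle — is routine.
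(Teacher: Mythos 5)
Your proof is correct and takes essentially the same approach as the paper: both rely on the two explicit trivializations of $\bnplus$ from \cref{prop:Bisaffinespace} and on a choice of $\basicints$-basis of $\thelie$ adapted to $\theborellie$. The paper writes out explicit local bases for $\paraliewahory_n$ in each chart and observes they agree on the overlap, whereas you repackage the same computation by first verifying that $\rho\colon\bnplus\to\affinespace^1_{\projectivespace^1_{\perf}}$ is a surjection of vector bundles and then invoking the general fact that the preimage of a sub-bundle under a bundle surjection is a sub-bundle; your version localizes the genuine subtlety (linearity of $\rho$ despite the ring-scheme addition on $\bnplus$ not matching the vector-bundle addition away from $[1:0]$) more explicitly, but the mathematical content is the same.
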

\begin{proof}
    Take a basis \( (b_1, \ldots, b_{\dim \theborellie}, \ldots, b_{\dim \thelie}) \) for \( \thelie \) over $\basicints$ such that \( (b_1, \ldots, b_{\dim \theborellie}) \) is a basis for \( \theborellie \). Then, for $[a:b] \in \projectivespace^1_{\perf}(R)$, we identify \( \pararcspace_n(\thelie) \cong \bigoplus \bnplus(R,[a:b]) \cdot b_i \).

    Recall the trivializations of \( \bnplus \) given in the proof of \cref{prop:Bisaffinespace}. 
    Outside the point \( [1\!:\!0] \), we can take a basis for \( \paraliewahory_n \) as
    \[
        \explicitset{ \varpi^i b_j \mid 0 < i < n \text{ if } j \leq \dim \theborellie;\;\; 0 \leq i < n \text{ otherwise} }.
    \]
    Outside \( [0\!:\!1] \), we can take a basis for \( \paraliewahory_n \) as
    \[
        \explicitset{ t^i b_j \mid 0 < i < n \text{ if } j \leq \dim \theborellie;\;\; 0 \leq i < n \text{ otherwise} }.
    \]
    
    One sees that these bases span the same subspace outside both \( [1\!:\!0] \) and \( [0\!:\!1] \).
\end{proof}

\begin{proposition}\label{prop:generic_flta_in_lie_iwahory_case}
    The schemes \( \paraliewahory_n \) and \( \pararcspace_n(\chevalley) \) are both perfections of vector bundles over \( \projectivespace^1 \). As such, they are flat over \( \projectivespace^1_{\perf} \). Moreover, \( \paraliewahory_n \), \( \pararcspace_n(\chevalley) \), and \( \projectivespace^1_{\perf} \) are all perfectly finitely presented over \( \spec\resfield \). 

    There exists an open perfect subscheme \( V \subset \pararcspace_n(\thelie) \) where \( \pararcspace_n(\theiwahorychevalleymap) \) is flat, and this open perfect subscheme contains the entire preimage of the point \( [1\!:\!0] \).
\end{proposition}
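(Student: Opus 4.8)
The plan is to transcribe, almost verbatim, the argument already carried out above for $\pararcspace_n(\chi)$. For the structural part of the statement, I would first note that $\paraliewahory_n$ is the perfection of a sub-vector bundle of $\pararcspace_n(\thelie)$ by the preceding proposition, and that $\pararcspace_n(\chevalley)\simeq\pararcspace_n(\affinespace^r_\basicints)$ is the perfection of a vector bundle over $\projectivespace^1$ by \cref{prop:paramjetofaffineisvb}. Vector bundles are flat over $\projectivespace^1$, and since the perfection functor preserves flatness and commutes with base change (\cref{perfschemes:prop:propertiesofperfschemespreserved}), both $\paraliewahory_n$ and $\pararcspace_n(\chevalley)$ are flat over $\projectivespace^1_{\perf}$. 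A vector bundle over $\projectivespace^1$ is finitely presented over $\spec\resfield$, so its perfection — and likewise $\projectivespace^1_{\perf}$ itself — is perfectly finitely presented over $\spec\resfield$ by definition.

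For the existence of the flat locus, I would apply \cref{prop:flatlocusopen} to $\pararcspace_n(\theiwahorychevalleymap)\colon\paraliewahory_n\to\pararcspace_n(\chevalley)$, which is a morphism between quasi-compact perfectly finitely presented perfect schemes; this yields an open perfect subscheme $V\subset\paraliewahory_n$ on which the morphism is flat. The remaining point is that $V$ contains the entire fibre over $[1\!:\!0]$. To see this I would invoke the crit\`ere de platitude par fibres (\cref{thm:critflatfibre}), relative to the base $\projectivespace^1_{\perf}$, at an arbitrary point $x$ lying in the fibre of $\paraliewahory_n$ over $[1\!:\!0]$: the hypotheses to check are that $\paraliewahory_n$ is flat over $\projectivespace^1_{\perf}$ at $x$ (true, since $\paraliewahory_n$ is the perfection of a vector bundle), and that the base change of $\pararcspace_n(\theiwahorychevalleymap)$ along $\{[1\!:\!0]\}\hookrightarrow\projectivespace^1_{\perf}$ is flat at $x$.

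The crucial input is the identification of this last base change with the equal-characteristic Iwahori jet space morphism $L_n^{\mathrm{eq},+}(\theiwahorychevalleymap)\colon\liewahory^{\mathrm{eq}}_n\to L_n^{\mathrm{eq},+}(\chevalley)$, whose flatness is \cref{thm:chevflatcharp}. By \cref{prop:bniswittortrivial} the ring $\bnplus(R,[1\!:\!0])$ is identified with $R[[t]]/t^n$, and I would check that under this identification the quotient morphism $\bnplus\to\affinespace^1_{\projectivespace^1_{\perf}}$ used to define $\paraliewahory_n$ (modding out by $\xi_2$) becomes the reduction $R[[t]]/t^n\to R$ modulo $t$, since over the identity matrix one has $\xi_1=\uniformizer$ and $\xi_2=t$. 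Consequently the fibre of $\paraliewahory_n$ over $[1\!:\!0]$ consists precisely of those elements of $\thelie(R[[t]]/t^n)$ whose reduction modulo $t$ lies in $\theborellie(R)$, that is, it is $\liewahory^{\mathrm{eq}}_n$, and the restricted morphism is $L_n^{\mathrm{eq},+}(\theiwahorychevalleymap)$ (compare \cref{prop:fibers_over_pararc_spaces}). Feeding this into \cref{thm:critflatfibre} shows that $\pararcspace_n(\theiwahorychevalleymap)$ is flat at every point over $[1\!:\!0]$, so that $V$ contains this preimage.

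The hard part will be precisely this identification of the special fibre: one has to verify that the ``modulo $(\xi_2)$'' description of $\paraliewahory_n$ specialises correctly over $[1\!:\!0]$ to the classical Iwahori condition on an equal-characteristic jet, keeping track of the subtlety that the section $\xi_2$ is only well-defined on $\bnplustilde$ before quotienting by $H$, while the ideal $(\xi_1,\xi_2)$ — equivalently the quotient map to $\affinespace^1_{\projectivespace^1_{\perf}}$ — does descend to $\projectivespace^1_{\perf}$. Everything else is a routine copy of the $\pararcspace_n(\chi)$ argument.
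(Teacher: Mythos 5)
Your proof is correct and follows essentially the same route as the paper, which simply declares that the statement ``follows by exactly the same arguments as their counterparts in the case of $\pararcspace$'': structural claims from the preceding proposition and \cref{prop:paramjetofaffineisvb}, the open flat locus from \cref{prop:flatlocusopen}, and the fibre over $[1\!:\!0]$ via \cref{thm:critflatfibre} together with the identification of that fibre as the equal-characteristic Iwahori jet morphism and \cref{thm:chevflatcharp}. Your care about $\xi_2$ not descending while the ideal does is the right subtlety to flag (cf.\ the paper's own remark to that effect), and you correctly read the statement's ``$V\subset\pararcspace_n(\thelie)$'' as the intended ``$V\subset\paraliewahory_n$''.
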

\begin{proof}
    These all follow by exactly the same arguments as their counterparts in the case of $\pararcspace$.
\end{proof}

\begin{corollary}
    Let \( v \colon \paraliewahory_n
    \rightarrow \pararcspace_n(\chevalley) \) be the composition \( \paraliewahory_n \to \pararcspace_n \thelie \rightarrow \pararcspace_n(\chevalley) \). 
    There exists a Zariski open perfect subscheme \( [1\!:\!0] \in U \subset \projectivespace^1_{\perf} \) such that \( v\mid_U \colon \paraliewahory\mid_U \rightarrow \pararcspace_n(\chevalley)\mid_U \) is flat.
\end{corollary}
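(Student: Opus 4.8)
The plan is to run the argument from the proof of \cref{thm:mainact} essentially verbatim, with the perfect vector bundle $\pararcspace_n(\thelie)$ and its zero section replaced by the perfect sub-vector bundle $\paraliewahory_n \subset \pararcspace_n(\thelie)$ and its zero section, and with \cref{prop:generic_flta_in_lie_iwahory_case} playing the role that the flatness statements for $\pararcspace_n(\thechevalleymap)$ played there. Concretely, let $V \subset \paraliewahory_n$ be the open perfect subscheme on which $v$ is flat; by \cref{prop:generic_flta_in_lie_iwahory_case} it contains the entire preimage of $[1\colon 0]$ under the projection $\paraliewahory_n \to \projectivespace^1_{\perf}$ (this uses that the fibre of $v$ over $[1\colon 0]$ is the equal-characteristic morphism $\liewahory^{\mathrm{eq}}_n \to L_n^{\mathrm{eq},+}(\chevalley)$, which is flat by \cref{thm:chevflatcharp}).

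First I would record that $\thechevalleymap$ is $\multiplicativegroup$-equivariant for the scaling action on $\thelie$ and that this action preserves $\paraliewahory_n$: since $\theborellie \subset \thelie$ is a linear subspace, the congruence condition cutting out $\liewahory$ is scaling-invariant, so by \cref{prop:gmequivariance} and the example following it the induced $(\multiplicativegroup)_{\projectivespace^1_{\perf}}$-action on $\paraliewahory_n$ agrees with its structure as a perfect sub-vector bundle of $\pararcspace_n(\thelie)$, while $v$ is $(\multiplicativegroup)_{\projectivespace^1_{\perf}}$-equivariant with target action the vector bundle scaling on $\pararcspace_n(\chevalley)$. Next, letting $0\colon \projectivespace^1_{\perf} \to \paraliewahory_n$ denote the zero section, I would set $U_1 := 0^{-1}(V)$, which is open and contains $[1\colon 0]$. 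Since $v$ is equivariant, $V$ is a union of $(\multiplicativegroup)_{\projectivespace^1_{\perf}}$-orbits, so $V \smallsetminus 0(\projectivespace^1_{\perf})$ descends to an open $V' \subset \projectivespace(\paraliewahory_n) := \parenth{\paraliewahory_n \smallsetminus 0(\projectivespace^1_{\perf})}/(\multiplicativegroup)_{\projectivespace^1_{\perf}}$. As $\projectivespace(\paraliewahory_n)$ is the perfection of a projective bundle over $\projectivespace^1$, hence perfectly proper and in particular universally closed, the image $Z \subset \projectivespace^1_{\perf}$ of the closed set $\projectivespace(\paraliewahory_n) \smallsetminus V'$ is closed. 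Then $U := U_1 \cap (\projectivespace^1_{\perf} \smallsetminus Z)$ is open, contains $[1\colon 0]$ (no nonzero point of the fibre over $[1\colon 0]$ lies outside $V$), and over every point of $U$ the whole fibre of $\paraliewahory_n$ lies in $V$; since flatness is local on the source, $v|_U \colon \paraliewahory_n|_U \to \pararcspace_n(\chevalley)|_U$ is flat.

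The only point requiring genuine care is the claim that $\paraliewahory_n$ is a $(\multiplicativegroup)_{\projectivespace^1_{\perf}}$-stable perfect sub-vector bundle of $\pararcspace_n(\thelie)$ with compatible scaling, together with the equivariance of $v$; but this is already essentially contained in the preceding proposition identifying $\paraliewahory_n$ with (the perfection of) a sub-vector bundle, so everything here is formal and parallels the proof of \cref{thm:mainact}. Finally, specializing at a rational point of $U$ distinct from $[1\colon 0]$ and invoking \cref{prop:fibers_over_pararc_spaces} yields, by the same generic-flatness argument as in the proof of \cref{thm:main}, the flatness of $\theiwahorychevalleymap_n \colon \liewahory_n \to \arcspace_n(\chevalley)$, completing the Iwahori case of \cref{thm:main}.
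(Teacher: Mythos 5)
Your proposal is correct and is exactly what the paper's one-line proof (``the same argument as in the proof of \cref{thm:mainact} applies, using the second assertion of \cref{thm:chevflatcharp}'') is shorthand for: you replace $\pararcspace_n(\thelie)$ by the sub-vector bundle $\paraliewahory_n$, invoke \cref{prop:generic_flta_in_lie_iwahory_case} for the flat locus containing the fibre over $[1\!:\!0]$, and rerun the $U_1,\,V',\,Z$ argument using equivariance and properness of the projectivized bundle. The one detail you carefully record that the paper leaves implicit — that $\paraliewahory_n$ is $(\multiplicativegroup)_{\projectivespace^1_{\perf}}$-stable with the scaling agreeing with the sub-bundle structure — is the right thing to check and is indeed supplied by the preceding proposition.
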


\begin{proof}
    The same argument as in the proof of \cref{thm:mainact} applies, using the second assertion of \cref{thm:chevflatcharp} instead of the first one. 
\end{proof}

\begin{corollary}\label{cor:lie_Iwahory_flat}
The Chevalley morphism 
\(\arcspace_n(\theiwahorychevalleymap )\colon \liewahory_n \to \arcspace_n( \chevalley)\) is flat. 
\end{corollary}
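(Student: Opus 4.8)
The plan is to repeat, essentially verbatim, the argument used to derive \cref{thm:main} from \cref{thm:mainact}, now taking as input the preceding corollary: the restriction $v|_U \colon \paraliewahory_n|_U \to \pararcspace_n(\chevalley)|_U$ is flat for some Zariski-open perfect subscheme $U \subset \projectivespace^1_{\perf}$ containing $[1\colon 0]$. First I would choose a rational point $x \colon \spec\resfield \to U$ with $x \neq [1\colon 0]$; such a point exists because $\resfield$ is algebraically closed, hence infinite, and $U$ is a nonempty open perfect subscheme of $\projectivespace^1_{\perf}$. The base change of the flat morphism $v|_U$ along $x$ is then flat.

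Second, I would identify this base change with $\arcspace_n(\theiwahorychevalleymap)$. By \cref{prop:bniswittortrivial}, over any point of $\projectivespace^1_{\perf}$ other than $[1\colon 0]$ the ring scheme $\bnplus$ specializes to $R \mapsto \witt(R)/\uniformizer^n$; hence by \cref{prop:fibers_over_pararc_spaces} the fibre of $\pararcspace_n(\chevalley)$ over $x$ is $\arcspace_n(\chevalley)$, and likewise the fibre of $\pararcspace_n(\thelie)$ over $x$ is $\arcspace_n(\thelie)$. Under the latter identification, the morphism $\pararcspace_n(\thelie) \to \pararcspace_0(\thelie)$ defined by killing the ideal $(\xi_2)$ specializes to the truncation $\arcspace_n(\thelie) \to \arcspace_0(\thelie)$, so by the definition of $\paraliewahory_n$ as the preimage of $\pararcspace_0(\theborellie)$ (and of $\liewahory_n$ in \cref{wittloopandarc:def:iwahori}), the fibre of $\paraliewahory_n$ over $x$ is exactly $\liewahory_n$. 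Consequently $v|_U$ base-changes along $x$ to the morphism $\arcspace_n(\theiwahorychevalleymap) \colon \liewahory_n \to \arcspace_n(\chevalley)$, which is therefore flat as a base change of a flat morphism.

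The only point requiring a moment's care is the fibrewise identification $\paraliewahory_n|_x \simeq \liewahory_n$ away from $[1\colon 0]$, but this is immediate from the explicit trivializations of $\bnplus$ used in the proof that $\paraliewahory_n$ is a perfectized subbundle of $\pararcspace_n(\thelie)$. Everything else is formal, all the substance having already gone into \cref{thm:chevflatcharp}, \cref{prop:generic_flta_in_lie_iwahory_case}, and the openness of the flat locus (\cref{prop:flatlocusopen}); so I do not expect any genuine obstacle here.
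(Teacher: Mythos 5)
Your proposal is correct and follows essentially the same route as the paper: the paper's proof of this corollary simply points to the argument after \cref{thm:mainact} (which derives \cref{thm:main} from generic flatness by specializing at a rational point $x \ne [1\!:\!0]$ of $U$), and you reproduce that specialization argument in detail, including the fibrewise identification $\paraliewahory_n|_x \simeq \liewahory_n$ via \cref{prop:bniswittortrivial} and \cref{prop:fibers_over_pararc_spaces}. No gap.
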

\begin{proof}
    See the argument after \cref{thm:mainact}.
\end{proof}

\section{Codimension of locally closed perfect subschemes of lattices and valuations of differentials}\label[Appendix]{appendixcodim}

\begin{para}
    Let \( \sourcelattice, \targetlattice \) be finite free \( \basicints \)-modules of the same rank \( n \). We view \( \sourcelattice, \targetlattice \) as affine spaces over \( \spec \basicints \), and remember their structure as group schemes.
\end{para}

\begin{para}\label{appendixcodim:valofdifferential}{\bf Valuation of the differential}
    \begin{mylist}
        \item Let \( f \colon \sourcelattice \to \targetlattice \) be a morphism of schemes over \( \spec \basicints \).
        \item Given coordinates (that is, isomorphisms of group schemes) \( \sourcelattice \simeq \affinespace^n_\basicints \), \( \targetlattice \simeq \affinespace^n_\basicints \), we get for every \( x \in \sourcelattice(\basicints) \) a Jacobian matrix \( \differential_x(f) \in M_n(\basicints) \) is given by the partial derivatives of \( f \) at \( x \) according to the coordinates.
        \item We view \( \differential_x(f) \) as a linear morphism \( \differential_x(f) \colon \sourcelattice \to \targetlattice \).
        \item The matrix \( \differential_x(f) \) clearly depends on the coordinates, but the valuation of its determinant \( d(f;x) := \valuation(\det(\differential_x(f))) \) according to \( \uniformizer \) depends only on \( x \) and \( f \).
    \end{mylist}
\end{para}

The goal of this appendix is to show the following theorem.

\begin{theorem}\label{appendixcodim:main}
    Let \( \sourcelattice, \targetlattice \) be finite free \( \basicints \)-modules of the same rank \( n \). Let \( f \colon \sourcelattice \to \targetlattice \) be a morphism of schemes over \( \spec \basicints \). Let \( X \subset\arcspace (\sourcelattice) \) and \( Y \subset\arcspace (\targetlattice) \) be pfp-locally closed subschemes of pure codimensions \( c_X \) and \( c_Y \), respectively, such that \( X = \arcspace (f)^{-1}(Y) \). 

    Suppose \(\arcspace (f)|_X \colon X \to Y \) is finite étale, and that there exists \( e \in \nats \) such that \( d(f; x) = e \) for all \( x \in X \). Then \( c_Y = c_X + e \).
\end{theorem}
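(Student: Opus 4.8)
The plan is to reduce the statement to a finite truncation level, where it becomes a computation of fibre dimensions governed by the elementary divisors of the Jacobian of $f$.

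\textbf{Step 1: reduction to a finite level.} Since $\arcspace(\sourcelattice) \simeq \lim_N \arcspace_N(\sourcelattice)$ with transition maps that are perfections of affine-space bundles, hence strongly pro perfectly smooth, and since $X \subset \arcspace(\sourcelattice)$ and $Y \subset \arcspace(\targetlattice)$ are pfp-locally closed of pure codimension, one can choose $N \gg 0$ and a locally closed subscheme $Y_N \subset \arcspace_N(\targetlattice)$ with $\codim_{\arcspace_N(\targetlattice)} Y_N = c_Y$ and $Y = Y_N \times_{\arcspace_N(\targetlattice)} \arcspace(\targetlattice)$; then, using the compatibility of truncation with $\arcspace(f)$, the subscheme $X_N := \arcspace_N(f)^{-1}(Y_N) \subset \arcspace_N(\sourcelattice)$ satisfies $X = X_N \times_{\arcspace_N(\sourcelattice)} \arcspace(\sourcelattice)$, so (enlarging $N$ if needed) $X_N$ is a level-$N$ model of $X$ with $\codim_{\arcspace_N(\sourcelattice)} X_N = c_X$. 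By \cref{wittloopandarc:example:1daffinespace} the schemes $\arcspace_N(\sourcelattice)$ and $\arcspace_N(\targetlattice)$ are perfect affine spaces of the same dimension $Nn$, hence $X_N$ and $Y_N$ are equidimensional of dimensions $Nn - c_X$ and $Nn - c_Y$, and the assertion $c_Y = c_X + e$ is equivalent to $\dim X_N = \dim Y_N + e$. Because $\arcspace(f)|_X \colon X \to Y$ is finite étale, in particular surjective, the morphism $g_N := \arcspace_N(f)|_{X_N} \colon X_N \to Y_N$ is surjective for $N \gg 0$ (its image is constructible and stabilizes in $N$). Applying the fibre dimension theorem to the surjection $g_N$ of equidimensional schemes, it now suffices to prove that for $N \gg 0$ every fibre of $g_N$ over a geometric point of $Y_N$ has dimension exactly $e$.

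\textbf{Step 2: reduction to the Smith normal form of the Jacobian.} Fix a geometric point $\bar x$ of $X_N$ over a point $x_\infty$ of $X$, viewed as an element of $\sourcelattice(\witt(K))$ with $K$ the residue field at $\bar x$, and set $A := \differential_{x_\infty}(f)$, a square matrix over the complete discrete valuation ring $\witt(K)$ with $\valuation(\det A) = e$ by hypothesis. The fibre of $g_N$ through $\bar x$, and its local dimension there, depend only on $f$ in a formal neighbourhood of $x_\infty$. Base changing the coefficient ring $\basicints$ to $\witt(K)$ (which leaves fibre dimensions unchanged), and using that translations by $\basicints$-points and $\basicints$-linear automorphisms of $\sourcelattice$ and $\targetlattice$ induce automorphisms of the corresponding truncated arc spaces commuting with $\arcspace_N(-)$, we may arrange $x_\infty = 0$, $f(0) = 0$, and $\differential_0(f) = D := \operatorname{diag}(\uniformizer^{a_1}, \dots, \uniformizer^{a_n})$ with $a_1, \dots, a_n \in \nats$ and $\sum_i a_i = e$.

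\textbf{Step 3: the local computation, which is the main obstacle.} It remains to show that for $N \gg 0$ the fibre $\arcspace_N(f)^{-1}(0) \subset \arcspace_N(\sourcelattice)$ has local dimension $e$ at the origin. For the linear model $f = D$ the fibre is the product over $i$ of the $\uniformizer^{a_i}$-torsion subscheme of $\arcspace_N(\affinespace^1_\basicints)$; each such subscheme is isomorphic to $\affinespace^{a_i}_{\perf}$, so the fibre is $\affinespace^{e}_{\perf}$ of dimension $e$. The content is that replacing $D$ by $f = D + q$, where $q$ vanishes to order $\ge 2$ at the origin, does not change this dimension. I would prove this in two halves. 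For the upper bound $\dim_0 \arcspace_N(f)^{-1}(0) \le e$, a Newton-polygon-type estimate bounds the dimension of the fibre of $\arcspace_N(f)$ by the $\uniformizer$-adic valuation of $\det \differential(f)$, which is constantly $e$ along $X$. For the matching lower bound one needs an inverse function theorem ``with defect $e$'': after using the ordinary inverse function theorem on the coordinates with $a_i = 0$ to eliminate them, one solves the congruence $f(\xi) \equiv 0 \pmod{\uniformizer^N}$ by Hensel's lemma on the range of coordinates of high $\uniformizer$-adic valuation — exactly the range in which, for $N$ large, the components of $q$ acquire valuation $\ge N$ and hence drop out modulo $\uniformizer^N$ — which recovers the linear picture and dimension $e$. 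Performing this analysis uniformly in $\bar x$ and feeding the resulting fibre-dimension equality back into the dimension count of Step 1 yields $c_Y = c_X + e$. The delicate point, and the one I expect to require the most care, is precisely this inverse-function/Hensel argument pinning the fibre dimension to exactly $e$ rather than merely bounding it.
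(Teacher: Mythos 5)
Your strategy---truncate to a finite level $N$, show that $g_N\colon X_N\to Y_N$ is a surjection of equidimensional perfect schemes all of whose fibres have dimension exactly $e$, and then read off $c_Y-c_X=e$ from a fibre--dimension count---is a genuinely different route from the one taken in the paper. The paper never computes fibre dimensions of any truncated map. It instead proves an arc-space inverse function theorem (\cref{thm:arc_inverse_func}): for $M>e$ the morphism $f$ restricts to an \emph{isomorphism} of pfp-closed subschemes $x+\uniformizer^M\arcspace(\sourcelattice)\isoto f(x)+\arcspace(D_x f)(\uniformizer^M\arcspace(\sourcelattice))$. By \cref{appendix:has_p_adic_nbrd} one can choose $M$ so these balls sit inside $X$ and $Y$; by the linear computation (\cref{appendixcodim:lin_case}) they have codimensions $Mn$ and $Mn+e$ in $\arcspace(\sourcelattice)$ and $\arcspace(\targetlattice)$; and since the horizontal maps in the resulting three-row diagram are all of relative dimension zero, codimension additivity (\cref{dimthry:prop:compositionofequidim}) gives $c_Y=c_X+e$. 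The argument is carried out at a single rational point $x\in X(\resfield)$ and requires no uniformity in $\bar x$ or $N$.

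The gap in your proposal is exactly where you flag it: Step~3 is a plan, not a proof. Both the Newton-polygon upper bound and the Hensel-type lower bound for $\dim_0\arcspace_N(f)^{-1}(0)$ need to be made precise, and the lower bound is delicate: Hensel's lemma produces a \emph{unique} solution once the high-valuation coordinates are fixed, so identifying the exact number of free parameters (not merely showing solutions exist) is where all the work lies, and it must be carried out uniformly in the level $N$ and the geometric point $\bar x$ of $X_N$. What you would end up proving is essentially the truncated shadow of \cref{thm:arc_inverse_func}, which the paper obtains cleanly by a Smith-normal-form change of variables reducing to the \'etale case, where it follows from the Cartesian square in \cref{wittloopandarc:properties:arpreserveetale}. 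Two smaller points: your Step~1 invocation of ``the fibre dimension theorem'' needs a version valid for equidimensional (not necessarily irreducible) $X_N$, $Y_N$, which should be stated or cited; and the assertion that the Newton-polygon bound controls $\dim\arcspace_N(f)^{-1}(0)$ by $v_\uniformizer(\det\differential f)$ is not standard and would itself require proof. None of this is a wrong idea---it is an unfinished one, and the paper's codimension-additivity packaging avoids precisely the local analysis you were worried about.
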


\begin{para}\label{appendixcodim:setuplinearcase}{\bf Linear case.}
    \begin{mylist}
        \item Suppose \( f \colon \sourcelattice \to \targetlattice \) is an \( \basicints \)-linear morphism.
        \item We choose coordinates as in \cref{appendixcodim:valofdifferential} such that \( f \) is represented by a matrix \( D \).
    \end{mylist}
\end{para}

\begin{lemma}\label{appendixcodim:lin_case}
    Let \( f \) be as in \cref{appendixcodim:setuplinearcase}. Then it induces a perfectly finitely presented closed embedding \( \arcspace( f) \colon \arcspace(\sourcelattice )\to \arcspace(\targetlattice )\) of pure codimension \( e = \valuation(\det f) \).
\end{lemma}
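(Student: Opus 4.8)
The plan is to reduce to the Smith normal form of the matrix $D$ and then compute the arc space of a diagonal linear map. First I would use that $\speciallineargroup_n(\basicints)$ acts on both source and target by changes of coordinates (which are automorphisms of the lattices as group schemes, hence induce automorphisms of the arc spaces $\arcspace(\sourcelattice)$ and $\arcspace(\targetlattice)$ fixing the point $0$), and these automorphisms change neither the pfp-closed-embedding property nor the codimension; likewise $\valuation(\det f)$ is unchanged. By the theory of modules over the PID $\basicints$ — whose nonzero ideals are the $(\uniformizer^k)$ — there exist bases in which $f$ is represented by a diagonal matrix $\operatorname{diag}(\uniformizer^{k_1},\dots,\uniformizer^{k_n})$ with $k_i\in\nats$, so $e=\valuation(\det f)=\sum_i k_i$. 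Thus it suffices to treat the case $\sourcelattice=\targetlattice=\affinespace^1_\basicints$ and $f$ multiplication by $\uniformizer^k$, and then take products over the $n$ coordinates (using that $\arcspace$ commutes with finite products and that a product of pfp-closed embeddings of pure codimensions $k_i$ is a pfp-closed embedding of pure codimension $\sum k_i$, by \cref{dimthry:prop:compositionofequidim} applied to the graphs, or directly).

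For the one-dimensional case, recall from \cref{wittloopandarc:example:1daffinespace} that $\arcspace(\affinespace^1_\basicints)\simeq\spec(\resfield[a_i]_{i\ge 0})_{\perf}$, with an $R$-point written as $\sum_{i\ge 0}[a_i]\uniformizer^i$, and similarly $\arcspace_m(\affinespace^1_\basicints)\simeq\affinespace^m_{\perf}$. Multiplication by $\uniformizer^k$ on $\affinespace^1_\basicints$ induces on arc spaces the map that, on the level of Witt coordinates, is (essentially) the shift $a_i\mapsto a_{i-k}$; I would make this precise by observing that $\uniformizer^k\cdot\sum[a_i]\uniformizer^i=\sum[a_i]\uniformizer^{i+k}$, so the image of $\arcspace(f)$ inside $\arcspace(\targetlattice)$ is exactly the closed perfect subscheme cut out by the vanishing of the first $k$ Witt coordinates $b_0=\dots=b_{k-1}=0$ (using that Teichmüller expansions are unique, \cref{wittloopandarc:example:1daffinespace}, together with the fact that multiplication by $\uniformizer$ on $\witt(R)$ has no kernel for $R$ perfect and reduced). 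Concretely, at the truncated level $\arcspace_m(f)\colon\affinespace^{m-k}_{\perf}\to\affinespace^m_{\perf}$ is the linear inclusion of the last $m-k$ coordinates — evidently a finitely presented closed embedding of pure codimension $k$ — and $\arcspace(f)$ is the inverse limit of these, hence a pfp-closed embedding (it is the perfection of a finitely presented closed embedding $\affinespace^1_\resfield\hookrightarrow\arcspace_k(\affinespace^1)\times_{\spec\resfield}\cdots$, or more cleanly: it is the base change along $\arcspace(\affinespace^1)\to\arcspace_m(\affinespace^1)$ of the closed embedding just described, which is pfp) of pure codimension $k$ in the sense of \cref{dimthry:prop:purecodimschemes}.

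The main obstacle I anticipate is purely bookkeeping: checking carefully that "multiplication by $\uniformizer$ on Witt coordinates is the shift" with the right indexing, and that the closed subscheme so obtained is genuinely cut out by vanishing of finitely many coordinates rather than by some subtler Teichmüller relation. This is exactly the kind of "re-verifying arguments using Teichmüller representatives" flagged in the introduction; once one fixes the identification $\arcspace(\affinespace^1_\basicints)\simeq\lim_m\affinespace^m_{\perf}$ it becomes a finite check. After that, the assembly — Smith normal form, product over coordinates, and identifying $e$ with $\sum k_i$ — is routine, and we conclude $\arcspace(f)$ is a pfp-closed embedding of pure codimension $e=\valuation(\det f)$, as claimed.
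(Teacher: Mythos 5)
Your proposal follows essentially the same route as the paper: reduce via the elementary divisor theorem (Smith normal form) to a diagonal map $\operatorname{diag}(\uniformizer^{k_1},\dots,\uniformizer^{k_n})$, then handle that case explicitly. The paper simply states "the assertion is clear" for the diagonal case; you are spelling that out, which is fine. Two small points. First, you should say $\operatorname{GL}_n(\basicints)$ rather than $\speciallineargroup_n(\basicints)$, since the change-of-basis matrices from the elementary divisor theorem need not have determinant $1$; what matters is that they have unit determinant, so $\valuation(\det f)$ is unchanged. Second, the sentence claiming $\arcspace_m(f)\colon\affinespace^{m-k}_{\perf}\to\affinespace^m_{\perf}$ "is the linear inclusion of the last $m-k$ coordinates" is off: the truncated arc map $\arcspace_m(f)$ has source $\arcspace_m(\affinespace^1)\simeq\affinespace^m_{\perf}$, not $\affinespace^{m-k}_{\perf}$, and it is the shift $(a_0,\dots,a_{m-1})\mapsto(0,\dots,0,a_0,\dots,a_{m-k-1})$, which is not injective for any finite $m$. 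Your parenthetical correction gives the right argument: the image of $\arcspace(f)$ is the pullback of $\{0\}\into\arcspace_k(\affinespace^1)\simeq\affinespace^k_{\perf}$ along the projection $\arcspace(\affinespace^1)\to\arcspace_k(\affinespace^1)$, hence a pfp-closed subscheme of pure codimension $k$, and $\arcspace(f)$ restricts to an isomorphism onto it (injectivity of the shift uses that $\uniformizer$ is a nonzerodivisor on $\witt(R)$ for perfect $R$, as you note). With that phrasing tightened, the proof is complete and matches the paper's argument.
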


\begin{proof}
    We can assume that $\sourcelattice=\targetlattice=\basicints^n$. By the theorem on elementrary divisors, $f$ decomposes as $f=x\circ g\circ y$ such that $x$ and $y$ are linear isomorphisms and  $g(x_1,\ldots,x_n)=(\varpi^{m_1}x_1,....,\varpi^{m_n}x_n)$. 
    
    It suffices to show the assertion in the case $f=g$.  In this case the assertion is clear.
\end{proof}

\begin{para}\label{appendixcodim:morenotations}{\bf More notations.}
    \begin{mylist}
        \item The image \( \arcspace( f)(\arcspace (\sourcelattice)) \) is a perfectly finitely presented closed subscheme of \( \arcspace( \targetlattice )\) of codimension \( e \) (as shown in the lemma).
        \item Let \( x \in \arcspace(\sourcelattice)(\resfield) \) be a rational point. Let \( H \subset \arcspace(\sourcelattice) \) be a subscheme; we denote by \( x + H \) the translation (shift) of \( H \) by \( x \).
        \item For a scalar \( \lambda \in \basicints \), we denote by \( \lambda \cdot - \) the corresponding linear endomorphism (multiplication by \( \lambda \)).
        \item\label{codim_of_lin_subspace} We will be interested in subschemes such as \( y + \arcspace( f)(\uniformizer^M \cdot \arcspace(\sourcelattice)) \subset \arcspace(\targetlattice )\), which is a pfp-closed subscheme of pure codimension \( Mn + e \) by \cref{appendixcodim:lin_case}.
    \end{mylist}
\end{para}

\subsection*{The general case.}  

The following can be viewed as an arcspace version of the inverse function theorem. 

\begin{theorem}\label{thm:arc_inverse_func}
    Let \( f \colon \sourcelattice \to \targetlattice \) be a morphism of schemes over \( \spec \basicints \). Let \( x \in \sourcelattice(\basicints) \), and write \( e = d(f;x) \) (see \cref{appendixcodim:valofdifferential} for notation). Then for every $M>e$ morphism \( f \) restricts to an isomorphism between the closed subschemes \( x + \uniformizer^M \arcspace(\sourcelattice )\to f(x) + \arcspace( D_x(f))(\uniformizer^M \arcspace(\sourcelattice)) \) (see \cref{appendixcodim:morenotations}).
\end{theorem}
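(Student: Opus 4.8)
The plan is to recast the statement, via Taylor expansion together with \cref{appendixcodim:lin_case}, as the assertion that a single self-morphism of $\arcspace(\sourcelattice)$ of the shape ``identity plus a highly divisible higher-order term'' is an isomorphism, and then to prove this truncation by truncation, where a finite Newton iteration supplies an honest polynomial inverse.

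First I would fix coordinates $\sourcelattice\simeq\affinespace^n_\basicints\simeq\targetlattice$ and write out the Taylor expansion of the polynomial map $f$ at $x\in\sourcelattice(\basicints)$, an identity of $\basicints$-morphisms
\[
f(x+h)=f(x)+\differential_x(f)(h)+Q(h),
\]
with $Q\colon\sourcelattice\to\targetlattice$ having all monomials of degree $\ge 2$. By Smith normal form over $\basicints$, write $\differential_x(f)=A\operatorname{diag}(\uniformizer^{m_1},\dots,\uniformizer^{m_n})B$ with $A,B\in\operatorname{GL}_n(\basicints)$ and $\sum_i m_i=e$, so that each $m_i\le e<M$. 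Substituting $h=\uniformizer^M v$ and collecting powers of $\uniformizer$ yields an identity of $\basicints$-morphisms
\[
\bigl(v\longmapsto f(x+\uniformizer^M v)-f(x)\bigr)\;=\;\uniformizer^M\,\differential_x(f)\circ g_0,\qquad g_0:=\Id_\sourcelattice+c,
\]
where $c:=B^{-1}\operatorname{diag}(\uniformizer^{M-m_i})A^{-1}\bigl(\sum_{d\ge 2}\uniformizer^{M(d-2)}Q_d\bigr)$, with $Q_d$ the homogeneous degree-$d$ part of $Q$. The inequalities $M-m_i\ge M-e\ge 1$ are precisely what makes $c$ a genuine polynomial morphism $\sourcelattice\to\sourcelattice$ over $\basicints$ whose monomials all have degree $\ge 2$ and whose coefficients are all divisible by $\uniformizer$.

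Next I would apply $\arcspace$. By \cref{appendixcodim:lin_case} the morphisms $\arcspace(\uniformizer^M\Id_\sourcelattice)$ and $\arcspace(\differential_x(f))$, and hence also $\arcspace(\uniformizer^M\differential_x(f))=\arcspace(\differential_x(f))\circ\arcspace(\uniformizer^M\Id_\sourcelattice)$, are closed embeddings; after translating by $x$, resp.\ $f(x)$, they present $x+\uniformizer^M\arcspace(\sourcelattice)$ and $f(x)+\arcspace(\differential_x(f))(\uniformizer^M\arcspace(\sourcelattice))$ as copies of $\arcspace(\sourcelattice)$. Applying $\arcspace$ to the displayed identity then shows that, under these identifications, the restriction $\arcspace(f)\big|_{x+\uniformizer^M\arcspace(\sourcelattice)}$ equals $\arcspace(\uniformizer^M\differential_x(f))\circ\arcspace(g_0)$; in particular it does land in the asserted target closed subscheme, and it is an isomorphism onto that target if and only if $\arcspace(g_0)$ is an automorphism of $\arcspace(\sourcelattice)$. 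It therefore remains to prove that $\arcspace(g_0)$ is an isomorphism. Since $\arcspace(\sourcelattice)=\lim_n\arcspace_n(\sourcelattice)$ and $\arcspace_n(g_0)$ is compatible with the transition maps, it suffices to treat each $\arcspace_n(g_0)$. Fix $n$ and define $\basicints$-morphisms recursively by $g_0^{(1)}:=\Id_\sourcelattice$ and $g_0^{(k)}:=\Id_\sourcelattice-c\circ g_0^{(k-1)}$, each a composite of polynomial morphisms over $\basicints$. Because every coefficient of $c$ is divisible by $\uniformizer$ and every monomial has degree $\ge 2$, one has, for all $u,u'\in\witt(R)^n$, that $u\equiv u'\ (\mathrm{mod}\ \uniformizer^a)$ forces $c(u)\equiv c(u')\ (\mathrm{mod}\ \uniformizer^{a+1})$; feeding this into the recursion gives by induction $g_0^{(k)}\equiv g_0^{(k-1)}\ (\mathrm{mod}\ \uniformizer^{k-1})$ and hence $g_0\circ g_0^{(n)}\equiv\Id_\sourcelattice$ and $g_0^{(n)}\circ g_0\equiv\Id_\sourcelattice\ (\mathrm{mod}\ \uniformizer^n)$. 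Thus $\arcspace_n(g_0^{(n)})$ is a two-sided inverse of $\arcspace_n(g_0)$; these inverses are mutually compatible over $n$, so $\arcspace(g_0)=\lim_n\arcspace_n(g_0)$ is an isomorphism, which finishes the proof.

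The one place where care is needed is the reduction in the preceding paragraph: the ``division by $\uniformizer^M$ and by $\differential_x(f)$'' has to be carried out at the level of schemes, not merely on $R$-valued points. This is exactly what \cref{appendixcodim:lin_case} is for — $\arcspace$ of an $\basicints$-linear map of lattices is a \emph{closed embedding}, hence a monomorphism one may cancel, and it identifies $x+\uniformizer^M\arcspace(\sourcelattice)$ and $\arcspace(\differential_x(f))(\uniformizer^M\arcspace(\sourcelattice))$ with copies of $\arcspace(\sourcelattice)$. Once the statement is reformulated as ``$\arcspace(g_0)$ is an automorphism'', the truncation-by-truncation Newton argument is routine; note that it is the strict inequality $M>e$ that renders the coefficients of $c$ divisible by $\uniformizer$, which is exactly what makes the iteration stabilize modulo $\uniformizer^n$.
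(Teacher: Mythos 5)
Your proof is correct but takes a genuinely different route from the paper's. The paper first translates to $x=f(x)=0$, uses elementary divisors to diagonalize $D_0(f)=\operatorname{diag}(\uniformizer^{m_i})$, substitutes $x_i=\uniformizer^m x_i'$ with $m=\max m_i\le e$ to factor $f$ through an $\basicints$-morphism that is \'etale at $0$, and then quotes \cref{wittloopandarc:properties:arpreserveetale} --- the Cartesian square expressing $\arcspace$ of an \'etale map as a pullback of its truncation --- to conclude by taking the fibre over $0$. You instead isolate the full nonlinear correction at once: you factor $v\mapsto f(x+\uniformizer^M v)-f(x)$ as $\uniformizer^M D_x(f)\circ g_0$ with $g_0=\Id+c$, check via Smith normal form that $M>e$ forces all coefficients of $c$ to lie in $\uniformizer\basicints$, and then verify that $\arcspace(g_0)$ is an automorphism by an explicit Newton/fixed-point iteration $g_0^{(k)}=\Id-c\circ g_0^{(k-1)}$, which terminates modulo $\uniformizer^n$ after $n$ steps, hence on every truncation $\arcspace_n$. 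The two approaches buy different things: the paper's reduction-to-\'etale is shorter and leans on the formal invariance of \'etale morphisms under arc spaces, but as written it invokes the Cartesian square only for the $\arcspace_0$-truncation (which directly handles $M-m=1$; for general $M$ one needs the analogous square for $\arcspace_{M-m}$, which is true by the same lifting argument but not stated). Your iteration is more hands-on and self-contained, treats all $M>e$ uniformly without an auxiliary reduction to the \'etale case, and makes the role of the strict inequality $M>e$ (namely: it forces $c$ to contract $\uniformizer$-adically) completely transparent. Both are valid; your version arguably fills a small gap in the argument as it appears in the paper.
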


\begin{proof}
     We can assume that $\sourcelattice=\targetlattice=\basicints^n$ and $x=f(x)=0$. 
     
     Using the elementary divisors theorem and arguing as in the proof of \cref{appendixcodim:lin_case}, we can assume that $f(x_1,.....,x_n)=(\pi^{m_1}x_1+f'_1, ....., \pi^{m_n}x_1+f'_n)$, where $f'_1,.....,f'_n$ are sums of monomials of degree $\geq 2$. Observe that $m:=\max(m_1,..,m_n)\leq e$.

    We perform a change of variables $\uniformizer^{m}x'_i= x_i$. We obtain that 
    \[
    f(\uniformizer^{m}x'_1,....,\uniformizer^{m}x'_n)=(\pi^{m_1+m}f''_1,....\pi^{m_n+m}f''_n),
    \]
    where $f'':=(f''_1,....,f''_n)$ is \'etale at $0$. Thus, we reduce to the case when $f$ is \'etale at $0$. 
    
    Let $U$ be the open neighborhood of $0$ such that $f|_U$ is \'etale, and let $U'=f(U)$. Now the assertion follows from the Cartesian diagram (see \cref{wittloopandarc:properties:arpreserveetale}):
    \[\begin{tikzcd}
	{\arcspace( U)} & {\arcspace( U')} \\
	{\arcspace_0( U)} & {\arcspace_0( U')},
	\arrow[from=1-1, to=1-2]
	\arrow[from=1-1, to=2-1]
	\arrow[from=1-2, to=2-2]
	\arrow[from=2-1, to=2-2]
    \end{tikzcd}\]
    which induces an isomorphism on the fibres of the vertical maps.
\end{proof}

\begin{lemma}\label{appendix:has_p_adic_nbrd}
    Let \(X \subset \arcspace(\sourcelattice) \) is a pfp-locally closed perfect subscheme. Then for every \(x\in X\) there exists \( M \) such that \( x + \uniformizer^M \arcspace(\sourcelattice )\subset X \).
\end{lemma}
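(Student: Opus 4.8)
The plan is to reduce the statement to the fact that a pfp-locally closed subscheme of $\arcspace(\sourcelattice)$ is cut out at a finite level of the cofiltered limit $\arcspace(\sourcelattice)=\lim_N\arcspace_N(\sourcelattice)$, after which the claimed inclusion follows from a one-line translation computation. After choosing an $\basicints$-basis of $\sourcelattice$ we may assume $\sourcelattice=\affinespace^n_\basicints$, so that $\arcspace(\sourcelattice)=\lim_N\arcspace_N(\sourcelattice)$ with affine (in fact perfectly smooth) truncation maps $\pi_N\colon\arcspace(\sourcelattice)\to\arcspace_N(\sourcelattice)$, where on $R$-points $\pi_N$ is the reduction $\sourcelattice(\witt(R))\to\sourcelattice(\witt(R)/\uniformizer^N)$. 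Since $X\into\arcspace(\sourcelattice)$ is a pfp-locally closed embedding and the transition maps above are affine, the spreading-out argument of \cite[\href{https://stacks.math.columbia.edu/tag/01ZM}{Tag 01ZM}]{stacks-project} (used already in the proof of \cref{dimthry:prop:pfpfbetweenplacidpresentedisequidimiffbasechangeofpfp}) provides an index $N$ and a locally closed embedding $X_N\into\arcspace_N(\sourcelattice)$ with $X\simeq X_N\times_{\arcspace_N(\sourcelattice)}\arcspace(\sourcelattice)=\pi_N^{-1}(X_N)$; in fact for the lemma we only need that $X$ is pulled back from some finite level.

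With this in hand I would show that $M:=N$ works (as does any $M\ge N$). Fix $x\in X(\resfield)$ and set $x_N:=\pi_N(x)\in X_N(\resfield)$. For any perfect $\resfield$-algebra $R$, an $R$-point of $x+\uniformizer^M\arcspace(\sourcelattice)$ has the form $x+\uniformizer^M v$ with $v\in\sourcelattice(\witt(R))$, and
\[
\pi_N\bigl(x+\uniformizer^M v\bigr)=x_N+\uniformizer^M\bar v=x_N,
\]
because $\uniformizer^M\sourcelattice(\witt(R))$ maps to $0$ in $\sourcelattice(\witt(R)/\uniformizer^N)$ once $M\ge N$. Thus the composite $x+\uniformizer^M\arcspace(\sourcelattice)\into\arcspace(\sourcelattice)\xrightarrow{\pi_N}\arcspace_N(\sourcelattice)$ is the constant morphism with value $x_N$, which factors through $X_N$; by the universal property of the fibre product $X=\pi_N^{-1}(X_N)$, the inclusion $x+\uniformizer^M\arcspace(\sourcelattice)\into\arcspace(\sourcelattice)$ factors through $X$, that is, $x+\uniformizer^M\arcspace(\sourcelattice)\subseteq X$.

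The only real content is the first step, the existence of the finite-level model $X_N$; everything after it is formal. Concretely, in the coordinates of \cref{wittloopandarc:example:1daffinespace} one has $\arcspace(\sourcelattice)\cong\spec\bigl(\resfield[a_{i,j}]_{i\ge0,\,1\le j\le n}\bigr)_{\perf}$ and $\arcspace_N(\sourcelattice)\cong\affinespace^{Nn}_{\perf}$ via Teichm\"uller expansions, the subscheme $X$ is defined by conditions involving only finitely many of the $a_{i,j}$, say those with $i<N$, and translating by $\uniformizer^M$ with $M\ge N$ only alters the coordinates with $i\ge M\ge N$; hence the translate $x+\uniformizer^M\arcspace(\sourcelattice)$ satisfies exactly the same defining conditions as the point $x\in X$, which is the assertion of the lemma.
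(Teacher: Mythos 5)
Your proof is correct and follows essentially the same route as the paper: both reduce to the fact that a pfp-locally closed $X$ is pulled back from some finite truncation $\arcspace_M(\sourcelattice)$, and then observe that $x+\uniformizer^M\arcspace(\sourcelattice)$ maps to the single point $\pi_M(x)\in X_M$, hence factors through $X$. Your invocation of \cite[\href{https://stacks.math.columbia.edu/tag/01ZM}{Tag 01ZM}]{stacks-project} supplies explicit justification for the finite-level model that the paper asserts without comment, but the substance of the argument is identical.
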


\begin{proof}
  Since  \(X \subset \arcspace(\sourcelattice) \) is pfp-locally closed, there exists \( M \) and a locally closed perfect subscheme \( X_M \subset \arcspace_M(\sourcelattice) \) such that \( X \simeq X_M \times_{\arcspace_M(\sourcelattice)} \arcspace(\sourcelattice )\). Let \( x_M \in X_M \) be the image of \( x \). Then the assertion follows from the Cartesian diagram:
    \[
    \begin{tikzcd}
        {x + \uniformizer^M \arcspace(\sourcelattice)} & X & {\arcspace(\sourcelattice)} \\
        {\explicitset{x_M}} & {X_M} & {\arcspace_M(\sourcelattice).}
        \arrow[from=1-1, to=1-2]
        \arrow[from=1-1, to=2-1]
        \arrow[from=1-2, to=1-3]
        \arrow[from=1-2, to=2-2]
        \arrow[from=1-3, to=2-3]
        \arrow[from=2-1, to=2-2]
        \arrow[from=2-2, to=2-3]
    \end{tikzcd}
    \]
\end{proof}

\begin{proof}[Proof of \cref{appendixcodim:main}]

    By \cref{appendix:has_p_adic_nbrd}, there exists \( M \gg 0 \) such that \( y + D_x(f)(\uniformizer^M \arcspace(\sourcelattice)) \subset Y \), \( x + \uniformizer^M \arcspace(\sourcelattice) \subset X \), and by \cref{thm:arc_inverse_func} \( f \) restricts to an isomorphism
    \[
    x + \uniformizer^M \arcspace(\sourcelattice) \xrightarrow{\sim} f(x) + \arcspace( D_x(f))(\uniformizer^M \arcspace(\sourcelattice)).
    \]

    Overall, we get a commutative diagram:
    \[
    \begin{tikzcd}
        \arcspace(\sourcelattice) \arrow[r, "f"] & \arcspace(\targetlattice) \\
        X \arrow[u, hook, "\eta_X"] \arrow[r, "f|_X"] & Y \arrow[u, hook', "\eta_Y"] \\
        x + \uniformizer^M \arcspace(\sourcelattice) \arrow[u, hook, "\iota_X"] \arrow[r, "\sim"] & f(x) + \arcspace (D_xf)(\uniformizer^M \arcspace(\sourcelattice)). \arrow[u, hook', "\iota_Y"]
    \end{tikzcd}
    \]

    In the lower square, the two vertical morphisms are pfp-closed embeddings with sources that are connected and strongly pro perfectly smooth. Thus, by \cref{dimtry:prop:uo_equidim_cover_by_smooth_imply_uo_equidim} and \cref{dimthry:prop:pfpfbetweenplacidpresentedcorollaries}, they are of pure codimension. The horizontal morphisms are both weakly equidimensional of constant dimension 0: the upper morphism \( f|_X \) is étale, and the lower morphism is an isomorphism. By additivity of codimension for weakly equidimensional morphisms of constant dimension (see \cref{dimthry:prop:compositionofequidim}), the codimensions of the vertical arrows $\iota_X$ and $\iota_Y$ are equal. We denote this codimension by \( d \).

    Applying \cref{dimthry:prop:compositionofequidim} again to the compositions \( \eta_X \circ \iota_X \) and \( \eta_Y \circ \iota_Y \) and using \cref{codim_of_lin_subspace}, we get
    \[
    Mn = c_X + d \quad \text{and} \quad Mn + e = c_Y + d.
    \]
    Subtracting these equations yields \( c_Y = c_X + e \), as claimed.
\end{proof}

\printbibliography

\end{document}